\documentclass[12pt]{amsart}

\usepackage{amsmath, amssymb, amsthm}
\usepackage[colorlinks=true]{hyperref}
\usepackage[alphabetic]{amsrefs}
\usepackage{verbatim}
\usepackage{xcolor}
\usepackage{url}
\usepackage{enumitem}
\usepackage{microtype}
\usepackage[pdftex]{graphicx}
\usepackage[margin=1in]{geometry}



\parskip=0.5ex plus0.16667ex minus0.08333ex

\newtheorem{lemma}{Lemma}[section]
\newtheorem{theorem}[lemma]{Theorem}
\newtheorem{corollary}[lemma]{Corollary}
\newtheorem{proposition}[lemma]{Proposition}
\theoremstyle{definition}
\newtheorem{definition}[lemma]{Definition}

\newtheorem{claim}[lemma]{Claim}
\newtheorem{example*}[lemma]{Example}
\newtheorem{remark}[lemma]{Remark}

\theoremstyle{remark}

\newcommand{\defbold}{\em}

\makeatletter
\newtheorem*{rep@theorem}{\rep@title}
\newcommand{\newreptheorem}[2]{%
\newenvironment{rep#1}[1]{%
\def\rep@title{{\bf #2 \ref{##1}}}%
\begin{rep@theorem}}%
{\end{rep@theorem}}}
\makeatother
\newreptheorem{theorem}{Theorem}

\DeclareRobustCommand{\qedify}[1]{%
  \ifmmode \quad\hbox{#1}
  \else
    \leavevmode\unskip\penalty9999 \hbox{}\nobreak\hfill
    \quad\hbox{#1}%
  \fi
}
\newenvironment{example}{\begin{example*}\pushQED{\qedify{$\diamondsuit$}}}{\popQED\end{example*}}

\numberwithin{equation}{section}

\newcommand{\K}{\ensuremath{\Bbbk}} 

\newcommand{\Rbar}{\ensuremath{\overline{\mathbb R}}}

\newcommand{\C}{\mathcal{C}}

\newcommand{\F}{\mathcal{F}}
\newcommand{\G}{\mathcal{G}}

\newcommand{\ZZ}{\mathbb{Z}}
\newcommand{\QQ}{\mathbb{Q}}

\newcommand{\NN}{\mathbb{N}}
\newcommand{\RR}{\mathbb{R}}
\newcommand{\BB}{\mathbb{B}}

\newcommand{\bfx}{\mathbf{x}}
\newcommand{\bfy}{\mathbf{y}}

\newcommand{\bfu}{\mathbf{u}}
\newcommand{\bfv}{\mathbf{v}}
\newcommand{\bfw}{\mathbf{w}}

\newcommand{\bfe}{\mathbf{e}}
\newcommand{\bfzero}{\mathbf{0}}

\DeclareMathOperator{\inn}{in}

\DeclareMathOperator{\rank}{rank}

\DeclareMathOperator{\val}{val}

\DeclareMathOperator{\trop}{trop}
\DeclareMathOperator{\supp}{supp}

\DeclareMathOperator{\spann}{span}

\DeclareMathOperator{\cone}{cone}
\DeclareMathOperator{\mult}{mult}
\DeclareMathOperator{\GL}{GL}
\DeclareMathOperator{\Mat}{Mat}
\DeclareMathOperator{\uMat}{\underline{Mat}}

\DeclareMathOperator{\mon}{Mon}
\DeclareMathOperator{\relint}{relint}
\DeclareMathOperator{\starr}{star}
\DeclareMathOperator{\rk}{rk}
\DeclareMathOperator{\lc}{lc}

\DeclareMathOperator{\elimination}{elim}

\DeclareMathOperator{\Cox}{Cox}

\newcommand{\elim}[4]{{#1} \leftarrow \elimination_{#2} ({#3},{#4})}

\makeatletter
\newcommand{\superimpose}[2]{{\ooalign{$#1\@firstoftwo#2$\cr\hfil$#1\@secondoftwo#2$\hfil\cr}}}
\makeatother
\newcommand{\ttimes}{\hspace{0.3mm}{\mathpalette\superimpose{{\circ}{\cdot}}}\hspace{0.3mm}}
\newcommand{\tplus}{\mathrel{\oplus}}

\begin{document}

\title{Varieties of Tropical Ideals are Balanced}

\author{Diane Maclagan}
\address{Mathematics Institute, University of Warwick, Coventry CV4 7AL, United 
Kingdom.}
\email{D.Maclagan@warwick.ac.uk}

\author{Felipe Rinc\'on} \address{School of Mathematical Sciences,
  Queen Mary University of London, London E1 4NS, United Kingdom.}
\email{f.rincon@qmul.ac.uk}

\begin{abstract}
Tropical ideals, introduced in \cite{TropicalIdeals}, define
subschemes of tropical toric varieties.  We prove that the
top-dimensional parts of their varieties are balanced polyhedral
complexes of the same dimension as the ideal.  This means that every
subscheme of a tropical toric variety defined by a tropical ideal has
an associated class in the Chow ring of the toric variety.   A key tool in the proof is that specialization of variables
in a tropical ideal yields another tropical ideal; this plays the role
of hyperplane sections in the theory.  We also show that elimination
theory (projection of varieties) works for tropical ideals
as in the classical case.  The
matroid condition that defines tropical ideals is crucial for these results.
\end{abstract}

\maketitle


\section{Introduction}

This paper is part of a program to develop an intrinsic scheme theory
for tropical geometry, begun in \cite{Giansiracusa2}; see \cites{
  DraismaRincon, Lorscheid, GG2, GGGrassmann, JooMincheva,
  JooMincheva2, MaclaganRincon1, MacPhersonThesis, Silversmith}.
We focus on subschemes of tropical
toric varieties.
Usual subschemes of affine
or projective space are defined by ideals in a polynomial ring.
Ideals in the semiring of tropical polynomials, however, are too
general with which to build a theory analogous to classical algebraic geometry.
For example, the variety of an ideal in this semiring is not necessarily a
finite polyhedral complex; see \cite{TropicalIdeals}*{Example 5.14}.

The
remedy proposed in \cite{TropicalIdeals} is to work with a smaller
class of ideals, called {\defbold tropical ideals} (see
Definition~\ref{d:tropicalideal} below).  The class of tropical ideals
includes all tropicalizations of classical ideals \cite{Giansiracusa2}, but it is strictly larger.
In \cite{TropicalIdeals},
the authors show that the variety of a tropical ideal is a finite
$\mathbb R$-rational polyhedral complex, and that tropical ideals
satisfy the ascending chain condition and the weak Nullstellensatz.  In
addition, homogeneous tropical ideals have a Hilbert polynomial, which
in particular allows a definition of dimension and degree.  This suggests that tropical ideals
form a reasonable class with which to work for tropical algebraic
geometry.

The main result of this paper is that the variety of a tropical ideal
is {\defbold balanced} with respect to an intrinsically defined
multiplicity on its maximal cells.  This generalizes the Structure
Theorem for tropicalizations of classical varieties.  The balancing
condition is a combinatorial constraint on a polyhedral complex that
can be interpreted as a ``zero-tension'' condition; see
Definition~\ref{d:balancing} for a precise definition.  It plays a
fundamental role  in tropical geometry.  Along the
way we prove other basic results for tropical ideals, including that
they are closed under specializations of the variables, and that the
dimension of their varieties agrees with the dimension of the defining
ideal.

We now state this more precisely, beginning with the definition of a
tropical ideal.  We write $\Rbar = (\mathbb R \cup \{\infty \} ,
\tplus ,\ttimes)$ for the tropical semiring, where $\tplus$ is $\min$
and $\ttimes$ is regular addition. For simplicity, we restrict 
our presentation in the introduction to ideals in the Laurent polynomial semiring.

\begin{definition}\label{d:tropicalideal}
An ideal $I \subseteq \Rbar[x_1^{\pm 1},\dots,x_n^{\pm 1}]$ is a 
{\defbold tropical ideal} if it satisfies the following
``monomial elimination axiom'': 

\parbox[center][][c]{0.95\textwidth}
{
$(\dag)$ For any $f, g \in I$ and any monomial 
$\bfx^\bfu$ for which $[f]_{\bfx^\bfu} = [g]_{\bfx^\bfu} \neq \infty$,
there exists $h \in I$ such that
$[h]_{\bfx^\bfu} = \infty$ and $[h]_{\bfx^\bfv} 
\geq \min( [f]_{\bfx^\bfv}, [g]_{\bfx^\bfv} )$ 
for all monomials $\bfx^\bfv$, with the equality holding
whenever $[f]_{\bfx^\bfv} \neq [g]_{\bfx^\bfv}$.
}

\noindent Here, we use the notation $[f]_{\bfx^\bfu}$ to denote the coefficient
of the monomial $\bfx^\bfu$ in the polynomial $f$.
As we  refer to this condition on $f$, $g$, and $h$ several times throughout the paper, 
we abbreviate it as
\[\elim{h}{\bfx^\bfu}{f}{g}.\]
We refer to $h$ as {\em an} elimination of $\mathbf{x}^{\mathbf{u}}$
from $f$ and $g$, as it is not uniquely defined by
$\mathbf{x}^{\mathbf{u}}$, $f$, and $g$.
\end{definition}

When $X$ is a $d$-dimensional irreducible subvariety of the torus
$(K^*)^n$ over a valued field $K$, the tropicalization $\trop(X)$ is the support of a pure
$d$-dimensional balanced polyhedral complex.  The following is the
main theorem of the paper, which generalizes this fact to varieties of
tropical ideals; see \S\ref{ss:varietiestropicalideals} for the definition.

\begin{theorem} \label{t:balancing}
Let $I \subseteq \Rbar[x_1^{ \pm 1},\dots,x_n^{\pm 1}]$ be a tropical
ideal of dimension $d$. Then the variety $V(I)$ is the support of a polyhedral complex $\Sigma$
whose maximal cells are $d$-dimensional.
Moreover, the weighted $\mathbb R$-rational polyhedral complex
$\Sigma^d$ consisting of the $d$-dimensional cells of $\Sigma$, with weights given by the
multiplicities of Definition~\ref{d:multiplicitydefn}, is balanced.
\end{theorem}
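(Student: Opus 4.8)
The plan is to reduce the balancing of $V(I)$ to the classical Structure Theorem by means of the two auxiliary tools advertised in the abstract: specialization of variables and elimination. The overall strategy is a local one. Balancing is a condition that can be checked at each codimension-one cell $\tau$ of $\Sigma^d$: one must show that the weighted sum of primitive normal vectors of the maximal cells containing $\tau$, taken in the quotient lattice $N/(N_\tau)$, is zero. So I would fix such a $\tau$, pass to the star $\starr_\tau(\Sigma)$, and aim to identify this local fan — together with its multiplicities — with the tropicalization of an honest classical variety (or at least with a balanced complex coming from one), at which point classical balancing finishes the argument.

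First I would establish the structural part of the statement: that $V(I)$ is the support of a polyhedral complex whose maximal cells all have dimension exactly $d = \dim I$. That $V(I)$ is a finite $\mathbb{R}$-rational polyhedral complex is already known from \cite{TropicalIdeals}; the content here is the equidimensionality and the match with $\dim I$. For this I expect to use the Hilbert-polynomial notion of dimension for homogeneous tropical ideals, together with the specialization theorem: specializing a variable should cut the dimension by one and, crucially, still produce a tropical ideal, so an inductive argument on $d$ (with hyperplane sections playing their classical role) pins down the dimension of every component of $V(I)$. Elimination theory enters here too, letting me project $V(I)$ and control dimensions of images and fibers. Second, I would set up the multiplicities of Definition~\ref{d:multiplicitydefn} in a form amenable to this induction — presumably as a colength or lattice-index attached to a zero-dimensional tropical ideal obtained by slicing $I$ transversally to a maximal cell — and check that the two notions (the intrinsic multiplicity and the one that appears after slicing) agree.

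The heart of the proof is the local balancing at a codimension-one cell $\tau$. Here I would take a generic point of $\tau$, translate it to the origin, and form the star; by the specialization/elimination machinery this star is the variety of a tropical ideal of dimension one (a one-dimensional fan), and the maximal cells of $\Sigma$ through $\tau$ correspond to its rays, carrying the multiplicities inherited from Definition~\ref{d:multiplicitydefn}. The claim then becomes: \emph{the variety of a one-dimensional tropical ideal, with these multiplicities, is a balanced one-dimensional fan}. I would prove this base case directly from the elimination axiom $(\dag)$: the rays of the fan and their weights are governed by the supports and coefficients of polynomials in $I$, and the monomial elimination axiom — applied to suitable pairs $f,g\in I$ along each ray direction — should force exactly the linear relation $\sum_\rho m_\rho\, \bfv_\rho = 0$ among primitive ray vectors. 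This is where the matroid condition is indispensable, as the abstract stresses: it is what guarantees enough elements of $I$ with prescribed leading behaviour to make the elimination axiom bite, and it is what makes the colengths/multiplicities well defined and additive.

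The main obstacle, I expect, is precisely this one-dimensional base case and, upstream of it, showing that the slicing and star operations interact correctly with both dimension and multiplicity — i.e. that specializing variables or passing to $\starr_\tau$ sends a tropical ideal of dimension $d$ to a tropical ideal of dimension $d-1$ (or to a $1$-dimensional one after enough slices) \emph{with the multiplicities transforming as the projection formula demands}. Controlling multiplicities through specialization is more delicate than controlling the underlying set, because tropical ideals can be strictly larger than tropicalizations of classical ideals, so one cannot simply borrow a classical lift; the intrinsic, matroid-based definition of multiplicity must be shown to be stable under these operations from first principles. Once the one-dimensional case is in hand and the reduction is justified, assembling the global balancing of $\Sigma^d$ from the local balancing at every $\tau$ is formal.
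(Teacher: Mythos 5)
Your global scaffolding --- prove first that $V(I)$ is a $d$-dimensional complex by induction using specialization, then reduce balancing to a local statement at each codimension-one cell of $\Sigma^d$ by passing to the star and quotienting out the lineality space via elimination --- is exactly the paper's route (Theorem~\ref{t:dimiscorrect}, Proposition~\ref{p:starweighted}, Theorem~\ref{t:projection}, assembled in Theorem~\ref{t:balanced}). The genuine gap is in the one-dimensional base case, which you correctly identify as the heart of the matter but do not actually prove. Your first suggestion --- identifying the local fan with the tropicalization of an honest classical variety and invoking the classical Structure Theorem --- cannot work: tropical ideals are strictly more general than tropicalizations of classical ideals, so no such lift exists, and if it did the theorem would be a corollary of known results. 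Your fallback --- that the elimination axiom $(\dag)$, applied to suitable pairs $f,g\in I$ ``along each ray direction,'' should force the relation $\sum_\rho m_\rho \bfv_\rho=\bfzero$ --- is not an argument: $(\dag)$ concerns a single monomial in a pair of polynomials and does not by itself produce any linear relation among weighted primitive ray generators.

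The missing idea is Theorem~\ref{t:degissummult}: for a zero-dimensional tropical ideal the degree (the constant Hilbert polynomial) equals the sum of the multiplicities of the points of its variety. This is what actually powers the one-dimensional case (Lemma~\ref{l:curvesbalancing}): one slices the one-dimensional fan with the two parallel hyperplanes $\{x_1=1\}$ and $\{x_1=-1\}$ to obtain zero-dimensional specializations $J_1$ and $J_2$; Theorem~\ref{t:degissummult} converts each $\deg(J_i)$ into a sum of local multiplicities; a lattice-index computation, Equation~\eqref{eqtn:latticeindex}, identifies the multiplicity of $V(J_1)$ at a sliced point with $|(\mathbf{u}_i)_1|\cdot\mult_{V(I)}(\mathbf{u}_i)$; and the relation $\sum_i m_i(\mathbf{u}_i)_1=\deg(J_1)-\deg(J_2)=0$ follows because the two specializations have the same trivialization and hence the same degree. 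Moreover the proof of Theorem~\ref{t:degissummult} is itself a substantial piece of work in the absence of primary decomposition, requiring the saturation-and-monomial-initial-ideal induction of Lemma~\ref{l:lemmafordegissummult}. None of this mechanism appears in your proposal, so while the reduction to dimension one is correctly set up, the statement is not proved.
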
	

The restriction to the $d$-skeleton of $\Sigma$ is necessary here, as
there is not yet a reasonable definition of an irreducible subscheme
of the tropical torus that implies that $V(I)$ is pure.
Theorem~\ref{t:balancing} allows us to define a Hilbert-Chow morphism for tropical ideals,
taking a subscheme of a tropical toric variety to its class in the
Chow ring of the toric variety; see Remark~\ref{r:HilbertChow}.

A key tool used is the fact, whose proof is non-trivial,
that the class of tropical ideals is closed under specialization of some of the variables. 

\begin{theorem} \label{t:sliceintro}
Let $I \subseteq \Rbar[x_1^{\pm 1},\dots,x_n^{\pm 1}]$ be a tropical ideal.  
For any $a \in \RR$, the ideal
$$I|_{x_n = a} := \{ f(x_1,\dots,x_{n-1},a) : f \in I \} \subseteq
\Rbar[x_1^{\pm 1},\dots,x_{n-1}^{\pm 1}]$$ is a tropical ideal.
When $V(I)$ is the support of a pure polyhedral complex, 
we have
$$V(I|_{x_n=a}) = \pi(V(I) \cap_{st}
\{x_n =a \}),$$ where $\pi \colon \RR^n \rightarrow \RR^{n-1}$ is
the projection onto the first $n-1$ coordinates, and $\cap_{st}$ is the stable intersection.
\end{theorem}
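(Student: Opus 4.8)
The plan is to treat the two assertions largely independently. \emph{That $I|_{x_n=a}$ is a tropical ideal.} First, $I|_{x_n=a}$ is the image of $I$ under the surjective semiring homomorphism $\Rbar[x_1^{\pm1},\dots,x_n^{\pm1}]\to\Rbar[x_1^{\pm1},\dots,x_{n-1}^{\pm1}]$ fixing $x_1,\dots,x_{n-1}$ and sending $x_n\mapsto a$, so it is automatically an ideal. Composing with the monomial-rescaling automorphism $x_n\mapsto a\ttimes x_n$, I may assume $a=0$; then, writing $\bfx'=(x_1,\dots,x_{n-1})$ and letting $\phi$ denote the specialization map, one has $[\phi(f)]_{{\bfx'}^{\bfv}}=\min_k[f]_{{\bfx'}^{\bfv}x_n^k}$. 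To verify the monomial elimination axiom $(\dag)$ it is convenient to use the equivalent matroidal formulation (cf.\ \cite{TropicalIdeals}): it suffices to show that for every finite set $B'$ of monomials in $\bfx'$, the coordinate projection $\pi_{B'}(I|_{x_n=0})\subseteq\Rbar^{B'}$ is a tropical linear space, i.e.\ the set of vectors of a valuated matroid on $B'$.

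Now $\pi_{B'}(I|_{x_n=0})$ is the image of $\pi_{B}(I)$, where $B=\{{\bfx'}^{\bfv}x_n^k:{\bfx'}^{\bfv}\in B',\ k\in\ZZ\}$, under the tropical-linear map $\Rbar^{B}\to\Rbar^{B'}$ that replaces each $x_n$-fibre by the tropical sum (i.e.\ the minimum) of its coordinates. The difficulty is that this fibrewise minimum does \emph{not}, for an arbitrary tropical linear space, yield a tropical linear space (already for the matroid $U_{2,3}$, collapsing two of the three coordinates produces a half-space, not a tropical linear space); what rescues the argument is that, $I$ being a Laurent ideal, $\pi_{B}(I)$ is invariant under the shift $k\mapsto k+1$, so it is a $\ZZ$-periodic tropical linear space on $B'\times\ZZ$. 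I would then prove that the fibrewise minimum of such a periodic tropical linear space is again a tropical linear space: using the ascending chain condition for tropical ideals together with the periodicity, one first replaces the infinite ground set $B$ by a finite truncation $\{{\bfx'}^{\bfv}x_n^k:|k|\le N\}$, and then checks the valuated-matroid vector axioms for the image directly, tracking how the $x_n$-periodicity controls the cocircuit structure. I expect this valuated-matroid step to be the main obstacle of the whole theorem, and the place where the matroid condition defining tropical ideals is essential.

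\emph{That $V(I|_{x_n=a})=\pi(V(I)\cap_{st}\{x_n=a\})$.} The starting point is the identity $\phi(f)(\bfw')=f(\bfw',a)$ for $\bfw'\in\RR^{n-1}$ and $f\in I$, together with the remark that, on grouping the monomials of $f$ by their $\bfx'$-exponent, the minimum defining $f(\bfw',a)$ is attained at least twice whenever the one defining $\phi(f)(\bfw')$ is, the converse failing precisely when the minimum defining $f(\bfw',a)$ is attained only at monomials sharing a single $\bfx'$-exponent. For ``$\supseteq$'' I argue by contrapositive: if $\bfw'\notin V(I|_{x_n=a})$, choose $f\in I$ with $\phi(f)(\bfw')$ attained only once; since $\supp(f)$ is finite, a direct estimate shows the minimum defining $f(\bfw,a+\varepsilon)$ is still attained only once for all $\bfw$ near $\bfw'$ and all small $\varepsilon\neq0$, so $V(I)$ avoids a neighbourhood of $(\bfw',a)$ in every perturbed hyperplane $\{x_n=a+\varepsilon\}$, whence $(\bfw',a)\notin|V(I)\cap_{st}\{x_n=a\}|$ (here purity is used, so that the stable intersection is the limit of these slices as $\varepsilon\to0$). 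For ``$\subseteq$'', let $\bfw'\in V(I|_{x_n=a})$, so $(\bfw',a)\in V(I)$. If some maximal cell of the complex through $(\bfw',a)$ is not contained in $\{x_n=a\}$, it meets $\{x_n=a+\varepsilon\}$ arbitrarily close to $(\bfw',a)$ and we are done. Otherwise --- again using purity --- the local cone $\starr_{V(I)}(\bfw',a)$ is a pure $d$-dimensional fan contained in $\{v_n=0\}$; it equals $V(\inn_{(\bfw',a)}(I))$ for the initial tropical ideal $\inn_{(\bfw',a)}(I)$ (cf.\ \cite{TropicalIdeals}), and since it projects onto $\{0\}$ in the $x_n$-direction, the elimination theorem for tropical ideals (also proved in this paper) produces a nonzero element $h_0\in\inn_{(\bfw',a)}(I)$ involving only powers of $x_n$. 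Writing $h_0=\inn_{(\bfw',a)}(f)$ with $f\in I$, the minimum of $f$ at $(\bfw',a)$ is attained only at monomials with trivial $\bfx'$-exponent, hence the minimum defining $\phi(f)(\bfw')$ is attained only at the constant monomial; thus $\bfw'\notin V(\phi(f))\supseteq V(I|_{x_n=a})$, contradicting $\bfw'\in V(I|_{x_n=a})$. This rules out the remaining case, and the two inclusions give the claimed equality.
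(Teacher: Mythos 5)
Your overall architecture is right — reducing to $a=0$ by rescaling $x_n$, recognising that the Laurent/periodic structure is what makes the specialization work, and proving the variety statement via the star of $V(I)$ at a point of $\{x_n=a\}$ together with the elimination (projection) theorem is exactly how the paper organises things. Your argument for $V(I|_{x_n=a}) = \pi(V(I)\cap_{st}\{x_n=a\})$ at the level of supports is essentially the paper's (Propositions~\ref{p:transverseintersection}, \ref{p:star} and Theorem~\ref{t:projection} feeding into Proposition~\ref{p:stableintersection}), with your perturbation argument for ``$\supseteq$'' a harmless variant of the transversality statement.

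The genuine gap is the first assertion, which you flag but do not prove. The claim that the fibrewise minimum of the ($\ZZ$-periodic) valuated matroid $\Mat(I|_B)$ is again a valuated matroid is precisely the monomial elimination axiom for $I|_{x_n=0}$, and it is the hardest result in this circle of ideas: the paper devotes Lemma~\ref{l:maineliminationstep} to it, a strengthened elimination statement (given $f,g\in I$ with $f_\bfu(\bfzero)\le g_\bfu(\bfzero)$ and $f_\bfv(\bfzero)>g_\bfv(\bfzero)$, produce $h\in I$ with $h_\bfu=\infty$, $h_\bfv(\bfzero)=g_\bfv(\bfzero)$, $h|_{\bfy=\bfzero}\ge f|_{\bfy=\bfzero}\tplus g|_{\bfy=\bfzero}$ and $\inn_{\bfzero}(h_\bfv)=\inn_{\bfzero}(g_\bfv)$), proved by a double induction involving several infinite-descent constructions of auxiliary sequences in $I$. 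Writing ``I would then prove that the fibrewise minimum of such a periodic tropical linear space is again a tropical linear space \dots\ checking the valuated-matroid vector axioms for the image directly'' defers exactly this content. Moreover, the route you sketch is doubtful: truncating to $|k|\le N$ destroys the periodicity you correctly identify as essential, and the ascending chain condition (which is about chains of tropical ideals) does not obviously supply such a truncation; the difficulty is that eliminating $\bfx^\bfu$ from $f|_{x_n=0}$ and $g|_{x_n=0}$ requires killing \emph{all} coefficients $[F]_{\bfx^\bfu x_n^k}$ simultaneously while controlling the minima over every other $x_n$-fibre, and single applications of $(\dag)$ upstairs do not do this. A secondary point: the paper proves the stable-intersection identity as an equality of \emph{weighted} polyhedral complexes (Proposition~\ref{p:stableintersection}), which requires the lattice-index computation and Theorem~\ref{t:degissummult}; your argument addresses only the underlying sets.
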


This theorem plays the role of a hyperplane section in tropical scheme
theory, as it allows induction on dimension.  In the realizable case,
$V(I|_{x_n=a})$ is the tropicalization of the intersection of the
variety with a generic translate of a subtorus; see
Remark~\ref{r:realizableslicing}.

Two important consequences of
Theorem~\ref{t:sliceintro}, which were already part of the standard
tropical tool-kit in the case that $I$ is the tropicalization of a
classical ideal, are the following.
\begin{enumerate}
\item (Theorem~\ref{t:dimiscorrect}). The variety of a tropical ideal of dimension $d$ is the support
  of a polyhedral complex with maximal cells {\defbold of dimension $d$}.

\item (Theorem~\ref{t:projection}).  If $I \subseteq \Rbar[x_1^{\pm 1},\dots,x_n^{\pm 1}]$ is a
  tropical ideal, and $\pi \colon \mathbb R^n \rightarrow \mathbb
  R^{n-1}$ is the projection onto the first $n-1$ coordinates, then 
$$V(I \cap \Rbar[x_1^{\pm 1},\dots,x_{n-1}^{\pm 1}]) = \pi(V(I)).$$
\end{enumerate}

We note that this differs slightly from the non-tropical case, 
where a closure is needed on $\pi(V(I))$.  The proof of the projection
result uses a Nullstellensatz of Grigoriev and Podolskii
\cite{GrigorievPodolskii} in a crucial fashion.  

The last
important ingredient in the proof of Theorem~\ref{t:balancing} is the
fact (Theorem~\ref{t:degissummult}) that the degree of a
zero-dimensional ideal is the sum of the multiplicities of the points
in its variety.  The proof here is more complicated than in the
classical case, owing to the lack of primary decomposition (so far) in
tropical scheme theory.

The structure of the paper is as follows.  In Section~\ref{s:Groebner}
we develop more Gr\"obner theory for tropical ideals.  The proofs of these results are fairly similar to the realizable case.  
Section~\ref{s:specialization} contains the first deep result, 
with the proof of the key
specialization theorem (Theorem~\ref{t:restrictiontheorem}).  The
results about dimension and projection (Theorem~\ref{t:dimiscorrect}
and Theorem~\ref{t:projection}) are proved in
Section~\ref{d:dimensionprojection}, while Section~\ref{s:degreezero}
contains the key facts about degrees of zero-dimensional ideals.
Finally, Theorem~\ref{t:balancing} is proved in
Section~\ref{s:balancing} (Theorem \ref{t:balanced}).

\noindent {\bf Acknowledgments}. DM was partially supported by EPSRC
grant EP/R02300X/1.  FR was partially supported by the Research
Council of Norway grant 239968/F20.  We thank Alex Fink for useful
conversations about Theorem~\ref{t:degissummult}, and the Institut
Mittag-Leffler for hosting those conversations during the program on
{\em Tropical Geometry, Amoebas, and Polytopes}.


\section{Gr\"obner theory} \label{s:Groebner}

In this section we prove basic results about initial ideals of tropical ideals
and the connection with initial ideals with respect to monomial term orders.

\subsection{Variants of tropical ideals}

Throughout this paper we will consider tropical ideals in both
$\Rbar[x_1,\dots,x_n]$ and $\Rbar[x_1^{\pm 1},\dots,x_n^{\pm 1}]$, and
also homogeneous tropical ideals in $\Rbar[x_0,\dots,x_n]$.  A slightly more general setting also occurs in \S~\ref{s:degreezero}.
In each case the definition of tropical ideal is that it obeys
the monomial elimination axiom $(\dag)$ given in
Definition~\ref{d:tropicalideal}.  In the case of homogeneous tropical
ideals in $\Rbar[x_0,\dots,x_n]$ it suffices to check the condition
when $f$ and $g$ are homogeneous.  Equivalently, we require that for
any finite selection $E$ of monomials (which can be Laurent in the
case $I \subseteq \Rbar[x_1^{\pm 1},\dots,x_n^{\pm 1}]$), the
restriction $I|_E$ is the set of vectors of a {\defbold valuated
  matroid} $\Mat(I|_E)$ on the ground set $E$.  
We denote by $\uMat(I|_E)$ the underlying matroid of $\Mat(I|_E)$, which
is a matroid on the set $E$.
  See \cite{TropicalIdeals}*{\S 2} for
more on this perspective.

We will also consider tropical ideals where the semiring of
coefficients is the {\defbold Boolean semiring} $\BB
:= \{0, \infty\} \subseteq \Rbar$.  Many results also hold for more
general additively idempotent semifields, as in \cite{TropicalIdeals};
we restrict to $\mathbb B$ and $\mathbb R$ here as the main focus is
on the polyhedral structure of varieties.

We now describe the connection between these three versions of tropical ideals.  
The {\defbold homogenization} of a tropical polynomial $f = \bigoplus c_\bfu
 \bfx^{\mathbf{u}} \in \Rbar[x_1,\dots,x_n]$ is 
$$\tilde{f} = \textstyle \bigoplus
c_{\mathbf{u}} x_0^{d-|\mathbf{u}|} \bfx^{\mathbf{u}} \in \Rbar[x_0,x_1,\dots,x_n] $$
 where 
$\textstyle |\bfu| := \sum_{i=1}^n u_i$ and $d =
  \max(|\mathbf{u}| : c_{\mathbf{u}} \neq \infty )$.  The
  homogenization of an ideal $I \subseteq
  \Rbar[x_1,\dots,x_n]$ is the ideal 
$$I^h := \langle \tilde{f} : f \in I \rangle \subseteq
  \Rbar[x_0,x_1,\dots,x_n].$$  
Conversely, if $f \in \Rbar[x_0,\dots,x_n]$ is a homogeneous polynomial,
its {\defbold dehomogenization} is the polynomial
$f|_{x_0=0} := f(0,x_1, \dots,x_n) \in \Rbar[x_1,\dots,x_n]$. The 
dehomogenization of a homogeneous ideal $J \subseteq \Rbar[x_0,x_1,\dots,x_n]$
is the ideal
$$J|_{x_0=0} := \{ f|_{x_0=0} : f \in J \} \subseteq \Rbar[x_1,\dots,x_n].$$

If $J \subseteq \Rbar[x_1^{\pm 1},\dots,x_n^{\pm 1}]$ is an ideal in the 
Laurent polynomial semiring, the intersection $J \cap \Rbar[x_1,\dots,x_n]$
is an ideal in the affine polynomial semiring $\Rbar[x_1,\dots,x_n]$. 
Conversely, any ideal $I \subseteq \Rbar[x_1,\dots,x_n]$
generates an ideal 
$$I\Rbar[x_1^{\pm 1},\dots,x_n^{\pm 1}] \subseteq \Rbar[x_1^{\pm 1},\dots,x_n^{\pm 1}].$$
Its elements are the Laurent polynomials of the form
$f \bfx^\bfu$ with $f \in I$ and $\bfx^\bfu$ a Laurent monomial.

If $I$ is an ideal in $\Rbar[x_1,\dots,x_n]$ and $m$ is a monomial,
then 
$$(I: m) := \{ f \in \Rbar[x_1,\dots,x_n] : f m \in I \}.$$  
The {\defbold saturation} of $I$ with respect to $m$ is 
$$(I:m^{\infty}) := \{ f \in
\Rbar[x_1,\dots,x_n] : f m^k \in I \text{ for some } k \geq 0 \}.$$

The following lemma details the relationships between these ideals.

\begin{lemma}\label{l:basicproperties}
\begin{enumerate}
\item \label{enum:torustoaffine} If $J \subseteq \Rbar[x_1^{\pm
    1},\dots,x_n^{\pm 1}]$ is a tropical ideal then $J \cap
  \Rbar[x_1,\dots,x_n]$ is a tropical ideal as well.  Conversely,
  if $I \subseteq \Rbar[x_1,\dots,x_n]$ is a tropical ideal then so is
  $I\Rbar[x_1^{\pm 1},\dots,x_n^{\pm 1}] \subseteq \Rbar[x_1^{\pm 1},\dots,x_n^{\pm 1}]$.  
  
\item \label{enum:affinetoproj}  If $I \subseteq \Rbar[x_1,\dots,x_n]$ is a tropical ideal then $I^h \subseteq \Rbar[x_0,\dots,x_n]$ is a tropical ideal.  Conversely, if $J \subseteq \Rbar[x_0,\dots,x_n]$ is a homogeneous tropical ideal, then $J|_{x_0=0} \subseteq \Rbar[x_1,\dots,x_n]$ is a tropical ideal.

\item If $I \subseteq \Rbar[x_1,\dots,x_n]$ is a tropical ideal and $m$ is any monomial, then
  $(I:m)$ and $(I:m^{\infty})$ are also tropical ideals.  When
  $m=\prod_{i=1}^n x_i$, then $(I:m^{\infty}) = I\Rbar[x_1^{\pm
  1},\dots,x_n^{\pm 1}] \cap \Rbar[x_1,\dots,x_n]$.  In particular, if
  $J \subseteq \Rbar[x_1^{\pm 1},\dots,x_n^{\pm 1}]$ and $I =
  J \cap \Rbar[x_1,\dots,x_n]$, then $(I:m^{\infty}) = I$.
\end{enumerate}
\end{lemma}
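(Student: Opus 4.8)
The proof of Lemma~\ref{l:basicproperties} is essentially a sequence of bookkeeping arguments verifying that the monomial elimination axiom $(\dag)$ is preserved under each of the listed constructions; equivalently, that each restriction $I|_E$ remains the vector set of a valuated matroid. I would handle the three parts in order, since the later parts reuse the earlier ones.

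For part \eqref{enum:torustoaffine}, the inclusion $J \cap \Rbar[x_1,\dots,x_n] \subseteq \Rbar[x_1,\dots,x_n]$ is clearly an ideal, and given $f,g$ in it with a common finite coefficient on $\bfx^\bfu$, an elimination $h = \elimination_{\bfx^\bfu}(f,g)$ produced in $J$ automatically lies in $\Rbar[x_1,\dots,x_n]$: the coefficient condition $[h]_{\bfx^\bfv}\geq\min([f]_{\bfx^\bfv},[g]_{\bfx^\bfv})$ forces $[h]_{\bfx^\bfv}=\infty$ on every Laurent monomial not appearing in $f$ or $g$, so $\supp(h)$ has no strictly-negative exponents. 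For the converse, an element of $I\Rbar[x_1^{\pm1},\dots,x_n^{\pm1}]$ has the form $f\bfx^\bfw$ with $f\in I$; given two such, $f_1\bfx^{\bfw_1}$ and $f_2\bfx^{\bfw_2}$, agreeing with finite value on $\bfx^\bfu$, I would first clear denominators by multiplying both by a single monomial $\bfx^\bfw$ (large enough that $\bfx^\bfw f_i\bfx^{\bfw_i}\in I$ for $i=1,2$), apply the axiom in $I$ to $\bfx^\bfw f_1\bfx^{\bfw_1}$ and $\bfx^\bfw f_2\bfx^{\bfw_2}$ relative to the monomial $\bfx^{\bfu+\bfw}$, and then divide the resulting $h$ by $\bfx^\bfw$; scaling by a monomial is invertible in the Laurent semiring and preserves all the coefficient (in)equalities in $(\dag)$, so the scaled-back polynomial is the required elimination.

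For part \eqref{enum:affinetoproj}, homogenization: a finite selection $E$ of monomials in $x_0,\dots,x_n$ can be partitioned by total degree, and since $(\dag)$ for a homogeneous ideal need only be checked on homogeneous $f,g$, I would reduce to understanding $I^h|_{E}$ for $E$ a set of degree-$d$ monomials. The key point is the standard bijection between degree-$d$ monomials in $x_0,\dots,x_n$ and degree-$\leq d$ monomials in $x_1,\dots,x_n$ via $\bfx^\bfu\mapsto x_0^{d-|\bfu|}\bfx^\bfu$: under this bijection the degree-$d$ part of $I^h$ matches the degree-$\leq d$ part of $I$ (here one must check that every degree-$d$ element of $I^h$ is a homogenization of a degree-$\leq d$ element of $I$, which follows from $I^h=\langle\tilde f:f\in I\rangle$ together with the fact that multiplying a tropical polynomial by a monomial or adding two polynomials interacts predictably with $x_0$-degree — this is the one spot needing a little care). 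Matroid axioms transport across a ground-set bijection, so $I^h|_E$ is a valuated matroid. Dehomogenization $J|_{x_0=0}$ is the image of $J$ under the semiring map $x_0\mapsto 0$; I would check directly that $\elimination$-data in $J$ maps to $\elimination$-data in $J|_{x_0=0}$, noting that setting $x_0=0$ only collapses monomials together and takes $\min$ of the colliding coefficients, which preserves the inequalities in $(\dag)$ (the equality-when-unequal clause needs the observation that if two monomials of $J|_{x_0=0}$ pull back to disjoint sets of monomials of $J$, the relevant comparisons are inherited).

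For part (3): $(I:m)$ is the preimage of $I$ under multiplication by $m$, and multiplication by a fixed monomial is a coefficient-preserving bijection on monomials (shifting exponents by $m$'s exponent vector), so given $f,g\in(I:m)$ I transport them to $fm,gm\in I$, apply $(\dag)$ in $I$ at the shifted monomial, and transport the resulting $h$ back by dividing by $m$ (valid as an operation on the support, since $h$'s support lies in $m+\supp(\text{stuff in }I)$ only after checking, as in part (1), that the $\geq\min$ clause kills all other monomials — this forces $m\mid h$ inside $\Rbar[\bfx]$). Then $(I:m^\infty)=\bigcup_k(I:m^k)$ is an increasing union of tropical ideals; I would verify that an increasing union of ideals each satisfying $(\dag)$ again satisfies $(\dag)$ (any two elements lie in a common $(I:m^k)$, and the elimination found there works globally), the only subtlety being that the union is genuinely directed, which it is since $(I:m^k)\subseteq(I:m^{k+1})$. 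The identity $(I:m^\infty)=I\Rbar[\bfx^{\pm1}]\cap\Rbar[\bfx]$ for $m=\prod x_i$ is a direct unwinding of definitions: $f\in(I:m^\infty)$ iff $f\prod x_i^k\in I$ for some $k$ iff $f\in I\Rbar[\bfx^{\pm1}]$, and intersecting with $\Rbar[\bfx]$ is automatic since $f$ already lies there; the ``in particular'' follows because if $I=J\cap\Rbar[\bfx]$ then $I\Rbar[\bfx^{\pm1}]\subseteq J$, hence $I\Rbar[\bfx^{\pm1}]\cap\Rbar[\bfx]\subseteq J\cap\Rbar[\bfx]=I$, while the reverse inclusion $I\subseteq(I:m^\infty)$ is trivial.

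I expect the main obstacle to be part \eqref{enum:affinetoproj}, specifically pinning down that the degree-$d$ piece of $I^h$ is exactly $\{\tilde f : f\in I,\ \deg f\leq d\}$ rather than merely containing it; one has to argue that taking tropical-semiring combinations of the generators $\tilde f$ (sums, and multiplication by monomials in $x_0,\dots,x_n$) never produces a degree-$d$ homogeneous polynomial outside that set, which amounts to checking that $(\,\cdot\,)^{\sim}$ is compatible with $\tplus$ up to the degree-padding and that multiplying by a monomial $x_0^a\bfx^\bfw$ corresponds to multiplying by $\bfx^\bfw$ on the dehomogenized side. This is routine but is the place where the homogenization convention (the $\max$ in the definition of $d$) actually does work. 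All other verifications are mechanical transport-of-structure arguments along monomial bijections or along the surjection $x_0\mapsto 0$.
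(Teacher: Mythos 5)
Most of this is sound and close to the paper's own argument: part (1) and the $(I:m)$ portion of (3) are exactly the paper's proofs; your homogenization argument in (2) via the coefficient-preserving bijection $\bfx^\bfu\mapsto x_0^{d-|\bfu|}\bfx^\bfu$ is a legitimate variant of the paper's direct elimination check, and you correctly isolate the one fact it needs (that every homogeneous degree-$d$ element of $I^h$ equals $x_0^{d-\deg f}\tilde f$ for some $f\in I$ of degree at most $d$); and your treatment of $(I:m^\infty)$ as a directed union of ideals satisfying $(\dag)$ is actually simpler than the paper's, which instead invokes the ascending chain condition for tropical ideals to stabilize the chain $(I:m)\subseteq(I:m^2)\subseteq\dotsb$.

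The gap is in the dehomogenization half of (2). The claim that ``elimination data in $J$ maps to elimination data in $J|_{x_0=0}$'' fails for arbitrary lifts. Given $f,g\in J|_{x_0=0}$ with $[f]_{\bfx^\bfu}=[g]_{\bfx^\bfu}\neq\infty$ and arbitrary $F,G\in J$ dehomogenizing to them, there may be no exponent $e$ with $[F]_{x_0^e\bfx^\bfu}=[G]_{x_0^e\bfx^\bfu}\neq\infty$ at all --- the minima over the fiber $\{x_0^e\bfx^\bfu\}_e$ can agree while being attained at different values of $e$ --- so the axiom in $J$ cannot even be invoked; and even when it can, eliminating a single $x_0^e\bfx^\bfu$ does not force the output $H$ to have coefficient $\infty$ at the \emph{other} preimages $x_0^{e'}\bfx^\bfu$, so $[H|_{x_0=0}]_{\bfx^\bfu}$ need not be $\infty$. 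Your parenthetical about distinct monomials downstairs pulling back to disjoint sets upstairs does not address this, since the problem lives within a single fiber. The missing idea --- which is the paper's move --- is to lift $f$ and $g$ to homogeneous polynomials of a \emph{common} degree, namely $F=x_0^c\tilde f$ and $G=x_0^{c+d}\tilde g$ with $d=\deg(\tilde f)-\deg(\tilde g)\geq 0$ and $c$ large enough that both lie in $J$. Then each monomial $\bfx^\bfv$ downstairs meets the supports of $F$ and $G$ (and hence, by the $\geq\min$ clause, of $H$) in at most the single monomial $x_0^{c+\deg(\tilde f)-|\bfv|}\bfx^\bfv$, and the push-forward you describe goes through verbatim.
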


\begin{proof}
\begin{enumerate}
\item Suppose $J \subseteq \Rbar[x_1^{\pm
    1},\dots,x_n^{\pm 1}]$ is a tropical ideal, and let 
    $I = J \cap \Rbar[x_1,\dots,x_n]$. If $f, g \in I$ with $[f]_{\mathbf{x}^{\mathbf{u}}} =
  [g]_{\mathbf{x}^{\mathbf{u}}}$ then $f,g \in J$, so there is $h \in
    J$ with $\elim{h}{\bfx^\bfu}{f}{g}$. 
            This in particular implies that $h \in I$, and so
            $I$ satisfies the monomial elimination axiom.

    Assume now that $I \subseteq \Rbar[x_1,\dots,x_n]$ is a tropical ideal,
    and let $J = I\Rbar[x_1^{\pm 1},\dots,x_n^{\pm 1}]$.
    Fix $f, g \in J$ with $[f]_{\mathbf{x}^{\mathbf{u}}} =
  [g]_{\mathbf{x}^{\mathbf{u}}}$. Take a monomial $\bfx^\bfv$ such that
  $f \bfx^\bfv, g\bfx^\bfv \in I$. Since $[f\bfx^\bfv]_{\mathbf{x}^{\mathbf{u}+\bfv}} =
  [g\bfx^\bfv]_{\mathbf{x}^{\mathbf{u}+\bfv}}$, there is $h \in I$ satisfying
  $\elim{h}{\bfx^{\bfu+\bfv}}{f\bfx^\bfv}{g\bfx^\bfv}$. It follows that
  $h \bfx^{-\bfv} \in J$ satisfies $\elim{h\bfx^{-\bfv}}{\bfx^\bfu}{f}{g}$,
  showing that $J$ satisfies the monomial elimination axiom.
  
\item
  Suppose $I \subseteq \Rbar[x_1,\dots,x_n]$ is a tropical ideal.
  Since $I^h$ is a homogeneous ideal, it is enough to prove the monomial
  elimination axiom for homogeneous polynomials $f,g \in I^h$ of the same
  degree. Suppose $[f]_{\mathbf{x}^{\mathbf{u}}} =
  [g]_{\mathbf{x}^{\mathbf{u}}}$,
  and let $\mathbf{u}'$ be the last $n$
  coordinates of $\mathbf{u}$.  Then $f':=f|_{x_0=0}, g':=g|_{x_0=0} \in
  I$, and $[f']_{\mathbf{x}^{\mathbf{u}'}} =
  [g']_{\mathbf{x}^{\mathbf{u}'}}$.  By the monomial elimination axiom
  for $I$ there is $h' \in I$ with  $\elim{h'}{\bfx^{\bfu'}}{f'}{g'}$.
  Then  $h :=
  x_0^{\deg(f)-\deg(h')}\tilde{h'}$ satisfies $\elim{h}{\bfx^{\bfu}}{f}{g}$,
  showing that $I^h$ is a tropical ideal.

Conversely, suppose $J \subseteq \Rbar[x_0,\dots,x_n]$ is a homogeneous
tropical ideal, and let $f,g \in J|_{x_0=0}$ with
$[f]_{\mathbf{x}^{\mathbf{u}}} = [g]_{\mathbf{x}^{\mathbf{u}}}$. 
Without loss of generality we may assume that $d := \deg(\tilde{f}) - \deg(\tilde{g}) \geq 0$.  
Take $c \in \NN$ such that $x_0^c\tilde{f}, x_0^{c+d}\tilde{g} \in J$.
Denote $e = c+\deg(\tilde f) - |\bfu|$.
By the monomial elimination axiom for $J$ there is
$h\in J$ with $\elim{h}{x_0^{e}\mathbf{x}^{\mathbf{u}}}{x_0^c\tilde{f}}{x_0^{c+d}\tilde{g}}$, 
and thus $\elim{h|_{x_0=0}}{\mathbf{x}^{\mathbf{u}}}{f}{g}$, 
which shows that $J|_{x_0=0}$ is a tropical ideal.

\item Suppose that $f, g \in (I:m)$ with
  $[f]_{\mathbf{x}^{\mathbf{u}}} = [g]_{\mathbf{x}^{\mathbf{u}}}$.
  Then $mf, mg \in I$, with $[mf]_{m\mathbf{x}^{\mathbf{u}}} =
  [mg]_{m\mathbf{x}^{\mathbf{u}}}$, so there exists $h \in I$ with
  $\elim{h}{m\bfx^{\bfu}}{mf}{mg}$.
  Thus every term of $h$ is
  divisible by $m$, and so $h':=h/m \in (I:m)$ satisfies 
  $\elim{h'}{\bfx^{\bfu}}{f}{g}$,  
  as desired.  

We have $I \subseteq (I:m) \subseteq (I:m^2) \subseteq \dotsb$.  Since
these are all tropical ideals, by the ascending chain condition
\cite{TropicalIdeals}*{Theorem 3.11} this chain stabilizes, so there is $N \geq 0$
for which $(I:m^k) = (I:m^N)$ for all $k \geq N$.  
We then have $(I:m^{\infty}) =
(I:m^N)$, which shows that $(I:m^{\infty})$ is a tropical ideal.

When $m = \prod_{i=1}^n x_i$, if $m^kf \in I$ then $f= 1/m^k (m^kf)
\in I \Rbar[x_1^{\pm 1},\dots,x_n^{\pm 1}]$, so 
$(I:m^{\infty}) \subseteq I\Rbar[x_1^{\pm 1},\dots,x_n^{\pm 1}] \cap \Rbar[x_1,\dots,x_n]$.
Conversely, if $g \in
I\Rbar[x_1^{\pm 1},\dots,x_n^{\pm 1}] \cap \Rbar[x_1,\dots,x_n]$,
we have that $g = \mathbf{x}^{\mathbf{u}} f$ for $f \in I$ and
$\bfx^\bfu$ a (possibly Laurent) monomial.
Setting $k= -\min \{ u_i : u_i<0\}$, we get $m^kg \in I$.  

The last claim follows from the fact that $J = I \Rbar[x_1^{\pm 1},\dots,x_n^{\pm 1}]$.\qedhere
\end{enumerate}

\end{proof}

We next recall the Gr\"obner theory developed in \cite{TropicalIdeals}*{\S 3}.
For $\bfw \in \mathbb R^n$ and $f=\bigoplus_{\bfu \in \NN^n}
c_\bfu \bfx^\bfu \in \Rbar[x_1,\ldots,x_n]$, the {\defbold initial term}
of $f$ with respect to $\bfw$ is
\[ \inn_\bfw(f):=\bigoplus_{\bfu \,:\, c_\bfu+\bfu \cdot \bfw=f(\bfw)
} \bfx^{\bfu} \quad \in \BB[x_1,\ldots,x_n]. \]
For a tropical ideal $I$ we define the  {\defbold initial ideal} with respect to $\bfw$ as 
\[\inn_\bfw( I):=\langle \inn_\bfw( f) \mid f \in I \rangle \quad \subseteq \BB[x_1,\ldots,x_n].\] 
Note that in fact $\inn_\bfw(I)$ is equal to the set $\{ \inn_\bfw( f) \mid f \in I \}$, 
as this set is already closed under tropical addition, scalar multiplication, and multiplication by any monomial.
Analogous definitions apply for polynomials and ideals in the
Laurent polynomial ring $\Rbar[x_1^{\pm 1},\ldots,x_n^{\pm 1}]$, 
and also when the coefficients are in $\mathbb B$. 

We have the following relationships between initial ideals.

\begin{lemma}\label{l:basicinitial}
\leavevmode
\begin{enumerate}
\item \label{enum:initialaffinetorus} If $I \subseteq \Rbar[x_1,\dots,x_n]$ is a tropical ideal and $\mathbf{w} \in \mathbb R^n$ then
$$\inn_{\mathbf{w}}(I
  \Rbar[x_1^{\pm 1},\dots,x_n^{\pm 1}]) = \inn_{\mathbf{w}}(I) \mathbb
  B[x_1^{\pm 1},\dots,x_n^{\pm 1}].$$
\item \label{enum:initialaffineproj} If $I \subseteq \Rbar[x_0,\dots,x_n]$ is a homogeneous tropical ideal and $\mathbf{w} \in \mathbb R^n$ then 
$$\inn_{\mathbf{w}}(I|_{x_0=0}) = \inn_{(0,\mathbf{w})}(I)|_{x_0=0}.$$
\item \label{enum:initialtorustoaffine} If $I \subseteq \Rbar[x_1^{\pm 1},\dots,x_n^{\pm 1}]$ is a tropical
ideal and $\mathbf{w} \in \mathbb R^n$ then
$$\textstyle \inn_{\mathbf{w}}(I) \cap
\mathbb B[x_1,\dots,x_n] = (\inn_{\mathbf{w}}(I \cap
\Rbar[x_1,\dots,x_n]) : (\prod_{i=1}^n x_i)^{\infty}).$$
\end{enumerate}

\end{lemma}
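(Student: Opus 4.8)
The plan is to prove the two inclusions directly, unwinding the definitions of initial term and saturation, and using parts~\eqref{enum:torustoaffine} and~\eqref{enum:initialaffinetorus} together with the last clause of Lemma~\ref{l:basicproperties}(3). Write $I_0 := I \cap \Rbar[x_1,\dots,x_n]$ and $m := \prod_{i=1}^n x_i$. By Lemma~\ref{l:basicproperties}(1), $I_0$ is a tropical ideal, and by the last sentence of Lemma~\ref{l:basicproperties}(3), $I = I_0 \Rbar[x_1^{\pm 1},\dots,x_n^{\pm 1}]$; hence by part~\eqref{enum:initialaffinetorus}, $\inn_{\bfw}(I) = \inn_{\bfw}(I_0) \BB[x_1^{\pm1},\dots,x_n^{\pm1}]$. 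So the claim reduces to the purely affine/Laurent statement
\[
\inn_{\bfw}(I_0)\BB[x_1^{\pm1},\dots,x_n^{\pm1}] \cap \BB[x_1,\dots,x_n] \;=\; \bigl(\inn_{\bfw}(I_0) : m^{\infty}\bigr),
\]
which I would prove for an arbitrary tropical ideal $J := \inn_{\bfw}(I_0) \subseteq \BB[x_1,\dots,x_n]$; that is, $J\BB[x_1^{\pm1},\dots,x_n^{\pm1}] \cap \BB[x_1,\dots,x_n] = (J:m^{\infty})$. But this is exactly the special-case identity $(J:m^{\infty}) = J\BB[\mathbf x^{\pm1}] \cap \BB[\mathbf x]$ recorded in Lemma~\ref{l:basicproperties}(3) for the monomial $m = \prod x_i$, applied over the Boolean semiring $\BB$ rather than $\Rbar$ — and the proof of that identity given in the excerpt uses only that $J$ is a tropical ideal in a polynomial semiring closed under division by monomials, so it goes through verbatim with $\Rbar$ replaced by $\BB$.

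Alternatively, if one prefers not to invoke the $\BB$-coefficient version of Lemma~\ref{l:basicproperties}(3), here is the direct argument. For ``$\supseteq$'': if $g \in (\inn_{\bfw}(I_0):m^{\infty})$ then $m^k g \in \inn_{\bfw}(I_0) \subseteq \inn_{\bfw}(I)$ for some $k$, and since $\inn_{\bfw}(I)$ is an ideal in the Laurent semiring (closed under multiplication by $m^{-k}$) we get $g = m^{-k}(m^k g) \in \inn_{\bfw}(I)$; as $g$ has no negative exponents, $g \in \inn_{\bfw}(I) \cap \BB[x_1,\dots,x_n]$. For ``$\subseteq$'': suppose $g \in \inn_{\bfw}(I) \cap \BB[x_1,\dots,x_n]$. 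By part~\eqref{enum:initialaffinetorus}, $g = \inn_{\bfw}(f)$ for some $f \in I = I_0\Rbar[\mathbf x^{\pm1}]$, and $\inn_{\bfw}$ commutes with multiplication by a monomial, so after multiplying by a suitable $m^k$ we may assume $f \in I_0$ and then $m^k g = \inn_{\bfw}(m^k f) \in \inn_{\bfw}(I_0)$; choosing $k$ large enough that $m^k g$ has no negative exponents (which is automatic here since $g$ already has none, so in fact $k=0$ works once $f \in I_0$), we conclude $g \in (\inn_{\bfw}(I_0):m^{\infty})$.

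I expect the only mild subtlety — and hence the step to write carefully — to be the bookkeeping in the ``$\subseteq$'' direction: one must note that $\inn_{\bfw}(m^k f) = m^k \inn_{\bfw}(f)$ (immediate from the definition of $\inn_{\bfw}$, since multiplying $f$ by $x^{\mathbf a}$ shifts both exponent vectors and the value $f(\bfw)$ compatibly), and that the ``witness'' $f$ for a given initial form $g \in \inn_{\bfw}(I)$ can be taken in $I$ and then cleared of denominators into $I_0$. Everything else is formal. No genuine obstacle arises because the hard content — the ascending chain condition used to make sense of $(\,\cdot\,:m^{\infty})$, and the structural identity for saturation by $\prod x_i$ — is already established in Lemma~\ref{l:basicproperties}, and transfers to $\BB$-coefficients without change.
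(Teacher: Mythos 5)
Your argument for part~\eqref{enum:initialtorustoaffine} is correct, and your ``direct argument'' is essentially the paper's own proof of that part: both inclusions come down to clearing or reintroducing denominators by a monomial and using that $\inn_{\bfw}$ commutes with monomial multiplication, while your first packaging---reducing via part~\eqref{enum:initialaffinetorus} to the Boolean-coefficient version of the identity $(J:m^{\infty}) = J\,\BB[x_1^{\pm 1},\dots,x_n^{\pm 1}]\cap\BB[x_1,\dots,x_n]$ from Lemma~\ref{l:basicproperties}---is just a repackaging of the same computation (and, as you note, that identity is elementary and does not even use the matroid axiom). The one shortfall is scope: the statement has three parts, and you have only treated part~\eqref{enum:initialtorustoaffine}; parts~\eqref{enum:initialaffinetorus} and~\eqref{enum:initialaffineproj}, the former of which your reduction invokes, still require their own (short, witness-chasing) arguments.
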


\begin{proof}
\begin{enumerate}
\item Let $f \in I\Rbar[x_1^{\pm 1},\dots,x_n^{\pm 1}]$.  Then $f =
  \mathbf{x}^{\mathbf{u}} f'$ for some Laurent monomial
  $\mathbf{x}^{\mathbf{u}}$ and $f' \in I$, so $\inn_{\mathbf{w}}(f) =
  \mathbf{x}^{\mathbf{u}} \inn_{\mathbf{w}}(f') \in
  \inn_{\mathbf{w}}(I) \mathbb B[x_1^{\pm 1},\dots,x_n^{\pm 1}]$.
  Conversely, let $g \in \inn_{\mathbf{w}}(I) \mathbb B[x_1^{\pm
      1},\dots,x_n^{\pm 1}]$.  Then $g = \mathbf{x}^{\mathbf{u}} g'$
  for some Laurent monomial $\mathbf{x}^{\mathbf{u}}$ and $g' \in
  \inn_{\mathbf{w}}(I)$.  Write $g' = \inn_{\mathbf{w}}(f)$ for $f \in
  I$. Then $\mathbf{x}^{\mathbf{u}} f \in I \Rbar[x_1^{\pm 1},
    \dots,x_n^{\pm 1}]$, so
  $g =  \mathbf{x}^{\mathbf{u}} \inn_{\mathbf{w}}(f) = 
  \inn_{\mathbf{w}}(\mathbf{x}^{\mathbf{u}}f) \in
  \inn_{\mathbf{w}}(I \Rbar[x_1^{\pm 1},\dots,x_n^{\pm 1}])$.

\item If $f \in I$ then $\inn_{(0,\bfw)}(f)|_{x_0=0} = \inn_{\mathbf{w}}(f|_{x_0=0}) \in
  \inn_{\mathbf{w}}(I|_{x_0=0})$, so 
  $\inn_{(0,\mathbf{w})}(I)|_{x_0=0} \subseteq \inn_{\mathbf{w}}(I|_{x_0=0})$.  
  Conversely, if $g \in
  \inn_{\mathbf{w}}(I|_{x_0=0})$ then $g = \inn_\bfw(f|_{x_0 = 0})$ for some $f \in I$, so
  $g = \inn_{(0,\mathbf{w})}(f)|_{x_0=0} \in \inn_{(0,\mathbf{w})}(I)|_{x_0=0}$.

\item If $f \in \inn_{\mathbf{w}}(I) \cap \mathbb B[x_1,\dots,x_n]$
then $f = \inn_{\mathbf{w}}(g)$ for $g \in I$.  Choose a monomial
$\mathbf{x}^{\mathbf{u}}$ with $\mathbf{x}^{\mathbf{u}} g \in
\Rbar[x_1,\dots,x_n]$.  Then $\mathbf{x}^{\mathbf{u}} f =
\inn_{\mathbf{w}}(\mathbf{x}^{\mathbf{u}} g) \in \inn_{\mathbf{w}}(I
\cap \Rbar[x_1,\dots,x_n])$, so $f \in (\inn_{\mathbf{w}}(I \cap
\Rbar[x_1,\dots,x_n]) : (\prod_{i=1}^n x_i)^{\infty})$.
Conversely, if $f \in (\inn_{\mathbf{w}}(I \cap \Rbar[x_1,\dots,x_n])
: (\prod_{i=1}^n x_i)^{\infty})$ then there is $g \in I \cap
\Rbar[x_1,\dots,x_n]$ with $\mathbf{x}^{\mathbf{u}}f =
\inn_{\mathbf{w}}(g)$.  Then $\mathbf{x}^{-\mathbf{u}} g \in I$, and
$f = \inn_{\mathbf{w}}(\mathbf{x}^{-\mathbf{u}} g) \in
\inn_{\mathbf{w}}(I)$, as required.\qedhere
\end{enumerate}
\end{proof}

If $I \subseteq
\mathbb R[x_0,\ldots,x_n]$ is a homogeneous tropical ideal, its {\defbold Hilbert function}
is the map $H_I:\NN \to \NN$ given by $d \mapsto
\rank(\Mat(I_d))$, where $I_d$ is the degree-$d$ part of $I$.
More specifically, $H_I(d)$ is the size of any maximal subset $B$ of monomials of degree
$d$ with the property that $B$ does not contain the support of any polynomial in $I_d$. 
For 
 $\bfw \in \RR^n$ the initial ideal  $\inn_\bfw(I) \subseteq \BB[x_0,\ldots,x_n]$ is a homogeneous tropical ideal,
 and $H_{\inn_\bfw(I)} = H_I$  \cite{TropicalIdeals}*{Corollary 3.6}.

\subsection{Monomial term orders}

In commutative algebra over a field, Gr\"obner theory usually begins
with monomial term orders.  We now introduce these for the semiring of
tropical polynomials.

\begin{definition}
A total order $\prec$ on the monomials in $\Rbar[x_1,\dots,x_n]$ is a
monomial term order if
$$\bfx^{\mathbf{u}} \prec \bfx^{\mathbf{u}'} \text{   implies   } \bfx^{\mathbf{u+v}} \prec \bfx^{\mathbf{u'+v}} \text{  for all monomials } \bfx^{\mathbf{v}},$$
and 
$$\bfx^{\mathbf{u}} \prec  \bfx^\mathbf 0$$ for all monomials $\bfx^{\mathbf{u}} \neq \bfx^\mathbf 0$.
\end{definition}

The direction of the inequality in the second condition is to make
this compatible with the $\min$ convention for initial ideals that we
use in this theory.  It is the opposite of the usual order, but is {\defbold
  not} the tropical analogue of a local order in usual Gr\"obner
theory.

\begin{example}
Two central examples of monomial term orders are the lexicographic and
reverse-lexicographic term orders.  The {\defbold lexicographic} term order
$\prec_{lex}$ on $\Rbar[x_1,\dots,x_n]$ has $\bfx^{\mathbf{u}}
\prec_{lex} \bfx^{\mathbf{v}}$ if the first nonzero entry of
$\mathbf{u}-\mathbf{v}$ is positive.
The {\defbold reverse-lexicographic} term order $\prec_{revlex}$ on
$\Rbar[x_1,\dots,x_n]$ has $\bfx^{\mathbf{u}} \prec_{revlex}
\bfx^{\mathbf{v}}$ if $\deg(\bfx^{\mathbf{u}}) > \deg(\bfx^{\mathbf{v}})$, or
$\deg(\bfx^{\mathbf{u}}) = \deg(\bfx^{\mathbf{v}})$ and the last nonzero
entry of $\mathbf{u} - \mathbf{v}$ is negative.
Note that these are the reverse of the usual orders, to be compatible
with the $\min$ convention.  For example, 
$$ x_1^2 \,\prec_{lex}\, x_1 x_2^2 \,\prec_{lex}\, x_2 \,\prec_{lex}\, x_3^2 \,\prec_{lex}\, 0,$$
and 
$$x_3^3 \,\prec_{revlex}\, x_2^2 \,\prec_{revlex}\, x_1x_3 \,\prec_{revlex}\, 0,$$
where $0$ denotes the constant monomial $x_1^0x_2^0x_3^0$.
\end{example}

\begin{definition}
Let $\prec$ be a monomial term order on $\Rbar[x_1,\dots,x_n]$.  The
{\defbold initial term} of a tropical polynomial $f = \bigoplus c_{\mathbf{u}} 
\bfx^{\mathbf{u}} \in \Rbar[x_1,\dots,x_n]$ is $\inn_{\prec}(f) =
\bfx^{\mathbf{v}}$, where $\bfx^{\mathbf{v}} = \min_{\prec} \{
\bfx^{\mathbf{u}} : c_{\mathbf{u}} \neq \infty \}$.  The {\defbold initial ideal} of an ideal 
 $I \subseteq
\Rbar[x_1,\dots,x_n]$ is the monomial ideal 
$$\inn_{\prec}(I) := \langle \inn_{\prec}(f) : f \in I \rangle.$$
\end{definition}

As with traditional Gr\"obner bases, one use of monomial initial
ideals is that they give distinguished bases for the matroids
associated to a tropical ideal.

\begin{lemma} \label{l:stdmonosbasis}
Let $I \subseteq \Rbar[x_1,\dots,x_n]$ be a tropical ideal and let
$\prec$ be a monomial term order.  Then for any finite collection $E$ of
monomials in $\Rbar[x_1,\dots,x_n]$ the set of monomials in $E \setminus \{
\inn_{\prec}(f) : f \in I|_E \}$ is a
basis of $\uMat(I|_E)$. 
In particular, if $I$ is a homogeneous tropical ideal, $H_I(d) = H_{\inn_\prec(I)}(d)$ for all 
$d \geq 0$.
\end{lemma}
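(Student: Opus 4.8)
The plan is to let $S \subseteq E$ be the set of monomials of the form $\inn_\prec(f)$ for a nonzero $f \in I|_E$, set $B := E \setminus S$, and show that $B$ is simultaneously independent and spanning in $\uMat(I|_E)$. I will use the standard description of the matroid of a restriction (cf.\ \cite{TropicalIdeals}*{\S 2}): a subset $F \subseteq E$ is independent in $\uMat(I|_E)$ precisely when it does not contain $\supp(f)$ for any nonzero $f \in I|_E$, equivalently the circuits of $\uMat(I|_E)$ are the minimal such supports. Independence of $B$ is then immediate --- if $\supp(f) \subseteq B$ for a nonzero $f \in I|_E$ then $\inn_\prec(f)$ would lie in both $B$ and $S$ --- so the entire content lies in showing that $B$ spans.

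I would reduce ``$B$ spans'' to the claim that for every $m \in S$ there exists $f_m \in I|_E$ with $\inn_\prec(f_m) = m$ and $\supp(f_m) \subseteq B \cup \{m\}$. Given the claim, $\supp(f_m)$ is the support of a nonzero element of $I|_E$ and so contains a circuit $C$ of $\uMat(I|_E)$; since $B$ is independent, $C \not\subseteq B$, which forces $m \in C$ and hence $C \setminus \{m\} \subseteq B$. As every element of a circuit lies in the closure of the remaining elements, $m \in \operatorname{cl}(C \setminus \{m\}) \subseteq \operatorname{cl}(B)$. This holds for all $m \in S$, so $\operatorname{cl}(B) \supseteq B \cup S = E$, and together with independence $B$ is a basis.

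The heart of the matter is the claim, which I would prove by a Gr\"obner-style reduction driven by the monomial elimination axiom. Enumerate $S = \{m_1 \prec m_2 \prec \dots \prec m_k\}$ and induct \emph{downward} on the index. Fix $m_j \in S$ and take any nonzero $f \in I|_E$ with $\inn_\prec(f) = m_j$ (such $f$ exists by the definition of $S$). If $\supp(f) \cap S = \{m_j\}$ we may take $f_{m_j} = f$. Otherwise let $m_{j'}$ be the $\prec$-largest element of $\supp(f) \cap S$; since $m_j = \inn_\prec(f)$ is $\prec$-minimal in $\supp(f)$ we have $j' > j$, so the inductive hypothesis gives $f_{j'} \in I|_E$ with $\inn_\prec(f_{j'}) = m_{j'}$ and $\supp(f_{j'}) \cap S = \{m_{j'}\}$. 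After a tropical rescaling of $f_{j'}$ arranging $[f_{j'}]_{m_{j'}} = [f]_{m_{j'}}$, apply axiom $(\dag)$ to produce $h \in I$ with $\elim{h}{m_{j'}}{f}{f_{j'}}$; then $\supp(h) \subseteq \supp(f) \cup \supp(f_{j'}) \subseteq E$, so $h \in I|_E$, and $[h]_{m_{j'}} = \infty$. Since $m_{j'} \succ m_j$, the coefficient of $f_{j'}$ vanishes at $m_j$ and at every monomial $\prec m_j$, so the axiom forces $[h]_{m_j} = [f]_{m_j} \neq \infty$ and $[h]_{\bfx^\bfv} = \infty$ for all $\bfx^\bfv \prec m_j$; hence $\inn_\prec(h) = m_j$. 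Because $\supp(f_{j'})$ meets $S$ only at $m_{j'}$, and $m_{j'} \notin \supp(h)$, the finite set $\supp(h) \cap S$ is a proper subset of $\supp(f) \cap S$. Replacing $f$ by $h$ and iterating terminates --- $\supp(f) \cap S$ strictly shrinks while always containing $m_j$ --- and yields $f_{m_j}$. The base case $j = k$ needs no reduction, since any $f$ with $\inn_\prec(f) = m_k$ already has $\supp(f) \cap S = \{m_k\}$ as $m_k$ is $\prec$-largest in $S$.

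For the final assertion I would apply the above with $E$ the set of degree-$d$ monomials, so $I|_E = I_d$ and $H_I(d) = \rank(\uMat(I_d)) = |B|$, where $B$ is the set of degree-$d$ monomials that are not $\inn_\prec(f)$ for any $f \in I_d$. A short check identifies $B$ with the set of degree-$d$ monomials lying outside $\inn_\prec(I)$: a degree-$d$ monomial in the monomial ideal $\inn_\prec(I)$ is a monomial multiple of some $\inn_\prec(g)$ with $g \in I$, and hence equals $\inn_\prec$ of a suitable homogeneous degree-$d$ element of $I$ (namely the degree-$d$ component of that multiple of $g$). Since the Hilbert function of a monomial ideal simply counts the monomials outside it, $|B| = H_{\inn_\prec(I)}(d)$. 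The step I expect to be the main obstacle is making the reduction in the claim terminate with the support meeting $S$ only in $\{m\}$; the downward induction is exactly what achieves this, because eliminating a non-standard monomial $m_{j'}$ against an already ``fully reduced'' $f_{j'}$ reintroduces no elements of $S$.
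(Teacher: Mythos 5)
Your proposal is correct and follows essentially the same route as the paper: independence of $B$ is immediate from the definition, and spanning is obtained by repeatedly applying the elimination axiom to strip the non-standard monomials out of a polynomial with prescribed initial term, using finiteness of $E$ to terminate. The only differences are cosmetic — you organize the termination via downward induction on $S$ and a strictly shrinking $\supp(f)\cap S$, where the paper instead makes an extremal choice of $f$ and derives a contradiction, and you spell out the circuit/closure argument that the paper leaves implicit.
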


\begin{proof}  
If the set $B:= E \setminus \{\inn_{\prec}(f) : f \in I|_{E} \}$ 
were not an independent set of the matroid $\uMat(I|_E)$, then
there would be $f \in I$ with support in $B$.  We would then have
$\inn_{\prec}(f) \in B$, contradicting the definition of $B$.
  To show that $B$ is a
basis we show that for all $\mathbf{x}^{\mathbf{u}} \in E \setminus B$ there is
$f \in I$ supported in $B \cup \{ \mathbf{x}^{\mathbf{u}} \}$.  To see this,
fix $\bfx^{\mathbf{u}} \in E \setminus B$.  We have $\mathbf{x}^{\mathbf{u}} =
\inn_{\prec}(f)$ for $f \in I|_E$.
 We may assume that $f$ has been chosen so that the smallest
 $\mathbf{x}^{\mathbf{v}} \in \supp(f) \setminus (B \cup
 \{\mathbf{x}^{\mathbf{u}}\} )$ (if any exists) with respect to
 $\prec$ is as large as possible; this is possible because $E$ is
 finite.  For such a minimal $\mathbf{x}^{\mathbf{v}} \in \supp(f)
 \setminus (B \cup \{\bfx^{\mathbf{u}} \})$, there is a polynomial
 $f_{\mathbf{v}} \in I|_E$ with $\inn_{\prec}(f_{\mathbf{v}}) =
 \mathbf{x}^{\mathbf{v}}$, where we may assume that the coefficient of
 $\mathbf{x}^{\mathbf{v}}$ in $f_{\mathbf{v}}$ is $0$.  Since
 $\bfx^{\mathbf{u}} \prec \mathbf{x}^{\mathbf{v}}$, we have
 $\mathbf{x}^{\mathbf{u}} \not \in \supp(f_{\mathbf{v}})$.  Let
 $\alpha$ be the coefficient of $\mathbf{x}^{\mathbf{v}}$ in $f$, and
 let $f' \in I|_E$ be an elimination
 $\elim{f'}{\mathbf{x}^{\mathbf{v}}}{f}{\alpha 
   f_{\mathbf{v}}}$.  We have $\inn_{\prec}(f') =
 \mathbf{x}^{\mathbf{u}}$, and the smallest monomial in $\supp(f')
 \setminus (B \cup \{\mathbf{x}^{\mathbf{u}} \})$ is larger than
 $\mathbf{x}^{\mathbf{v}}$, which contradicts the construction of $f$.
 We thus conclude that there is $f \in I$ with $\inn_{\prec}(f) =
 \mathbf{x}^{\mathbf{u}}$, and $\supp(f) \subseteq B \cup
 \{\bfx^{\mathbf{u}} \}$, as claimed.
 
For a homogeneous ideal $I$ and $d\geq 0$, take $E$ to be the
 collection of monomials of degree $d$.  We then have that the set of
 monomials of degree $d$ not in $\inn_{\prec}(I)$ is a basis for
 $\uMat(I_d)$, as $\inn_{\prec}(I_d) = (\inn_{\prec}(I))_d$ for homogeneous ideals.  This 
 implies the equality of Hilbert functions $H_{\inn_{\prec}(I)}(d) =
 H_I(d)$.
\end{proof}

The following lemma states that for a {\defbold fixed} tropical ideal
$I$ every initial ideal with respect to a monomial term order is also
an initial ideal with respect to a weight vector $\mathbf{w} \in
\mathbb R^n$. 
The proof is very similar to the classical case; see
\cite{GBCP}*{Proposition 1.11}.

The {\defbold recession cone} of a polyhedron $P$ is the largest cone
$C$ for which the Minkowski sum $P+C \subseteq P$.  Equivalently, the
recession cone of a nonempty polyhedron $\{ \mathbf{x} : A \bfx \leq \mathbf b \}$ is the cone  $\{ \mathbf{x} : A \bfx \leq \bf0 \}$.

\begin{lemma} \label{l:weightvectortermorder}
Let $I$ be a homogeneous tropical ideal in $\Rbar[x_0,\dots,x_n]$, and let $\prec$ be a monomial term order.  
There is a nonempty polyhedron $C_{\prec} \subseteq \RR^{n+1}$ with an $(n+1)$-dimensional recession cone, with the property that $\inn_{\bfw}(I) = \inn_{\prec}(I)$ for all $\bfw$ in the interior of $C_{\prec}$. 
\end{lemma}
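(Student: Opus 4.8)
The plan is to mimic the classical argument from \cite{GBCP}*{Proposition 1.11}, exploiting the fact that $I$ is \emph{fixed} and that each graded piece $I_d$ has finitely many relevant ``initial data''. First I would fix, for each degree $d \geq 0$, a finite set $G_d$ of homogeneous polynomials in $I_d$ whose $\prec$-initial terms generate $(\inn_\prec(I))_d$ as a monomial ideal; concretely, by Lemma~\ref{l:stdmonosbasis} the standard monomials $B_d = \mon_d \setminus \inn_\prec(I)$ form a basis of $\uMat(I_d)$, and for each non-standard monomial $\bfx^\bfu \in \mon_d \cap \inn_\prec(I)$ there is $f_\bfu \in I_d$ with $\inn_\prec(f_\bfu) = \bfx^\bfu$ and $\supp(f_\bfu) \subseteq B_d \cup \{\bfx^\bfu\}$. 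For each such $f_\bfu = \bfx^\bfu \tplus \bigoplus_{\bfx^\bfv \in B_d} c_\bfv \bfx^\bfv$, the condition $\inn_\bfw(f_\bfu) = \bfx^\bfu$ is exactly the finite system of \emph{strict} linear inequalities $[f_\bfu]_{\bfx^\bfu} + \bfu \cdot \bfw < c_\bfv + \bfv \cdot \bfw$ for all $\bfx^\bfv \in \supp(f_\bfu)\setminus\{\bfx^\bfu\}$, each of which, since $\bfx^\bfu \prec \bfx^\bfv$, is satisfied for $\bfw$ sufficiently deep in a fixed open cone (the classical trick: $\prec$-comparability of $\bfx^\bfu$ and $\bfx^\bfv$ plus an Archimedean/limit argument shows these inequalities hold on a full-dimensional polyhedral region).

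The crux is then to intersect these regions over all degrees into a single nonempty polyhedron with full-dimensional recession cone. For this I would invoke the finiteness already available in the theory: the homogeneous tropical ideal $I$ has a Hilbert polynomial and, more to the point, by the arguments underlying \cite{TropicalIdeals} (ascending chain condition plus the Gröbner-theoretic machinery), the monomial ideal $\inn_\prec(I)$ is finitely generated, say by monomials in degrees $\leq D$. I would argue that the finitely many strict inequalities coming from degrees $\leq D$ already force $\inn_\bfw(I) = \inn_\prec(I)$ in \emph{all} degrees: if $\inn_\bfw(f_\bfu) = \bfx^\bfu$ for all the chosen generators $f_\bfu$ with $\deg \leq D$, then $\inn_\bfw(I) \supseteq \inn_\prec(I)$, while $H_{\inn_\bfw(I)} = H_I = H_{\inn_\prec(I)}$ (the first equality is \cite{TropicalIdeals}*{Corollary 3.6}, the second is Lemma~\ref{l:stdmonosbasis}) forces equality of the two homogeneous monomial ideals degree by degree. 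Thus $C_\prec$ can be taken to be the closure of the finite intersection of the open half-spaces $\{\bfw : [f_\bfu]_{\bfx^\bfu} + \bfu\cdot\bfw < c_\bfv + \bfv\cdot\bfw\}$ over generators in degrees $\leq D$.

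It remains to check $C_\prec$ is nonempty and that its recession cone is $(n+1)$-dimensional. Nonemptiness and full-dimensionality of the recession cone both follow from the \emph{compatibility} of $\prec$ with addition of exponents: each defining inequality has the form $\bfw \cdot (\bfu - \bfv) < (\text{const})$ with $\bfx^\bfu \prec \bfx^\bfv$, and one shows classically that the cone $\{\bfw : \bfw\cdot(\bfu-\bfv) < 0 \text{ whenever } \bfx^\bfu \prec \bfx^\bfv \text{ for the finitely many relevant pairs}\}$ is nonempty and full-dimensional, because any term order can be refined/represented by a generic real weight vector on any fixed finite set of monomial comparisons (realizing a term order by a weight vector on a finite truncation is standard, cf.\ \cite{GBCP}*{Ch.~1}); translating that open cone by any point satisfying the finitely many inhomogeneous inequalities (which is possible since each is strict and the homogeneous part is consistent) gives $C_\prec$, whose recession cone contains this open cone and hence is $(n+1)$-dimensional.

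The main obstacle I anticipate is the \emph{uniformity across degrees}: a priori one gets, for each $d$, a polyhedron $C_\prec^{(d)}$, and one must rule out that $\bigcap_d C_\prec^{(d)}$ degenerates (shrinks to lower dimension or becomes empty). The cleanest route around this is precisely the Hilbert-function/ACC argument sketched above, reducing to finitely many degrees; the alternative — directly controlling the inequalities in unbounded degree — seems to require an explicit bound on the ``complexity'' of a Gröbner basis of a tropical ideal, which the paper may prefer to avoid. I would therefore lead with the reduction to $\inn_\prec(I)$ being finitely generated and let the equality of Hilbert functions do the rest.
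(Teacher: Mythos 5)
Your proposal follows essentially the same route as the paper: take fundamental circuits over the standard-monomial bases for the finitely many minimal generators of $\inn_\prec(I)$, cut out $C_\prec$ by the resulting strict inequalities, deduce $\inn_\prec(I)\subseteq\inn_\bfw(I)$ and upgrade to equality via $H_{\inn_\bfw(I)}=H_I=H_{\inn_\prec(I)}$, and get nonemptiness plus a full-dimensional recession cone from the consistency of the associated homogeneous system (the paper proves this last point by an explicit Farkas-lemma argument rather than citing the standard realizability of term orders by weight vectors). The only point where the paper is more careful is the final equality step: since $\inn_\bfw(I)$ is not a priori a monomial ideal, one invokes the matroid fact that two matroids of the same rank cannot have properly nested cycle sets (\cite{Oxley}*{Corollary 7.3.4}) rather than comparing monomial ideals degree by degree.
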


\begin{proof}
Let $\bfx^{\mathbf{u}_1},\dots,\bfx^{\mathbf{u}_s}$ be the minimal
generators of the monomial ideal $\inn_{\prec}(I)$. Write
$B_{|\mathbf{u}_i|}$ for the monomials in $\Rbar[x_0,\dots,x_n]$ of
degree $|\mathbf{u}_i|$ not in $\inn_{\prec}(I)$, which form a basis for $\Mat(I_d)$ by Lemma~\ref{l:stdmonosbasis}.  
For any $1 \leq i \leq s$, there exists a homogeneous
polynomial $g_i \in I$ with $\supp(g_i) \subseteq
B_{|\bfu_i|} \cup \{\bfx^{\mathbf{u}_i}\}$, corresponding to the
fundamental circuit of $\bfx^{\mathbf{u}_i}$ over $B_{|\bfu_i|}$.
After scaling, we can write $g_i
= \bfx^{\mathbf{u}_i} \tplus \bigoplus_{\mathbf{v} \in 
B_{|\bfu_i|}} c_{i\mathbf{v}} \bfx^{\mathbf{v}}$.  Let $C_{\prec}$ be
the closure of the set
$$C_{\prec}^\circ = \{ \mathbf{w} \in \mathbb R^{n+1} : \mathbf{w} \cdot \mathbf{u}_i < c_{i\mathbf{v}} + \mathbf{w} \cdot
\mathbf{v} \text{ for all } 1 \leq i \leq s \text{ and }
\bfx^{\mathbf{v}} \in \supp(g_i) \setminus \{\bfx^{\mathbf{u}_i} \} \}.$$
For any $\mathbf{w} \in C_{\prec}^\circ$ we have $\inn_{\mathbf{w}}(g_i) =
\bfx^{\mathbf{u}_i}$ for all $i$, and so $\inn_{\prec}(I) \subseteq
\inn_{\mathbf{w}}(I)$.  
We have $H_{\inn_{\mathbf{w}}(I)}(d) = H_I(d) = H_{\inn_{\prec}(I)}(d)$
for all $d \geq 0$ by \cite{TropicalIdeals}*{Corollary 3.6} and Lemma \ref{l:stdmonosbasis}. 
Thus we cannot have $\inn_{\prec}(I)$ properly contained in
$\inn_{\mathbf{w}}(I)$, as otherwise we would have a proper
containment of the sets of cycles of two matroids with the same
rank \cite{Oxley}*{Corollary 7.3.4}. 

It thus remains to show that $C_{\prec}$ is nonempty and has a full-dimensional recession cone.  Form the matrix
$U$ with $n+1$ columns whose $\ell$ rows are the vectors $-\mathbf{v}
+\mathbf{u}_i$ for $1 \leq i \leq s$ and $\bfx^{\mathbf{v}} \in \supp(g_i) \setminus \{\bfx^{\mathbf{u}_i} \}$.  
If $C_{\prec}^\circ$ is empty, then there is no $\mathbf{w} \in
\mathbb R^{n+1}$ for which $U\mathbf{w} < \mathbf{c}$, where the
$(i,\mathbf{v})$th entry of $\mathbf{c}$ is $c_{i\mathbf{v}}$, and the
inequality is coefficientwise.  There is thus also no $\mathbf{w} \in \mathbb
R^{n+1}_{\leq 0}$ with $U \mathbf{w} \leq \mathbf{c}'$,
where $\mathbf{c}'_i = \min(c_i,0)-1$.  Let $U'$ be the $(\ell+n+1)
\times (n+1)$ matrix with first $\ell$ rows equal to $U$, and the last
$n+1$ rows an identity matrix.  There is thus no $\mathbf{w} \in
\mathbb R^{n+1}$ with $U' \mathbf{w} \leq (\mathbf{c}',\mathbf{0})^T$.
By the Farkas lemma (\cite{Ziegler}*{Proposition 1.7}) there is thus $\mathbf{b}
\in \mathbb R^{\ell+n+1}_{\geq 0}$ with $\mathbf{b} \neq 0$ and
$\mathbf{b}^T U' = \mathbf{0}$.  Since $U'$ has integral entries, we
may choose $\mathbf{b} \in \mathbb N^{\ell+n+1}$.  Write
$\mathbf{b}_{i,\mathbf{v}}$ for the component of $\mathbf{b}$
corresponding to the row $-\mathbf{v}+\mathbf{u}_i$ of $U'$.  Then
since $\mathbf{b} \geq \mathbf{0}$, we must have $\sum_{i,\mathbf{v}} \mathbf{b}_{i,\mathbf{v}} (-\mathbf{v}+\mathbf{u}_i) \leq \mathbf{0}$.  This means that
$\prod_{i, \mathbf{v}} (\bfx^{\mathbf{u}_i})^{\mathbf{b}_{i,\mathbf{v}}}$ divides
$\prod_{i, \mathbf{v}} (\bfx^{\mathbf{v}})^{\mathbf{b}_{i,\mathbf{v}}}$,
so $\prod_{i, \mathbf{v}}
(\bfx^{\mathbf{u}_i})^{\mathbf{b}_{i,\mathbf{v}}} \succeq \prod_{i,
  \mathbf{v}} (\bfx^{\mathbf{v}})^{\mathbf{b}_{i,\mathbf{v}}}$. But
$\bfx^{\mathbf{u}_i} \prec \bfx^{\mathbf{v}}$ for all $1 \leq i \leq
s$ and all $\mathbf{x}^{\mathbf{v}} \in \supp(g_i) \setminus \{
\mathbf{x}^{\mathbf{u}_i} \}$, so
$\textstyle \prod_{i, \mathbf{v}} (\bfx^{\mathbf{u}_i})^{\mathbf{b}_{i,\mathbf{v}}} \prec \prod_{i, \mathbf{v}} (\bfx^{\mathbf{v}})^{\mathbf{b}_{i,\mathbf{v}}}$.
From this contradiction we conclude that $C_{\prec}^\circ$ is nonempty, and thus $C_{\prec}$ is nonempty as well.

Finally, note that the argument in the previous paragraph applies verbatim substituting $\mathbf c$ by $\bf0$ to show that the open cone 
$$\{ \mathbf{w} \in \mathbb R^{n+1} : \mathbf{w} \cdot \mathbf{u}_i < \mathbf{w} \cdot
\mathbf{v} \text{ for all } 1 \leq i \leq s \text{ and }
\bfx^{\mathbf{v}} \in \supp(g_i) \setminus \{\bfx^{\mathbf{u}_i} \} \}$$
is nonempty.  The recession cone of $C_\prec$ is the closure of this cone, so it is full dimensional.
\end{proof}

\begin{example}
Let $I \subseteq \Rbar[x_0,x_1,x_2]$ be the ideal of the point
$[0:0:0] \in \trop(\mathbb P^2)$.  This is tropicalization of the
ideal $\langle x_1-x_0,x_2-x_0 \rangle \subseteq K[x_0,x_1,x_2]$ for
any field $K$.  Let $\prec$ be the reverse lexicographic term order
with $x_0 \prec x_1 \prec x_2$.  Then 
$\inn_{\prec}(I) = \langle x_0,x_1 \rangle$.
The cone
$C_{\prec}^{\circ}$ from the proof of
Lemma~\ref{l:weightvectortermorder} is
$C_{\prec}^{\circ} = \{ \mathbf{w} \in \mathbb R^3 : w_0<w_2,
w_1<w_2 \}$. Note that while $\inn_{\mathbf{w}}(I)=\inn_{\prec}(I)$
for all $\mathbf{w} \in C_{\prec}^{\circ}$, we do not have
$\inn_{\mathbf{w}}(f) = \inn_{\prec}(f)$ for all $f \in I$ and
$\mathbf{w} \in C_{\prec}^{\circ}$.  For example,
$\inn_{\prec}(x_0 \tplus x_1) = x_0$, while $\inn_{(1,0,2)}(x_0 \tplus
x_1) = x_1$.
\end{example}

\subsection{Varieties of tropical ideals} \label{ss:varietiestropicalideals}

The {\defbold variety} of a tropical ideal $I \subseteq \Rbar[x_1,\dots,x_n]$ is 
$$V(I) := \{ \mathbf{w} \in \Rbar^n : f(\mathbf{w}) = \infty, \text{ or
  the minimum in } f(\mathbf{w}) \text{ is attained at least twice}
  \}.$$ 
The variety of an ideal $I \subseteq \Rbar[x_1^{\pm 1},\dots,x_n^{\pm
    1}]$ is defined similarly:
$$V(I) := \{ \mathbf{w} \in \mathbb R^n : \text{the minimum in }
f(\mathbf{w}) \text{ is attained at least twice}\}.$$ 
For a homogeneous ideal $I \subseteq \Rbar[x_0,\dots,x_n]$, we can think of its variety as a
subset of $$\trop(\mathbb P^n) = (\Rbar^{n+1} \setminus
(\infty,\dots,\infty) )/\mathbb R(1,\dots,1),$$
namely
$$ V(I) := \{ [\mathbf{w}] \in \trop(\mathbb P^n) \colon f(\mathbf{w})= \infty, \text{ or the min in } f(\mathbf{w}) \text{ is attained at least twice}  \}.$$
See \cite{TropicalIdeals}*{\S 4} for
more on this. 

Theorem 5.11 of \cite{TropicalIdeals} proves that if $I$ is a tropical
ideal in $\Rbar[x_1^{\pm 1},\dots,x_n^{\pm 1}]$,
$\Rbar[x_1,\dots,x_n]$, or a homogeneous tropical ideal in
$\Rbar[x_0,\dots,x_n]$, the variety $V(I)$ is the support of a finite
$\mathbb R$-rational polyhedral complex in either $\mathbb R^n$,
$\Rbar^n$, or $\trop(\mathbb P^n)$ respectively.  Here by {\defbold
$\mathbb R$-rational} we mean that every polyhedron in it has a rational
normal fan (but not necessarily rational vertices).  One source of
this polyhedral complex structure in the homogeneous case is the {\defbold Gr\"obner
complex} of $I$.  This is the finite $\mathbb R$-rational polyhedral
complex for which $\mathbf{w}$ and $\mathbf{w}'$ live in the same
relatively open polyhedron if and only if $\inn_{\mathbf{w}}(I)
= \inn_{\mathbf{w}'}(I)$; see
\cite{TropicalIdeals}*{\S 5}.

We have the following relationships between the varieties of ideals.

\begin{lemma} \label{l:varietyaffine}
\leavevmode
\begin{enumerate}
\item \, \label{enum:varietytorusaffine} Let $I \subseteq \Rbar[x_1^{\pm 1},\dots,x_n^{\pm 1}]$ be a tropical ideal, and let $J = I \cap \Rbar[x_1,\dots,x_n]$.  Then $V(J) \cap \mathbb R^n = V(I)$.

\item \, \label{enum:varietyaffineproj} Let $I \subseteq
  \Rbar[x_1,\dots,x_n]$ be a tropical ideal.  Then
$$ V(I^h) \cap \{[\mathbf{w}] : w_0=0 \} = \{ [0,\mathbf{w}'] :
  \mathbf{w}' \in V(I) \}.$$
\end{enumerate}
\end{lemma}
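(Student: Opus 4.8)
The plan is to prove the two parts separately, in each case unwinding the definition of $V(-)$ and testing membership polynomial-by-polynomial, using the description of elements of the derived ideals from the start of the section. For part~\eqref{enum:varietytorusaffine}, fix $\bfw \in \RR^n$. Since $J = I \cap \Rbar[x_1,\dots,x_n] \subseteq I$, any $f \in J$ is also in $I$, so if the minimum in $f(\bfw)$ is attained at least twice for all $f \in I$ then certainly for all $f \in J$; together with the fact that for $\bfw \in \RR^n$ no $f$ can satisfy $f(\bfw) = \infty$ unless $f$ is the zero polynomial, this gives $V(I) \subseteq V(J) \cap \RR^n$. For the reverse inclusion, suppose $\bfw \in V(J) \cap \RR^n$ and take any $f \in I$; by Lemma~\ref{l:basicproperties} (or directly, since $I = J\Rbar[x_1^{\pm 1},\dots,x_n^{\pm 1}]$ when $J$ is the affine contraction — but here we only know $J = I \cap \Rbar[\bfx]$), choose a monomial $\bfx^\bfu$ with $\bfx^\bfu f \in J$. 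Since $\bfx^\bfu f$ and $f$ have the same Newton polygon up to translation, the minimum in $(\bfx^\bfu f)(\bfw) = \bfw\cdot\bfu + f(\bfw)$ is attained at least twice if and only if the minimum in $f(\bfw)$ is; as $\bfw \in V(J)$ and $\bfx^\bfu f \in J$, we are done. The one subtlety to check carefully is that such a clearing monomial always exists, which holds because any element of $I \subseteq \Rbar[x_1^{\pm 1},\dots,x_n^{\pm 1}]$ has finitely many terms, hence bounded-below exponents.

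For part~\eqref{enum:varietyaffineproj}, the argument is the homogeneous analogue, using the homogenization/dehomogenization correspondence. First I would show the inclusion $\supseteq$: if $\bfw' \in V(I)$, take any homogeneous $F \in I^h$; every generator of $I^h$ is of the form $x_0^k \tilde f$ for $f \in I$, and a general element is a tropical combination $\bigoplus \bfx^{\bfa_j} \tilde{f_j}$, but it is cleaner to argue that $F|_{x_0=0} \in I$ (this is essentially Lemma~\ref{l:basicproperties}\eqref{enum:affinetoproj}, as $(I^h)|_{x_0=0} = I$ — one should cite or quickly verify this). Then the terms of $F$ with $x_0$-exponent $0$ are exactly the terms of $F|_{x_0=0}$, and evaluating $F$ at $(0,\bfw')$ only sees those terms (all others have $x_0$-coordinate $\infty$, contributing $\infty \ttimes (\text{finite}) = \infty$), so $F(0,\bfw') = (F|_{x_0=0})(\bfw')$; since $\bfw' \in V(I)$ and $F|_{x_0=0} \in I$, the minimum is attained twice (or the value is $\infty$). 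Conversely, for $\subseteq$, suppose $[0,\bfw'] \in V(I^h)$ and let $f \in I$; then $\tilde f \in I^h$, and by the same evaluation identity the minimum in $f(\bfw') = \tilde f(0,\bfw')$ behaves as required, so $\bfw' \in V(I)$. The key computational fact underpinning both directions is the evaluation identity $\widetilde{F}(0,\bfw') = F(\bfw')$ and, dually, $F(0,\bfw') = (F|_{x_0=0})(\bfw')$, which follows immediately from the fact that setting a coordinate to $\infty$ in the $\min$-plus semiring kills every term in which that variable appears with positive exponent.

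The main obstacle — really the only point needing care — is bookkeeping around which direction of each containment uses which of the two correspondences (passing to a larger ideal is free for one inclusion; the reverse inclusion needs the explicit description of elements of $I\Rbar[x_1^{\pm 1},\dots,x_n^{\pm 1}]$ resp.\ $I^h$, or else a prior lemma computing $(I^h)|_{x_0=0}$). Everything else is a direct unwinding of definitions, and I would keep each part to a few lines.
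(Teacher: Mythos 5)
Part~(\ref{enum:varietytorusaffine}) of your proposal is correct and is essentially the paper's argument: one inclusion is free from $J \subseteq I$, and the other follows by clearing denominators with a monomial $\bfx^{\bfu}$ and observing that multiplication by a monomial translates all terms by the same amount, hence preserves whether the minimum is attained twice.

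Part~(\ref{enum:varietyaffineproj}) contains a genuine error in its key justification. You claim that evaluating $F$ at $(0,\mathbf{w}')$ "only sees" the terms with $x_0$-exponent $0$ because the others "have $x_0$-coordinate $\infty$, contributing $\infty \ttimes (\text{finite}) = \infty$." This confuses the tropical multiplicative identity with the additive one: at $x_0 = 0$ the monomial $x_0^j$ evaluates to $j \cdot 0 = 0$, so \emph{every} term of $F$ survives and participates in the minimum (it is the substitution $x_0 = \infty$ that annihilates terms with positive $x_0$-exponent — that is the dehomogenization-at-infinity situation of Theorem~\ref{t:restrictiontheorem}, not the one here). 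Relatedly, the terms of $F|_{x_0=0}$ are not "the terms of $F$ with $x_0$-exponent $0$"; the coefficient of $\bfx^{\bfu}$ in $F|_{x_0=0}$ is $\min_j\, [F]_{x_0^j\bfx^{\bfu}}$. The identities $\tilde{f}(0,\mathbf{w}') = f(\mathbf{w}')$ and $F(0,\mathbf{w}') = (F|_{x_0=0})(\mathbf{w}')$ that you rely on are nevertheless true, but for the opposite reason (all $x_0$-powers contribute $0$, and one then minimizes over $j$). Moreover, equality of values is not by itself sufficient: you must track \emph{which monomials} attain the minimum, since two monomials $x_0^{j_1}\bfx^{\bfu}$ and $x_0^{j_2}\bfx^{\bfu}$ of $F$ attaining the minimum collapse to a single monomial of $F|_{x_0=0}$. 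This collapse is harmless in your $\supseteq$ direction (two distinct monomials of $F|_{x_0=0}$ attaining the minimum always lift to two distinct monomials of $F$), and in the $\subseteq$ direction you only use $F = \tilde{f}$, where each $\bfx$-monomial carries a unique power of $x_0$, so no collapse can occur — which is exactly how the paper argues, testing only the generators $\tilde{f}$. So the statement and the overall architecture survive, but the stated mechanism is false and, taken literally, would compute $F(0,\mathbf{w}')$ incorrectly (e.g.\ for $F = x_0x_1 \tplus x_2$ it would give $w_2$ rather than $\min(w_1,w_2)$).
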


\begin{proof}

\begin{enumerate}
\item 
 Since every polynomial in $J$ is also in $I$, we have the inclusion
$V(I) \subseteq V(J) \cap \mathbb R^n$.  Now suppose $\mathbf{w} \in
\mathbb R^n$ is not in $V(I)$.  Then there is $f \in I$ with
$\inn_{\mathbf{w}}(f)$ a monomial.  Choose $\mathbf{x}^{\mathbf{u}}$
with $\mathbf{x}^{\mathbf{u}}f \in J$.  Then
$\inn_{\mathbf{w}}(\mathbf{x}^{\mathbf{u}} f) =
\mathbf{x}^{\mathbf{u}} \inn_{\mathbf{w}}(f)$ is also a monomial, so
$\mathbf{w} \not \in V(I)$.

\item For any $\bfw \in \RR^{n+1}$, write $\mathbf{w}'$ for the projection of $\mathbf{w}$ onto the last $n$ coordinates.
Then for every $f \in I$ and $\bfw \in \RR^{n+1}$ with $w_0=0$,
the minimum in $\tilde{f}(\mathbf{w})$ is attained twice 
if and only if the minimum in $f(\mathbf{w}')$ is attained twice, and so 
$\mathbf{w} \in V(I^h)$ if and only $\mathbf{w}' \in V(I)$.\qedhere
\end{enumerate}
\end{proof}

For realizable tropical ideals the variety of an initial ideal with
respect to $\mathbf{w}$ is the {\em star} of the variety at $\bfw$.  We now
extend this to all tropical ideals.

Let $\Sigma$ be a polyhedral complex in $\RR^n$, and let $\sigma$ be a cell of $\Sigma$.
The {\defbold linear span} of $\sigma$ is the linear subspace 
\[ \spann(\sigma) := \spann \{ \bfx - \bfy : \bfx, \bfy \in \sigma \}. \] 
The {\defbold star} $\starr_\Sigma(\sigma)$ of $\Sigma$ at $\sigma$ is
a polyhedral fan whose cones are indexed by the cells $\tau$ of
$\Sigma$ containing $\sigma$. The cone indexed by such a $\tau$ is the
convex cone
$\overline \tau := \cone \{ \bfx - \bfy : \bfx \in \tau \text{ and } \bfy \in \sigma \}$.
Equivalently, if $\bfw \in \relint(\sigma)$, we have
\[\overline \tau = \{ \bfv \in \RR^n : \bfw + \epsilon \bfv \in \tau  
\text{ for all } 0 < \epsilon \ll 1 \}.\]
The fan $\starr_\Sigma(\sigma)$ has lineality space equal to $\spann(\sigma)$.

If $\bfw \in \RR^n$ lies in the support of $\Sigma$, we set 
\[ \starr_{\Sigma} (\mathbf{w}) := \starr_\Sigma (\sigma), \]
where $\sigma$ is the cell of $\Sigma$ for which $\bfw \in \relint(\sigma)$.
If $\bfw$ is not in the support of $\Sigma$ we set $\starr_{\Sigma} (\mathbf{w}) = \emptyset$.

\begin{proposition} \label{p:star}
Fix a tropical ideal $I \subseteq \Rbar[x_1^{\pm 1},\dots,x_n^{\pm 1}]$, and $\bfv,
\bfw \in \RR^n$. Then we have $$\inn_{\bfv}(\inn_{\bfw}(I)) =
\inn_{\bfw + \epsilon \bfv}(I)$$ for $0 < \epsilon \ll 1$, and thus
$$V(\inn_{\mathbf{w}}(I)) = \starr_{V(I)}(\mathbf{w}).$$
\end{proposition}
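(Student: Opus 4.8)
The plan is to prove the first identity $\inn_{\bfv}(\inn_{\bfw}(I)) = \inn_{\bfw+\epsilon\bfv}(I)$ for all sufficiently small $\epsilon>0$, and then deduce the statement about stars as a formal consequence. For the first identity, I would argue at the level of individual polynomials and then pass to ideals. Given $f \in I$, I would write $f = \bigoplus_\bfu c_\bfu \bfx^\bfu$ and note that $\inn_\bfw(f) = \bigoplus_{\bfu \in S} \bfx^\bfu$ where $S$ is the set of exponents achieving the minimum of $c_\bfu + \bfu\cdot\bfw$. Then $\inn_\bfv(\inn_\bfw(f))$ picks out, among $\bfu \in S$, those minimizing $\bfu \cdot \bfv$ (since the Boolean coefficients are all $0$). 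On the other hand, $\inn_{\bfw+\epsilon\bfv}(f)$ picks out the $\bfu$ minimizing $c_\bfu + \bfu\cdot\bfw + \epsilon(\bfu\cdot\bfv)$; for $\epsilon$ small enough (depending on the finitely many exponents appearing in $f$), this lexicographically first minimizes $c_\bfu + \bfu\cdot\bfw$ and then breaks ties by $\bfu\cdot\bfv$, giving exactly $\inn_\bfv(\inn_\bfw(f))$. So for each fixed $f$ there is $\epsilon_f > 0$ with $\inn_\bfv(\inn_\bfw(f)) = \inn_{\bfw+\epsilon\bfv}(f)$ for $0 < \epsilon < \epsilon_f$.

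The subtlety is that $\epsilon_f$ depends on $f$, and an ideal has infinitely many elements, so one cannot naively take a minimum over all $f \in I$. This is the main obstacle, and the way around it is to use finiteness of the relevant combinatorial data rather than of the generating set. I would invoke the Gr\"obner complex: the fan structure on $\RR^n$ given by the equivalence $\bfw \sim \bfw'$ iff $\inn_\bfw(I) = \inn_{\bfw'}(I)$ is finite (this is the content cited from \cite{TropicalIdeals}*{\S 5}, applied after homogenizing via Lemma~\ref{l:basicproperties}\eqref{enum:affinetoproj} and Lemma~\ref{l:basicinitial}\eqref{enum:initialaffineproj}, or directly in the Laurent setting). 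Hence the ray $\{\bfw + \epsilon\bfv : \epsilon > 0\}$ meets only finitely many cells, so there is a single $\epsilon_0 > 0$ with $\inn_{\bfw+\epsilon\bfv}(I)$ constant, say equal to $J$, for all $0 < \epsilon < \epsilon_0$. Similarly, applying the Gr\"obner-complex finiteness to the tropical ideal $\inn_\bfw(I) \subseteq \BB[\bfx^{\pm}]$ gives a single $\epsilon_1 > 0$ with $\inn_\bfv(\inn_\bfw(I))$ equal to some fixed ideal for all small enough scalings; but $\inn_\bfv(\inn_\bfw(I)) = \inn_{\epsilon\bfv}(\inn_\bfw(I))$ for any $\epsilon > 0$ since scaling the weight vector does not change which terms attain the minimum. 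Now for $0 < \epsilon < \min(\epsilon_0,\epsilon_1)$ and any generator $\inn_\bfw(f)$ of $\inn_\bfw(I)$ (with $f \in I$), the polynomial-level computation gives $\inn_\bfv(\inn_\bfw(f)) = \inn_{\bfw+\epsilon'\bfv}(f) \in \inn_{\bfw+\epsilon'\bfv}(I) = J$ for $\epsilon'$ small; since these generate $\inn_\bfv(\inn_\bfw(I))$, we get $\inn_\bfv(\inn_\bfw(I)) \subseteq J$, and the reverse inclusion follows symmetrically by writing any $g \in I$ with $\inn_{\bfw+\epsilon\bfv}(g)$ a monomial-support element and checking it lies in $\inn_\bfv(\inn_\bfw(I))$; alternatively one notes both ideals have the same Hilbert function (by \cite{TropicalIdeals}*{Corollary 3.6}, as initial ideals) and invokes the matroid argument of Lemma~\ref{l:weightvectortermorder} (via \cite{Oxley}*{Corollary 7.3.4}) to upgrade the inclusion of equal-rank matroids to an equality.

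Finally, I would deduce $V(\inn_\bfw(I)) = \starr_{V(I)}(\bfw)$. Let $\Sigma$ be the polyhedral complex supported on $V(I)$ coming from the Gr\"obner complex, and let $\sigma$ be the cell with $\bfw \in \relint(\sigma)$. A point $\bfv \in \RR^n$ lies in $V(\inn_\bfw(I))$ iff no $f \in I$ has $\inn_\bfv(\inn_\bfw(f))$ a single monomial, iff no $f \in I$ has $\inn_{\bfw+\epsilon\bfv}(f)$ a single monomial for small $\epsilon$ (by the first identity applied termwise, together with the finiteness argument above), iff $\bfw + \epsilon\bfv \in V(I)$ for all $0 < \epsilon \ll 1$. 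By the description of the star recalled just before the proposition — $\overline\tau = \{\bfv : \bfw + \epsilon\bfv \in \tau \text{ for } 0 < \epsilon \ll 1\}$ — this last condition says precisely that $\bfv$ lies in the cone $\overline\tau$ of $\starr_\Sigma(\sigma)$ for some cell $\tau \supseteq \sigma$ of $\Sigma$ contained in $V(I)$, i.e. $\bfv \in \starr_{V(I)}(\bfw)$. Here one should be slightly careful that "the minimum in $f(\bfw+\epsilon\bfv)$ is attained at least twice for all small $\epsilon$" is equivalent to "$\bfw + \epsilon\bfv \in V(I)$ for all small $\epsilon$" at the level of the whole ideal rather than one polynomial — but this again follows because membership in $V(I)$ is detected by the finite Gr\"obner complex, so "for all small $\epsilon$" can be taken uniformly. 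This completes the proof.
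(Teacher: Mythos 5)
Your proposal is correct and follows essentially the same route as the paper: the pointwise identity $\inn_{\bfv}(\inn_{\bfw}(f)) = \inn_{\bfw+\epsilon\bfv}(f)$ for each fixed $f$, combined with finiteness of the Gr\"obner complex of the homogenization to make $\epsilon$ uniform over the infinitely many elements of $I$. The only divergence is in how equality (rather than a single inclusion) of $\inn_{\bfv}(\inn_{\bfw}(I))$ and $\inn_{\bfw+\epsilon\bfv}(I)$ is closed off: the paper notes that any discrepancy would occur in some degree $d$ of the homogenization, where both ideals are generated by initial forms of the finitely many circuits of $I^h_d$, and shrinks $\epsilon$ once more to get a contradiction; you instead obtain one inclusion directly from generators and upgrade it to equality via equality of Hilbert functions and the equal-rank matroid argument of Lemma~\ref{l:weightvectortermorder}. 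Both mechanisms are sound, and your choice of the Hilbert-function fallback is prudent, since the naive ``symmetric'' argument for the reverse inclusion does run into the problem you implicitly sidestep: a witness $g$ with $h = \inn_{\bfw+\epsilon\bfv}(g)$ may itself demand an even smaller $\epsilon$.
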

\begin{proof}
For any $f \in \Rbar[x_1^{\pm 1}, \dots, x_n^{\pm 1}]$, 
$\inn_\bfv(\inn_{\bfw}(f)) = \inn_{\bfw+\epsilon \bfv}(f)$ for small
enough $\epsilon>0$.  Let $I^h \subseteq \Rbar[x_0, \dots, x_n]$
denote the homogenization of the ideal $I \cap \Rbar[x_1,\dots,x_n]$,
and consider $\widetilde{\bfw} := (0,\bfw) \in \mathbb R^{n+1}$ and
$\tilde{\bfv} := (0,\bfv) \in \mathbb R^{n+1}$.  Since the
Gr\"obner complex of $I^h$ is a finite polyhedral complex, there is
$\epsilon>0$ for which the ideal $\inn_{\tilde{\bfw}+\epsilon'
  \tilde{\bfv}}(I^h)$ is constant for all $0 < \epsilon' < \epsilon$.
If  $\inn_{\tilde{\bfw}+\epsilon' \tilde{\bfv}}(I^h)$ is different
from $\inn_{\tilde{\bfv}}(\inn_{\tilde{\bfw}}(I^h))$ then the two ideals differ
in some  degree $d$. Their degree $d$ parts are generated by
the corresponding initial forms of the (finitely many) circuits of
$I^h_d$,  and we can take $\epsilon'$ small enough so that for any such
circuit $f$ we have $\inn_{\tilde\bfv}(\inn_{\tilde\bfw}(f)) =
\inn_{\tilde\bfw+\epsilon' \tilde\bfv}(f)$, which is a contradiction.
Finally, we have that $I = I^h|_{x_0 = 0}$, and so by
Part~\ref{enum:initialaffineproj} of Lemma~\ref{l:basicinitial} we
get that for any $0 < \epsilon' < \epsilon$,
\[\inn_{\bfv}(\inn_{\bfw}(I)) = \inn_{\tilde{\bfv}}(\inn_{\tilde{\bfw}}(I^h))|_{x_0=0} = 
\inn_{\tilde{\bfw}+\epsilon' \tilde{\bfv}}(I^h)|_{x_0=0} = \inn_{\bfw+\epsilon' \bfv}(I).\]
The fact that $V(\inn_{\mathbf{w}}(I)) = \starr_{V(I)}(\mathbf{w})$ then follows 
 directly from the definitions.
\end{proof}
Later in Proposition \ref{p:starweighted} we show that the equality 
$V(\inn_{\mathbf{w}}(I)) = \starr_{V(I)}(\mathbf{w})$ is in fact an equality
of \emph{weighted} polyhedral fans.

Any $\bfv \in \ZZ^n$ induces a grading of the semiring $\Rbar[x_1^{\pm
    1}, \dots, x_n^{\pm 1}]$, by setting $\deg(x_i) = v_i$, so the
    degree of a term $c \bfx^\bfu$ is $\bfv \cdot \bfu \in \ZZ$.  
If $L \subseteq \RR^n$ is a rational $d$-dimensional linear subspace, fixing a
basis $\bfv_1, \bfv_2, \dots, \bfv_d$ of $L$ with $\bfv_i \in \ZZ^n$ for all $i$ 
gives then rise to a $\mathbb Z^d$-grading on $\Rbar[x_1^{\pm 1}, \dots, x_n^{\pm 1}]$, where
the degree of a term $c \bfx^\bfu$ is $(\bfv_1 \cdot \bfu, \dots, \bfv_d \cdot \bfu) \in \ZZ^d$.

\begin{corollary}\label{c:homogeneous}
Let $I \subseteq \Rbar[x_1^{\pm 1}, \dots, x_n^{\pm 1}]$ be a tropical
ideal, and let $\bfw \in V(I)$ lie in the relative interior of a cell
$\sigma$ of the Gr\"obner complex of $I$. Then $\inn_{\bfw}(I)$ is
homogeneous with respect to the grading by $\mathbf{v}$ for any
$\bfv \in \spann(\sigma)$.  Thus $\inn_{\mathbf{w}}(I)$ is homogeneous
with respect to a $\mathbb Z^{\dim(\sigma)}$-grading induced by $\spann(\sigma)$.
\end{corollary}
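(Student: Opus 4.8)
The plan is to show that for any $\bfv \in \spann(\sigma) \cap \ZZ^n$, the ideal $\inn_{\bfw}(I)$ coincides with $\inn_{\bfv}(\inn_{\bfw}(I))$, which immediately forces $\inn_{\bfw}(I)$ to be homogeneous with respect to the $\bfv$-grading (a monomial ideal, or more generally an ideal equal to one of its own $\bfv$-initial ideals, is $\bfv$-homogeneous, since each polynomial in it then has all its terms in a single $\bfv$-degree). First I would observe that since $\bfw$ lies in the relative interior of the cell $\sigma$ of the Gr\"obner complex and $\bfv \in \spann(\sigma)$, for all sufficiently small $\epsilon > 0$ both $\bfw + \epsilon\bfv$ and $\bfw - \epsilon\bfv$ remain in $\relint(\sigma)$, so $\inn_{\bfw + \epsilon\bfv}(I) = \inn_{\bfw}(I) = \inn_{\bfw - \epsilon\bfv}(I)$ by the defining property of the Gr\"obner complex.

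Next I would invoke Proposition~\ref{p:star}: for small $\epsilon > 0$ we have $\inn_{\bfv}(\inn_{\bfw}(I)) = \inn_{\bfw + \epsilon\bfv}(I)$, and likewise $\inn_{-\bfv}(\inn_{\bfw}(I)) = \inn_{\bfw - \epsilon\bfv}(I)$. Combining with the previous paragraph gives $\inn_{\bfv}(\inn_{\bfw}(I)) = \inn_{\bfw}(I)$. It remains to extract homogeneity from this fixed-point equation. For any $g \in \inn_{\bfw}(I)$, the element $\inn_{\bfv}(g)$ is again in $\inn_{\bfw}(I)$; applying the same to $\inn_{-\bfv}$ and using that $\inn_{\bfw}(I)$ equals both $\inn_{\bfv}(\inn_{\bfw}(I))$ and $\inn_{-\bfv}(\inn_{\bfw}(I))$, one argues that $g$ itself must be a single $\bfv$-graded piece: if $g$ had terms in two distinct $\bfv$-degrees, then $\inn_{\bfv}(g)$ and $\inn_{-\bfv}(g)$ would be proper "truncations" of $g$ lying in $\inn_\bfw(I)$, and one can then split off lower-degree pieces inductively to conclude every $\bfv$-graded component of $g$ lies in $\inn_{\bfw}(I)$. (Alternatively, and more cleanly: the Gr\"obner-complex cell of $\inn_\bfw(I)$ containing $\bfzero$ contains the whole line $\RR\bfv$, and an initial ideal is constant on such a line precisely when it is $\bfv$-homogeneous — this is the tropical analogue of the classical fact that $\inn_\bfw(\inn_\bfw(I)) = \inn_\bfw(I)$ detects $\bfw$-homogeneity.) The final sentence of the corollary then follows by choosing a $\ZZ$-basis $\bfv_1, \dots, \bfv_{\dim\sigma}$ of $\spann(\sigma)$ and applying the $\bfv_i$-homogeneity simultaneously, which is exactly the $\ZZ^{\dim\sigma}$-grading described before the corollary statement.

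The main obstacle I anticipate is the last step — rigorously deducing $\bfv$-homogeneity of $\inn_{\bfw}(I)$ from the identity $\inn_{\bfv}(\inn_{\bfw}(I)) = \inn_{\bfw}(I)$. In the classical polynomial-ring setting this is a short argument, but in the tropical/matroidal setting one must be careful that $\inn_{\bfw}(I)$ is literally the \emph{set} $\{\inn_\bfw(f) : f \in I\}$ (as noted after the definition of initial ideals), so that membership of truncations is automatic, and that the "peeling off graded pieces" induction terminates since every polynomial has finite support. Everything else is a direct bookkeeping application of Proposition~\ref{p:star} and the finiteness of the Gr\"obner complex established in \cite{TropicalIdeals}.
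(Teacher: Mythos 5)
Your first two steps are exactly the paper's proof: since $\bfw+\epsilon\bfv$ stays in $\relint(\sigma)$ for small $\epsilon$, the Gr\"obner complex gives $\inn_{\bfw+\epsilon\bfv}(I)=\inn_\bfw(I)$, and Proposition~\ref{p:star} turns this into $\inn_\bfv(\inn_\bfw(I))=\inn_\bfw(I)$. The final step you flag as the main obstacle is, however, a one-line observation rather than an induction: for $f\in\BB[x_1^{\pm1},\dots,x_n^{\pm1}]$ all coefficients are $0$ or $\infty$, so $\inn_\bfv(f)$ consists precisely of the terms of minimal $\bfv$-degree and is therefore concentrated in a single $\bfv$-degree; the identity $\inn_\bfw(I)=\inn_\bfv(\inn_\bfw(I))=\langle\inn_\bfv(f):f\in\inn_\bfw(I)\rangle$ thus exhibits a $\bfv$-homogeneous generating set, and an ideal generated by homogeneous elements is homogeneous. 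Be careful with your proposed ``peeling off graded pieces'' alternative: there is no subtraction in $\BB[x_1^{\pm1},\dots,x_n^{\pm1}]$, so a ``truncation'' of $g$ obtained by discarding its lowest $\bfv$-degree terms is not automatically in the ideal (one would need the elimination axiom to remove those monomials one at a time), which is why the paper avoids this route entirely.
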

\begin{proof}
By Proposition \ref{p:star}, if $\bfv \in \spann(\sigma)$ then
for $0 < \epsilon \ll 1$ we have
$\inn_\bfw(I) = \inn_{\bfw+\epsilon \bfv}(I) = \inn_\bfv(\inn_\bfw(I))$.
This shows that $\inn_\bfw(I)$ is generated by polynomials of the
form $\inn_\bfv(f)$ with $f \in \BB[x_1^{\pm 1},\dots,x_n^{\pm 1}]$,
and thus homogeneous with respect to the grading by $\mathbf{v}$.
\end{proof}

There is a tight connection between the tropicalization of a classical variety over the same field with a nontrivial and trivial valuation.  We now extend this to tropical ideals.
Let $\varphi : \Rbar \to \mathbb B$ be the semiring homomorphism
defined by $\varphi(a) = 0$ if $a \neq \infty$, and $\varphi(\infty) =
\infty$.  This induces a semiring homomorphism $$\varphi :
\Rbar[x_1, \dotsc, x_n] \to \mathbb \BB[x_1, \dotsc, x_n].$$  The image
$\varphi(I)$ of $I$ is a tropical ideal in $\BB[x_1, \dotsc, x_n]$,
called the {\defbold trivialization} of $I$; in fact, we have
$\Mat(\varphi(I)|_E) = \uMat(I|_E)$ for any finite collection $E$ of monomials. 
For a monomial term order $\prec$
we have $\inn_{\prec}(I) = \inn_{\prec}(\varphi(I))$.  The same
notions apply to ideals in the Laurent polynomial semiring
$\Rbar[x_1^{\pm 1}, \dots, x_n^{\pm 1}]$.

The set of the
recession cones of all polyhedra in a polyhedral complex $\Sigma$ is
not always a fan, as the cones may not intersect correctly; see, for
example, \cite{BurgosGilSombra}.  However, when $X$ is a subvariety of
$(K^*)^n$ and $\Sigma$ is a polyhedral complex structure on
$\trop(X)$, then this set is a rational polyhedral fan
\cite{TropicalBook}*{Theorem 3.5.6}.  We now show that this generalizes to
tropical ideals.

We will make use of the following notation. 
The {\defbold normal complex} $\mathcal N(f)$ of a polynomial $f \in
\Rbar[x_0,\dotsc,x_n]$ is the $\mathbb R$-rational polyhedral complex
in $\mathbb R^{n+1}$ whose polyhedra are the closures of the sets
$C[\mathbf{w}] = \{ \mathbf{w}' \in \mathbb R^{n+1} :
\inn_{\mathbf{w}'}(f) = \inn_{\mathbf{w}}(f) \}$ for $\mathbf{w} \in
\mathbb R^{n+1}$.

\begin{proposition}\label{p:recessionfan} 
Let $I \subseteq \Rbar[x_0,\dots,x_n]$ be a homogeneous tropical ideal.  
The Gr\"obner complex in $\RR^{n+1}$ of the trivialization $\varphi(I)$ is the recession fan of the Gr\"obner complex of $I$ in $\RR^{n+1}$.
The maximal cones of this fan correspond to the monomial initial ideals of $I$ of the form $\inn_\prec(I)$ with $\prec$ a monomial term order. 
In addition, if $\Sigma$ is a polyhedral complex with $|\Sigma|=V(I)$,
then the support of the recession fan of $\Sigma$ is $V(\varphi(I))$.
\end{proposition}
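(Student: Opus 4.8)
The plan is to prove the three assertions in order, using the Hilbert-function machinery and the equality $\inn_\prec(I) = \inn_\prec(\varphi(I))$ together with Lemma~\ref{l:weightvectortermorder}. First I would establish that the Gröbner complex of $\varphi(I)$ is a fan. Since $\varphi(I)$ has coefficients in $\BB$, every nonzero coefficient equals $0$, so for $f \in \BB[x_0,\dots,x_n]$ the initial form $\inn_\bfw(f)$ depends only on which monomials $\bfx^\bfu$ minimize $\bfw\cdot\bfu$; scaling $\bfw$ by a positive constant does not change this, and neither does adding a multiple of $(1,\dots,1)$. Hence each relatively open cell $\{\bfw : \inn_\bfw(\varphi(I)) = \text{fixed}\}$ is a cone, so the Gröbner complex of $\varphi(I)$ is a rational polyhedral fan. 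Next I would identify this fan with the recession fan of the Gröbner complex of $I$. Fix $\bfw \in \RR^{n+1}$ lying in the relative interior of a cell $\sigma$ of the Gröbner complex of $I$, so $\inn_\bfw(I)$ is constant on $\relint(\sigma)$. The recession cone of $\sigma$ is $\{\bfv : \bfw + t\bfv \text{ stays in }\sigma \text{ for all }t \gg 0\}$ up to closure. The key computation is that for $\bfv$ in the interior of this recession cone, $\inn_{\bfv}(\inn_\bfw(I)) = \inn_\bfv(\varphi(\inn_\bfw(I))) = \varphi(\inn_\bfw(I))$ stabilizes, and moreover $\inn_\bfv(\varphi(I))$ equals this same ideal — this uses that $\varphi$ commutes with taking initial forms and that for large $t$ the coefficients of $\inn_\bfw(I)$ become irrelevant in $\inn_{t\bfv}$. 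Thus the recession cone of the cell $\sigma$ of $I$'s Gröbner complex maps onto the cell of $\varphi(I)$'s Gröbner complex containing $\bfv$ in its relative interior, and one checks this is a bijection of cells preserving incidences. The equality of Hilbert functions $H_{\inn_\bfw(I)} = H_I$ from \cite{TropicalIdeals}*{Corollary 3.6}, combined with $H_{\inn_\prec(\varphi(I))} = H_{\varphi(I)}$ from Lemma~\ref{l:stdmonosbasis}, rules out any spurious extra containments, exactly as in the proof of Lemma~\ref{l:weightvectortermorder}.

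For the second assertion, I would invoke Lemma~\ref{l:weightvectortermorder}: each monomial term order $\prec$ gives a polyhedron $C_\prec$ with full-dimensional recession cone on whose interior $\inn_\bfw(I) = \inn_\prec(I)$; the recession cone of $C_\prec$ is the closure of $\{\bfw : \bfw\cdot\bfu_i < \bfw\cdot\bfv\}$ by the final paragraph of that proof, and on the interior of this recession cone $\inn_\bfw(\varphi(I)) = \inn_\prec(\varphi(I)) = \inn_\prec(I)$. Since the recession cones of $C_\prec$ over all $\prec$ are full-dimensional and their union covers $\RR^{n+1}$ (because every generic weight vector selects a term order as its lexicographic tie-break), these are exactly the maximal cones of the recession fan, and they biject with the monomial initial ideals $\inn_\prec(I)$.

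For the third assertion, I would argue that the underlying set $V(I)$ determines the recession fan support independently of the choice of polyhedral complex $\Sigma$, then compute it using $\Sigma$ = the Gröbner complex. Concretely, a ray direction $\bfv$ lies in the support of the recession fan of $\Sigma$ iff there is a point $\bfw \in |\Sigma|$ with $\bfw + t\bfv \in |\Sigma|$ for all $t \geq 0$, and since $|\Sigma| = V(I)$ is closed this is a property of the set $V(I)$ alone; standard arguments (as in \cite{TropicalBook}*{Theorem 3.5.6}) show the recession fan support equals $\{\bfv : \starr_{V(I)}(\bfw) \text{ contains a ray in direction } \bfv \text{ for some } \bfw\}$ stabilized at infinity, which for the Gröbner complex is $\bigcup_{\bfw} V(\inn_\bfw(I))$ at the ``infinite'' cells, i.e. the cells of the recession fan; using the first part this is $V(\varphi(I))$. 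I expect the main obstacle to be the careful bookkeeping in the first assertion — namely verifying that the cell-to-recession-cone correspondence is a bijection preserving the face relation, and in particular that no two distinct cells of $\varphi(I)$'s Gröbner complex arise from the same recession cone and vice versa. This is where the Hilbert function equality does the real work: it forces the containment $\inn_\prec(I) \subseteq \inn_\bfw(I)$ to be an equality whenever $\bfw$ lies in the relevant recession cone, pinning down the correspondence exactly.
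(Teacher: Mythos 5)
There is a genuine gap in your treatment of the first assertion, exactly where you anticipated trouble. The ``key computation'' you propose --- that for $\bfv$ in the (relative) interior of the recession cone of the Gr\"obner cell $\sigma$ containing $\bfw$ one has $\inn_\bfv(\varphi(I)) = \varphi(\inn_\bfw(I)) = \inn_\bfw(I)$ --- fails in general. If $\sigma$ is bounded modulo the lineality space $\RR(1,\dots,1)$ (such cells exist: the bounded edge in Example~\ref{e:recession} is one), its recession cone is just the lineality space, so the only available $\bfv$ give $\inn_\bfv(\varphi(I)) = \varphi(I)$, which is not $\inn_\bfw(I)$: in that example $\inn_\bfw(I)$ contains $xz \tplus yz$ for $\bfw$ on the bounded edge, while $\varphi(I)$ does not. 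The map ``cell $\mapsto$ recession cone'' is many-to-one and does not intertwine initial ideals as you assert, so the claimed bijection of cells collapses; relatedly, $\varphi$ does \emph{not} commute with $\inn_\bfw$ ($\varphi(\inn_\bfw(f))=\inn_\bfw(f)$ keeps only the coefficient-weighted minimizers, whereas $\inn_\bfw(\varphi(f))$ keeps all $\bfw$-minimizers of the support). A second, independent problem is that you never show the recession cones of the Gr\"obner cells actually fit together into a fan; for a general polyhedral complex they need not, as the paper notes citing \cite{BurgosGilSombra}.

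The paper sidesteps both issues by a reduction you do not use: by \cite{TropicalIdeals}*{Theorem 5.6} the Gr\"obner complex of $I$ is the normal complex $\mathcal N(F)$ of a single polynomial $F$ built from the basis valuation functions of the matroids $\Mat(I_d)$, and the Gr\"obner fan of $\varphi(I)$ is $\mathcal N(\varphi(F))$; the first assertion then reduces to the elementary fact that $\mathcal N(\varphi(f))$ is the normal fan of the Newton polytope of $f$, i.e.\ the recession fan of $\mathcal N(f)$. The third assertion is likewise reduced to single polynomials via the simultaneous tropical basis of \cite{TropicalIdeals}*{Theorem 5.9} (a tropical basis $f_1,\dots,f_s$ of $I$ with $\varphi(f_1),\dots,\varphi(f_s)$ a tropical basis of $\varphi(I)$) together with the fact that recession commutes with intersection; your appeal to ``standard arguments'' does not supply the link between the recession fan of $V(I)$ and $V(\varphi(I))$. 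Your second assertion is essentially the paper's argument (Lemma~\ref{l:weightvectortermorder} plus building a term order from a generic $\bfw$ in a maximal cone), though one must normalize $\bfw$ to have negative entries so the induced order satisfies $\bfx^\bfu \prec \bfx^{\bfzero}$.
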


\begin{proof}
  For any $d \geq 0$, denote by $\mon_d$ the set of monomials in
  $\Rbar[x_0,\dots,x_n]$ of degree $d$, and let $p : \binom{\mon_d}{r_d} \to \Rbar$ be the basis valuation function of
  the rank $r_d$ valuated matroid $\Mat(I_{d})$.  Consider the polynomial
$$ F_d := \bigoplus_{B \text{ basis of } \uMat(I_{d})} p(B) 
\ttimes 
 \left( \prod_{\mathbf{x}^\mathbf{u} \in \mon_d \setminus B} 
\mathbf{x}^\mathbf{u} \right) \quad \in \Rbar[x_0,\dots,x_n]. $$
Theorem 5.6 of \cite{TropicalIdeals} shows that for $D \gg 0$, the
Gr\"obner complex of $I$ is equal to the normal complex $\mathcal
N(F)$ of the polynomial $F = \prod_{d\leq D} F_d$.  In a similar way,
the Gr\"obner fan of the trivialization $\varphi(I)$ is the normal
complex $\mathcal N(G)$ of the polynomial $G = \prod_{d\leq D}
G_d \in \BB[x_0,\dots,x_n]$, where
$$G_d := \bigoplus_{B \text{ basis of } \uMat(I_d)} \left( \prod_{\mathbf{x}^\mathbf{u} \in \mon_d \setminus B} \mathbf{x}^\mathbf{u} \right) \quad \in \BB[x_0,\dots,x_n]$$
and $D \gg 0$. Note that $G = \varphi(F)$. The statement that the Gr\"obner fan of $\varphi(I)$ is the recession fan of the Gr\"obner complex of $I$ follows then from the fact that for any polynomial $f$, the normal complex of $\varphi(f)$ is the recession fan of the normal complex of $f$. Indeed, the normal complex of $f$ is a polyhedral complex dual to the regular subdivision of the Newton polytope $\mathrm{NP}(f)$ of $f$ induced by the coefficients of $f$, and its recession fan is the normal fan of $\mathrm{NP}(f)$, which is the normal complex of $\varphi(f)$.

We now show that the maximal cones of the Gr\"obner fan $\Sigma$ of $\varphi(I)$ correspond to monomial initial ideals $\inn_\prec(I)$ with $\prec$ a monomial term order. 
Lemma \ref{l:weightvectortermorder} ensures that any monomial initial ideal $\inn_\prec(I) = \inn_{\prec}(\varphi(I))$ is equal to $\inn_{\bfw}(\varphi(I))$ for $\bfw$ in the relative interior of a maximal cone of $\Sigma$. 
Conversely, suppose $C$ is a maximal cone of $\Sigma$, and take $\bfw \in C$ with all its coordinates linearly independent over $\QQ$.
Since $\varphi(I)$ is homogeneous, we can subtract a large  multiple of $(1,\dots,1)$ and assume that all the entries of $\bfw$ are negative.
The ordering on monomials given by $\bfx^\bfu \prec \bfx^\bfv$ if $\bfw \cdot \bfu \leq \bfw \cdot \bfv$ is then a total order, and it satisfies the two conditions for it to be a monomial term order. 
By definition, we have $\inn_\prec(I) = \inn_\bfw(\varphi(I))$, and thus the cone $C$ corresponds to the monomial initial ideal $\inn_\prec(I)$.

Finally, to prove the claim about the tropical varieties, we first
observe the analogous claim for a single tropical polynomial $f \in
\Rbar[x_0,\dots,x_n]$.  The variety $V(f)$ is the codimension-one
skeleton of the normal complex to the subdivision of the Newton
polytope $\mathrm{NP}(f)$ of $f$ induced by the coefficients of $f$.
Maximal cells of $V(f)$ are dual to edges of this subdivision.  These
cells are unbounded, so have a nontrivial recession cone, only if the
dual edge is part of an edge of the Newton polytope $\mathrm{NP}(f)$.
In that case the recession cone of the cell is the normal cone to the
edge.  Since $V(\varphi(f))$ is the codimension-one skeleton of the
normal fan of $\mathrm{NP}(f)$, and the maximal cells of
$\mathrm{NP}(f)$ are the normal cones to edges; this proves the claim
for a single tropical polynomial.

For the general case, by \cite{TropicalIdeals}*{Theorem 5.9} there exists 
a finite collection $f_1,\dots,f_s$ of polynomials in
$I$ that form a tropical basis for $I$ and for which
$\varphi(f_1),\dots,\varphi(f_s)$ form a tropical basis for $\varphi(I)$, meaning that
$V(I) = \bigcap_{i=1}^s V(f_i)$ and $V(\varphi(I))= \bigcap_{i=1}^s
V(\varphi(f_i))$. The result then follows from the fact that the recession cone of
the intersection of two polyhedra is the intersection of the two
recession cones, and so the recession fan of the intersection of the
complexes $V(f_i)$ for $1 \leq i \leq s$ equals the intersection of the fans $V(\varphi(f_i))$, as required.
\end{proof}

\begin{example} \label{e:recession}
Let $J= \langle xy+xz+yz+2z^2 \rangle \subseteq \mathbb Q[x,y,z]$, where
$\mathbb Q$ has the $2$-adic valuation, and let $I = \trop(J)$.  
The Gr\"obner complex of $I$ is the normal complex 
$\mathcal N(xy \tplus xz \tplus yz \tplus 1 \ttimes z^2)$, shown on the left of Figure~\ref{f:recessionfan} with the lineality
space $\mathbb R (1,1,1)$ quotiented out.  
The Gr\"obner complex of the trivialization $\varphi(I)$ is the normal
complex $\mathcal N(xy \tplus xz \tplus yz \tplus z^2)$, shown on the right of
Figure~\ref{f:recessionfan}.  Note that the second complex is the
recession fan of the first.
\begin{figure}
\centering
\includegraphics[scale=1]{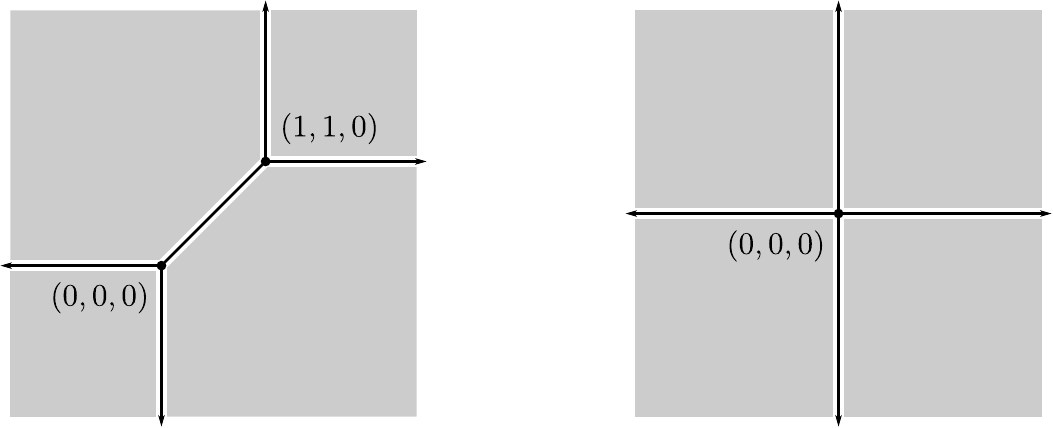}
\caption{\label{f:recessionfan} The Gr\"obner complex and its recession fan from Example~\ref{e:recession}.}
\end{figure}
\end{example}


\section{Specialization} \label{s:specialization}

In this section we prove that the class of tropical ideals 
is closed under specialization of the variables
(Theorem~\ref{t:restrictiontheorem}).

\begin{definition}
If $f \in \Rbar[x_1,\dotsc,x_{n},y_1,\dots,y_m]$ and
$\mathbf{a}=(a_1,\dots,a_m) \in \Rbar^m$, we write 
$f|_{\mathbf{y}=\mathbf{a}} := f(x_1,\dotsc,x_{n},a_1,\dots,a_m) \in \Rbar[x_1,\dotsc,x_{n}]$.
For an ideal $I \subseteq \Rbar[x_1,\dotsc,x_{n},y_1,\dots,y_m]$, we denote by
$I|_{\mathbf{y}=\mathbf{a}}$ the set
\[ I|_{\mathbf{y}=\mathbf{a}} := \{ f|_{\mathbf{y}=\mathbf{a}} : f
\in I \} \subseteq 
\Rbar[x_1,\dotsc,x_{n}], \] and call it the
     {\defbold{specialization}} of $I$ at $\mathbf{y}=\mathbf{a}$.  Note that $I|_{\mathbf{y}=\mathbf{a}}$
     is an ideal in $\Rbar[x_1,\dotsc,x_{n}]$.
\end{definition}

For any polynomial $f \in \Rbar[x_1,\dotsc,x_{n},y_1,\dots,y_m]$ and
any monomial $\bf x^\bfu$ in the variables $x_1,\dotsc,x_{n}$, we
denote by $f_\bfu \in \Rbar[y_1,\dots,y_m]$ the coefficient of
$\mathbf{x}^{\mathbf{u}}$ in $f$ viewed as a polynomial in
$x_1,\dots,x_{n}$, so
\[
f(x_1,\dots,x_{n},y_1,\dots,y_m) = \bigoplus_{\bfu \in \NN^{n}} f_\bfu(\mathbf{y}) \ttimes \bf x^\bfu.
\]

Our main result in this section is that if $I$ is a tropical ideal
then any specialization of $I$ is also a tropical ideal.  The proof
boils down to the following lemma.  For tropical polynomials $f, g$ 
we write $f \geq g$ if the inequality holds
coefficientwise.

\begin{lemma} \label{l:maineliminationstep}
Let $I$ be a tropical ideal in $\Rbar[x_1,\dotsc,x_n,y_1,\dots,y_m]$.
Suppose that $\bfx^\bfu, \bfx^\bfv$ are monomials in the variables 
$x_1,\dotsc,x_{n}$.  If $f, g \in I$ satisfy $f_{\mathbf{u}}(\bfzero) \leq
g_{\mathbf{u}}(\bfzero)$, and $f_{\mathbf{v}}(\bfzero) > g_{\mathbf{v}}(\bfzero)$, then
there is $h \in I$ with $h_{\mathbf{u}} = \infty$, $h_{\mathbf{v}}(\bfzero)
= g_{\mathbf{v}}(\bfzero)$, $h|_{\mathbf{y}=\bfzero} \geq f|_{\mathbf{y}=\bfzero}
\tplus g|_{\mathbf{y}=\bfzero}$, and 
$\inn_{\mathbf{0}}(h_{\mathbf{v}}) =
\inn_{\mathbf{0}}(g_{\mathbf{v}})$.
\end{lemma}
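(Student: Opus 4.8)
We want to produce $h \in I$ that kills the coefficient of $\bfx^\bfu$ while keeping the coefficient of $\bfx^\bfv$ controlled — this is exactly what the monomial elimination axiom $(\dag)$ gives, provided we can arrange two polynomials whose coefficients of $\bfx^\bfu$ agree. The idea is to first normalize $f$ and $g$ by tropical-scalar multiplication and by adding monomials so that their coefficients of $\bfx^\bfu$, evaluated at $\bfy = \bfzero$, become equal, and then apply $(\dag)$ to an appropriate monomial (one in the full variable set $x_1,\dots,x_n,y_1,\dots,y_m$, namely the $x$-monomial $\bfx^\bfu$ times a power of the $y_i$'s chosen to match up the lowest-order terms of $f_\bfu$ and $g_\bfu$). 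The conclusion of $(\dag)$ then needs to be translated back through the coefficient-extraction operation $f \mapsto f_\bfw$ and through the specialization $\bfy = \bfzero$, which is where the bulk of the verification lies.

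First I would reduce to the case $f_\bfu(\bfzero) = g_\bfu(\bfzero)$. By hypothesis $f_\bfu(\bfzero) \leq g_\bfu(\bfzero)$. Pick $\bfy$-monomials $\mathbf y^\mathbf a, \mathbf y^\mathbf b$ and scalars realizing $\inn_\bfzero(f_\bfu)$ and $\inn_\bfzero(g_\bfu)$, i.e.\ terms of $f_\bfu$ (resp.\ $g_\bfu$) of $\bfy$-degree achieving the value at $\bfzero$; multiply $f$ by a monomial/scalar in the $y$-variables so that this distinguished term of $f_\bfu$ matches the corresponding distinguished term of $g_\bfu$ as a term of $\Rbar[\mathbf y]$ (same $\bfy$-monomial, same coefficient). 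Call the results $f', g'$; now there is a \emph{monomial} $\bfx^\bfu \mathbf y^\mathbf c$ with $[f']_{\bfx^\bfu \mathbf y^\mathbf c} = [g']_{\bfx^\bfu \mathbf y^\mathbf c} \neq \infty$. Apply $(\dag)$: get $h_0 \in I$ with $[h_0]_{\bfx^\bfu\mathbf y^\mathbf c} = \infty$ and $[h_0]_{\bfx^\bfw} \geq \min([f']_{\bfx^\bfw},[g']_{\bfx^\bfw})$ for all monomials $\bfx^\bfw$ in all $n+m$ variables, with equality where $f', g'$ differ. Then repeat/iterate over all $\bfy$-monomials $\mathbf y^\mathbf c$ of $\bfy$-degree at most the value defining $\inn_\bfzero(f_\bfu) = \inn_\bfzero(g_\bfu)$ (there are finitely many relevant ones since coefficients of higher $\bfy$-degree in $(f')_\bfu$, $(g')_\bfu$ evaluate to something larger at $\bfzero$ and can be absorbed), producing after finitely many elimination steps a polynomial $h$ with $h_\bfu(\bfzero) = \infty$, i.e.\ $h_\bfu = \infty$ after specialization — here I'd be careful that it is $h|_{\bfy = \bfzero}$ whose $\bfx^\bfu$-coefficient vanishes, which is what is actually asserted. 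The closure of $I$ under these operations and the fact that the class of valuated matroids is closed under the elimination axiom guarantee $h \in I$ throughout.

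Next I would extract the four desired properties of the final $h$. The inequality $h|_{\bfy=\bfzero} \geq f|_{\bfy=\bfzero} \tplus g|_{\bfy=\bfzero}$ follows coefficientwise: for each $x$-monomial $\bfx^\bfw$, $h_\bfw(\bfzero) \geq \min([f']_{\bfx^\bfw\mathbf y^\mathbf d}(\bfzero\text{-contribution}), \dots)$ unwinds, using that the $\bfy$-scalings applied to $f$ only raised values, to $h_\bfw(\bfzero) \geq \min(f_\bfw(\bfzero), g_\bfw(\bfzero))$, i.e.\ $[h|_{\bfy=\bfzero}]_{\bfx^\bfw} \geq [f|_{\bfy=\bfzero} \tplus g|_{\bfy=\bfzero}]_{\bfx^\bfw}$. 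For the $\bfx^\bfv$-coefficient, the hypothesis $f_\bfv(\bfzero) > g_\bfv(\bfzero)$ means that at every $\bfy$-monomial achieving $\inn_\bfzero(g_\bfv)$, the polynomials $f'$ and $g'$ differ (after tracking the scaling), so the "equality where they differ" clause of $(\dag)$ forces $[h]_{\bfx^\bfv \mathbf y^\mathbf d} = [g']_{\bfx^\bfv\mathbf y^\mathbf d}$ on those monomials; hence $\inn_\bfzero(h_\bfv) = \inn_\bfzero(g_\bfv)$ (in particular $h_\bfv(\bfzero) = g_\bfv(\bfzero)$), after checking the scaling on $f$ was chosen to not interfere with the $\bfx^\bfv$-row — which it wasn't, since we only multiplied by a $\bfy$-monomial/scalar, and that multiplies \emph{every} $\bfx$-coefficient by the same thing, so I should actually scale $f$ and $g$ symmetrically or track the uniform shift carefully. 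This bookkeeping — making sure a single $\bfy$-scaling simultaneously normalizes the $\bfu$-row while leaving the $\bfv$-row comparison intact — is the delicate part.

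\textbf{The main obstacle.} The real difficulty is the iteration: one application of $(\dag)$ only kills one monomial $\bfx^\bfu\mathbf y^\mathbf c$ of $h$, but we need the entire coefficient $h_\bfu$ to vanish at $\bfy=\bfzero$, which involves (after specialization) a min over infinitely many $\bfy$-monomials. The resolution is that only \emph{finitely many} $\bfy$-monomials in $f_\bfu$ and $g_\bfu$ have $\bfy$-degree small enough to matter for the value at $\bfzero$ (all others contribute $+\infty$ after the inequality in $(\dag)$ combined with evaluation), so a finite sequence of eliminations suffices; but arranging that each successive elimination keeps the coefficients of $\bfx^\bfu\mathbf y^{\mathbf c'}$ for the already-treated $\mathbf c'$ still large (so they don't come back) requires that at each stage we re-match the leading term of the current $\bfx^\bfu$-row of the two polynomials and invoke the equality clause. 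I expect this to require an induction on the (finite) number of low-$\bfy$-degree monomials in the $\bfx^\bfu$-row, which is the technical heart of the lemma.
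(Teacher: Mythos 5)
Your first paragraph correctly isolates where the difficulty lies (the iteration needed to kill the whole $\bfx^\bfu$-row, and the danger of killed monomials ``coming back''), but the resolution you propose --- a finite induction on the number of relevant monomials in the $\bfu$-row --- does not work, and this is exactly the gap the paper's proof is built to close. The problem is that the elimination axiom $(\dag)$ forces \emph{equality} with the minimum precisely at those monomials where the two inputs \emph{differ}. So suppose you have produced $h_0$ with $[h_0]_{\bfx^\bfu\bfy^{\bfc}}=\infty$ and you now want to kill a second monomial $\bfx^\bfu\bfy^{\bfc'}$ of the $\bfu$-row. You must pair $h_0$ with a partner (a rescaled $f$ or $g$, say) whose coefficient at $\bfx^\bfu\bfy^{\bfc'}$ matches that of $h_0$; but that partner generically has a \emph{finite} coefficient at $\bfx^\bfu\bfy^{\bfc}$, and since $\infty$ differs from any finite value, the equality clause of $(\dag)$ forces the output to recover exactly that finite coefficient at $\bfx^\bfu\bfy^{\bfc}$ --- the monomial you just eliminated reappears. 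Hence the number of unkilled monomials in the $\bfu$-row is not a decreasing quantity, your iteration can cycle, and ``re-matching the leading term at each stage'' does not by itself supply a terminating invariant. (A minor additional point: at $\bfy=\bfzero$ \emph{every} monomial of $f_\bfu$ contributes its coefficient to the min, not only low-degree ones; finiteness is automatic because $f_\bfu$ is a polynomial. Also, multiplying $f$ by a $\bfy$-monomial to normalize shifts the entire support, so the target set of monomials to kill changes at every step, which further undercuts an induction on that set.)

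The paper's proof has a genuinely different architecture designed around this obstruction: it inducts on the number $m$ of $y$-variables, regards $y_m$ as an $x$-variable, and applies the \emph{full inductive statement of the lemma itself} (not merely one instance of $(\dag)$) to pairs of monomials $\bfx^\bfu y_m^j$, $\bfx^\bfv y_m^k$ --- this gives control of an entire row at $\bfzero$ in one stroke, including the clause $\inn_{\bfzero}(h_\bfv)=\inn_{\bfzero}(g_\bfv)$, which is needed to keep the $\bfv$-row intact across repeated applications. Termination is then obtained not by counting monomials but by a minimal-counterexample argument (lexicographic minimality of $(\deg_{y_m}(g_\bfu),\deg_{y_m}(f_\bfu))$) together with the construction of infinite sequences of polynomials in which an integer invariant $l_i$ --- the extremal $y_m$-exponent at which the minimum of the $\bfu$- or $\bfv$-row is attained --- strictly increases while remaining bounded by $d$ or $e$. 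Some invariant of this kind is indispensable; without it your scheme, as written, does not terminate, so the proposal has a genuine gap at its technical heart.
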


\begin{proof}
If $g_{\mathbf{u}}=\infty$ then we can take $h=g$, so henceforth we
assume that $g_{\mathbf{u}} \neq \infty$.  The proof is by induction
on $m$.  In the case $m=0$ the polynomials $f_{\mathbf{u}},
f_{\mathbf{v}}, g_{\mathbf{u}}$, and $g_{\mathbf{v}}$ are all
constants, and the claim is true for an elimination
$\elim{h}{\mathbf{x}^{\mathbf{u}}}{(g_{\mathbf{u}}-f_{\mathbf{u}})
  \ttimes f}{g}$.
 We now assume that $m>0$ and that the lemma is true for smaller $m$.

 Throughout the proof, we  make use of the induction hypothesis and apply the lemma 
 by regarding the variable $y_m$ as an $x$ variable.  For a polynomial $h \in I$ we write $h_{(\mathbf{u},j)} \in \Rbar[y_1,\dots,y_{m-1}]$ for the coefficient of $\mathbf{x}^{\mathbf{u}}y_m^j$ in $h$ viewed as a polynomial in $x_1,\dots,x_n,y_m$, so 
 $$ h(x_1,\dots,x_{n},y_1,\dots,y_m) = \bigoplus_{\substack{\bfu \in \NN^{n} \\ j \in \NN}} h_{(\bfu,j)}(y_1,\dots,y_{m-1}) \ttimes \bfx^{\bfu} y_m^j.$$
 Note that for any $\bfu \in \NN^n$ we have
 $$ h_\bfu(y_1, \dots, y_m) = \bigoplus_{j \in \NN} h_{(\bfu,j)}(y_1,\dots,y_{m-1}) \ttimes y_m^j.$$
 We denote by $\deg_{y_m}(h_\bfu)$ the maximum $j$ such that $h_{(\bfu,j)} \neq \infty$. 

 Now, suppose the lemma is not true for the tropical ideal $I$ and the
  monomials $\bfx^\bfu, \bfx^\bfv$.    
   Choose a counterexample $f,g \in I$
  which is minimal in the sense that $(\deg_{y_m}(g_\bfu),
  \deg_{y_m}(f_\bfu)) \in \NN \times \NN$ is
  lexicographically minimal among all counterexamples $f,g \in I$.  
Consider the sets
\begin{align*}
\F &:= \{ h \in I : h|_{\mathbf{y}=\bfzero} \geq  f|_{\mathbf{y}=\bfzero} \tplus g|_{\mathbf{y}=\bfzero} \text{ and } h_\bfv(\bfzero) > g_{\bfv}(\bfzero) \},\\
\G &:= \{ h \in I : h|_{\mathbf{y}=\bfzero} \geq f|_{\mathbf{y}=\bfzero} \tplus g|_{\mathbf{y}=\bfzero} \text{ and } h_\bfv(\bfzero) = g_{\bfv} (\bfzero) \}.
\end{align*}
Note that $f \in \F$ and $g \in \G$. 
Moreover, regarding $y_m$ as an $x$ variable, any application of the
lemma to monomials $\mathbf{x}^{\mathbf{u}}y_m^j$ and 
$\mathbf{x}^{\mathbf{v}}y_m^k$ in polynomials $h_1 \in \F$ and $h_2 \in \G$ with $(h_2)_{(\mathbf{v},k)}(\bfzero) =
  g_{\mathbf{v}}(\bfzero)$ yields a polynomial $h_3 \in \G$.  
Similarly, 
any application of the lemma to any two monomials
in polynomials $h_1, h_2 \in \F$ gives a polynomial $h_3 \in \F$.

Set 
$$d := \deg_{y_m}(f_\bfu) \in \NN \quad \text{ and } \quad \alpha := f_{(\bfu,d)}(\bfzero) \in \RR,$$ 
and similarly 
$$e:=\deg_{y_m}(g_\bfu) \in \NN \quad \text{ and } \quad \beta := g_{(\bfu,e)}(\bfzero) \in \RR.$$

\begin{claim}\label{claimnog}
There is no $h \in \G$ with $c := \deg_{y_m}(h_\bfu) \leq d$ and $\gamma := h_{(\bfu,c)}(\bfzero) < \alpha$.
\end{claim}
\noindent Suppose that such an $h \in \G$ exists.
Set $h' = (\alpha-\gamma)\ttimes y_m^{d-c}\ttimes h \in \F$, so that both
$f_\bfu$ and $h'_\bfu$ are polynomials of degree $d$ in $y_m$ 
and $h'_{(\bfu,d)}(\bfzero) = \alpha$. 
Since $f_{\mathbf{u}}(\bfzero) \leq h_{\mathbf{u}}(\bfzero) \leq \gamma < \alpha$, there is $j<d$
with $f_{(\mathbf{u},j)}(\bfzero) = f_{\mathbf{u}}(\bfzero)$. 
We have $h'_{(\bfu,d)}(\bfzero) = \alpha = f_{(\bfu,d)}(\bfzero)$ and 
$h'_{(\bfu,j)}(\bfzero) > h_{(\bfu,j)}(\bfzero) \geq h_{\mathbf{u}}(\bfzero) \geq f_{\mathbf{u}}(\bfzero) = f_{(\bfu,j)}(\bfzero)$, 
so we can apply the lemma to the monomials $\mathbf{x}^{\mathbf{u}}y_m^d$ and
$\mathbf{x}^{\mathbf{u}}y_m^j$ in the polynomials $h'$ and $f$ to obtain a polynomial $f' \in \F$
with $f'_{(\bfu,d)} = \infty$ and $f'_{\bfu}(\bfzero) = f'_{(\bfu,j)}(\bfzero) = f_{\bfu}(\bfzero)$.  
Since $\deg_{y_m}(f'_\bfu) < d$, our
minimality assumption implies that $f',g$ satisfy the statement of the
lemma, so there is $h \in I$ with $h_\bfu = \infty$, $h|_{\mathbf{y}=\bfzero}
\geq f'|_{\mathbf{y}=\bfzero} \tplus g|_{\mathbf{y}=\bfzero} \geq f|_{\mathbf{y}=\bfzero} \tplus g|_{\mathbf{y}=\bfzero}$
and $\inn_{\mathbf{0}}(h_{\mathbf{v}}) = \inn_{\mathbf{0}}(g_{\mathbf{v}})$.
But then $h$ contradicts the assumption that $f$ and $g$ were a
counterexample to the lemma, finishing the proof of the claim.

\begin{claim}\label{claimnof}
There is no $h \in \F$ with $c := \deg_{y_m}(h_\bfu) \leq e$ and $\gamma := h_{(\bfu,c)}(\bfzero) \leq  \beta$.
\end{claim}
\noindent Suppose that such an $h \in \F$ exists.
Set $h' = y_m^{e-c}\ttimes h \in \F$, so that both $h'_\bfu$ and $g_\bfu$ are
polynomials of degree $e$ in $y_m$.
Fix $j$ such that $g_{(\mathbf{v},j)}(\bfzero) = g_{\mathbf{v}}(\bfzero)$.
We have $h'_{(\bfu,e)}(\bfzero) = \gamma \leq g_{(\bfu,e)}(\bfzero)$ and 
$h'_{(\bfv,j)}(\bfzero) \geq h'_{\bfv}(\bfzero) = h_{\bfv}(\bfzero) > g_{\mathbf{v}}(\bfzero) = g_{(\bfv,j)}(\bfzero)$, 
so we can apply the lemma to the monomials $\mathbf{x}^{\mathbf{u}}y_m^e$ and
$\mathbf{x}^{\mathbf{v}}y_m^j$ in the polynomials $h'$ and $g$ to obtain a polynomial $g^j \in \G$
with $g^j_{(\bfu,e)}(\bfzero) = \infty$, $g^j_{(\bfv,j)}(\bfzero) = g_{(\bfv,j)}(\bfzero) = g_{\bfv}(\bfzero)$, and $\inn_{\mathbf{0}}({g^j}_{(\mathbf{v},j)}) = \inn_{\mathbf{0}}(g_{(\mathbf{v},j)})$.
Let $g' \in \mathcal G$ be the sum of
 all $g^j$ over all such choices of $j$. 
Then $g'_{(\bfu,e)}(\bfzero) = \infty$, and $g'_{\bfv}(\bfzero) =
g_{\bfv}(\bfzero)$.  Since $\inn_{\mathbf{0}}(g_{\mathbf{v}}) =
\bigoplus_j \inn_{\mathbf{0}}(g_{(\mathbf{v},j)})$, this implies that
$\inn_{\mathbf{0}}(g'_{\mathbf{v}})=
\inn_{\mathbf{0}}(g_{\mathbf{v}})$.
  Since $\deg_{y_m}(g'_\bfu) < e$, our minimality
  assumption implies that $f, g'$ satisfy the statement of the lemma,
  so there is $h \in I$ such that $h_{\mathbf{u}} = \infty$,
  $h_{\mathbf{v}} (\bfzero) = g'_{\mathbf{v}}(\bfzero) = g_{\mathbf{v}}(\bfzero)$,
  $h|_{\bfy=\bfzero} \geq f|_{\bfy=\bfzero} \tplus g'|_{\bfy=\bfzero} \geq f|_{\bfy=\bfzero} \tplus
  g|_{\bfy=\bfzero}$, and $\inn_{\mathbf{0}}(h_{\mathbf{v}}) =
  \inn_{\mathbf{0}}(g'_{\mathbf{v}}) =
  \inn_{\mathbf{0}}(g_{\mathbf{v}})$.
Such an $h$ contradicts the assumption that $f$ and $g$ were a
counterexample to the lemma, so this finishes the proof of the claim.

Since $g \in \G$, Claim \ref{claimnog} implies that if $
\alpha>\beta$ then $d < e$.  Also, since $f \in \F$, Claim
\ref{claimnof} implies that if $\alpha \leq \beta$ then $d > e$.  We
now show that both of these cases are impossible, which leads to a
contradiction to our original assumption that the counterexample
exists.

$\bullet$ {\bf Case $\alpha > \beta$ and $d<e$}. 
We inductively construct an infinite sequence of polynomials
$f_1, f_2, \dotsc \in \F$ satisfying the following conditions:
\begin{enumerate}
\item For all $i$ we have  $(f_i)_\bfu(\bfzero)=f_{\mathbf{u}}(\bfzero)$.
\item Set $d_i := \deg_{y_m}((f_i)_\bfu)$ and $\alpha_i := (f_i)_{(\bfu,d_i)}(\bfzero)$.  We have 
 $\alpha_i > \beta$ and $d_i < e$ for all $i$.  
\item Set $l_i := \max \{ j  \colon (f_i)_{(\mathbf{u},j)}(\bfzero) = f_{\mathbf{u}}(\bfzero) \}$.
  We have $l_1 < l_2 < \dotsb $.
\end{enumerate}
Set $f_1 := f$, and suppose that we have constructed $f_i \in \F$. To
construct $f_{i+1}$, set $f'_i = y_m^{e-d_i} \ttimes f_i \in \F$ and
$g'_i = (\alpha_i-\beta) \ttimes g \in \F$, so that both $(f'_i)_\bfu$
and $(g'_i)_\bfu$ are polynomials of degree $e$ in $y_m$  
and $(f'_i)_{(\bfu,e)}(\bfzero) = \alpha_i = (g'_i)_{(\bfu,e)}(\bfzero)$.
Since $(f'_i)_{\mathbf{u}}(\bfzero) = (f_i)_\bfu(\bfzero) = f_{\mathbf{u}}(\bfzero)$, 
there is $k$ such that $(f'_i)_{(\mathbf{u},k)}(\bfzero) = f_{\mathbf{u}}(\bfzero)$.
Take $k$ to be the largest possible such value, which is equal to $(e - d_i) + l_i$.
We have $(g'_i)_{(\bfu,e)}(\bfzero) = (f'_i)_{(\bfu,e)}(\bfzero)$ and 
$(g'_i)_{(\bfu,k)}(\bfzero) > g_{(\bfu,k)}(\bfzero) \geq g_{\bfu}(\bfzero) \geq f_{\mathbf{u}}(\bfzero) = (f'_i)_{(\bfu,k)}(\bfzero)$, 
so $k \neq e$, and we can apply the lemma to the monomials $\mathbf{x}^{\mathbf{u}}y_m^e$ and
$\mathbf{x}^{\mathbf{u}}y_m^k$ in the polynomials $g'_i$ and $f'_i$ to obtain a polynomial $f_{i+1} \in \F$
with $(f_{i+1})_{(\bfu,e)} = \infty$ and $(f_{i+1})_{\bfu}(\bfzero) = (f_{i+1})_{(\bfu,k)}(\bfzero) = f_{\bfu}(\bfzero)$.
We have $l_{i+1} := \max \{ j  : (f_{i+1})_{(\mathbf{u},j)}=f_{\mathbf{u}}(\bfzero) \} = k = 
 (e-d_i) + l_i$,
so $l_{i+1} > l_i$.
Furthermore, note that $d_{i+1} := \deg_{y_m}((f_{i+1})_\bfu) < e$, so by Claim \ref{claimnof} we must also have $\alpha_{i+1} > \beta$.
This all shows that $f_{i+1}$ satisfies the desired properties.
We conclude this case by noting that the sequence $l_1 < l_2 < \dotsb $
is a strictly increasing infinite sequence of integers
bounded above by $e$, which is a contradiction.

$\bullet$ {\bf Case $\alpha \leq \beta$ and $d>e$}. 
We inductively construct a sequence of polynomials
$g_1, g_2, \dotsc \in \G$ satisfying the following conditions:
\begin{enumerate}
\item Set $e_i := \deg_{y_m}((g_i)_\bfu)$ and $\beta_i := (g_i)_{(\bfu,e_i)}(\bfzero)$.  We have 
$\alpha \leq \beta_i$ and $d > e_i$ for all $i$.
\item \label{enum:increasing} Let $l_i := \min \{ j : (g_i)_{(\mathbf{v},j)} = g_\bfv(\bfzero) \}$.
We have $l_1 < l_2 < \dotsb $.
\end{enumerate}
Set $g_1 := g$, and suppose that we have constructed $g_i \in \G$. To
construct $g_{i+1}$, set $g'_i = y_m^{d-e_i} \ttimes g_i \in \G$, so that both
$f_\bfu$ and $(g'_i)_\bfu$ are polynomials of degree $d$ in $y_m$. 
Set $k := (d - e_i) + l_i$, which is the minimum value
satisfying $(g'_i)_{(\mathbf{v},k)}(\bfzero)=
g_{\mathbf{v}}(\bfzero)$.  
We have $f_{(\bfu,d)}(\bfzero) = \alpha \leq \beta_i = (g'_i)_{(\bfu,d)}(\bfzero)$ and 
$f_{(\bfv,k)}(\bfzero) \geq f_{\bfv}(\bfzero) > g_{\mathbf{v}}(\bfzero) = (g'_i)_{(\bfv,k)}(\bfzero)$, 
so we can apply the lemma to the monomials $\mathbf{x}^{\mathbf{u}}y_m^d$ and
$\mathbf{x}^{\mathbf{v}}y_m^k$ in the polynomials $f$ and $g'_i$ to obtain a polynomial $g_{i+1} \in \G$
with $(g_{i+1})_{(\bfu,d)}(\bfzero) = \infty$, and $(g_{i+1})_{(\bfv,k)}(\bfzero) = (g'_i)_{(\bfv,k)}(\bfzero) = g_{\bfv}(\bfzero)$.
We have $l_{i+1} := \min \{ j  : (g_{i+1})_{(\mathbf{v},j)}=g_{\mathbf{v}}(\bfzero) \} = k = 
 (d-e_i) + l_i$,
so $l_{i+1} > l_i$.
This all shows that $g_{i+1}$ satisfies the desired properties.
Now, as the sequence of degrees $e_1, e_2, \dotsc $ is an infinite sequence of integers bounded above 
by $d$, it must contain an infinite constant subsequence $e_{i_1} = e_{i_2} = \dotsb $.
This contradicts the following claim.

\begin{claim} There is no infinite sequence of polynomials $g_1, g_2, \dotsc \in \G$
satisfying the condition $l_1<l_2 < \dotsb$ of \eqref{enum:increasing} with $\deg_{y_m}((g_1)_\bfu) = \deg_{y_m}((g_2)_\bfu) = \dotsb $.
\end{claim}
\noindent To prove the claim, suppose such a sequence $g_1, g_2,
\dotsc \in \G$ exists, and assume $c := \deg_{y_m}((g_1)_\bfu) =
\deg_{y_m}((g_2)_\bfu) = \dotsb$ is minimal among all such sequences.
Set $\beta_i := (g_i)_{(\bfu,c)}(\bfzero)$. For any $j>0$, let $g'_{2j-1} =
\max(0, \beta_{2j}-\beta_{2j-1}) \ttimes g_{2j-1}$ and $g'_{2j} =
\max(0, \beta_{2j-1}-\beta_{2j}) \ttimes g_{2j}$, so that both
$(g'_{2j-1})_\bfu$ and $(g'_{2j})_\bfu$ are polynomials 
of degree $c$ in $y_m$ satisfying $(g'_{2j-1})_{(\bfu,c)}(\bfzero) = (g'_{2j})_{(\bfu,c)}(\bfzero)$.
Set $l = l_{2j-1}$ if $\beta_{2j-1} \geq \beta_{2j}$, and $l = l_{2j}$ if 
$\beta_{2j-1} < \beta_{2j}$.  Note that $l$ is 
 the minimum value for which either 
 $(g'_{2j-1})_{(\mathbf{v},l)}(\bfzero)=g_{\mathbf{v}}(\bfzero)$ or
$(g'_{2j})_{(\mathbf{v},l)}(\bfzero)=g_{\mathbf{v}}(\bfzero)$, 
and that exactly one of these equalities holds. 
Since $(g'_{2j-1})_{(\bfu,c)}(\bfzero) = (g'_{2j})_{(\bfu,c)}(\bfzero)$ and 
$(g'_{2j-1})_{(\bfv,l)}(\bfzero) \neq (g'_{2j})_{(\bfv,l)}(\bfzero)$, 
so $c \neq l$ and we can apply the lemma to the monomials $\mathbf{x}^{\mathbf{u}}y_m^c$ and
$\mathbf{x}^{\mathbf{v}}y_m^l$ in the polynomials $g'_{2j-1}$ and $g'_{2j}$ (possibly in the reverse order) to obtain a polynomial $g''_j \in I$
with $(g''_j)_{(\bfu,c)}(\bfzero) = \infty$, $(g''_j)_{(\bfv,l)}(\bfzero) = \min((g'_{2j-1})_{(\bfv,l)}(\bfzero), (g'_{2j})_{(\bfv,l)}(\bfzero)) = g_{\bfv}(\bfzero)$,
and $g''_j|_{\mathbf{y}=\bfzero} \geq g'_{2j-1}|_{\mathbf{y}=\bfzero} \tplus g'_{2j}|_{\mathbf{y}=\bfzero} \geq f|_{\mathbf{y}=\bfzero} \tplus
g|_{\mathbf{y}=\bfzero}$.
As $\min \{s  : (g''_j)_{(\mathbf{v},s)} = g_\bfv(\bfzero) \} = l$, 
which is equal to either $l_{2j-1}$ or $l_{2j}$, it follows that 
$g''_1, g''_2, \dotsb$ is a sequence of polynomials in $\G$
satisfying condition \eqref{enum:increasing}.

Now, since the sequence of degrees $\deg_{y_m}((g''_1)_\bfu),
\deg_{y_m}((g''_2)_\bfu), \dotsc$ is an infinite sequence of
non-negative integers strictly less than $c$, it must contain an
infinite constant subsequence $\deg_{y_m}((g''_{i_1})_\bfu) =
\deg_{y_m}((g''_{i_2})_\bfu) = \dotsb $.  The sequence
$g''_{i_1}, g''_{i_2}, \dotsc$ is then an infinite sequence of polynomials
in $\G$ satisfying condition \eqref{enum:increasing} such that $c >
\deg_{y_m}((g''_{i_1})_\bfu) = \deg_{y_m}((g''_{i_2})_\bfu) = \dotsb
$, which contradicts the minimality of $c$, showing that no such
sequence exists.
\end{proof}

We now use Lemma \ref{l:maineliminationstep} to prove 
the key specialization theorem. We will also need Lemma \ref{l:maineliminationstep}
in its full strength in the proof of the projection theorem in Section \ref{d:dimensionprojection}.

\begin{theorem} \label{t:restrictiontheorem}
Let $I \subseteq \Rbar[x_1,\dots,x_{n}]$ be a tropical ideal. 
For any $a \in \Rbar$, the ideal 
$I|_{x_n=a} \subseteq \Rbar[x_1,\dots,x_{n-1}]$ is also a tropical ideal.
\end{theorem}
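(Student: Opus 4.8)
The plan is to reduce Theorem~\ref{t:restrictiontheorem} to the case $a = 0$ and then to Lemma~\ref{l:maineliminationstep} by an appropriate change of perspective. First I would dispose of the boundary value $a = \infty$: the specialization $I|_{x_n = \infty}$ consists of the polynomials $f(x_1,\dots,x_{n-1},\infty)$, which is just the part of $f$ supported on monomials not involving $x_n$, i.e.\ the initial term construction associated with a suitable weight degenerating $x_n$ to $\infty$; this case should follow from the fact that $\inn_\bfw(I)$ is a tropical ideal together with Lemma~\ref{l:basicinitial}. For the main case $a \in \RR$, after the substitution $x_n \mapsto x_n \ttimes a$ (an automorphism of $\Rbar[x_1,\dots,x_n]$ that carries tropical ideals to tropical ideals and intertwines $I|_{x_n=a}$ with $I|_{x_n=0}$), it suffices to prove that $I|_{x_n=0}$ is a tropical ideal, which is exactly the setting of Lemma~\ref{l:maineliminationstep} with $m=1$, $y_1 = x_n$, and $\bfzero = 0$.

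Next I would verify the monomial elimination axiom $(\dag)$ for $J := I|_{x_n=0}$ directly. Given $\bar f, \bar g \in J$ with $[\bar f]_{\bfx^\bfu} = [\bar g]_{\bfx^\bfu} \ne \infty$ for some monomial $\bfx^\bfu$ in $x_1,\dots,x_{n-1}$, lift them to $f, g \in I$ with $f|_{x_n=0} = \bar f$ and $g|_{x_n=0} = \bar g$. In the notation of Section~\ref{s:specialization} (with $m=1$, $y_1 = x_n$) we have $f_\bfu(0) = [\bar f]_{\bfx^\bfu} = [\bar g]_{\bfx^\bfu} = g_\bfu(0)$. To produce the required $h \in I$ with $h_\bfu = \infty$ and the correct inequalities on all other coefficients, I would apply Lemma~\ref{l:maineliminationstep} repeatedly, once for each monomial $\bfx^\bfv$ (in $x_1,\dots,x_{n-1}$) on which $[\bar f]_{\bfx^\bfv} \ne [\bar g]_{\bfx^\bfv}$, say with $f_\bfv(0) > g_\bfv(0)$ after possibly swapping $f$ and $g$: this yields $h^\bfv \in I$ with $h^\bfv_\bfu = \infty$, $h^\bfv_\bfv(0) = g_\bfv(0)$, $\inn_0(h^\bfv_\bfv) = \inn_0(g_\bfv)$, and $h^\bfv|_{x_n=0} \ge f|_{x_n=0} \tplus g|_{x_n=0}$. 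Summing (tropically) all the $h^\bfv$ over such $\bfv$, together with one more term handling the monomials where $[\bar f]_{\bfx^\bfw} = [\bar g]_{\bfx^\bfw}$ (here an ordinary elimination $\elim{h^0}{\bfx^\bfu}{f}{g}$ in the tropical ideal $I$, which already has $h^0_\bfu = \infty$ and $h^0|_{x_n=0} \ge \bar f \tplus \bar g$), gives $h := \bigoplus_\bfv h^\bfv \tplus h^0 \in I$ whose dehomogenization $\bar h := h|_{x_n=0} \in J$ satisfies $[\bar h]_{\bfx^\bfu} = \infty$, $[\bar h]_{\bfx^\bfv} \ge \min([\bar f]_{\bfx^\bfv},[\bar g]_{\bfx^\bfv})$ for all $\bfx^\bfv$, and equality whenever $[\bar f]_{\bfx^\bfv} \ne [\bar g]_{\bfx^\bfv}$. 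This is precisely $\elim{\bar h}{\bfx^\bfu}{\bar f}{\bar g}$.

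The one subtlety to be careful about is why $[\bar h]_{\bfx^\bfv} = \min([\bar f]_{\bfx^\bfv},[\bar g]_{\bfx^\bfv})$ holds on the nose when $[\bar f]_{\bfx^\bfv} \ne [\bar g]_{\bfx^\bfv}$, rather than just ``$\ge$''. For a given such $\bfv$ with $f_\bfv(0) > g_\bfv(0)$, the term $h^\bfv$ contributes $h^\bfv_\bfv(0) = g_\bfv(0)$, which is the desired minimum; but I must check that no other summand $h^{\bfv'}$ (or $h^0$) contributes a strictly smaller value in the $\bfx^\bfv$ coefficient of the tropical sum. Here the conditions $h^{\bfv'}|_{x_n=0} \ge \bar f \tplus \bar g$ and $\inn_0(h^\bfv_\bfv) = \inn_0(g_\bfv)$ are exactly what is needed: every summand has $\bfx^\bfv$-coefficient $\ge \min([\bar f]_{\bfx^\bfv},[\bar g]_{\bfx^\bfv}) = g_\bfv(0)$, so the minimum over summands equals $g_\bfv(0)$ and is attained by $h^\bfv$. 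I expect this bookkeeping with the tropical sum — keeping track of which summand achieves equality and confirming none undershoots — to be the only delicate part; the genuinely hard work has already been isolated into Lemma~\ref{l:maineliminationstep}, whose induction on $m$ (here just $m=1$) and on $\deg_{y_m}$ is what makes the matroid structure survive specialization.
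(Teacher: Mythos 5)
Your proposal follows the paper's proof essentially verbatim: dispose of $a=\infty$ directly, rescale $x_n$ to reduce to $a=0$, and then build the required elimination as a tropical sum of the polynomials $h^{\bfv}$ supplied by Lemma~\ref{l:maineliminationstep}, one for each monomial where the specialized coefficients disagree. The bookkeeping you single out as the delicate point is handled exactly as you describe, and the monomials where $[\bar f]_{\bfx^\bfv}=[\bar g]_{\bfx^\bfv}$ need no separate treatment, since there only the inequality $\geq$ is required and every summand already satisfies $h^{\bfv'}|_{x_n=0}\geq \bar f\tplus\bar g$. For that reason you should \emph{delete} the extra summand $h^0$: as written it is not only unnecessary but invalid. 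An ordinary elimination $\elim{h^0}{\bfx^\bfu}{F}{G}$ in $I$ requires the coefficients of the monomial $\bfx^\bfu x_n^0$ in the lifts $F,G$ to agree and be finite, which does not follow from $F_\bfu(0)=G_\bfu(0)=\min_j F_{(\bfu,j)}$; and even when it applies, it only kills the coefficient of $\bfx^\bfu x_n^0$, so $h^0_\bfu$ can remain finite through higher powers of $x_n$, which would destroy the needed conclusion $[\bar h]_{\bfx^\bfu}=\infty$ once $h^0$ enters the tropical sum. Finally, the $a=\infty$ case is not an initial-ideal construction (the specialization keeps real coefficients, and $(0,\dots,0,\infty)$ is not a weight vector); the correct and simpler argument is the direct one: specializing at $x_n=\infty$ just discards all terms divisible by $x_n$, so an elimination of $\bfx^\bfu$ performed upstairs in $I$ specializes to the required elimination downstairs.
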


\begin{proof}
If $a=\infty$, the monomial elimination axiom for $I|_{x_n=a}$ follows
directly from the monomial elimination axiom for $I$. 
Indeed, if $f,g \in I|_{x_n=\infty}$ then there are $F,G \in I$ such that
$f = F|_{x_n=\infty}$ and $g = G|_{x_n=\infty}$. Suppose 
$\bfx^\bfu$ is a monomial such that $[f]_{\bfx^\bfu} = [g]_{\bfx^\bfu} \neq \infty$. 
As evaluating the variable $x_n$ at $\infty$ does not change the coefficients
of the monomials not divisible by $x_n$, we also have 
$[F]_{\bfx^\bfu} = [G]_{\bfx^\bfu} \neq \infty$.
Since $I$ is a tropical ideal, there exists $H \in I$ with
$\elim{H}{\bfx^\bfu}{F}{G}$.
This implies that $h := H|_{x_n=\infty} \in I|_{x_n=\infty}$ is an elimination
$\elim{h}{\bfx^\bfu}{f}{g}$, showing that $I|_{x_n=\infty}$
satisfies the monomial elimination axiom.

Suppose now that $a \neq \infty$. Consider the ideal
$I':= \{ f(x_1,\dots,x_{n-1},a x_n) : f \in I\} \subseteq \Rbar[x_1,\dots,x_{n}]$.
Since $I'$ is obtained from $I$ by doing an invertible scaling of
the variable $x_n$, the fact that $I$ is a tropical ideal implies that
$I'$ is also a tropical ideal.  Note that $I|_{x_n=a} = I'|_{x_n=0}$, and
thus we may assume that $a=0$.

To show that $I|_{x_n=0}$ is a tropical ideal, 
fix two polynomials $f,g \in I|_{x_n=0}$ and a monomial
$\mathbf{x}^{\mathbf{u}}$ with $[f]_{\bfx^\bfu} =
[g]_{\bfx^\bfu} \neq \infty$.
Choose $F,G \in I$ such that $f = F|_{x_n=0}$ and $g = G|_{x_n=0}$.
For any monomial $\bfx^\bfv$ for which $F_\bfv(0) \neq
G_\bfv(0)$, we can use Lemma \ref{l:maineliminationstep}
to construct a polynomial $H^{\bfv} \in I$ such that 
$H_\bfu = \infty$, $H_\bfv(0) = \min(F_\bfv(0),G_\bfv(0))$, 
and $H|_{x_n=0} \geq F|_{x_n=0} \tplus G|_{x_n=0}$.
Let $H \in I$ be the sum of all such polynomials $H^{\bfv}$. 
Then $h := H|_{x_n=0} \in
I|_{x_n=0}$ satisfies $[h]_{\bfx^\bfu}=\infty$ and $[h]_{\bfx^\bfv} 
\geq \min( [f]_{\bfx^\bfv}, [g]_{\bfx^\bfv} )$ 
for all monomials $\bfx^\bfv$, with the equality holding
whenever $[f]_{\bfx^\bfv} \neq [g]_{\bfx^\bfv}$. 
This shows that $I|_{x_n=0}$ satisfies the monomial elimination axiom,
and so it is a tropical ideal.
\end{proof}

\begin{remark} \label{r:realizableslicing}
If $K$ is a valued field with an uncountable residue field $\K$, $I
\subseteq K[x_1,\dots,x_n]$ is an ideal, and $a \in \mathbb R$ is an
element of the value group of $K$, then 
$\trop(I)|_{x_n=a} = \trop(I|_{x_n=\alpha})$ where $\alpha$ is a
sufficiently generic element of $K$ with valuation $a$.  To see this,
first fix $\alpha_0 \in K$ of valuation $a$.  For $f \in I$, write $f$
as a polynomial in $x_1,\dots,x_{n-1}$ with coefficients in $K[x_n]$.
We claim that for each such coefficient $g= \sum c_i x_n^i$ and
$\alpha$ of valuation $a$ we have $ \val(g(\alpha)) \geq \trop(g)(a)$,
with equality for all but finitely values of
$\overline{\alpha/\alpha_0} \in \K$.  The inequality is immediate from the
valuation axioms, so we only need justify the equality condition.  Fix
$j$ with $\val(c_j\alpha^j) = \trop(g)(a)$, and note that $g =
c_j\alpha_0^j \sum_i (c_i\alpha_0^{i-j}/c_j ) (x_n/\alpha_0)^i$.  Set
$b_i = c_i \alpha_0^{i-j}/c_j$ and $g' = \sum b_i y^i$.  By
construction $\val(g(\alpha))>\trop(g)(a)$ if and only if
$\val(g'(\alpha/\alpha_0)) > \trop(g')(0)=0$.  This occurs if and only
if $\overline{\alpha/\alpha_0}$ is one of the finitely many roots of
$\overline{g'} = \sum_i \overline{b_i} y_i \in \K[x_1,\dots,x_n]$.

The ideal $\trop(I)|_{x_n=a}$ is
generated by its circuits, which are specializations at $x_n=a$ of
circuits of $\trop(I)$.  These are tropicalizations of polynomials in
$I$, and up to scaling, there are a countable number of them.  There are thus, up to scaling,
 a countable
number of polynomials in $\overline{g}' \in \K[x_n]$ where $g$ is a
coefficient of a circuit.  We have $\trop(I)|_{x_n=a} =
\trop(I|_{x_n=\alpha})$ for all $\alpha \in K$ with $\val(\alpha)=a$
and the property that $\alpha/\alpha_0$ is not a
root of any of these polynomials in $\K[x_n]$.

To see that some hypothesis on the field is necessary, consider the
trivial valuation on $\mathbb Z/2\mathbb Z$, and the ideal $I= \langle
y^2+y+x\rangle \subseteq \mathbb Z/2\mathbb Z[x,y]$.  Then $\trop(I) \subseteq \mathbb B[x,y]$ contains $y^2 \tplus y \tplus x$
but does not contain any polynomial in $\mathbb B[y]$, and is saturated
with respect to $x$.  It follows that $\trop(I)|_{y=0}$ contains $x \tplus 0$, but does
not contain $x$.  However the only element of $\mathbb Z/2 \mathbb Z$
with valuation $0$ is $1$, so $I|_{y=1} = \langle x \rangle$, and thus $\trop(I|_{y=1})$ 
is  not equal to $\trop(I)|_{y=0}$.

On the other hand, $\trop(I)|_{x_n=\infty} = \trop(I|_{x_n=0})$ holds
without any hypothesis on the field.
\end{remark}

\begin{corollary} \label{c:Laurentworkstoo}
Let $I \subseteq \Rbar[x_1^{\pm 1},\dots,x_n^{\pm 1}]$ be a tropical
ideal, and let $J = I \cap \Rbar[x_1,\dots,x_n]$.  For any $a \in
\mathbb R$ the ideal 
$$I|_{x_n=a} := \{ f|_{x_n =a} : f \in I \} \subseteq
\Rbar[x_1^{\pm 1},\dots,x_{n-1}^{\pm 1}]$$ equals $J|_{x_n=a}
\Rbar[x_1^{\pm 1},\dots,x_{n-1}^{\pm 1}]$.  Thus $I|_{x_n=a}$ is a
tropical ideal.
\end{corollary}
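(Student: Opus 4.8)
The plan is to reduce the statement to Theorem~\ref{t:restrictiontheorem} by passing back and forth between the Laurent semiring ring and the ordinary polynomial semiring ring, exactly as set up in Lemma~\ref{l:basicproperties}. First I would recall that, by Part~\ref{enum:torustoaffine} of Lemma~\ref{l:basicproperties}, the ideal $J = I \cap \Rbar[x_1,\dots,x_n]$ is a tropical ideal in $\Rbar[x_1,\dots,x_n]$. Theorem~\ref{t:restrictiontheorem} then shows that $J|_{x_n=a} \subseteq \Rbar[x_1,\dots,x_{n-1}]$ is a tropical ideal, and a second application of Part~\ref{enum:torustoaffine} of Lemma~\ref{l:basicproperties} shows that $J|_{x_n=a}\,\Rbar[x_1^{\pm 1},\dots,x_{n-1}^{\pm 1}]$ is a tropical ideal in $\Rbar[x_1^{\pm 1},\dots,x_{n-1}^{\pm 1}]$. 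So everything reduces to the set equality
\[ I|_{x_n=a} = J|_{x_n=a}\,\Rbar[x_1^{\pm 1},\dots,x_{n-1}^{\pm 1}]. \]

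For the inclusion $\supseteq$, I would note that $J \subseteq I$ gives $J|_{x_n=a} \subseteq I|_{x_n=a}$, and that $I|_{x_n=a}$ is an ideal of $\Rbar[x_1^{\pm 1},\dots,x_{n-1}^{\pm 1}]$: specialization at $x_n=a$ is a semiring homomorphism $\Rbar[x_1^{\pm 1},\dots,x_n^{\pm 1}] \to \Rbar[x_1^{\pm 1},\dots,x_{n-1}^{\pm 1}]$, so $I|_{x_n=a}$ is closed under $\tplus$ and under $\ttimes$ by elements of $\Rbar[x_1^{\pm 1},\dots,x_{n-1}^{\pm 1}]$ (lift the multiplier to the larger ring). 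Hence $I|_{x_n=a}$ contains the ideal generated by $J|_{x_n=a}$, which is the right-hand side.

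For the inclusion $\subseteq$, take $h \in I|_{x_n=a}$, say $h = F|_{x_n=a}$ with $F \in I$. Since $F$ is a Laurent polynomial, there is a Laurent monomial $\bfx^{\bfw}$, with exponent vector $\bfw = (w_1,\dots,w_{n-1},w_n) \in \ZZ^n$, such that $\bfx^{\bfw}F$ lies in $\Rbar[x_1,\dots,x_n]$; because $I$ is an ideal of the Laurent semiring ring, $g := \bfx^{\bfw}F \in I \cap \Rbar[x_1,\dots,x_n] = J$, so $F = \bfx^{-\bfw}g$. As $a \in \RR$ (and not $\infty$), $a$ is a unit of $\Rbar$, so substituting $x_n=a$ gives
\[ h = F|_{x_n=a} = (-w_n a)\ttimes x_1^{-w_1}\cdots x_{n-1}^{-w_{n-1}}\ttimes (g|_{x_n=a}), \]
which lies in $J|_{x_n=a}\,\Rbar[x_1^{\pm 1},\dots,x_{n-1}^{\pm 1}]$ since $g|_{x_n=a} \in J|_{x_n=a}$. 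This establishes the equality, and hence that $I|_{x_n=a}$ is a tropical ideal.

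There is no substantive obstacle beyond bookkeeping: all the hard work is in Theorem~\ref{t:restrictiontheorem}. The only points that require a little care are that specialization at a real value is a semiring homomorphism that commutes with clearing denominators, and that a real number is a tropical unit, so the factor $x_n^{-w_n}$ can still be cleared after substitution even when $w_n < 0$.
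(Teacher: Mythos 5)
Your proof is correct and follows essentially the same route as the paper: both establish the identity $I = J\,\Rbar[x_1^{\pm 1},\dots,x_n^{\pm 1}]$ by clearing denominators, observe that specialization at a real value $a$ commutes with multiplication by a Laurent monomial (turning $x_n^{-w_n}$ into the finite scalar $-w_n a$), and then invoke Part~\ref{enum:torustoaffine} of Lemma~\ref{l:basicproperties} together with Theorem~\ref{t:restrictiontheorem}. The paper states the set equality more tersely, but the content is identical to your two-inclusion argument.
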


\begin{proof}
The equality $I|_{x_n=a} =
J|_{x_n=a} \Rbar[x_1^{\pm 1},\dots,x_{n-1}^{\pm 1}]$ follows from
$I = J \Rbar[x_1^{\pm 1},\dots,x_{n}^{\pm 1}]$, 
and from the fact that for any polynomial $f \in J$ and any
Laurent monomial $\bfx^\bfu$ we have 
$(f\bfx^\bfu)|_{x_n=a} = f|_{x_n=a} \ttimes \bfx^\bfu|_{x_n=a}$.
Part~\ref{enum:torustoaffine} of Lemma~\ref{l:basicproperties} 
and Theorem~\ref{t:restrictiontheorem} thus imply that $I|_{x_n=a}$ 
is a tropical ideal.
\end{proof}

The following example shows that polynomials of degree at most $d$ in
a specialization are not necessarily specializations of
polynomials of degree at most $d$ in the ideal.

\begin{example} \label{e:nonhomog}
Let $J = \langle x_1-1, x_2-x_3 \rangle \subseteq \mathbb
C[x_1,x_2,x_3]$, and let $I = \trop(J)$.  Consider the specialization $I' = I|_{x_3=0}$.
We have $x_1 \tplus 0 \in I$, and thus in $x_1 \tplus 0 \in I'$.  Also, $x_2 \tplus x_3
\in I$, so $x_2 \tplus 0 \in I'$.  Since $I'$ is a tropical ideal, the
monomial elimination axiom implies that $x_1 \tplus x_2
\in I'$.  However there is no polynomial with tropicalization $x_1
\tplus x_2$ in $J$; we need to set $x_3$ to $0$ in the polynomial
$x_1x_3 \tplus x_2 = \trop(x_3(x_1-1)-(x_2-x_3)) \in I$.  
\end{example}

Example~\ref{e:nonhomog} can be homogenized to
show that if $I$ is a homogeneous tropical ideal, the operation of
specializing $x_n = x_0$, which is an ideal in
$\Rbar[x_0,\dots,x_{n-1}]$, is not always a tropical ideal. This
can be fixed, however, by saturating appropriately.

\begin{proposition}\label{prop:homogeneousspecialization}
Let $I \subseteq \Rbar[x_0,\dots, x_n]$ be a homogeneous tropical ideal, 
and denote by $I|_{x_0 = 0} \subseteq \Rbar[x_1,\dots, x_n]$ its dehomogenization.
For any $a \in \Rbar$, we have
\[ (I|_{x_n = \, a x_0}:x_0^{\infty}) = ((I|_{x_0 = 0})|_{x_n = a})^h \quad \subseteq \Rbar[x_0,\dots, x_{n-1}].\]
In particular, $(I|_{x_n = \, a x_0}:x_0^{\infty})$ is a homogeneous tropical ideal.
\end{proposition}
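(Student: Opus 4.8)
The plan is to split the asserted identity into two purely formal facts about homogeneous ideals in $\Rbar[x_0,\dots,x_{n-1}]$, neither of which uses the matroid axiom, and to bring in the earlier theorems only at the very end for the tropicality conclusion. To set up, I would first observe that $A := I|_{x_n = ax_0}$ is the image of $I$ under the semiring homomorphism $\Rbar[x_0,\dots,x_n]\to\Rbar[x_0,\dots,x_{n-1}]$ fixing $x_0,\dots,x_{n-1}$ and sending $x_n\mapsto ax_0$. This map is surjective, and it carries degree-$d$ forms to degree-$d$ forms because $ax_0$ has degree $1$ (this holds also for $a=\infty$, where the relevant monomials simply disappear). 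Hence $A$ is a homogeneous ideal, and since the saturation of any homogeneous ideal is again a homogeneous ideal regardless of tropicality, $(A:x_0^\infty)$ is a well-defined homogeneous ideal of $\Rbar[x_0,\dots,x_{n-1}]$.

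The first fact I would establish is that for \emph{every} homogeneous ideal $J\subseteq\Rbar[x_0,\dots,x_{n-1}]$ one has $(J:x_0^\infty)=(J|_{x_0=0})^h$. The crux is the elementary identity: if $p$ is homogeneous and $x_0^j$ is the largest power of $x_0$ dividing $p$, then the homogenization of $p|_{x_0=0}$ is $p\,x_0^{-j}$; this is immediate from the definitions of de- and re-homogenization, using that $0$ is the tropical multiplicative unit. For the inclusion $\supseteq$: a generator of $(J|_{x_0=0})^h$ is $\tilde{g}$ for $g=p|_{x_0=0}$ with $p\in J$, and $\tilde{g}=p\,x_0^{-j}$ lies in $(J:x_0^\infty)$ since $\tilde{g}\,x_0^{j}=p\in J$. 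For $\subseteq$: as $J$ is homogeneous and $x_0$ is a monomial, $(J:x_0^\infty)$ is homogeneous, so it suffices to treat a homogeneous $g$ with $g\,x_0^k\in J$; then $g|_{x_0=0}=(g\,x_0^k)|_{x_0=0}\in J|_{x_0=0}$, so $g\,x_0^{-j}$ (the homogenization of $g|_{x_0=0}$) lies in $(J|_{x_0=0})^h$, whence $g=g\,x_0^{-j}\cdot x_0^{j}\in(J|_{x_0=0})^h$. Applying this with $J=A$ yields $(I|_{x_n=ax_0}:x_0^\infty)=(A|_{x_0=0})^h$.

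The second fact is that dehomogenization commutes with the substitution, i.e.\ $A|_{x_0=0}=(I|_{x_0=0})|_{x_n=a}$. Here I would note that the two composite operations $(\,\cdot\,|_{x_n=ax_0})|_{x_0=0}$ and $(\,\cdot\,|_{x_0=0})|_{x_n=a}$ are both compositions of semiring homomorphisms out of the (free) polynomial semiring $\Rbar[x_0,\dots,x_n]$ which restrict to the identity on $\Rbar$ and send $x_0\mapsto 0$, $x_i\mapsto x_i$ for $1\le i\le n-1$, and $x_n\mapsto a$; being determined by their restriction to $\Rbar$ and their values on the generators $x_0,\dots,x_n$, they coincide. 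Applying the common map to every $f\in I$ gives $A|_{x_0=0}=\{(f|_{x_0=0})|_{x_n=a}:f\in I\}=(I|_{x_0=0})|_{x_n=a}$. Combining the two facts gives the claimed equality $(I|_{x_n=ax_0}:x_0^\infty)=((I|_{x_0=0})|_{x_n=a})^h$. Finally, for the ``in particular'' statement: $I|_{x_0=0}$ is a tropical ideal by Part~\ref{enum:affinetoproj} of Lemma~\ref{l:basicproperties}, so $(I|_{x_0=0})|_{x_n=a}$ is a tropical ideal by Theorem~\ref{t:restrictiontheorem}, and its homogenization is a homogeneous tropical ideal by Part~\ref{enum:affinetoproj} of Lemma~\ref{l:basicproperties} once more.

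I do not anticipate a serious obstacle; the entire argument is bookkeeping with the tropical conventions (dehomogenization is the substitution $x_0=0$, not $x_0=1$, since $0$ is the tropical unit; substituting $x_n=ax_0$ adds $u_n\cdot a$ to the coefficient of each monomial). The only points needing a little care are the identity ``homogenization of $p|_{x_0=0}$ equals $p\,x_0^{-j}$'' and the homogeneity of $(J:x_0^\infty)$, both of which are routine once $J$ is known to be homogeneous — and it is precisely the degree-preservation of $x_n\mapsto ax_0$ that makes $A$ homogeneous, so that saturating with respect to $x_0$ is the right repair, matching the failure of the naive specialization illustrated by the homogenization of Example~\ref{e:nonhomog}.
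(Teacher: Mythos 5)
Your proof is correct and takes essentially the same route as the paper's: both rest on the elementary identity that for a homogeneous polynomial $p$ with $x_0^j$ the largest power of $x_0$ dividing it, the homogenization of $p|_{x_0=0}$ equals $p$ divided by $x_0^j$ (equivalently, $f|_{x_n=ax_0}=x_0^r\ttimes((f|_{x_0=0})|_{x_n=a})^h$ for homogeneous $f$), together with the observation that $((I|_{x_0=0})|_{x_n=a})^h$ is saturated with respect to $x_0$, and both conclude tropicality from Lemma~\ref{l:basicproperties}\eqref{enum:affinetoproj} and Theorem~\ref{t:restrictiontheorem}. The only difference is organizational — the paper sandwiches $I|_{x_n=ax_0}\subseteq((I|_{x_0=0})|_{x_n=a})^h\subseteq(I|_{x_n=ax_0}:x_0^\infty)$ and saturates, whereas you isolate the general identity $(J:x_0^\infty)=(J|_{x_0=0})^h$ for homogeneous $J$ plus the commutation of the two substitutions — and your reduction to homogeneous elements (needed since $\widetilde{p|_{x_0=0}}=p\,x_0^{-j}$ requires $p$ homogeneous) is the same routine step the paper also leaves implicit.
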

\begin{proof}
For any homogeneous polynomial $f \in \Rbar[x_0,\dots, x_n]$, we have, for some $r \geq 0$, 
the equality 
$f|_{x_n = \, a x_0} = x_0^r \ttimes (f|_{x_0 = 0, \, x_n = a})^h \in \Rbar[x_0,\dots, x_{n-1}]$.
This implies that $I|_{x_n = \, a x_0} \subseteq ((I|_{x_0 = 0})|_{x_n = a})^h \subseteq (I|_{x_n = \, a x_0}:x_0^{\infty})$. 
As $((I|_{x_0 = 0})|_{x_n = a})^h$ is saturated with respect to $x_0$, the desired equality must hold.
\end{proof}

We finish this section with some observations about the effect of
specialization on initial ideals and varieties.  Recall that the
trivialization of a tropical ideal $I$ is the image of $I$ under the 
homomorphism induced by $\varphi \colon \Rbar \rightarrow \mathbb B$
defined by $\varphi(a) = 0 $ if $a \neq \infty$ and $\varphi(\infty) =
\infty$.

\begin{lemma} \label{l:basicfacts}
  Let $I$ be a tropical ideal in $\Rbar[x_1,\dots,x_n]$ or $\Rbar[x_1^{\pm 1},\dots,x_n^{\pm 1}]$.
\leavevmode
\begin{enumerate}
\item \label{item:trivialization} For all $a \in \Rbar$ we have
  $\varphi(I|_{x_n=a})=\varphi(I)|_{x_n=\varphi(a)}$.
\item \label{item:initial} For all $\mathbf{w} \in \mathbb R^{n-1}$ and all $a \in \mathbb R$ 
we have $\inn_{\mathbf{w}}(I|_{x_n=a}) = \inn_{(\mathbf{w},a)}(I)|_{x_n=0}$.
\end{enumerate}
\end{lemma}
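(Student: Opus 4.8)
The plan is to prove both parts by establishing the corresponding identities for a \emph{single} polynomial and then passing to the whole ideal. This reduction is legitimate because, as recorded in Section~\ref{s:Groebner}, each of the three constructions involved is literally an image-set rather than a generated ideal: $\varphi(I)=\{\varphi(f):f\in I\}$, $I|_{x_n=a}=\{f|_{x_n=a}:f\in I\}$, and $\inn_\bfw(I)=\{\inn_\bfw(f):f\in I\}$. So it suffices to show, for every polynomial $f$ (resp.\ $F$), that
\[\varphi(f|_{x_n=a}) = \varphi(f)|_{x_n=\varphi(a)} \qquad\text{and}\qquad \inn_\bfw(F|_{x_n=a}) = \inn_{(\bfw,a)}(F)|_{x_n=0};\]
taking images over $f,F\in I$ then yields \eqref{item:trivialization} and \eqref{item:initial} respectively.

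\textbf{Part \eqref{item:trivialization}.} I would treat $a\in\RR$ and $a=\infty$ separately. Write $f=\bigoplus_{\bfu,j}c_{\bfu,j}\,\bfx^\bfu x_n^j$ with $\bfx^\bfu$ a monomial in $x_1,\dots,x_{n-1}$. When $a\in\RR$ (so $\varphi(a)=0$), the coefficient of $\bfx^\bfu$ in $f|_{x_n=a}$ is $\bigoplus_j(c_{\bfu,j}\ttimes a^j)$, which is finite exactly when some $c_{\bfu,j}$ is finite; the coefficient of $\bfx^\bfu$ in $\varphi(f)|_{x_n=0}$ is $\bigoplus_j\varphi(c_{\bfu,j})$, which equals $0$ under exactly the same condition. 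Hence the two Boolean polynomials have the same support and so coincide. When $a=\infty$ (in which case $I$ is an affine tropical ideal, since $x_n=\infty$ cannot be substituted into a Laurent monomial), both $\cdot|_{x_n=\infty}$ operations simply delete the monomials divisible by $x_n$, and this commutes with applying $\varphi$ coefficientwise, so again the two sides agree. Taking images over $f\in I$ gives $\varphi(I|_{x_n=a})=\varphi(I)|_{x_n=\varphi(a)}$.

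\textbf{Part \eqref{item:initial}.} Fix $\bfw\in\RR^{n-1}$, $a\in\RR$, and $F=\bigoplus_{\bfu,j}c_{\bfu,j}\,\bfx^\bfu x_n^j\in I$. The key observation is that the two relevant minima agree: $(F|_{x_n=a})(\bfw)=\min_{\bfu}\bigl(\min_j(c_{\bfu,j}+ja)+\bfw\cdot\bfu\bigr)=\min_{\bfu,j}(c_{\bfu,j}+\bfw\cdot\bfu+ja)=F(\bfw,a)$. Consequently a monomial $\bfx^\bfu$ lies in the support of $\inn_\bfw(F|_{x_n=a})$ iff $\min_j(c_{\bfu,j}+ja)+\bfw\cdot\bfu=F(\bfw,a)$, iff $c_{\bfu,j}+\bfw\cdot\bfu+ja=F(\bfw,a)$ for some $j$, iff $\bfx^\bfu x_n^j$ lies in the support of $\inn_{(\bfw,a)}(F)$ for some $j$, iff $\bfx^\bfu$ survives in $\inn_{(\bfw,a)}(F)|_{x_n=0}$ (setting $x_n=0$ collapses the powers of $x_n$). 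Since both sides have Boolean coefficients, equality of supports gives equality of polynomials, and applying this to every $F\in I$ together with the set descriptions above gives $\inn_\bfw(I|_{x_n=a})=\{\inn_\bfw(F|_{x_n=a}):F\in I\}=\{\inn_{(\bfw,a)}(F)|_{x_n=0}:F\in I\}=\inn_{(\bfw,a)}(I)|_{x_n=0}$.

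\textbf{Expected obstacle.} I do not anticipate a genuine difficulty: the entire content is in the two single-polynomial identities, which are immediate from the definitions of $\varphi$, specialization, and the initial form. The only points that need care are the degenerate value $a=\infty$ (handled as above) and the standard convention $\inn_\bfw(\infty)=\infty$ for the zero polynomial. The one substantive input is the fact, imported from \cite{TropicalIdeals} and recalled in Section~\ref{s:Groebner}, that all three operations return the image-set and not merely the ideal it generates — this is exactly what licenses the reduction to single polynomials.
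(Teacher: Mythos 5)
Your proposal is correct and follows essentially the same route as the paper: Part~\eqref{item:trivialization} is exactly the observation that the semiring homomorphism $\varphi$ commutes with specialization, and Part~\eqref{item:initial} reduces to the single-polynomial identity $\inn_{\mathbf{w}}(f|_{x_n=a})=\inn_{(\mathbf{w},a)}(f)|_{x_n=0}$, which you verify by the same comparison of minima that appears in the paper's proof of~\eqref{eq:initialspecialization}. The only cosmetic difference is that for the Laurent case of Part~\eqref{item:initial} the paper reduces to the affine case via Part~\eqref{enum:initialaffinetorus} of Lemma~\ref{l:basicinitial}, whereas your direct computation applies verbatim to Laurent polynomials; both are fine.
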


\begin{proof}
In both cases, Part \ref{item:trivialization} is expressing the fact that applying $\varphi$ 
to a polynomial commutes with specializing a variable, as $\varphi$ is a semiring homomorphism.

To prove Part \ref{item:initial} for $I \subseteq \Rbar[x_1,\dots,x_n]$, note that $\inn_{\mathbf{w}}(I|_{x_n=a})$ is equal to the set of 
polynomials of the form $\inn_{\mathbf{w}}(f|_{x_n=a})$ with $f \in I$, while $\inn_{(\mathbf{w},a)}(I)|_{x_n=0}$
is equal to the set of polynomials of the form $\inn_{(\mathbf{w},a)}(f)|_{x_n=0}$ with $f \in I$.
It thus suffices to show that  
\begin{equation}\label{eq:initialspecialization}
\inn_{\mathbf{w}}(f|_{x_n=a}) = \inn_{(\mathbf{w},a)}(f)|_{x_n=0}
\end{equation}
for any polynomial $f = \bigoplus c_\bfu \bfx^\bfu \in \Rbar[x_1,\dots,x_n]$.
For $\bfu \in \mathbb N^n$, denote by $\bfu'$ its projection 
onto the first $n-1$ coordinates. 
The initial term $\inn_{(\bfw,a)}(f)$ is equal to the sum of those monomials
$\bfx^\bfu$ for which $c_\bfu + (\bfw, a) \cdot \bfu$ is smallest, and so
$\inn_{(\bfw,a)}(f)|_{x_n=0}$ is the sum of those
monomials $\bfx^{\bfu'}$ for which there exists $u_n$ with $c_{(\bfu',u_n)} + \bfw \cdot \bfu'
+ a \, u_n$ equal to the minimum value $f(\bfw, a)$.  
Now, the coefficient of
$\mathbf{x}^{\mathbf{u}'}$ in $f|_{x_n=a}$ is 
$\min(c_{(\mathbf{u}',u_n)} + u_n \, a) =: b_{\mathbf{u}'}$.  The minimum value of
$b_{\mathbf{u}'}+\mathbf{w} \cdot \mathbf{u}'$ is thus also $f(\bfw, a)$,
and $\inn_{\mathbf{w}}(f|_{x_n=a})$ is the sum of those monomials
$\mathbf{x}^{\mathbf{u}'}$ achieving this minimum. 
The desired equality follows from the fact that $\mathbf{u}'$ achieves the minimum
if and only if any choice of $u_n$ such that
$\min(c_{(\mathbf{u}',u_n)} + u_n \, a) = b_{\mathbf{u}'}$ satisfies
$c_{(\bfu',u_n)} + \bfw \cdot \bfu' + a \, u_n = f(\bfw, a)$.

The case that $I \subseteq \Rbar[x_1^{\pm 1},\dots,x_n^{\pm 1}]$ 
follows from the above argument using
Part~\ref{enum:initialaffinetorus} of Lemma~\ref{l:basicinitial},
since $I|_{x_n=a} = (I \cap \Rbar[x_1,\dots,x_n])|_{x_n=a} \mathbb
R[x_1^{\pm 1},\dots,x_n^{\pm 1}]$.
\end{proof}

Recall that two polyhedral complexes $\Sigma_1, \Sigma_2$ in $\mathbb
R^n$ {\defbold intersect transversely} at $\mathbf{w} \in \mathbb R^n$ if
$\mathbf{w}$ lies in the relative interior of $\sigma_1 \in \Sigma_1$
and $\sigma_2 \in \Sigma_2$, and $\spann(\sigma_1) + \spann(\sigma_2) = \RR^n$.
We say that $\Sigma_1$ and $\Sigma_2$ intersect transversely if they
intersect transversely at any $\bfw \in \Sigma_1 \cap \Sigma_2$.

\begin{proposition} \label{p:transverseintersection}
Fix a tropical ideal $I \subseteq \Rbar[x_1^{\pm 1},\dots,x_n^{\pm
    1}]$, and let $a \in \mathbb R$.  Then $V(I|_{x_n=a}) \subseteq
\mathbb R^{n-1}$ in contained in $\pi(V(I) \cap \{x_n=a \})$, where
$\pi : \mathbb R^n \rightarrow \mathbb R^{n-1}$ is the projection onto
the first $n-1$ coordinates.  Moreover if $\mathbf{w}$ lies in a
closed cell $\sigma$ of a polyhedral complex $\Sigma$ with
$|\Sigma|=V(I)$ with the property that $\spann(\sigma) \not \subseteq
\{x_n=0 \}$, then $\pi(\mathbf{w}) \in V(I|_{x_n=a})$.  Thus if the
intersection of $V(I)$ and $\{x_n=a \}$ is transverse at $\mathbf{w}
\in \mathbb R^n$ then $\pi(\mathbf{w}) \in V(I|_{x_n=a})$.
\end{proposition}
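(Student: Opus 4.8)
The plan is to deduce both assertions from Part~\ref{item:initial} of Lemma~\ref{l:basicfacts}, which identifies $\inn_{\bfw'}(I|_{x_n=a})$ with $\inn_{(\bfw',a)}(I)|_{x_n=0}$, together with the standard description of tropical varieties: for a Laurent tropical ideal $K$ (possibly with Boolean coefficients) and a point $\bfp$ in its ambient real space, one has $\bfp\notin V(K)$ if and only if the Boolean ideal $\inn_{\bfp}(K)$ contains a monomial; in particular $V(K)=\emptyset$ as soon as $K$ itself contains a monomial. By Corollary~\ref{c:Laurentworkstoo} the ideal $I|_{x_n=a}$ is again a tropical ideal, so this applies to it. With these two tools in hand, both statements become purely about which monomials survive the substitution $x_n=0$ in the Boolean sense.

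For the inclusion $V(I|_{x_n=a})\subseteq\pi(V(I)\cap\{x_n=a\})$ I would argue contrapositively. Suppose $\bfw'\in\RR^{n-1}$ and $(\bfw',a)\notin V(I)$, so $\inn_{(\bfw',a)}(I)$ contains a monomial $\bfx^{\bfu}$. Since dehomogenization $(\cdot)|_{x_n=0}$ is a semiring homomorphism sending monomials to monomials, $\bfx^{\bfu}|_{x_n=0}$ is a monomial of $\inn_{(\bfw',a)}(I)|_{x_n=0}=\inn_{\bfw'}(I|_{x_n=a})$, whence $\bfw'\notin V(I|_{x_n=a})$.

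For the ``moreover'' clause I read $\bfw$ as lying in $\{x_n=a\}$ (i.e. $w_n=a$): this is the only case needed, it is automatically satisfied in the application to transverse intersections, and it is genuinely necessary — for instance it already fails for $I=\trop(\langle x_1-x_2\rangle)$, $a=0$, $\bfw=(5,5)$. Using $w_n=a$, Part~\ref{item:initial} of Lemma~\ref{l:basicfacts} gives $\inn_{\pi(\bfw)}(I|_{x_n=a})=\inn_{\bfw}(I)|_{x_n=0}$, so it is enough to show that the Boolean tropical ideal $J:=\inn_{\bfw}(I)$ has the property that $J|_{x_n=0}$ contains no monomial. Assume it contained a monomial $\bfx^{\bfu'}$. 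Because $(\cdot)|_{x_n=0}$ collapses each Boolean monomial $\bfx^{\bfv}x_n^k$ to $\bfx^{\bfv}$, any preimage of $\bfx^{\bfu'}$ in $J$ is supported only on monomials $\bfx^{\bfu'}x_n^k$, so $J$ contains a polynomial $\bfx^{\bfu'}q$ with $q=\bigoplus_{k\in S}x_n^k$ for a finite nonempty $S\subseteq\ZZ$. By Proposition~\ref{p:star} we have $V(J)=\starr_{V(I)}(\bfw)$, which is nonempty since $\bfw\in V(I)$; as $J$ therefore contains no monomial, $|S|\geq 2$. Now for any $\bfp\in V(J)$ the polynomial $\inn_{\bfp}(\bfx^{\bfu'}q)=\bfx^{\bfu'}\inn_{p_n}(q)$ lies in $J$, hence is not a monomial, so $\inn_{p_n}(q)$ is not a monomial; but $\inn_{p_n}(q)$ equals $x_n^{\min S}$ when $p_n>0$ and $x_n^{\max S}$ when $p_n<0$, so $p_n=0$. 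Thus $V(J)\subseteq\{x_n=0\}$. This contradicts the hypothesis $\spann(\sigma)\not\subseteq\{x_n=0\}$: together with $\bfw\in\sigma\subseteq V(I)$ it gives a point $\bfx\in\sigma$ with $x_n\neq w_n$, and then $\bfx-\bfw\in\starr_{V(I)}(\bfw)=V(J)$ (the segment $[\bfw,\bfx]$ lies in $\sigma\subseteq V(I)$) with nonzero $n$-th coordinate. Hence $J|_{x_n=0}$ has no monomial and $\pi(\bfw)\in V(I|_{x_n=a})$. The final sentence is the special case in which $\sigma$ is the cell of a polyhedral structure on $V(I)$ with $\bfw\in\relint(\sigma)$: transversality of $V(I)$ and $\{x_n=a\}$ at $\bfw$ means exactly that $\bfw\in\{x_n=a\}$ and $\spann(\sigma)+\{x_n=0\}=\RR^n$, i.e. $\spann(\sigma)\not\subseteq\{x_n=0\}$.

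No step here is genuinely hard; the whole proof runs on the two reductions above. The one point needing care is recognizing that a monomial in $J|_{x_n=0}$ forces $J$ to contain a polynomial of the very rigid shape $\bfx^{\bfu'}q(x_n)$ — this is exactly what trivializes the computation showing $V(J)\subseteq\{x_n=0\}$ — together with keeping honest track of the hypothesis $w_n=a$, which the transversality condition is designed to supply.
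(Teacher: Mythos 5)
Your proof is correct and follows essentially the same route as the paper's: the first containment via the identity $\inn_{\mathbf{w}'}(I|_{x_n=a})=\inn_{(\mathbf{w}',a)}(I)|_{x_n=0}$ of Lemma~\ref{l:basicfacts}, and the ``moreover'' clause by showing that a monomial in $\inn_{\pi(\mathbf{w})}(I|_{x_n=a})$ forces $J=\inn_{\mathbf{w}}(I)$ to contain a polynomial of the form $\mathbf{x}^{\mathbf{u}'}q(x_n)$ with $q$ having at least two terms, hence forces $V(J)=\starr_{V(I)}(\mathbf{w})$ into $\{x_n=0\}$, contradicting $\spann(\sigma)\not\subseteq\{x_n=0\}$. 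You are also right that the hypothesis $w_n=a$ is needed and implicit in the statement; the paper's proof uses it tacitly at exactly the point where you invoke it. One small justification to repair: a Boolean tropical ideal $J$ need not contain $\inn_{\mathbf{p}}(h)$ for $h\in J$ and $\mathbf{p}\in\RR^n$, but the fact you actually need --- that $\inn_{\mathbf{p}}(\mathbf{x}^{\mathbf{u}'}q)$ is not a monomial when $\mathbf{p}\in V(J)$ --- is immediate from the definition of $V(J)$ applied to the element $\mathbf{x}^{\mathbf{u}'}q$ of $J$.
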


\begin{proof}
Fix $\mathbf{w}' \in V(I|_{x_n=a}) \subseteq \mathbb R^{n-1}$.  Then for any $f
\in I$, the minimum in $f|_{x_n=a}(\mathbf{w}')$ is achieved at least
twice, say at monomials $\mathbf{x}^{\mathbf{u}}$ and
$\mathbf{x}^{\mathbf{v}}$.  Write $i,j$ for exponents of $x_n$ at
which the minimum in the univariate polynomials $f_{\mathbf{u}}(a)$ and $f_{\mathbf{v}}(a)$ is
achieved.  Then the minimum in $f(\mathbf{w}',a)$ is achieved at the
terms $\mathbf{x}^{\mathbf{u}}x_n^i$ and
$\mathbf{x}^{\mathbf{v}}x_n^j$, and so $(\bfw',a) \in V(I)$.

Now suppose that $\mathbf{w}$ lies in a closed cell $\sigma$ of a
polyhedral complex $\Sigma$ with $|\Sigma|=V(I)$ with the property
that $\spann(\sigma) \not \subseteq \{x_n=0 \}$, but assume that
$\pi(\bfw) \notin V(I|_{x_n=a})$.  Then there is $f \in I|_{x_n=a}$
with $\inn_{\pi(\bfw)}(f)$ equal to a monomial
$\mathbf{x}^{\mathbf{u}}$.  Fix $F \in I$ with $f = F|_{x_n=a}$.  Then
$\inn_{\mathbf{w}}(F)$ equals $\mathbf{x}^{\mathbf{u}}$ times a
polynomial $g \in \mathbb{B}[x_n^{\pm 1}]$ with more than one term, by
\eqref{eq:initialspecialization}.  This means that for any
$\mathbf{w}' \in V(\inn_{\mathbf{w}}(I))$ we must have $w'_n=0$.  By
Proposition~\ref{p:star}, the variety of the initial ideal
$\inn_{\mathbf{w}}(I)$ is the star of $V(I)$ at $\mathbf{w}$.  But
this contradicts the assumption that $\mathbf{w} \in \sigma$ with
$\spann(\sigma) \not \in \{ x_n=0\}$.  The claim about transverse
intersection is the special case that $\mathbf{w} \in
\relint(\sigma)$.
\end{proof}

In the case that the variety $V(I)$ is the support of a pure polyhedral complex, 
we show later in Proposition \ref{p:stableintersection} that $V(I|_{x_n=a})$ is the 
{\em stable intersection} of $V(I)$ and $\{x_n=a\}$.


\section{Dimension and Projections} \label{d:dimensionprojection}

In this section we prove several fundamental results about the dimension
(Theorem~\ref{t:dimiscorrect}) and projections
(Theorem~\ref{t:projection}) of varieties of tropical ideals.

For a homogeneous tropical ideal $I \subseteq \Rbar[x_0,\dots,x_n]$, the
Hilbert function $H_I(d) = \rk(\uMat(I_d))$ agrees with a
polynomial for $d \gg 0$, called the {\defbold Hilbert polynomial} of $I$
\cite{TropicalIdeals}*{Proposition 3.8}. The {\defbold dimension} of $I$ is defined to be the
degree of this polynomial.  We can extend this definition to tropical
ideals in $\Rbar[x_1,\dots,x_n]$ and $\Rbar[x_1^{\pm 1},\dots,x_n^{\pm
    1}]$, by setting $\dim(I)= \dim(I^h)$ for $I \subseteq
\Rbar[x_1,\dots,x_n]$, and $\dim(I) = \dim(I \cap
\Rbar[x_1,\dots,x_n])$ for $I \subseteq \Rbar[x_1^{\pm
    1},\dots,x_n^{\pm 1}]$.

\begin{proposition} \label{p:dimgoesdown}
Let $I$ be a proper tropical ideal in $\Rbar[x_1^{\pm 1},\dots,x_n^{\pm 1}]$ or $\Rbar[x_1,\dots,x_n]$.  
For any $a \in \mathbb R$ we have 
$$\dim(I|_{x_n=a}) \leq \dim(I)-1.$$
\end{proposition}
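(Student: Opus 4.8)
The plan is to pass to a monomial initial ideal, where the statement reduces to an elementary fact about staircases and where the classical obstruction — a component of the variety that meets no coordinate hyperplane — simply cannot arise.

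First I would reduce to the affine case. If $I\subseteq\Rbar[x_1^{\pm1},\dots,x_n^{\pm1}]$, set $J=I\cap\Rbar[x_1,\dots,x_n]$; then $\dim(I)=\dim(J)$ by definition and $J$ is again a proper tropical ideal, while by Corollary~\ref{c:Laurentworkstoo} and Lemma~\ref{l:basicproperties}(3) one has $I|_{x_n=a}\cap\Rbar[x_1,\dots,x_{n-1}]=(J|_{x_n=a}:(\prod_{i<n}x_i)^\infty)\supseteq J|_{x_n=a}$. Since homogenization preserves inclusions and the Hilbert function is anti-monotone under inclusions of homogeneous tropical ideals, dimension is non-increasing along inclusions of tropical ideals, so $\dim(I|_{x_n=a})\le\dim(J|_{x_n=a})$; hence it is enough to prove the statement for the affine proper tropical ideal $J$. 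From now on $I\subseteq\Rbar[x_1,\dots,x_n]$ is affine and proper, and I want $\dim(I|_{x_n=a})\le\dim(I)-1$.

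Next I would reduce to the case of a monomial ideal. Let $\pi\colon\NN^n\to\NN^{n-1}$ be the projection forgetting the last coordinate, fix any monomial term order $\prec$ on $\Rbar[x_1,\dots,x_{n-1}]$, and choose a monomial term order $\prec'$ on $\Rbar[x_1,\dots,x_n]$ refining the pull-back of $\prec$ along $\pi$: compare $\bfx^{\bfu}$ and $\bfx^{\bfv}$ first by whether $\bfx^{\pi(\bfu)}\prec\bfx^{\pi(\bfv)}$, breaking remaining ties by declaring the monomial of larger $x_n$-degree to be the smaller one (the direction of the tie-break is forced by the $\min$-convention, and one checks the term-order axioms). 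Because $a\in\RR$, the support of $F|_{x_n=a}$ is exactly $\pi(\supp F)$ for every $F\in I$, so $\inn_\prec(F|_{x_n=a})=\inn_{\prec'}(F)|_{x_n=0}$, and passing to the ideals these generate gives $\inn_\prec(I|_{x_n=a})=\inn_{\prec'}(I)|_{x_n=0}$. Since a monomial initial ideal has the same Hilbert function, hence the same dimension, as the original ideal (Lemma~\ref{l:stdmonosbasis}, whose affine form follows by homogenizing), I get $\dim(I|_{x_n=a})=\dim(\inn_{\prec'}(I)|_{x_n=0})$ and $\dim(I)=\dim(\inn_{\prec'}(I))$. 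It therefore suffices to prove: for a monomial ideal $M\subseteq\Rbar[x_1,\dots,x_n]$ one has $\dim(M|_{x_n=0})\le\dim(M)-1$.

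This last step is pure combinatorics: $M|_{x_n=0}$ is the monomial ideal whose exponent set is $\pi(\exp M)$, and using that the dimension of an affine monomial ideal equals the largest size of a coordinate subset supporting no monomial of the ideal, a set $\sigma\subseteq\{1,\dots,n-1\}$ supports no monomial of $M|_{x_n=0}$ precisely when $\sigma\cup\{n\}$ supports no monomial of $M$; so $\dim(M|_{x_n=0})+1=\max\{|\tau|:n\in\tau,\ \tau\text{ supports no monomial of }M\}\le\dim(M)$. The main obstacle I expect to be organizational rather than deep: setting up $\prec'$ correctly and checking that passing to $\inn_{\prec'}$ preserves dimension in the non-homogeneous setting. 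The conceptual content is exactly that a monomial ideal never has a component disjoint from all coordinate hyperplanes, which is the feature whose absence makes the naive field-theoretic analogue of this proposition false.
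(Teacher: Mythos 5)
Your reduction to the Laurent-to-affine setting is fine, the identity $\inn_{\prec}(I|_{x_n=a})=\inn_{\prec'}(I)|_{x_n=0}$ for your projection-compatible order $\prec'$ is correct (and the observation that $\supp(F|_{x_n=a})=\pi(\supp F)$ because $a\neq\infty$ is exactly the right use of the hypothesis), and the combinatorial endgame for monomial ideals is correct. The gap is the step you yourself flag as ``organizational'': the claim that passing to $\inn_{\prec'}$ preserves dimension in the non-homogeneous setting. Lemma~\ref{l:stdmonosbasis} gives $H_I=H_{\inn_\prec(I)}$ only for \emph{homogeneous} ideals. For an affine ideal $J$ with $\dim(J):=\dim(J^h)$, applying Lemma~\ref{l:stdmonosbasis} to $E=\monld$ yields
$H_{J^h}(d)=|\monld|-|\{\inn_{\prec'}(f):f\in J,\ \deg f\le d\}| \;\ge\; |\monld|-|(\inn_{\prec'}(J))_{\le d}| = H_{(\inn_{\prec'}(J))^h}(d)$,
because a low-degree monomial of $\inn_{\prec'}(J)$ may only be witnessed as the initial term of a high-degree element of $J$. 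This gives $\dim(J)\ge\dim(\inn_{\prec'}(J))$, which is the direction you can afford for $I$ itself, but for $J=I|_{x_n=a}$ your chain needs the \emph{reverse} inequality $\dim(I|_{x_n=a})\le\dim(\inn_{\prec}(I|_{x_n=a}))$, and that does not ``follow by homogenizing'': it would require $(\inn_{\prec}(J))_{\le d}=\inn_{\prec}(J_{\le d})$ up to bounded error, i.e.\ a degree-compatibility property of the order. Your $\prec'$ is necessarily \emph{not} degree-compatible, since it must assign $x_n$ weight zero at leading order (e.g.\ $x_1\prec' x_n^{100}$), so a polynomial such as $x_1\tplus x_n^{100}$ contributes $x_1$ to $\inn_{\prec'}(I)$ in degree $1$ while living in $I$ only in degree $100$; the induced order $\prec$ downstairs inherits the same defect through the specialization.

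This is not a bookkeeping issue: it is precisely the point where the paper's proof does its real work. There, one chooses $w_n$ generically so that a full-dimensional set of weights $(w_0,\dots,w_{n-1})$ gives monomial initial ideals of $I^h$, takes $\prec$ to be reverse-lexicographic with $x_0$ last on $((I')^h)$, and uses saturation of $(I')^h$ with respect to $x_0$ to prove exactly the compatibility $(\inn_{\bfw'}(I'))_{\le d}=\inn_{\bfw'}(I'_{\le d})$ that your argument assumes; only then does a counting argument (building an independent set $B_d$ of $\uMat(I_{\le d})$ out of a basis $B'_d$ of $\uMat(I'_{\le d})$ by appending powers of $x_n$) deliver $\dim(I)\ge\dim(I')+1$. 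To repair your proof you would need to establish that affine dimension is preserved under your specific $\inn_{\prec}$ and $\inn_{\prec'}$ — which, for non-degree-compatible orders, is an open step in your write-up and is essentially equivalent in difficulty to the proposition itself.
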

\begin{proof}
If $I$ is a tropical ideal in $\Rbar[x_1^{\pm 1},\dots,x_n^{\pm 1}]$,
let $J = I \cap \Rbar[x_1, \dots, x_n]$. By definition, $\dim(J) =
\dim(I)$. We also have $J|_{x_n=a} = (I|_{x_n=a}) \cap
\Rbar[x_1,\dots,x_{n-1}]$, and so
$\dim(J|_{x_n=a})= \dim(I|_{x_n=a})$.
 Replacing $I$ by $J$, we see
that we can reduce to proving the statement for ideals in
$\Rbar[x_1,\dots,x_n]$.

Suppose that $I$ is a tropical ideal in $\Rbar[x_1,\dots,x_n]$. 
Since the dimension of a tropical ideal depends only on its trivialization, 
by Part~\ref{item:trivialization} of Lemma~\ref{l:basicfacts} the dimension of $I|_{x_n = a}$ does not depend on the value of $a \in \RR$.  

For any $\bfw \in \RR^n$, write $\mathbf{w}'$ for the projection of 
$\mathbf{w}$ onto the first
$n-1$ coordinates. 
By Part~\ref{item:initial} of Lemma~\ref{l:basicfacts}, 
we have $\inn_{\mathbf{w}'}(I|_{x_n=w_n}) =
(\inn_{\mathbf{w}}(I))|_{x_n=0}$. 
In particular, if $\inn_{\mathbf{w}}(I)$ is a
monomial ideal then $\inn_{\mathbf{w}'}(I|_{x_n = w_n})$ is
a monomial ideal as well. 

We now prove that there is a $\bfw \in \RR^n$ such that $\inn_\bfw(I)$ 
is a monomial ideal
with the property that $(\inn_{\bfw'}(I|_{x_n=w_n}))_{\leq d} =
\inn_{\bfw'}((I|_{x_n=w_n})_{\leq d})$ for all $d \geq 0$.  Consider
the homogenization $I^h \subseteq \Rbar[x_0, \dots, x_n]$ of $I$. The 
Gr\"obner complex of $I^h$ is a finite polyhedral complex in 
$\RR^{n+1}$ whose maximal cells correspond to monomial initial ideals,
and so there exists $w_n \in \RR$ such that the set of $(w_0, \dots,
w_{n-1}) \in \RR^n$ for which $\inn_{(w_0,\dots,w_{n-1}, w_n)}(I^h)$
is not a monomial ideal is a polyhedral complex in $\RR^n$ of
dimension at most $n-1$.  Set $I' = I|_{x_n=w_n}$.  Let $\prec$ be a
reverse-lexicographic order on $\Rbar[x_0,\dots,x_{n-1}]$ with the
variables ordered so that $x_i \prec x_0$ for all $i = 1, \dots, n-1$.
By Proposition \ref{p:recessionfan}, the set of $(w_0,\dots,w_{n-1})
\in \RR^{n}$ such that $\inn_{(w_0,\dots,w_{n-1})}((I')^h) =
\inn_{\prec}((I')^h)$ is an $n$-dimensional open polyhedron in $\RR^n$, and so
our assumption on $w_n$ implies we can pick one such
$(w_0,\dots,w_{n-1})$ with the additional property that
$\inn_{(w_0,\dots,w_{n-1}, w_n)}(I^h)$ is a monomial ideal.  Adding a
suitable multiple of $(1,\dots,1)$, we can assume that $w_0 = 0$.  We
claim that $\bfw = (w_1,\dots,w_n)$ chosen in this way satisfies the
desired properties. First, by Part \ref{enum:initialaffineproj} of
Lemma \ref{l:basicinitial}, we have $\inn_\bfw(I) =
(\inn_{(0,\bfw)}(I^h))|_{x_0=0}$, which is a monomial ideal.  It
remains to be checked that $(\inn_{\bfw'}(I'))_{\leq d} =
\inn_{\bfw'}(I'_{\leq d})$. The inclusion $(\inn_{\bfw'}(I'))_{\leq d}
\supseteq \inn_{\bfw'}(I'_{\leq d})$ holds for any ideal $I'$ and any
vector $\mathbf{w}'$. For the reverse inclusion, take $\bfx^\bfu$ a
generator of the monomial ideal $\inn_{\bfw'}(I')$ satisfying
$\deg(\bfx^\bfu) \leq d$.  Again by Part
\ref{enum:initialaffineproj} of Lemma \ref{l:basicinitial}, we have
$\inn_{\bfw'}(I') = \inn_{(0,\bfw')}((I')^h)|_{x_0=0}$, and so there
is some $k \geq 0$ such that $x_0^k \bfx^\bfu \in
\inn_{(0,\bfw')}((I')^h) = \inn_{\prec}((I')^h)$.  Let $m = k +
|\mathbf{u}|$. By Lemma \ref{l:stdmonosbasis}, the monomials of degree $m$ not
in $\{ \inn_{\prec}(f) : f \in (I')^h_m \}$ form a basis $B$ for
$\underline{\Mat}((I')^h_m)$, and so there is a polynomial $f \in
(I')^h_m$ such that $\supp(f) \cap \inn_{\prec}((I')^h) = \{x_0^k
\bfx^\bfu\}$, corresponding to the fundamental circuit of $x_0^k
\bfx^\bfu$ over $B$.
We then have $\inn_{(0,\bfw')}(f) = \inn_{\prec}(f) = x_0^k \bfx^\bfu$. 
Since $\prec$ is reverse-lexicographic with $x_0$ the last variable, it follows that all monomials in $\supp(f)$ are divisible by $x_0^k$. As $(I')^h$ is saturated with respect to $x_0$, this implies that $f = x_0^k g$ with $g \in (I')^h$. 
We conclude that $\bfx^\bfu = \inn_{(0,\bfw')}(g) \in \inn_{(0,\bfw')}((I')^h_{\leq d})$, and thus $\bfx^\bfu \in \inn_{\bfw'}(I'_{\leq d})$, as desired.

Now, for any $d \geq 0$, let $B'_d$ be the set of monomials of degree at most $d$ not in 
$\inn_{\mathbf{w}'}(I')$. Since  $(\inn_{\bfw'}(I'))_{\leq d} = \inn_{\bfw'}(I'_{\leq d})$, 
by \cite{TropicalIdeals}*{Lemma 3.3} the set $B'_d$ is a basis of the matroid $\underline{\Mat}(I'_{\leq d})$.
Consider the set of monomials 
$B_d := \{ \bfx^\bfu x_n^k : \bfx^\bfu \in B'_d \text{ and } 0 \leq k \leq d - |\bfu|\}$.
As $\inn_{\mathbf{w}'}(I') = (\inn_{\mathbf{w}}(I))|_{x_n=0}$, none of the monomials
in $B_d$ can be contained in $\inn_\bfw(I)$, and thus $B_d$ is an independent set of the matroid $\uMat(I_{\leq d})$.
Note that $B_{d+1}$ is the disjoint union $B_{d+1}= B'_{d+1} \sqcup x_n B_d$. We then have
$H_{I'}(d+1) = |B'_{d+1}| = |B_{d+1}| - |B_d|$.
For $d \gg 0$, the function $H_{I'}(d)$ agrees with a polynomial on $d$ of degree $\dim(I')$ by \cite{TropicalIdeals}*{Proposition 3.8}.  
It follows that for $d \gg 0$, the sequence $(|B_d|)_{d \geq 0}$ agrees with a polynomial of degree $\dim(I') + 1$.
Since $H_I(d) \geq |B_d|$, the Hilbert polynomial of $I$ is a polynomial of degree at least $\dim(I') + 1$,
and thus $\dim(I) \geq  \dim(I') + 1$, as claimed.
\end{proof}

\begin{remark}
The strict inequality 
$\dim(I|_{x_n=a})<\dim(I)-1$ is possible.  An example is given by $I=
\trop(\langle x_1-1, x_2-1 \rangle \cap \langle x_3-1 \rangle) \subseteq \Rbar[x_1^{\pm 1},x_2^{\pm 1},x_3^{\pm 1}]$, which has
dimension two.  By Remark \ref{r:realizableslicing}, 
for any $a \in \RR$ we have $I|_{x_3=a} = \trop(\langle x_1-1,x_2-1 \rangle)
\subseteq \Rbar[x_1^{\pm 1},x_2^{\pm 1}]$, which is zero-dimensional.
\end{remark}

We now prove that the dimension of a tropical ideal agrees with the dimension
of its variety.

\begin{theorem} \label{t:dimiscorrect}
Let $I$ be a tropical ideal in $\Rbar[x_1^{\pm 1},\dots,x_n^{\pm 1}]$
of dimension $d$. Then $V(I) \subseteq \RR^n$ is a $d$-dimensional
polyhedral complex.
\end{theorem}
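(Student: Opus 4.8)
The plan is to prove that $\dim V(I) = d$ by establishing the two inequalities separately, using the specialization machinery developed in Section~\ref{s:specialization} and the results about initial ideals.

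For the inequality $\dim V(I) \leq d$, I would argue by induction on $d$. When $d = 0$, the ideal $I$ has a Hilbert polynomial of degree $0$, and I need to show $V(I)$ is finite (i.e., $0$-dimensional). If $V(I)$ contained a $1$-dimensional cell $\sigma$ of its polyhedral complex structure, then picking $\bfw \in \relint(\sigma)$, by Proposition~\ref{p:star} we would have $V(\inn_\bfw(I)) = \starr_{V(I)}(\bfw)$, a fan with a nontrivial lineality space; after a change of coordinates (via a $\GL_n(\ZZ)$ substitution, under which tropical ideals and dimension are preserved) I may assume this lineality direction is $\bfe_n$, so $\inn_\bfw(I)$ is homogeneous with respect to the grading by $\bfe_n$ (Corollary~\ref{c:homogeneous}). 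Then $\inn_\bfw(I)|_{x_n = 1}$ is a proper tropical ideal, but by homogeneity $\inn_\bfw(I)$ contains no polynomial in the $x_n$ variable alone, so the specialization is proper; meanwhile $\inn_\bfw(I)$ and $I$ have the same dimension (dimension depends only on the trivialization, and taking initial ideals preserves Hilbert functions by \cite{TropicalIdeals}*{Corollary 3.6}), giving a contradiction via Proposition~\ref{p:dimgoesdown} applied to a case where dimension cannot drop below $-\infty$. For the inductive step, given a $(d')$-dimensional cell $\sigma$ of $V(I)$ with $d' \geq 1$, I again reduce via $\starr$ to a tropical ideal homogeneous in some coordinate direction, specialize that variable, and invoke Proposition~\ref{p:dimgoesdown} together with Proposition~\ref{p:transverseintersection} to descend; the induction hypothesis then forces $d' - 1 \leq d - 1$.

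For the reverse inequality $\dim V(I) \geq d$, I would produce, by induction on $d$, an honest $d$-dimensional polyhedron inside $V(I)$. The base case $d = 0$ requires showing $V(I) \neq \emptyset$, which is the weak Nullstellensatz for tropical ideals \cite{TropicalIdeals}. For $d \geq 1$: pick $\bfw$ generic and specialize. More concretely, by Proposition~\ref{p:dimgoesdown} the dimension of $I|_{x_n = a}$ is \emph{at most} $d - 1$; I need a specialization achieving exactly $d - 1$. The key is the detailed analysis in the proof of Proposition~\ref{p:dimgoesdown}: there one constructs $\bfw$ and independent sets $B_d$ with $|B_d|$ agreeing with a polynomial of degree $\dim(I|_{x_n = w_n}) + 1$, while $H_I(d) \geq |B_d|$. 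Running this argument in the direction where $H_I$ genuinely has degree $d$ shows $\dim(I|_{x_n=w_n}) = d - 1$ for a suitable generic $w_n$. By the induction hypothesis $V(I|_{x_n = w_n})$ contains a $(d-1)$-dimensional polyhedron $P$; and by Proposition~\ref{p:transverseintersection}, $P \times \{w_n\} \subseteq V(I)$ (after checking the transversality, which holds since we may choose $w_n$ so that no cell of $V(I)$ contained in $\{x_n = w_n\}$ has full-dimensional projection — this is where genericity of $w_n$ relative to the finitely many cells of the polyhedral complex and the Gröbner complex is used). Combined with the fact that $\{x_n = w_n\}$ meets $V(I)$ transversely somewhere near $P$, this gives a $d$-dimensional piece of $V(I)$.

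\textbf{The main obstacle} will be organizing the reduction to a coordinate direction cleanly: going from ``$V(I)$ has a $k$-dimensional cell'' to ``some specialization of a tropical ideal with the same dimension as $I$ has dimension $\geq k-1$'' requires passing through $\starr$, invoking homogeneity of the initial ideal (Corollary~\ref{c:homogeneous}) in an appropriate $\ZZ$-linear direction, and checking that the relevant specialization is proper and that dimension is preserved under the monomial change of coordinates — each individually routine, but together needing care. A secondary subtlety is ensuring, in the lower-bound argument, that the generic slice $\{x_n = w_n\}$ simultaneously (a) realizes the dimension drop exactly and (b) intersects $V(I)$ transversely enough that Proposition~\ref{p:transverseintersection} applies to lift the $(d-1)$-polyhedron back up; this forces a careful choice of $w_n$ avoiding the finitely many ``bad'' hyperplane values coming from the Gröbner complex of $I^h$.
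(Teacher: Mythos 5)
Your overall strategy differs from the paper's: the paper first proves the combinatorial identity $\dim V(I) = \max\{|S| : I \cap \Rbar[x_j^{\pm 1} : j \in S] = \{\infty\}\}$ (upper bound by projecting a maximal cell, lower bound by induction on $\dim V(I)$ using a transverse hyperplane section), and only then identifies this quantity with the Hilbert-polynomial dimension via a well-chosen monomial initial ideal of $I^h$. Your upper bound, while organized differently (induction on $\dim(I)$ through stars and specializations rather than through coordinate projections), can be made to work. The problem is the lower bound.

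The gap is your claim that for a suitable generic $w_n$ one has $\dim(I|_{x_n = w_n}) = d-1$ exactly, justified by ``running the proof of Proposition~\ref{p:dimgoesdown} in the other direction.'' This fails on two counts. First, for a \emph{fixed} coordinate $x_n$ the statement is simply false: the remark following Proposition~\ref{p:dimgoesdown} exhibits $I = \trop(\langle x_1-1,x_2-1\rangle \cap \langle x_3-1\rangle)$ of dimension $2$ with $\dim(I|_{x_3=a}) = 0$ for \emph{every} $a \in \RR$, so no genericity of the value $a$ rescues a badly chosen direction; you must choose the variable (or a $\GL_n(\ZZ)$-direction), and justifying that a good choice exists is precisely the hard content. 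Second, the proof of Proposition~\ref{p:dimgoesdown} cannot be reversed: it constructs the set $B_D$ only as an \emph{independent} set of $\uMat(I_{\leq D})$, giving $H_I(D) \geq |B_D|$ and hence the one-sided inequality $\dim(I) \geq \dim(I|_{x_n=w_n})+1$; nothing in that argument shows $B_D$ is a basis, and in the example above the discrepancy $H_I(D) - |B_D|$ grows like a polynomial of degree $d$. Proving that \emph{some} coordinate direction realizes the drop by exactly one is essentially a tropical Noether normalization, which is what the paper's detour through $\inn_\prec(I^h)$ and coordinate subspaces supplies; without some substitute for it your induction cannot get off the ground. (The subsequent lifting step — from a $(d-1)$-dimensional $P \subseteq V(I|_{x_n=w_n})$ to a $d$-dimensional cell of $V(I)$ — is repairable by choosing $w_n$ so that no positive-dimensional cell of $V(I)$ lies inside $\{x_n=w_n\}$, but it is downstream of the unproved dimension claim.)
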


We note that this complex need not be pure, so may have maximal cells of dimension less than $d$.

\begin{proof}
We first prove the equality
\begin{equation}\label{eq:dimension}
 \dim(V(I)) = \max \left( |S| : S \subseteq \{1,\dotsc,n\} \text{ and } 
I \cap \Rbar[x_j^{\pm 1} : j \in S] = \{ \infty \} \right),
\end{equation}
where by convention we set this maximum 
to be $-1$ if $I \cap \Rbar = \Rbar$.
Denote by $e$ the expression on the right hand side.
To show that $\dim(V(I)) \leq e$, take a cell $\sigma$ of $V(I)$ of maximal dimension.
There exists $S \subseteq \{1,\dotsc,n\}$ of size $\dim(\sigma) = \dim(V(I))$ 
such that the projection $\pi(\sigma)$ of $\sigma$ onto the coordinate subspace
$\RR^S \subseteq \RR^n$ is injective. 
It follows that $I$ does not contain any polynomial in
$\Rbar[x_j^{\pm 1}:j\in S]$ other than $\infty$,
as otherwise $\pi(\sigma) \subseteq \RR^S$ would have codimension
at least 1. 

We now prove that $\dim(V(I)) \geq e$ by induction on $\dim(V(I))$.  
If $\dim(V(I)) = -1$ then $V(I) = \emptyset$, and by the weak
Nullstellensatz for tropical ideals~\cite{TropicalIdeals}*{Corollary 5.17},
 we also have $e = -1$.
For the induction step, suppose $\dim(V(I)) \geq 0$, 
and fix $S \subseteq \{1,\dotsc,n\}$ of size $\dim(V(I))+1$ and $i \in S$. 
Choose $a \in \RR$ such that the hyperplane $\{x_i=a\}$ intersects
$V(I)$ transversely at all points of intersection (or their intersection is empty).  
This is possible
since $V(I)$ is the support of a finite polyhedral complex.  By
Proposition~\ref{p:transverseintersection} we have $V(I|_{x_i=a}) =
\pi(V(I) \cap \{x_i=a\})$, where $\pi$ is the projection onto the
coordinates other than $i$, and so $V(I|_{x_i=a})$ has dimension at
most $\dim(V(I))-1$. As $|S \setminus \{i\}| = \dim(V(I))$, the
induction hypothesis then guarantees that there is a polynomial $f
\neq \infty$ in $I|_{x_i=a} \cap \Rbar[x_j^{\pm 1} : j \in S \setminus
  \{i\}]$.  The polynomial $f$ must be the specialization $f=
g|_{x_i=a}$ for a polynomial $g \in I \cap \Rbar[x_j^{\pm 1} : j \in
  S]$, which shows that $I \cap \Rbar[x_j^{\pm 1} : j \in S] \neq
\{\infty\}$.  This completes the proof of \eqref{eq:dimension}.

We now show that \eqref{eq:dimension} implies that $\dim(I) =
\dim(V(I))$.  Take $S \subseteq \{1,\dotsc,n\}$ of size $\dim(V(I))$
such that $I \cap \Rbar[x_j^{\pm 1} : j \in S] = \{ \infty \}$.  Let
$J \subseteq \Rbar[x_0, \dots, x_n]$ be the homogenization of 
$I \cap \Rbar[x_1,\dots,x_n]$.  Fix $\prec$ be a term order on $\Rbar[x_0, \dots, x_n]$ with
the property that $x_i \prec \mathbf{x}^{\mathbf{u}}$ whenever both $i \not \in
\{0\} \cup S$ and $u_j=0$ for all $j \not \in \{0\} \cup S$.
Any polynomial in $J$ involves a variable $x_i$ with $i \not \in \{0\} \cup S$,
and thus any monomial
$\bfx^\bfu \in \inn_{\prec}(J)$ must involve a variable $x_i$ with $i \not \in \{0\} \cup S$. 
This means that $\inn_{\prec}(J) \cap \Rbar[x_j : j \in \{0\} \cup S] = \{ \infty \}$,
and so 
\[ \max \left( |S'| : S' \subseteq \{0,\dotsc,n\} \text{ and} 
\inn_{\prec}(J) \cap \Rbar[x_j : j \in S'] = \{ \infty \} \right) \geq \dim(V(I)) +1.\]
In fact, this last inequality must be an equality, as any subset $S'$ such that
$\inn_{\prec}(J) \cap \Rbar[x_j : j \in S'] = \{ \infty \}$ must
also satisfy $J \cap \Rbar[x_j : j \in S'] = \{ \infty \}$.
It follows that $\dim(I) = \dim(V(I))$, as $\dim(I) = \dim(J) = \dim(\inn_{\prec}(J))$ 
by Lemma \ref{l:stdmonosbasis},
and the (projective) dimension of a monomial ideal $M$ is the size of the largest
subset $S'$ with $M \cap \Rbar[x_i : i \in S'] = \{ \infty\}$ minus one.
\end{proof}

\begin{remark}
One consequence of Theorem~\ref{t:dimiscorrect} is that the definition
of dimension we use here, as the degree of the Hilbert polynomial,
essentially agrees with a naive notion of Krull dimension, as in
\cite{JooMincheva2}.  This follows from the proof of Theorem 7.2.1 of
\cite{KalinaThesis}, which shows that for an arbitrary ideal in
$\Rbar[x_1,\dots,x_n]$, if $V(I)$ is the support of an $\mathbb
R$-rational polyhedral complex then 
$\dim(\Rbar[x_1,\dots,x_n]/\mathcal B(I) )$ is one more than the maximal
dimension of a cell in the complex.  The ``one more'' comes from the
fact that $\dim(\Rbar)=1$ in this theory.  However, as shown in
\cite{JooMincheva}, varieties of prime ideals are not flexible enough
to play the role of irreducible varieties in a tropical scheme theory.
\end{remark}

We next consider the effects of changes of coordinates on
$\Rbar[x_1^{\pm 1},\dots,x_n^{\pm 1}]$.  This is essentially identical
to the realizable case (see \cite{TropicalBook}*{Lemma 2.6.10 and
  Corollary 3.2.13}).

Let $A \in \GL(n,\mathbb Z)$ be an $n \times n$ invertible matrix with
integer entries, and fix $\mathbf{\lambda} \in \mathbb R^n$.  Let
$\phi^* \colon \Rbar[x_1^{\pm 1},\dots,x_n^{\pm 1}] \rightarrow
\Rbar[x_1^{\pm 1},\dots,x_n^{\pm 1}]$ be the semiring homomorphism
given by $x_i \mapsto \lambda_i \mathbf{x}^{\mathbf{a}_i}$, where
$\mathbf{a}_i$ is the $i$th column of $A$.  Such homomorphisms are
precisely the automorphisms of $\Rbar[x_1^{\pm 1},\dots,x_n^{\pm 1}]$.
Write $\phi^*_A$ for the homomorphism given by $\phi^*_A(x_i)=\mathbf{x}^{\mathbf{a}_i}$.  
We denote by $\trop(\phi) \colon \mathbb
R^n \rightarrow \mathbb R^n$ the linear map given by
$\trop(\phi)(\mathbf{w}) = A^T \mathbf{w} + \mathbf{\lambda}$.

\begin{lemma} \label{l:changeofcoords}
Let $I \subseteq \Rbar[x_1^{\pm 1},\dots,x_n^{\pm 1}]$ be a tropical
ideal, and let $I'={\phi^*}^{-1}(I)$.  Then $I'$ is also a tropical
ideal, and
$$\phi^*_A(\inn_{\trop(\phi)(\mathbf{w})}(I')) =
\inn_{\mathbf{w}}(I)$$ for all $\mathbf{w} \in \mathbb R^n$.
As a consequence,  we have 
$$V(I') = \trop(\phi)(V(I)).$$
\end{lemma}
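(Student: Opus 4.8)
The plan is to reduce both the statement about initial ideals and the one about varieties to a single explicit description of how $\phi^*$ acts on a Laurent polynomial. Writing $\mathbf{a}_i$ for the $i$-th column of $A$, a direct computation gives $\phi^*(\bfx^\bfu) = (\mathbf{\lambda}\cdot\bfu)\ttimes\bfx^{A\bfu}$, so for $f = \bigoplus_\bfu c_\bfu \bfx^\bfu$ one gets $\phi^*(f) = \bigoplus_\bfu (c_\bfu \ttimes (\mathbf{\lambda}\cdot\bfu))\,\bfx^{A\bfu}$. Since $A \in \GL(n,\ZZ)$ the assignment $\bfu \mapsto A\bfu$ is a bijection of $\ZZ^n$, so the monomials of $\phi^*(f)$ are indexed bijectively by those of $f$, and $[\phi^*(f)]_{\bfx^{A\bfu}}$ differs from $[f]_{\bfx^\bfu}$ by the constant $\mathbf{\lambda}\cdot\bfu$, which depends only on $\bfu$. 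Using $(A\bfu)\cdot\bfw = \bfu\cdot(A^T\bfw)$ one checks that $\phi^*(f)(\bfw) = f(\trop(\phi)(\bfw))$ and, tracking which terms attain the minimum, that
\[ \inn_\bfw(\phi^*(f)) = \phi^*_A\!\left(\inn_{\trop(\phi)(\bfw)}(f)\right) \]
for every $f$ and every $\bfw \in \RR^n$, where $\phi^*_A$ denotes both the homomorphism from the statement and the induced automorphism $\bfx^\bfu \mapsto \bfx^{A\bfu}$ of $\BB[x_1^{\pm 1},\dots,x_n^{\pm 1}]$.

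For the first assertion, that $I'$ is a tropical ideal, note that since $\phi^*$ is an automorphism we have $I = \phi^*(I')$. Given $f', g' \in I'$ and a monomial $\bfx^\bfu$ with $[f']_{\bfx^\bfu} = [g']_{\bfx^\bfu} \neq \infty$, the description above shows $\phi^*(f'), \phi^*(g') \in I$ have equal and finite coefficient at $\bfx^{A\bfu}$, since both are shifted by the same constant $\mathbf{\lambda}\cdot\bfu$. Choosing $H \in I$ with $\elim{H}{\bfx^{A\bfu}}{\phi^*(f')}{\phi^*(g')}$, I would show that $h := {\phi^*}^{-1}(H) \in I'$ satisfies $\elim{h}{\bfx^\bfu}{f'}{g'}$: the automorphism ${\phi^*}^{-1}$ shifts the coefficient at each monomial $\bfx^{A\bfv}$ by the fixed constant $-(\mathbf{\lambda}\cdot\bfv)$, and this is the same shift whether applied to $H$ or to $\phi^*(f'), \phi^*(g')$, so every equality and every min-inequality in axiom $(\dag)$ for $H$ translates verbatim into the corresponding statement of $(\dag)$ for $h$. (Alternatively, this follows because the valuated matroid that $I'$ induces on any finite set $E$ of monomials is a relabeling-and-rescaling of the one $I$ induces on $\{\bfx^{A\bfu} : \bfx^\bfu \in E\}$.)

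For the identity $\phi^*_A(\inn_{\trop(\phi)(\bfw)}(I')) = \inn_\bfw(I)$, I would use that for a tropical ideal $\inn_\bfw(\cdot)$ is the \emph{set} of initial forms of its elements (applicable to $I'$ by the previous step), together with $I = \phi^*(I')$ and the pointwise identity from the first paragraph:
\[ \inn_\bfw(I) = \{\inn_\bfw(\phi^*(f)) : f \in I'\} = \phi^*_A\!\left(\{\inn_{\trop(\phi)(\bfw)}(f) : f \in I'\}\right) = \phi^*_A\!\left(\inn_{\trop(\phi)(\bfw)}(I')\right), \]
the middle equality because $\phi^*_A$ is a bijection. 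For the consequence about varieties, $\trop(\phi)\colon \bfw \mapsto A^T\bfw + \mathbf{\lambda}$ is a bijection of $\RR^n$ since $A$ is invertible, so it is enough to show $\bfv \in V(I) \iff \trop(\phi)(\bfv) \in V(I')$. But $\bfv \in V(I)$ iff no $\inn_\bfv(g)$ with $g \in I$ is a monomial; writing $g = \phi^*(f)$ with $f \in I'$ and using the pointwise identity together with the fact that the automorphism $\phi^*_A$ of $\BB[x_1^{\pm 1},\dots,x_n^{\pm 1}]$ carries monomials to monomials and polynomials with more than one term to polynomials with more than one term, this is equivalent to no $\inn_{\trop(\phi)(\bfv)}(f)$ with $f \in I'$ being a monomial, i.e. to $\trop(\phi)(\bfv) \in V(I')$. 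Hence $V(I') = \trop(\phi)(V(I))$.

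None of the steps is deep — as the authors note, the argument is the tropical shadow of the classical change-of-coordinates statement. The only real care is bookkeeping: getting the $A$-versus-$A^T$ and columns-versus-rows conventions consistent in the first paragraph, and checking in the second that the scalar shift $\mathbf{\lambda}\cdot\bfu$ introduced by $\phi^*$ genuinely cancels out of the monomial elimination axiom. I expect that second check to be the most error-prone part to write cleanly.
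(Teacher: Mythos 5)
Your proposal is correct and follows essentially the same route as the paper: the heart of both arguments is the pointwise identity $\inn_{\bfw}(\phi^*(f)) = \phi^*_A(\inn_{\trop(\phi)(\bfw)}(f))$, from which the statements about initial ideals and varieties follow formally. The only (minor) divergence is that the paper establishes that $I'$ is a tropical ideal via the valuated-matroid reformulation (restricting to a finite set of monomials and observing that $\phi^*$ acts by relabeling and rescaling), whereas you verify the elimination axiom $(\dag)$ directly by transporting eliminations through $\phi^*$ and noting that the uniform shift by $\lambda\cdot\bfv$ at each monomial preserves all the required (in)equalities — both work, and you already note the matroid argument as an alternative.
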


\begin{proof}

We first show that $I'$ is a tropical ideal.  Write $\phi^* =
\phi^*_{\lambda} \circ \phi^*_A$, where $\phi^*_{\lambda}(x_i)
=\lambda_i x_i$.  Let $E$ be a finite collection of monomials in
$\Rbar[x_1^{\pm 1},\dots,x_n^{\pm 1}]$, and let $E' = \phi^*_A(E)$.
Since $I$ is a tropical ideal, the polynomials in $I$ with support in
$E'$ are the vectors of a valuated matroid.  Applying the invertible
map ${\phi^*_{\lambda}}^{-1}$ to this collection produces an
equivalent valuated matroid on the ground set $E'$.  Since $\phi_A^*$
is an injection, the collection of polynomials in $I'$ with support in
$E$ define the same matroid.  This shows that $I'$ is a tropical
ideal.

Fix $f = \bigoplus c_\mathbf{u} \mathbf{x}^\mathbf{u} \in I'$, so $\phi^*(f) = \bigoplus
c_{\mathbf{u}} \ttimes  (\mathbf{\lambda} \cdot \mathbf{u}) \ttimes \mathbf{x}^{A\mathbf{u}} \in I$.  We have 
\begin{align} \label{eqtn:tropphi} \inn_{\mathbf{w}}(\phi^*(f)) 
& = \bigoplus_{\mathbf{u} \,:\, c_\mathbf{u} + \lambda \cdot \mathbf{u} + \mathbf{w} \cdot (A\mathbf{u}) \text{ is minimal}} \mathbf{x}^{A\mathbf{u}} \\ &
  = \bigoplus_{\mathbf{u} \,:\, c_\mathbf{u} + (
\lambda + A^T\mathbf{w}) \cdot \mathbf{u} \text{ is minimal}} \mathbf{x}^{A\mathbf{u}}
\nonumber  \\
 & = \bigoplus_{\mathbf{u} \,:\, c_\mathbf{u} + \trop(\mathbf{\mathbf{w}}) \cdot \mathbf{u} \text{ is
      minimal}} \mathbf{x}^{A\mathbf{u}} \nonumber \\ 
& = \phi^*_A(\inn_{\trop(\phi)(\mathbf{w})}(f)). \nonumber
\end{align}
This implies that $\phi_A^*(\inn_{\trop(\phi)(\mathbf{w})}(I')) \subseteq
\inn_{\mathbf{w}}(I)$.  As $A$ is invertible, we also have
${\phi}_{A^{-1}}^*(\inn_{\mathbf{w}}(I)) \subseteq
\inn_{\trop(\phi)(\mathbf{w})}(I')$, since $\mathbf{w} =
\trop(\phi^{-1})(\trop(\phi)(\mathbf{w}))$, so
$\inn_{\trop(\phi)(\mathbf{w})}(I') = \inn_{\mathbf{w}}(I)$.  Thus
  $V(I') = \trop(\phi)(V(I))$, as required.
\end{proof}

We conclude this section with Theorem~\ref{t:projection}, which shows
that the projection of the variety of a tropical ideal is the variety
of the elimination ideal, as is the case for realizable tropical
ideals.  For this we first require the following result.  
Recall from \cite{TropicalIdeals}*{Theorem 5.9} that any
tropical ideal $I \subseteq \Rbar[x_1^{\pm 1},\dots,x_n^{\pm 1}]$ has
a finite tropical basis.  This is a set $\{f_1,\dots,f_s\} \subset I$
with the property that $V(I) = \bigcap_{i=1}^s V(f_i)$.

\begin{lemma}\label{l:tropicalbasis}
Let $I$ be a tropical ideal in $\Rbar[x_1^{\pm 1},\dots,x_n^{\pm 1}]$. 
If $\bfv \in \RR^n$ is not in $V(I)$ then there exists a finite tropical basis 
$\mathcal T \subset I$ such that for all $f \in \mathcal T$ we have $\bfv \notin V(f)$.
\end{lemma}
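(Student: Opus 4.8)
The plan is to start from any finite tropical basis $\{f_1,\dots,f_s\}\subset I$, which exists by \cite{TropicalIdeals}*{Theorem 5.9}, and to surgically replace each $f_i$ for which $\bfv\in V(f_i)$ by a finite collection of polynomials of $I$ that all avoid $\bfv$, while keeping the common intersection of the varieties equal to $V(I)$. Since $I$ is then cut out by a finite set whose members miss $\bfv$, this is exactly what is wanted.

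The first step is to produce a ``pivot'' polynomial. Since $\bfv\notin V(I)$, Proposition~\ref{p:star} gives $V(\inn_\bfv(I))=\starr_{V(I)}(\bfv)=\emptyset$, so by the weak Nullstellensatz \cite{TropicalIdeals}*{Corollary 5.17} the tropical ideal $\inn_\bfv(I)$ contains a monomial; being closed under multiplication by monomials it contains $\bfx^{\bfzero}$, so there is $g\in I$ with $\inn_\bfv(g)=\bfx^{\bfzero}$, and after rescaling we may assume $[g]_{\bfx^{\bfzero}}=0$, i.e.\ $g(\bfv)=0$ with the minimum attained only at the constant term. For any monomial $\bfx^\bfu$ the shift $g_\bfu:=g\ttimes\bfx^\bfu\in I$ then satisfies $\inn_\bfv(g_\bfu)=\bfx^\bfu$, $[g_\bfu]_{\bfx^\bfu}=0$, and $V(g_\bfu)=V(g)$. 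The main recursive device is this: if $f\in I$ has $\inn_\bfv(f)=\bigoplus_{\bfu\in U}\bfx^\bfu$ with $|U|\ge 2$, then for each $\bfu\in U$ an elimination $\elim{h_\bfu}{\bfx^\bfu}{f}{[f]_{\bfx^\bfu}\ttimes g_\bfu}\in I$ is well defined (the hypothesis of $(\dag)$ holds because $[g_\bfu]_{\bfx^\bfu}=0$), and a direct computation from $(\dag)$ shows $\inn_\bfv(h_\bfu)=\bigoplus_{\bfw\in U\setminus\{\bfu\}}\bfx^\bfw$: the extra terms introduced from $g_\bfu$ are pushed strictly above $f(\bfv)$ at $\bfv$ because $\inn_\bfv(g_\bfu)$ is a single monomial of coefficient $0$. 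So each $h_\bfu$ has one fewer initial term at $\bfv$; iterating this and collecting all the polynomials that appear along the way, together with $g$, attaches to every $f$ with $\bfv\in V(f)$ a finite set $T_f\subset I$ all of whose members avoid $\bfv$. Replacing each $f_i$ with $\bfv\in V(f_i)$ by $T_{f_i}$, and keeping $g$ and the remaining $f_i$, produces a finite $\mathcal T\subset I$ whose elements all avoid $\bfv$, and since $\mathcal T\subset I$ we have $V(I)\subseteq\bigcap_{h\in\mathcal T}V(h)$ for free.

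The technical heart is the reverse inclusion $\bigcap_{h\in\mathcal T}V(h)\subseteq V(I)$, i.e.\ that the elimination chains have not enlarged any variety. Concretely one wants to choose $T_f$ so that $\bigcap_{h\in T_f}V(h)\cap V(g)\subseteq V(f)$; intersecting over all $i$ and with $V(g)$ then recovers $V(I)$. To prove this local statement I would fix $\bfw\notin V(f)$, set $\bfx^\bfq=\inn_\bfw(f)$, and for each $\bfu\in U$ compare $f(\bfw)$ with $[f]_{\bfx^\bfu}+g_\bfu(\bfw)$: when $f$ wins, one checks $\inn_\bfw(h_\bfu)=\bfx^\bfq$ is again a monomial, so $h_\bfu$ detects $\bfw$; when $g$ strictly wins and $\bfw\notin V(g)$ one finds $\inn_\bfw(h_\bfu)$ equal to the initial monomial of $g_\bfu$ at $\bfw$. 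The delicate case is when $\bfw$ lies both in the ``$g$-dominant'' region and in $V(g)$: there $\inn_\bfw(h_\bfu)$ is $\inn_\bfw(g_\bfu)$ with one exponent deleted, so one has been thrown back to the same problem for $g$ --- whose initial term at $\bfv$ is already a monomial, so this is genuinely new --- and must eliminate against further polynomials of $I$ that are monomial near $\bfw$. This is where I expect the finiteness of the Gr\"obner complex to enter, to guarantee that only finitely many auxiliary polynomials are needed (one for each relevant Gr\"obner cell outside $V(I)$), massaged so as not to spoil the behaviour at $\bfv$; the equality case $f(\bfw)=[f]_{\bfx^\bfu}+g_\bfu(\bfw)$ is a lower-dimensional locus that is cleaned up by adding a few more eliminations to $T_f$. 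Organizing this nested bookkeeping, especially controlling the $g$-dominant region, is the part of the argument I expect to require the most care.
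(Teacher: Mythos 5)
Your construction of the set $T_f$ is plausible as far as it goes (the local computation that each elimination step strips one monomial from $\inn_{\bfv}(f)$ does work, because $\inn_{\bfv}(g_{\bfu})$ is a single monomial with coefficient $0$, so the relevant coefficients of $f$ and $[f]_{\bfx^{\bfu}}\ttimes g_{\bfu}$ genuinely differ and $(\dag)$ forces equality there). But the proposal has a genuine gap exactly where you flag it: the reverse inclusion $\bigcap_{h\in T_f}V(h)\cap V(g)\subseteq V(f)$ is the entire content of the lemma, and it is not established. Elimination is lossy on varieties: a point $\bfw\notin V(f)$ lying in $V(g)$ and in the region where the shifted pivot $g_{\bfu}$ dominates can land in $V(h_{\bfu})$ for \emph{every} $\bfu$, so $T_f$ may simply fail to detect $\bfw$. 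Your proposed repair --- adjoin further polynomials of $I$ that are ``monomial near $\bfw$'' and invoke finiteness of the Gr\"obner complex --- does not obviously terminate: those auxiliary polynomials may themselves have $\bfv$ in their varieties, which re-creates the original problem and threatens an infinite regress. Nothing in the sketch controls this recursion, so as written the argument does not close.

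The paper's proof avoids elimination entirely and is worth comparing. Starting from any finite tropical basis $\mathcal T$ and one polynomial $f\in I$ with $\bfv\notin V(f)$, one subdivides $\RR^n\setminus(V(I)\cup P)$ (where $P$ is the open linearity region of $f$ containing $\bfv$) into finitely many pointed $n$-dimensional polyhedra $Q$, each contained in $\RR^n\setminus V(g)$ for some $g\in\mathcal T$ with $g$ and $f$ both linear on $Q$. A hyperplane $\{\mathbf c\cdot\bfw+d=0\}$ separating $\bfv$ from $\overline{Q}$ yields the modified polynomial $g':=g\tplus(d\,\bfx^{\mathbf c})^N f\in I$: for $N\gg 0$ the term $(d\,\bfx^{\mathbf c})^N f$ is so large on $Q$ that $g$ still achieves the unique minimum there (so $g'$ detects $Q$), while at $\bfv$ the same term is so small that the minimum of $g'$ is achieved only within $(d\,\bfx^{\mathbf c})^N f$, whence $\bfv\notin V(g')$. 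This is a purely semiring-theoretic modification (sum and monomial multiple), needing no use of the matroid axiom beyond the existence of a finite tropical basis, and it entirely sidesteps the bookkeeping of how eliminations distort varieties. If you want to salvage your approach you would need a precise inductive invariant guaranteeing that the collection $T_f$ still separates every $\bfw\notin V(I)$; the separating-hyperplane trick is the mechanism the paper uses to get that for free.
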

\begin{proof}
Fix $f \in I$ such that $\bfv \notin V(f)$.  The set $\RR^n \setminus
V(f)$ decomposes naturally as a union of finitely many (full-dimensional) open polyhedra,
so $\bfv$ lies in the interior of one such open polyhedron $P$.
Let $\mathcal T$ be a finite tropical basis for $I$.  For each
$g \in \mathcal T$ we can decompose $\mathbb R^n \setminus V(g)$ as a
union of open polyhedra.  After subdividing, we can thus write
$\mathbb R^n \setminus (V(I) \cup P) = \bigcup P_i$, where for each $P_i$ the 
closure $\overline{P_i}$ is an $n$-dimensional
pointed polyhedron, there is $g \in \mathcal T$ such that $P_i
\subseteq \mathbb R^n \setminus V(g)$ with $g(\mathbf{w}) =
\mathbf{a} \cdot \mathbf{w} +b$ for $\mathbf{w} \in P_i$, and $f$ is is linear on $P_i$.

Fix $i$, and set $Q= P_i$, with $Q \subseteq \mathbb R^n \setminus
V(g)$ for $g \in \mathcal T$.  Since $\overline{Q}$ is a pointed
polyhedron not containing $\mathbf{v}$, there is a
hyperplane $\{ \mathbf{w} \in \mathbb R^n : \mathbf{c} \cdot
\mathbf{w} +d= 0 \}$ with $\mathbf{c} \cdot \mathbf{w} +d > 0$ for
$\mathbf{w} \in \overline{Q}$ and $\mathbf{c} \cdot \mathbf{v}+d < 0$.  We may
assume that the face of $\overline{Q}$ minimizing $\mathbf{c}$ is a
vertex $\mathbf{p}$.

Consider the polynomial $g' := g \tplus
(d\bfx^{\mathbf c})^N f \in I$.
We claim that for sufficiently large $N$, the minimum in
$g'(\mathbf{w})$ is achieved at terms in $g$ for $\mathbf{w} \in Q$,
and at terms in $(d\bfx^{\mathbf c})^N f$ for $\mathbf{w}=\mathbf{v}$.  This implies that $Q
\subseteq \mathbb R^n \setminus V(g')$, and $\mathbf{v} \not \in
V(g')$.  It follows that the collection of $g'$ for $g \in \mathcal
T$, together with $f$, form the desired tropical basis for $I$.

To see the claim, since $f$ is linear on $Q$, there are $\mathbf{a}'$
and $b$ with $f(\mathbf{w}) = \mathbf{a}' \cdot \mathbf{w} +b'$ for
$\mathbf{w} \in Q$.  For $N \gg 0$ the face of $\overline{Q}$
minimizing $N\mathbf{c} + \mathbf{a}'-\mathbf{a}$ is still
$\mathbf{p}$, so $(N \mathbf{c} + \mathbf{a}' - \mathbf{a}) \cdot
\mathbf{w} \geq (N \mathbf{c} + \mathbf{a}' - \mathbf{a}) \cdot
\mathbf{p}$ for $\mathbf{w} \in Q$.   Since $\mathbf{c}  \cdot \mathbf{p}+d >0$, for $N \gg 0$ we have
\begin{align*} (N \mathbf{c} + \mathbf{a}' - \mathbf{a}) \cdot \mathbf{w} + Nd+b'-b
  & \geq (N \mathbf{c} + \mathbf{a}' - \mathbf{a}) \cdot \mathbf{p} + Nd+b'-b\\
  & = N(\mathbf{c} \cdot \mathbf{p}+d) + (\mathbf{a}'-\mathbf{a}) \cdot \mathbf{p} + b'-b \\
  & >0
\end{align*}  
for all $\mathbf{w} \in Q$. For such $N$,
$$g(\mathbf{w}) = \mathbf{a} \cdot \mathbf{w} + b < N(\mathbf{c} \cdot
\mathbf{w}+d) + \mathbf{a}' \cdot \mathbf{w}+b' = N(\mathbf{c} \cdot \mathbf{w}+d) + f(\mathbf{w}) $$ for
all $\mathbf{w} \in P$, which shows the first part of the claim.  For the
second part, note that any $N > (g(\mathbf{v})-f(\mathbf{v}))/(\mathbf{c}
\cdot \mathbf{v} +d)$ suffices.
\end{proof}

\begin{theorem} \label{t:projection}
Let $I$ be a tropical ideal in $\Rbar[x_1^{\pm 1},\dots,x_n^{\pm 1}, y_1^{\pm 1},\dots,y_m^{\pm 1}]$.
Then $$V(I \cap \Rbar[y_1^{\pm 1},\dots,y_{m}^{\pm 1}]) = \pi(V(I)),$$ 
where $\pi \colon \mathbb R^n \times \RR^m \rightarrow
\RR^m$ is the projection onto the second factor.
\end{theorem}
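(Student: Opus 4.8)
The plan is to prove the two inclusions separately, handling the easy one by a direct valuation argument and the hard one by combining the specialization machinery of Section~\ref{s:specialization} with a tropical Nullstellensatz. For the inclusion $\pi(V(I)) \subseteq V(I \cap \Rbar[y_1^{\pm 1},\dots,y_m^{\pm 1}])$, fix $(\bfw, \bfv) \in V(I) \subseteq \RR^n \times \RR^m$ and let $g \in I \cap \Rbar[y_1^{\pm 1},\dots,y_m^{\pm 1}]$. Since $g$ involves no $x$-variables, the minimum in $g(\bfv)$ being attained once or twice does not depend on $\bfw$; but $(\bfw,\bfv) \in V(I)$ and $g \in I$ force the minimum in $g(\bfw,\bfv) = g(\bfv)$ to be attained at least twice, so $\bfv \in V(I \cap \Rbar[y_1^{\pm 1},\dots,y_m^{\pm 1}])$. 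This direction is routine.

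For the reverse inclusion $V(I \cap \Rbar[y_1^{\pm 1},\dots,y_m^{\pm 1}]) \subseteq \pi(V(I))$, I would argue by induction on $n$, the number of variables being eliminated, reducing to the case $n = 1$. In that case, write $I \subseteq \Rbar[x^{\pm 1}, y_1^{\pm 1},\dots,y_m^{\pm 1}]$ and fix $\bfv \in V(I \cap \Rbar[\bfy^{\pm 1}])$; the goal is to find $a \in \RR$ with $(a, \bfv) \in V(I)$. Suppose not: for every $a \in \RR$, $(a,\bfv) \notin V(I)$, i.e.\ there is $f_a \in I$ with $\inn_{(a,\bfv)}(f_a)$ a monomial. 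The key observation is that $\inn_{(a,\bfv)}(f_a)$ being a monomial is governed by the initial ideal $\inn_{\bfv}(I|_{x=a})$ via Part~\ref{item:initial} of Lemma~\ref{l:basicfacts}, which gives $\inn_{\bfv}(I|_{x=a}) = \inn_{(a,\bfv)}(I)|_{x=0}$; so the hypothesis "$(a,\bfv) \notin V(I)$ for all $a$" translates into "$\bfv \notin V(I|_{x=a})$ for all $a \in \RR$", where each $I|_{x=a}$ is a tropical ideal by Corollary~\ref{c:Laurentworkstoo}. One then wants to deduce $\bfv \notin V(I \cap \Rbar[\bfy^{\pm 1}])$, which contradicts the choice of $\bfv$.

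The main obstacle — and where I expect the Grigoriev--Podolskii Nullstellensatz \cite{GrigorievPodolskii} to enter — is controlling the family $\{I|_{x=a}\}_{a \in \RR}$ uniformly in $a$. Using Lemma~\ref{l:tropicalbasis}, for each $a$ with $\bfv \notin V(I|_{x=a})$ one extracts a tropical basis of $I|_{x=a}$ avoiding $\bfv$; but to glue these into a single polynomial of $I$ witnessing $\bfv \notin V(I \cap \Rbar[\bfy^{\pm 1}])$, one needs a finiteness statement: the initial ideal $\inn_{(a,\bfv)}(I)$ takes only finitely many values as $a$ ranges over $\RR$ (this follows from the Gröbner complex of the homogenization of $I \cap \Rbar[x,\bfy]$ being finite). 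Partitioning $\RR$ into the finitely many intervals on which $\inn_{(a,\bfv)}(I)$ is constant, on each such interval we have a fixed monomial appearing as $\inn_{(a,\bfv)}(f)$ for a fixed circuit $f$ of $I$; taking a suitable tropical combination (a product or a min) of these finitely many circuits, together with a Puiseux-type or bounded-degree argument supplied by \cite{GrigorievPodolskii} to certify that the resulting polynomial, when the $x$-variable is eliminated, still has a monomial initial term at $\bfv$, produces the desired element of $I \cap \Rbar[\bfy^{\pm 1}]$ witnessing $\bfv \notin V(I \cap \Rbar[\bfy^{\pm 1}])$. The delicate point is that eliminating $x$ from a polynomial with a monomial $x$-leading behaviour does not automatically preserve "monomial initial term"; this is precisely the subtlety the introduction flags ("The proof of the projection result uses a Nullstellensatz of Grigoriev and Podolskii in a crucial fashion"), and I would organize the $n=1$ argument so that this single lemma-level use of \cite{GrigorievPodolskii} is isolated and the rest is the bookkeeping of Lemmas~\ref{l:basicfacts}, \ref{l:tropicalbasis} and Corollary~\ref{c:Laurentworkstoo}.
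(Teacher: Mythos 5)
Your first inclusion is fine and matches the paper. The reverse inclusion, however, has a genuine gap at its core. You reduce (by contradiction) to the hypothesis ``$\bfv \notin V(I|_{x=a})$ for all $a \in \RR$'' and then want to conclude $\bfv \notin V(I \cap \Rbar[y_1^{\pm 1},\dots,y_m^{\pm 1}])$. But $I \cap \Rbar[y_1^{\pm 1},\dots,y_m^{\pm 1}] \subseteq I|_{x=a}$ for every $a$, so $V(I|_{x=a}) \subseteq V(I \cap \Rbar[y_1^{\pm 1},\dots,y_m^{\pm 1}])$: knowing that $\bfv$ avoids each of the smaller varieties says nothing about the larger one. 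The entire difficulty of the theorem is manufacturing an $x$-free element of $I$ witnessing $\bfv \notin V(I \cap \Rbar[y_1^{\pm 1},\dots,y_m^{\pm 1}])$, and none of your proposed devices produces one: a tropical product or sum of circuits that involve $x$ still involves $x$, and a product of initial forms is a monomial only when every factor is, so gluing witnesses over the finitely many intervals on which $\inn_{(a,\bfv)}(I)$ is constant does not yield a polynomial with monomial initial form either.

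You have also placed the Grigoriev--Podolskii Nullstellensatz in the wrong spot. In the paper it is not used to control the family $\{I|_{x=a}\}_{a}$; rather, one specializes the \emph{retained} variables at the fixed point (after scaling, $\bfy = \bfzero$), obtaining from a tropical basis $\mathcal T$ as in Lemma~\ref{l:tropicalbasis} a system $\{f|_{\bfy=\bfzero}\}_{f \in \mathcal T}$ in the eliminated variables $x_1,\dots,x_n$ whose tropical prevariety is empty precisely because $\bfzero \notin \pi(V(I))$. The Nullstellensatz then supplies a certificate of emptiness: a combination $\bigoplus_i c_i\bfx^{\bfu_i}\ttimes f_i|_{\bfy=\bfzero}$ in which the minimal coefficient of each monomial is attained by a unique summand, a different one for each monomial. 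Lifting this back to polynomials $c_i\bfx^{\bfu_i}f_i \in I$ and repeatedly applying Lemma~\ref{l:maineliminationstep} --- which your outline never invokes, although it is exactly the tool needed and is the reason that lemma is proved in full strength --- one Gaussian-eliminates the $\bfx$-monomials one at a time, ending with $\bfx^{\bfv_s}\ttimes h$ where $h \in I\cap\Rbar[y_1^{\pm 1},\dots,y_m^{\pm 1}]$ and $\bfzero \notin V(h)$. Without this elimination step, or a substitute for it, your argument does not close.
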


\begin{proof}
Since $V(I) \subseteq V(f)$ for all $f \in I \cap \Rbar[y_1^{\pm 1},\dots,y_{m}^{\pm 1}]$, 
where we regard $f$ as a polynomial in $\Rbar[x_1^{\pm 1},\dots,x_n^{\pm 1}, y_1^{\pm 1},\dots,y_m^{\pm 1}]$, we have $\pi(V(I)) \subseteq V(I \cap \Rbar[y_1^{\pm 1},\dots,y_{m}^{\pm 1}])$. 

To prove the reverse inclusion, suppose $\bfw \in
\mathbb R^m$ satisfies $\bfw \not \in \pi(V(I))$.
By tropically scaling the $y$ variables, we may assume that $\bfw = \bfzero$.
As $(\bfzero, \bfzero) \in \RR^n \times \RR^m$ is not in $V(I)$, 
by Lemma \ref{l:tropicalbasis} the ideal $I$ has a finite tropical
basis $\mathcal T \subset I$ with $(\bfzero, \bfzero) \notin V(f)$ for all $f \in \mathcal T$.
After multiplying each polynomial in $\mathcal T$ by a monomial we may assume that 
$\mathcal T$ consists of polynomials in $\Rbar[x_1,\dots,x_n,y_1,\dots,y_m]$.

Now consider the specializations $f|_{\bfy=\bfzero} \in \Rbar[x_1,\dots,x_n]$ 
for $f \in \mathcal T$. Since $\bfzero \not \in \pi(V(I))$, we claim that the
tropical prevariety $\bigcap_{f \in \mathcal T} V(f|_{\bfy=\bfzero}) \subseteq
\mathbb R^n$ is empty.  Indeed, if this prevariety contained a
point $\mathbf{w}' \in \RR^n$, then $(\bfzero,\mathbf{w}') \in V(f)$ for all $f \in
\mathcal T$, and so $(\bfzero,\mathbf{w}') \in V(I)$.
As $\bigcap_{f \in \mathcal T} V(f|_{\bfy=\bfzero})$ is empty, we can apply
the Tropical Primary Nullstellensatz of Grigoriev and Podolski
\cite{GrigorievPodolskii}*{Theorem 10}.  This says that there is a
tropical polynomial combination 
$$g = \bigoplus_{i=1}^s c_i \mathbf{x}^{\mathbf{u}_i} \ttimes 
{f_i}|_{\bfy=\bfzero} \in \Rbar[x_1,\dots,x_n]$$
where $f_i \in \mathcal T$ for all $i$, and for all monomials
$\mathbf{x}^{\mathbf{v}} \in \supp(g)$ there is a unique
$i=i(\mathbf{v})$ with the coefficient of $\mathbf{x}^{\mathbf{v}}$ in
$c_i \mathbf{x}^{\mathbf{u}_i} \ttimes {f_i}|_{\bfy=\bfzero}$  less than the
coefficient of $\mathbf{x}^{\mathbf{v}}$ in $c_j
\mathbf{x}^{\mathbf{u}_j} \ttimes {f_j}|_{\bfy=\bfzero}$ for $j \neq i(\mathbf{v})$,
and for $\mathbf{v} \neq \mathbf{v}'$ we have $i(\mathbf{v}) \neq
i(\mathbf{v}')$.  Note that these conditions imply that we can take
$s$ to be the number of terms of $g$.

Set $g_i = c_i \mathbf{x}^{\mathbf{u}_i} f_i \in
\Rbar[x_1,\dots,x_n,y_1,\dots,y_m]$.  We may regard $g_i$ as a polynomial in
$x_1,\dots,x_n$ with coefficients in $\Rbar[y_1,\dots,y_m]$.  Write
${g_i}_{\mathbf{v}} \in \Rbar[y_1,\dots,y_m]$ for the coefficient of
$\mathbf{x}^{\mathbf{v}}$ in $g_i$.  The monomials $\bfx^\bfv$ appearing in $g_i$
are a subset of the monomials appearing in $g$.  Fix an order on the
monomials $\mathbf{x}^{\mathbf{v}_1},\dots, \mathbf{x}^{\mathbf{v}_s}$
appearing in $g$.  After reordering, we may assume that the lowest
coefficient appearing in ${g_i}|_{\bfy=\bfzero}$ for any $i$ appears in
$g_s$, and is the coefficient of $\mathbf{x}^{\mathbf{v}_s}$, and in
general the lowest coefficient of $\mathbf{x}^{\mathbf{v}_j}$ in any
${g_i}|_{\bfy=\bfzero}$ occurs in ${g_j}|_{\bfy=\bfzero}$ for all $1 \leq j \leq s$.
Since the coefficient of $\mathbf{x}^{\mathbf{v}_s}$ in
${g_s}|_{\bfy=\bfzero}$ is the lowest coefficient of any ${g_j}|_{\bfy=\bfzero}$, it
is the lowest coefficient appearing in ${g_s}|_{\bfy=\bfzero}$.  The
assumption that $(\bfzero,\bfzero) \not \in V(f)$ for all $f \in \mathcal T$
then implies that $\bfzero \notin V({g_s}_{\mathbf{v}_s})$.

We now repeatedly apply Lemma~\ref{l:maineliminationstep} to do a form
of Gaussian elimination on this system of polynomials.  Applying
Lemma~\ref{l:maineliminationstep} to $g_1$ and $g_i$ for $i>1$, with
$\mathbf{u}=\mathbf{v}_1$ and $\mathbf{v}=\mathbf{v}_i$, we get a new
polynomial $g'_i \in I$ with ${g'_i}_{\mathbf{v}_1} = \infty$,
${g'_i}_{\mathbf{v}_i}(\bfzero) = {g_i}_{\mathbf{v}_i}(\bfzero)$, and
${g'_i}|_{\bfy=\bfzero} \geq {g_1}|_{\bfy=\bfzero} \tplus {g_i}|_{\bfy=\bfzero}$.  
Additionally, when $i = s$ we get that $\inn_{\mathbf{0}}({g'_s}_{\mathbf{v}_s}) =
\inn_{\mathbf{0}}({g_s}_{\mathbf{v}_s})$,
and so $\bfzero \notin V({g'_s}_{\mathbf{v}_s})$.
Note that the smallest coefficient in any $g'_i|_{\bfy=\bfzero}$ still occurs
in $g'_s$, and the coefficient of $\mathbf{v}_i$ in ${g'_i}|_{\bfy=\bfzero}$
is still smaller than that coefficient in ${g'_j}|_{\bfy=\bfzero}$ for $j>i$.  

We now replace $g_i$ by $g'_i$ for $i>1$, and iterate, with $g_2$
playing the role of $g_1$.  After $s-1$ iterations we obtain $g_s =
\mathbf{x}^{\mathbf{v}_s} h$, where $h \in \Rbar[y_1,\dots,y_m]$ satisfies 
$\bfzero \notin V(h)$.  Since $g_s \in I \subseteq \Rbar[x_1^{\pm
    1},\dots,x_n^{\pm 1}, y_1^{\pm 1},\dots,y_m^{\pm 1}]$, 
    we have $h \in I$, so $\bfzero \not \in V(I \cap
\Rbar[y_1^{\pm 1},\dots,y_m^{\pm 1}])$, as desired.  This proves the reverse inclusion.
\end{proof}

\begin{example}
Theorem~\ref{t:projection} is not true verbatim with
$\Rbar[x_1^{\pm 1},\dots,x_n^{\pm 1},y_1^{\pm 1},\dots,y_m^{\pm 1}]$ replaced by
$\Rbar[x_1,\dots,x_n,y_1,\dots,y_m]$.  
For example,  consider $I = \trop(\langle
x_0x_3-x_1x_2 \rangle ) \subseteq \Rbar[x_0,x_1,x_2,x_3]$.  Then $V(I)
= \{ \mathbf{w} \in \Rbar^4 : w_0+w_3=w_1+w_2 \}$.  We have $I
\cap \Rbar[x_0,x_1,x_2] = \{ \infty \}$, and so $V(I
\cap \Rbar[x_0,x_1,x_2]) = \Rbar^3$, but $\{
(\infty,w_1,w_2) : w_1, w_2 \in \mathbb R \}$ is not in $\pi(V(I))$.
In this example we have $V(I
\cap \Rbar[x_0,x_1,x_2])$ equal to the closure of $\pi(V(I))$, as in
the classical case.
\end{example}


\section{Degrees of zero-dimensional ideals} \label{s:degreezero}

For a zero-dimensional subscheme of $\mathbb P^n$, its degree equals
the sum of the multiplicities of the points in its variety.  
In this section we extend this to
tropical ideals.  The proof is more complicated than in the classical case, as we do not (yet?) have primary decomposition available as a tool.

\begin{definition} \label{d:degree}
  The {\defbold degree} of a $d$-dimensional homogeneous tropical ideal
  $I \subseteq \Rbar[x_0,\dots,x_n]$ is $d!$ times the leading coefficient of the
  Hilbert polynomial of $I$.
  \end{definition}

When $I$ is zero-dimensional, the Hilbert polynomial of $I$ is a constant, 
and the degree is equal to that
constant.  For a zero dimensional tropical ideal $I$ in
$\Rbar[x_1,\dots,x_n]$ or $\Rbar[x_1^{\pm 1},\dots,x_n^{\pm 1}]$ we
define the degree to be the degree of the homogenization $I^h$, or of
the homogenization of $I \cap \Rbar[x_1,\dots,x_n]$ respectively.  We
make the same definitions with $\Rbar$ replaced by $\mathbb B$.  This
has the following equivalent formulation.

\begin{lemma} \label{l:degreeforzerodim}
Let $I$ be a zero-dimensional tropical ideal in $\Rbar[x_1,\dots,x_n]$ or
$\Rbar[x_1^{\pm 1},\dots,x_n^{\pm 1}]$.  Then the degree of $I$ is the
maximum size of a finite collection $E$ of monomials not containing the support
of any polynomial in $I$: $$\deg(I) = \max(\rk(\underline{\Mat}(I_E))).$$
\end{lemma}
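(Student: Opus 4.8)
The plan is to reduce to the affine case and there identify the matroids $\uMat(I|_E)$ with the truncations of the Hilbert function of the homogenization $I^h$. I would first dispose of the Laurent case: if $I\subseteq\Rbar[x_1^{\pm1},\dots,x_n^{\pm1}]$ and $J=I\cap\Rbar[x_1,\dots,x_n]$, then $\deg(I)=\deg(J)$ by definition, and the two maxima agree as well. Indeed, given a finite collection $E$ of Laurent monomials, multiplying its elements by a common monomial $\bfx^\bfw$ produces a finite collection $E'\subseteq\Rbar[x_1,\dots,x_n]$ of ordinary monomials, and multiplication by $\bfx^\bfw$ gives a bijection between the polynomials of $I$ supported on $E$ and those of $J$ supported on $E'$, so $\uMat(I|_E)\cong\uMat(J|_{E'})$; conversely, if $E'$ already consists of ordinary monomials then every polynomial of $I$ supported on $E'$ lies in $\Rbar[x_1,\dots,x_n]$, hence in $J$, so $\uMat(I|_{E'})=\uMat(J|_{E'})$. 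Thus it suffices to treat a zero-dimensional tropical ideal $I\subseteq\Rbar[x_1,\dots,x_n]$.

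In that case $I^h\subseteq\Rbar[x_0,\dots,x_n]$ is a zero-dimensional homogeneous tropical ideal by Part~\ref{enum:affinetoproj} of Lemma~\ref{l:basicproperties}, so its Hilbert function $H_{I^h}$ is eventually equal to the constant $\deg(I^h)=\deg(I)$. For $d\geq 0$ let $E_d$ be the set of all monomials of $\Rbar[x_1,\dots,x_n]$ of degree at most $d$, and let $\beta_d$ be the bijection $\bfx^\bfu\mapsto x_0^{\,d-|\bfu|}\bfx^\bfu$ from $E_d$ onto the set $\mon_d$ of degree-$d$ monomials of $\Rbar[x_0,\dots,x_n]$. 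The key claim is that $\beta_d$ is an isomorphism of matroids $\uMat(I|_{E_d})\cong\uMat(I^h_d)$, so that $\rk(\uMat(I|_{E_d}))=H_{I^h}(d)$. To prove it I would show that a subset $B\subseteq E_d$ contains the support of some $f\in I$ if and only if $\beta_d(B)$ contains the support of some homogeneous $g\in I^h$ of degree $d$. In one direction, $f\mapsto x_0^{\,d-\deg(\tilde{f})}\tilde{f}$ works, since this polynomial lies in $I^h$, is homogeneous of degree $d$ (note $\deg(\tilde{f})\leq d$ because every monomial of $f$ has degree at most $d$), and has support $\beta_d(\supp(f))$. In the other direction, $g\mapsto g|_{x_0=1}$ works: because $g$ is homogeneous of degree $d$ this dehomogenization has support $\beta_d^{-1}(\supp(g))$, and it lies in $I$ because any $g\in I^h$ can be written as $\bigoplus_i m_i\tilde{f_i}$ with $f_i\in I$, whence $g|_{x_0=1}=\bigoplus_i (m_i|_{x_0=1})\,f_i\in I$.

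Granting the claim, both inequalities follow quickly. Choosing $d\gg0$ so that $H_{I^h}(d)=\deg(I)$ exhibits a single collection $E=E_d$ with $\rk(\uMat(I|_{E_d}))=\deg(I)$, so $\max_E\rk(\uMat(I|_E))\geq\deg(I)$. Conversely, given an arbitrary finite collection $E$ of monomials, choose $d$ large enough that $E\subseteq E_d$ and $H_{I^h}(d)=\deg(I)$; since $\uMat(I|_E)$ is the restriction of $\uMat(I|_{E_d})$ to $E$, we get $\rk(\uMat(I|_E))\leq\rk(\uMat(I|_{E_d}))=\deg(I)$. Hence $\deg(I)=\max_E\rk(\uMat(I|_E))$, as claimed. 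The one step requiring care is the matroid isomorphism $\uMat(I|_{E_d})\cong\uMat(I^h_d)$: the bookkeeping showing that homogenization and dehomogenization are mutually inverse on the relevant supports, and in particular that dehomogenizing an \emph{arbitrary} element of $I^h$ (not merely a generator $\tilde{f}$) stays inside $I$ — which uses that $I$ is closed under tropical addition and multiplication by monomials. The remaining steps are formal.
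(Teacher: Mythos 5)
Your proof is correct and follows essentially the same route as the paper: reduce the Laurent case to $J = I\cap\Rbar[x_1,\dots,x_n]$ by multiplying by a common monomial, then compare independence of monomial sets for $I$ with independence in $\uMat(I^h_d)$ via (de)homogenization and use that $H_{I^h}(d)=\deg(I)$ for $d\gg 0$; your packaging of this as a matroid isomorphism $\uMat(I|_{E_d})\cong\uMat(I^h_d)$ is just a mild tightening of the paper's set-by-set argument. One notational slip: the tropical dehomogenization is $g\mapsto g|_{x_0=0}$ (substituting the tropical multiplicative identity $0$, as in the paper), not $g|_{x_0=1}$ — with the literal reading of $x_0=1$ the identity $\tilde{f_i}|_{x_0=1}=f_i$ fails, though the supports, and hence your matroid claim, are unaffected.
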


\begin{proof}
The degree of $I \subseteq \Rbar[x_1^{\pm 1},\dots,x_n^{\pm 1}]$ is by
definition the same as the degree of $J= I \cap \Rbar[x_1,\dots,x_n]$.
If $E$ is a collection of monomials in $\Rbar[x_1^{\pm
    1},\dots,x_n^{\pm 1}]$, there is a polynomial in $I \subseteq
\Rbar[x_1^{\pm 1},\dots,x_n^{\pm 1}]$ with support contained in $E$ if
and only if there is some monomial $\mathbf{x}^{\mathbf{u}}$ with
$\mathbf{x}^{\mathbf{u}}E \subseteq \Rbar[x_1,\dots,x_n]$ and a polynomial in
$J := I \cap \Rbar[x_1,\dots,x_n]$ with support in
$\mathbf{x}^{\mathbf{u}}E$. So it suffices to consider $J \subseteq
\Rbar[x_1,\dots,x_n]$.

The degree of $J$ is the degree of $J^h$, which is the rank of
$\uMat(J^h_d)$ for $d \gg 0$.  If $E$ is a collection of monomials in
$\Rbar[x_1,\dots,x_n]$, let $E^d = \{
x_0^{d-|\mathbf{u}|}\mathbf{x}^{\mathbf{u}} : \mathbf{x}^{\mathbf{u}}
\in E \}$ for $d \gg \max_{\mathbf{x}^{\mathbf{u}} \in E}
|\mathbf{u}|$.  If there is no polynomial in $J$ with support in $E$,
then there is no polynomial in $J^h$ with support in $E^d$, so
$\deg(J^h) \geq |E|$.  Conversely, let $E'$ be a basis for
$\uMat(J^h_d)$ for fixed $d \gg 0$, so there is no polynomial in $J^h$
with support in $E'$.  Set $E = E'|_{x_0=0}$.  Then $|E'| = |E|$, 
and since $J^h|_{x_0 = 0} = J$, we have that $E$
is a collection of monomials in $\Rbar[x_1,\dots,x_n]$ not containing
the support of any polynomial in $J$, thus the maximum such $|E|$
is at least $\deg(J^h)$.
\end{proof}

The notion of {\em multiplicity} is central in tropical geometry.  We
now define it for zero-dimensional ideals.  A general definition
follows in Definition~\ref{d:balancing}. 
Note that any initial ideal of a zero-dimensional tropical ideal is 
also zero-dimensional; this follows, for instance, from Theorem \ref{t:dimiscorrect}
and Proposition \ref{p:star}.

\begin{definition} \label{d:zerodimmult}
Let $I$ be a zero-dimensional tropical ideal in $\Rbar[x_1^{\pm
    1},\dots,x_n^{\pm 1}]$, and let $\mathbf{w} \in V(I)$.  The {\defbold
  multiplicity} $\mult_{V(I)}(\bfw)$ of $V(I)$ at $\mathbf{w}$ is the degree of
the zero-dimensional initial ideal $\inn_{\mathbf{w}}(I) \subseteq \mathbb B[x_1^{\pm 1},\dots,x_n^{\pm 1}]$.  

When $I$ is a zero-dimensional tropical ideal in
$\Rbar[x_1,\dots,x_n]$ or a zero-dimensional homogeneous tropical ideal in
$\Rbar[x_0,\dots,x_n]$, we define the multiplicity of $V(I)$
at $\mathbf{w} \in \mathbb R^n$ or $[\mathbf{w}]\in \trop(\mathbb
P^n)$ with $\mathbf{w} \in \mathbb R^{n+1}$ to be the degree of the
saturation $(\inn_{\mathbf{w}}(I):(\prod_{i=0}^n x_i)^{\infty})$.  We will not
need the case where $\mathbf{w}$ has infinite coordinates in this
paper.
\end{definition}

We first note some basic properties of the  multiplicity and degree.
Recall from \S \ref{d:dimensionprojection} that an automorphism of $\Rbar[x_1,\dots,x_n]$ is given by a
function $\phi^*$ 
of the form $\phi^*(x_i) = \lambda_i
\mathbf{x}^{\mathbf{a}_i}$, where $\lambda_i \in \RR$ for all $i$
and the $n \times n$ matrix $A$ with
columns $\mathbf{a}_i$ has determinant $\pm 1$.  This has
tropicalization $\trop(\phi)$ given by $\trop(\phi)(\mathbf{w}) = A^T
\mathbf{w} + \lambda$.

\begin{proposition}

  \label{p:basicdeg}
  \leavevmode
\begin{enumerate}
\item \label{enum:degmap} Let $\phi^* \colon \Rbar[x_1^{\pm
    1},\dots,x_n^{\pm 1}] \rightarrow \Rbar[y_1^{\pm 1},\dots,y_n^{\pm
    1}]$ be the semiring homomorphism given by $\phi^*(x_i) =
  \mathbf{y}^{\mathbf{a}_i}$, where the $n \times n$ matrix $A$ with
  columns $\mathbf{a}_i$ has rank $n$. If $J$ is a tropical ideal in
  $\Rbar[x_1^{\pm 1},\dots,x_n^{\pm 1}]$ then $J_{\phi} :=
  \phi^*(J)\Rbar[y_1^{\pm 1},\dots,y_n^{\pm 1}]$ is a tropical ideal
  as well. In addition, if $J$ is zero-dimensional then so is
  $J_\phi$, and $\deg(J_{\phi})=|\det(A)|\deg(J)$.  The same holds with $\Rbar$ replaced by $\mathbb B$.

\item \label{enum:automorphismdeg} When $I$ is a zero-dimensional
  ideal in $\Rbar[x_1^{\pm 1},\dots,x_n^{\pm 1}]$, the degree of $I$
  and the multiplicity of a point in $V(I)$ are invariant under
  automorphisms of $\Rbar[x_1^{\pm 1},\dots,x_n^{\pm 1}]$.
  Explicitly, if $\phi^* \colon \Rbar[x_1^{\pm 1},\dots,x_n^{\pm 1}]
  \rightarrow \Rbar[x_1^{\pm 1},\dots,x_n^{\pm 1}]$ is an
  automorphism and $I' = {\phi^*}^{-1}(I)$, then
  $\deg(I') = \deg(I)$, and $\mult_{V(I)}(\mathbf{w}) =
  \mult_{V(I')}(\trop(\phi)(\mathbf{w}))$.

\item \label{enum:homogenizationmultiplicity} For a zero-dimensional 
tropical ideal $I \subseteq
  \Rbar[x_1^{\pm},\dots,x_n^{\pm 1}]$, let $J \subseteq
  \Rbar[x_0,\dots,x_n]$ be the homogenization of $I \cap
  \Rbar[x_1,\dots,x_n]$.  Then for any $\mathbf{w} \in V(I)$, we have
  $\mult_{V(I)}(\mathbf{w}) = \mult_{V(J)}([0:\mathbf{w}])$.
\end{enumerate}
\end{proposition}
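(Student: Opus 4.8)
The plan is to establish the three parts in sequence, with part~\ref{enum:degmap} doing essentially all of the work and parts~\ref{enum:automorphismdeg} and~\ref{enum:homogenizationmultiplicity} following more or less formally from it together with Lemmas~\ref{l:changeofcoords}, \ref{l:basicinitial}, and~\ref{l:basicproperties}. For part~\ref{enum:degmap}, note first that $\phi^*$ sends a monomial $\bfx^\bfu$ to $\bfy^{A\bfu}$, so it is injective (since $A$ has rank $n$) and its image is the sub-semiring of $\Rbar[y_1^{\pm 1},\dots,y_n^{\pm 1}]$ consisting of the Laurent polynomials supported on the sublattice $A\ZZ^n\subseteq\ZZ^n$. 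I would then record the resulting $\ZZ^n/A\ZZ^n$-grading
\[ \Rbar[y_1^{\pm 1},\dots,y_n^{\pm 1}] = \bigoplus_{\mathbf{b}\in\ZZ^n/A\ZZ^n}\bfy^{\mathbf{b}}\,\phi^*(\Rbar[x_1^{\pm 1},\dots,x_n^{\pm 1}]), \]
a direct sum over the $|\det A|$ cosets whose degree-zero piece is $\phi^*(\Rbar[x_1^{\pm 1},\dots,x_n^{\pm 1}])$. Since $\phi^*(J)$ is an ideal of this degree-zero piece, the ideal it generates satisfies $J_\phi=\bigoplus_{\mathbf{b}}\bfy^{\mathbf{b}}\phi^*(J)$ and is homogeneous for the grading. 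Hence for any finite collection $E$ of $\bfy$-monomials, partitioning $E=\bigsqcup_{\mathbf{b}}E_{\mathbf{b}}$ according to cosets, a polynomial of $J_\phi$ supported on $E$ is exactly a sum of polynomials of $\bfy^{\mathbf{b}}\phi^*(J)$ supported on the respective $E_{\mathbf{b}}$, so $\Mat(J_\phi|_E)=\bigoplus_{\mathbf{b}}\Mat\big((\bfy^{\mathbf{b}}\phi^*(J))|_{E_{\mathbf{b}}}\big)$, and each summand is isomorphic, via $(\phi^*)^{-1}$ applied to $\bfy^{-\mathbf{b}}E_{\mathbf{b}}$, to $\Mat(J|_{E'_{\mathbf{b}}})$ for a collection $E'_{\mathbf{b}}$ of $\bfx$-monomials with $|E'_{\mathbf{b}}|=|E_{\mathbf{b}}|$. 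A direct sum of valuated matroids is a valuated matroid, so $J_\phi$ is a tropical ideal. Since the $E_{\mathbf{b}}$ can be chosen independently and $\rk(\uMat(J|_{E'_{\mathbf{b}}}))\leq\deg(J)$ with equality attainable, Lemma~\ref{l:degreeforzerodim} gives $\deg(J_\phi)=\max_E\rk(\uMat(J_\phi|_E))=|\det A|\,\deg(J)$; this is finite, and $V(J_\phi)=(A^T)^{-1}(V(J))$ is zero-dimensional by Theorem~\ref{t:dimiscorrect}, so $J_\phi$ is zero-dimensional with the asserted degree. The same argument works verbatim over $\BB$.

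For part~\ref{enum:automorphismdeg}, by Lemma~\ref{l:changeofcoords} the ideal $I'={\phi^*}^{-1}(I)$ is a tropical ideal, $V(I')=\trop(\phi)(V(I))$, and $\phi^*_A(\inn_{\trop(\phi)(\bfw)}(I'))=\inn_{\bfw}(I)$. Writing $\phi^*=\phi^*_{\lambda}\circ\phi^*_A$ with $\phi^*_{\lambda}(x_i)=\lambda_ix_i$, one sees that $p\in I'$ is supported on a collection $E$ of monomials if and only if $\phi^*(p)\in I$ is supported on $A\cdot E$, so (as $\det A=\pm1$ makes $\bfu\mapsto A\bfu$ a bijection of $\ZZ^n$) Lemma~\ref{l:degreeforzerodim} yields $\deg(I')=\deg(I)$; likewise $\inn_{\bfw}(I)=\phi^*_A(\inn_{\trop(\phi)(\bfw)}(I'))$ is the image of the latter under the automorphism $\phi^*_A$ of $\BB[x_1^{\pm1},\dots,x_n^{\pm1}]$, so applying part~\ref{enum:degmap} over $\BB$ with $|\det A|=1$ gives $\mult_{V(I)}(\bfw)=\deg(\inn_{\bfw}(I))=\deg(\inn_{\trop(\phi)(\bfw)}(I'))=\mult_{V(I')}(\trop(\phi)(\bfw))$.

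For part~\ref{enum:homogenizationmultiplicity}, set $I_{\mathrm{aff}}:=I\cap\Rbar[x_1,\dots,x_n]$, so $J=I_{\mathrm{aff}}^h$ and $J|_{x_0=0}=I_{\mathrm{aff}}$, and write $J_0:=\inn_{(0,\bfw)}(J)$. By Part~\ref{enum:initialaffineproj} of Lemma~\ref{l:basicinitial}, $K:=\inn_{\bfw}(I_{\mathrm{aff}})=J_0|_{x_0=0}$, and because $x_0$ is a tropical unit one also has $(J_0:x_0^{\infty})|_{x_0=0}=J_0|_{x_0=0}=K$. I would then verify the routine facts that for a homogeneous, $x_0$-saturated ideal $M$ one has $M=(M|_{x_0=0})^h$ and, with $m:=\prod_{i=1}^nx_i$, $(M:m^{\infty})|_{x_0=0}=(M|_{x_0=0}:m^{\infty})$; applying these with $M=(J_0:x_0^{\infty})$ gives $(J_0:(\prod_{i=0}^nx_i)^{\infty})=(K:m^{\infty})^h$, whence $\mult_{V(J)}([0:\bfw])=\deg\big((K:m^{\infty})^h\big)$. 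On the other hand, $\mult_{V(I)}(\bfw)=\deg(\inn_{\bfw}(I))$, and by Part~\ref{enum:initialtorustoaffine} of Lemma~\ref{l:basicinitial} together with the third part of Lemma~\ref{l:basicproperties} we have $\inn_{\bfw}(I)\cap\BB[x_1,\dots,x_n]=(K:m^{\infty})$, so $\deg(\inn_{\bfw}(I))=\deg\big((K:m^{\infty})^h\big)$ by the definition of the degree of a zero-dimensional Laurent ideal. The two expressions coincide.

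The hard part will be part~\ref{enum:degmap}: setting up the $\ZZ^n/A\ZZ^n$-grading cleanly and checking carefully that $\uMat(J_\phi|_E)$ really is the claimed direct sum of relabelled restrictions of $\uMat(J)$, plus the small amount of dimension bookkeeping needed to conclude zero-dimensionality. Parts~\ref{enum:automorphismdeg} and~\ref{enum:homogenizationmultiplicity} should then be quick, amounting to bookkeeping around Lemmas~\ref{l:changeofcoords}, \ref{l:basicinitial}, and~\ref{l:basicproperties}.
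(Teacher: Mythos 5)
Your proposal is correct and follows essentially the same route as the paper: the $\ZZ^n/A\ZZ^n$-grading with graded pieces $\bfy^{\mathbf{b}}\phi^*(J)$, the direct-sum decomposition of $\Mat(J_\phi|_E)$ combined with Lemma~\ref{l:degreeforzerodim} for the degree, Lemma~\ref{l:changeofcoords} for part~\ref{enum:automorphismdeg}, and the chain of identities from Lemma~\ref{l:basicinitial} for part~\ref{enum:homogenizationmultiplicity}. The only cosmetic difference is that in part~\ref{enum:degmap} you certify the tropical-ideal property via the valuated-matroid characterization (direct sums of valuated matroids are valuated matroids), whereas the paper verifies the elimination axiom directly for homogeneous elements; these are equivalent by the definition recalled in Section~\ref{s:Groebner}.
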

\begin{proof}

\begin{enumerate}

\item We first show that the ideal $\phi^*(J) \Rbar[y_1^{\pm
    1},\dots,y_n^{\pm 1}]$ is a tropical ideal.  Note that this ideal
  is homogeneous with respect to the grading of $\Rbar[y_1^{\pm
      1},\dots,y_n^{\pm 1}]$ by the cokernel $G$ of the matrix $A$,
  which is a finite abelian group of size $|\det(A)|$.  Explicitly, let
  $L$ be the sublattice of $\mathbb Z^n$ spanned by the columns
  $\mathbf{a}_i$ of $A$.  Grade $\Rbar[y_1^{\pm 1},\dots,y_n^{\pm 1}]$
  by $\deg(y_i)= \mathbf{e}_i + L \in G = \mathbb Z^n/L$.  Since
  $\phi^*(f)$ has degree zero for every $f \in J$, the ideal $J_{\phi}$ is
  homogeneous with respect to this grading.  In addition, note that if
  $\deg(\mathbf{y}^{\mathbf{u}}) = \mathbf{0}$ then
  $\mathbf{y}^{\mathbf{u}} = \phi^*(\mathbf{x}^{\mathbf{v}})$ for some
  monomial $\mathbf{x}^{\mathbf{v}}$, so
  $\mathbf{y}^{\mathbf{u}}\phi^*(f) = \phi^*(\mathbf{x}^{\mathbf{v}}f)$,
  and thus $(J_{\phi})_{\mathbf{0}} = \phi^*(J)$.  This also shows that
  $(J_{\phi})_{\gamma} = \mathbf{y}^{\mathbf{u}} \phi^*(J)$ for any $\gamma \in G$
  and $\mathbf{y}^{\mathbf{u}}$ with $\deg(\mathbf{y}^{\mathbf{u}}) =
  \gamma$.

Now suppose that $f, g \in J_{\phi}$ with
$[f]_{\mathbf{y}^{\mathbf{u}}} = [g]_{\mathbf{y}^{\mathbf{u}}} <
\infty$.  Since $J_{\phi}$ is homogeneous with respect to the
$G$-grading, to prove that it is a tropical ideal it is enough to
show the elimination axiom holds for homogeneous polynomials $f$ and
$g$ of the same degree $\gamma := \deg(\bfy^\bfu)$.  By factoring
out $\mathbf{y}^{\mathbf{u}}$, we may assume that
${\mathbf{u}} = \mathbf 0$ and $\deg(f)=\deg(g)=\bfzero$.  This means
that $f= \phi^*(f')$ and $g = \phi^*(g')$ for some $f',g' \in J$, with
the coefficient of ${\bfx^\mathbf 0}$ equal in $f'$ and $g'$.  Since $J$ is a tropical
ideal, there is $h' \in J$ satisfying $\elim{h'}{\bfx^{\mathbf 0}}{f'}{g'}$.
Set $h = \phi^*(h')$.  Then $\elim{h}{\bfy^{\bf0}}{f}{g}$,
since all monomials appearing in $f,g$, and $h$ have degree $\bf0$. 
This shows that $J_{\phi}$ is a tropical ideal.  

Since $J_{\phi}$ is homogeneous, for any collection of monomials $E$
in $\Rbar[y_1^{\pm 1},\dots,y_n^{\pm 1}]$ the matroid
$\Mat(J_{\phi}|_E)$ is the direct sum $\bigoplus_{\gamma \in G}
\Mat(J_{\phi}|_{E_\gamma})$, where $E_\gamma$ is the collection of
monomials in $E$ of degree $\gamma$.  In addition, if $E$ is a
collection of monomials of degree $\gamma$, so $E =
\mathbf{y}^{\mathbf{v}}\phi^*(E')$ for a collection of monomials $E'$
in $\Rbar[x_1^{\pm 1},\dots,x_n^{\pm 1}]$, then $\Mat(J_{\phi}|_E)$ is
isomorphic to $\Mat(J|_{E'})$.  Lemma~\ref{l:degreeforzerodim} then
implies that the largest set of monomials $E$ not supporting a
polynomial in $J_\phi$ has size $|G|\deg(J) = |\det(A)| \deg(J)$.  The
proof is verbatim the same in the case that $\Rbar$ is replaced by
$\mathbb B$.

\item Let $\phi^* \colon \Rbar[x_1^{\pm 1},\dots,x_n^{\pm 1}]
  \rightarrow \Rbar[x_1^{\pm 1},\dots,x_n^{\pm 1}]$ be an
  automorphism, given by $\phi^*(x_i) = \lambda_i
  \mathbf{x}^{\mathbf{a}_i}$ for $1 \leq i \leq n$.  Set $I' =
         {\phi^*}^{-1}(I)$.  The fact that $\deg(I')=\deg(I)$ follows
         from Lemma~\ref{l:degreeforzerodim}.  By
         Lemma~\ref{l:changeofcoords}, if $\mathbf{w} \in V(I)$ then
         $\trop(\phi)(\mathbf{w}) \in V(I')$.
         In addition, $\inn_{\trop(\phi)(\mathbf{w})}(I') =
         \phi^{*-1}_A(\inn_{\mathbf{w}}(I))$.
         Lemma~\ref{l:degreeforzerodim} then implies that the degree
         of $\inn_{\trop(\phi)(\mathbf{w})}(I')$ and
         $\inn_{\mathbf{w}}(I)$ are the same, as the automorphism
         $\phi^{*-1}_A$ does not change the maximal size of a
         collection of monomials not containing the support of a
         polynomial in either ideal.  As these degrees are the
         multiplicities by definition, the result follows.

\item Set $I' = I \cap \Rbar[x_1,\dots,x_n]$, so that $J = (I')^h$.
  By definition, $\mult_{V(J)}([0:\mathbf{w}])$ is the degree of
  $J_{in}:=(\inn_{(0,\mathbf{w})}(J): (\prod_{i=0}^n x_i)^{\infty})$.
  Since $J_{in}$ is saturated with respect to $x_0$, it is the
  homogenization with respect to $x_0$ of $(J_{in})|_{x_0=0} \subseteq \Rbar[x_1,\dots,x_n]$, so
  $\deg(J_{in}) = \deg(J_{in}|_{x_0=0})$ by the definition of degree.
  Note that $J_{in}|_{x_0=0} = (\inn_{(0,\mathbf{w})}(J)|_{x_0=0} :
  (\prod_{i=1}^n x_i)^{\infty})$.  Thus, by
  Part~\ref{enum:initialaffineproj} of Lemma~\ref{l:basicinitial},
  $J_{in}|_{x_0=0} = (\inn_{\mathbf{w}}(I') : (\prod_{i=1}^n
  x_i)^{\infty})$.  Part~\ref{enum:initialtorustoaffine} of
  Lemma~\ref{l:basicinitial} then implies that $(\inn_{\mathbf{w}}(I')
  : (\prod_{i=1}^n x_i)^{\infty}) = \inn_{\mathbf{w}}(I) \cap \mathbb
  B [x_1,\dots,x_n]$.  Thus $\deg(J_{in}) = \deg(\inn_{\mathbf{w}}(I))
  = \mult_{V(I)}(\mathbf{w})$ as required. \qedhere
\end{enumerate}
\end{proof}

We now show that Definition~\ref{d:zerodimmult} agrees with the multiplicity of a root of a 
univariate polynomial.  Many different tropical
polynomials give rise to the same function from $\Rbar$ to $\Rbar$,
but given a polynomial $f$ there is a minimum possible choice for the
coefficients of a polynomial giving rise to the same function. 
We call the polynomial with these coefficients the
{\defbold convexification} of $f$.  

\begin{definition}
For a polynomial $f \in \Rbar[x^{\pm 1}]$ or $\Rbar[x]$, we can factor
the convexification of $f$ as $\alpha \ttimes \prod_{i=1}^s (x \tplus
w_i)^{m_i}$, with $\alpha \in \mathbb R$, $w_i \in \Rbar$, and $m_i
\in \mathbb N$; see, for example \cite{GriggManwaring}.  
The {\defbold multiplicity}  $\mult_{w_i}(f)$ of $f$ at $w_i$ 
equals $m_i$.  For $w \in \Rbar$ with $w \neq w_i$ for all $i$, we set
$\mult_w(f)=0$.
\end{definition}

\begin{example}
The convexification of $f = x^3 \tplus 1 \ttimes x^2 \tplus x \tplus
1$ is equal to $x^3 \tplus x^2 \tplus x \tplus 1 = (x \tplus 0)^2 \ttimes (x
\tplus 1)$.  The multiplicity of $f$ at $0$ thus equals $2$.
\end{example}

The following proposition shows that, while tropical ideals in one variable 
might not be finitely generated, they still behave like 
principal ideals.

\begin{proposition} \label{p:principalideal} 
Let $I$ be a tropical ideal in $\Rbar[x^{\pm 1}]$.
There exists $h \in I$ such that $V(I) = V(h)$, 
and the multiplicity of $V(I)$ at any point $w \in \RR$ equals
$\mult_w(h)$. In addition, for every $f \in I$ the 
convexification of $h$ divides the convexification of $f$.
\end{proposition}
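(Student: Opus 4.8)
First I would dispose of the degenerate cases: if $I=\{\infty\}$ then $V(I)=\RR$ and $h=\infty$ works, and if $I$ is the unit ideal then $V(I)=\emptyset$ and $h=0$ (the multiplicative unit) works; so by Theorem~\ref{t:dimiscorrect} we may assume $\dim I=0$, i.e.\ $V(I)=\{w_1,\dots,w_p\}\subseteq\RR$ is a nonempty finite set and every $f\in I\setminus\{\infty\}$ satisfies $V(I)\subseteq V(f)$ with $V(f)$ finite. Throughout I would use that in one Laurent variable the convexification of $f$ factors as $\alpha\ttimes\prod_w(x\tplus w)^{\mult_w(f)}$ with $\sum_w\mult_w(f)=\width(f)$, that $\mult_w(f)$ depends only on the convexification, that $\width$ is additive under $\ttimes$ and nondecreasing under $\tplus$ (since the support of a tropical sum is the union of the supports), and that the convexification of $h$ divides that of $f$ precisely when $\mult_w(h)\le\mult_w(f)$ for all $w$.

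Next I would reduce the statement. For $w\in\RR$ put $m^*(w):=\min\{\mult_w(f):f\in I\setminus\{\infty\}\}$, so $m^*(w)\ge1$ iff $w\in V(I)$. It suffices to produce $h\in I$ with $\mult_w(h)=m^*(w)$ for \emph{all} $w$: then $V(h)=\{w:m^*(w)\ge1\}=V(I)$, and for every $f\in I$ and every $w$ one has $\mult_w(h)=m^*(w)\le\mult_w(f)$, so the convexification of $h$ divides that of $f$; it then only remains to identify $m^*(w)$ with $\mult_{V(I)}(w)$. For that I would set $J:=\inn_w(I)\subseteq\BB[x^{\pm1}]$, a zero-dimensional tropical ideal with $\mult_{V(I)}(w)=\deg(J)$ by definition. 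Every element of $J$ has the form $\inn_w(f)$ for $f\in I$, has all coefficients $0$, and has width $\mult_w(f)$; since a Boolean polynomial of width $d$ convexifies to a monomial times $(x\tplus0)^d$, this gives $\min\{\width(g):g\in J\setminus\{\infty\}\}=m^*(w)$. Finally $\deg(J)$ equals this minimal width: ``$\ge$'' is Lemma~\ref{l:degreeforzerodim} applied to the window $\{1,x,\dots,x^{m^*(w)-1}\}$, which supports no element of $J$; for ``$\le$'', pick a minimal-support element $g_0\in J$ of width $m^*(w)$ (it exists because shifting a width-$m^*(w)$ element into $\{1,\dots,x^{m^*(w)}\}$ makes that window dependent, and no circuit inside it can be narrower), observe that the shifts $x^jg_0$ $(j\ge0)$ are circuits of $\uMat(J|_E)$ for $E=\{1,\dots,x^{W}\}$ with least exponents $0,1,2,\dots$, and chase closures ($x^j\in\mathrm{cl}(\{x^{j+1},\dots,x^{W}\})$) to get $\rk\uMat(J|_E)\le m^*(w)$; hence every $(m^*(w)+1)$-element set supports an element of $J$, so $\deg(J)\le m^*(w)$ by Lemma~\ref{l:degreeforzerodim}.

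It then remains to construct $h\in I$ with $\mult_w(h)=m^*(w)$ for all $w$. Since $\width(f)=\sum_w\mult_w(f)\ge\sum_w m^*(w)$ for every $f\in I\setminus\{\infty\}$, such an $h$ is exactly an element of $I$ of width $\sum_w m^*(w)$ (equality in that inequality forces $\mult_w(h)=m^*(w)$ everywhere). So the whole proposition reduces to showing $I$ has an element of width $\sum_{w\in V(I)}m^*(w)$, equivalently $\deg(I)=\sum_{w\in V(I)}\mult_{V(I)}(w)$, which is the one-variable case of Theorem~\ref{t:degissummult}. (The closure-chase above applies verbatim to $I$ itself and shows $\deg(I)$ equals the minimal width of a nonzero element of $I$; the bound $\deg(I)\ge\sum_w m^*(w)$ is immediate, so only $\deg(I)\le\sum_w m^*(w)$ needs proof.) My plan for that is a reduction iterated to termination: given $f_*,f_0\in I\setminus\{\infty\}$ with $\mult_{w_0}(f_*)>\mult_{w_0}(f_0)$ for some $w_0$, produce $f'\in I\setminus\{\infty\}$ with $\width(f')<\width(f_*)$; run this from a minimal-width $f_*$ to conclude $\mult_{w_0}(f_*)=m^*(w_0)$ for every $w_0$, i.e.\ $\width(f_*)=\sum_w m^*(w)$. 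To build $f'$: after the automorphism $x\mapsto(-w_0)\ttimes x$ of Lemma~\ref{l:changeofcoords} one may take $w_0=0$; rescale and translate $f_0$ to $f_0'$ so that the $0$-edge of its convexification lies along the $0$-edge of that of $f_*$ and they share its left vertex $x^a$, so $[f_0']_{x^a}=[f_*]_{x^a}\ne\infty$; then apply the monomial elimination axiom to obtain $f'\in I$ with $\elim{f'}{x^a}{f_0'}{f_*}$, and check that $f'\ne\infty$ and that its Newton segment is strictly narrower than that of $f_*$.

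The main obstacle will be precisely the width control in that last elimination step — equivalently, the one-variable case of Theorem~\ref{t:degissummult}. The subtlety is that a priori $f_0'$ may have terms outside the Newton segment of $f_*$, and that deleting the single term $x^a$ need not narrow the Newton segment when $x^a$ is interior to it; so one must first arrange, by additional applications of the elimination axiom (working inside the initial ideal $\inn_0(I)$, or by replacing $f_0$ with a minimal-support element whose Newton segment sits inside that of $f_*$), that the elimination genuinely removes a vertex and creates no new extreme exponents. This is the same Gaussian-elimination-with-bookkeeping flavour as the proofs of Lemma~\ref{l:maineliminationstep} and Theorem~\ref{t:projection}, with termination guaranteed by the ascending chain condition (equivalently, finiteness of the Gr\"obner complex, so that only finitely many initial ideals occur). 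The reductions of the first paragraph, the multiplicity computation of the second, and the deductions that $V(h)=V(I)$ and that the divisibility holds are all routine once width-additivity/monotonicity and Lemma~\ref{l:degreeforzerodim} are in hand.
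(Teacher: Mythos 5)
Your reductions in the first two paragraphs are sound and essentially reproduce the closing paragraph of the paper's proof: the identification of $\mult_{V(I)}(w)$ with $\min\{\mult_w(f) : f\in I\}$ via the minimal width of elements of $\inn_w(I)\cap\BB[x]$, the equality of $\deg$ with minimal width via windows of consecutive monomials and Lemma~\ref{l:degreeforzerodim}, and the observation that the whole proposition reduces to producing a single $h\in I$ of width $\sum_{w}m^*(w)$ are all correct. It is also a genuine (and non-circular) observation that this is the $n=1$ case of Theorem~\ref{t:degissummult}, whose proof in the paper does not rely on this proposition.

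The gap is exactly where you place it, and it is not a detail: the descent step producing $f'$ with $\width(f')<\width(f_*)$ from a multiplicity discrepancy is the entire content of the proposition, and your construction does not yield it. Aligning the $0$-edges at a shared left vertex $x^a$ and performing $\elim{f'}{x^a}{f_0'}{f_*}$ removes only the coefficient of $x^a$; since $x^a$ is in general interior to the Newton segment of $f_*$ (and $f_0'$ may contribute new extreme exponents), $f'$ has the same or larger width, as you concede. The promised repair --- ``additional applications of the elimination axiom'' so that the elimination ``removes a vertex and creates no new extreme exponents,'' with termination from the ascending chain condition --- is not an argument: the ACC gives no decreasing quantity here, and it is precisely the choice of \emph{which} monomial to eliminate and \emph{how} to normalize the two polynomials that makes the statement true. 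The paper's proof resolves this with a two-level induction you would need to reconstruct: an outer induction over the roots $r_1<\dots<r_j$ of a minimal-degree $h\in I\cap\Rbar[x]$ from left to right, and, at each root (normalized to $0$), an inner descent on the degree of a minimal-degree counterexample $g$ with $\mult_0(g)<\mult_0(h)$. The elimination there is performed at the \emph{top} monomial $x^{\deg(g)}$ of $g$ against $(\lc(g)-\lc(h))\ttimes x^{\deg(g)-\deg(h)}\ttimes h$, and the inequalities $\deg(g)-k\ge\deg(h)-j$ and $\lc(g)\ge\lc(h)$ --- which follow from the identities $\deg(h)=j+\sum_{i<l}\mult_{r_i}(h)$ and $\lc(h)=-\sum_{i<l}r_i\mult_{r_i}(h)$ supplied by the outer induction hypothesis --- are what force the resulting polynomial to again violate $\mult_0(\cdot)\ge\mult_0(h)$ while having strictly smaller degree, yielding the contradiction. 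Without some substitute for this bookkeeping, your proof of the key step is missing.
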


\begin{proof}
Fix $h \in I \cap \Rbar[x]$ with $\deg(h)$ minimal. 
We first prove that for any $f \in I$ and $w \in \RR$, the
multiplicity of $V(f)$ at $w$ is at least the multiplicity of $V(h)$
at $w$. 
Denote by 
$r_1<r_2<\dots<r_j$ the points of $V(h)$, which we call the roots of $h$.  
 The claim is trivially true for $w < r_1$, and indeed
for any $w$ not equal to one of the roots $r_i$.  Suppose now that the
claim is true for all $w<r_l$ for some root $r_l$.  After replacing
$x$ by $r_l \ttimes x$ we may assume that $r_l=0$.  If the claim is
false for $r_l$, then there is $g \in I \cap \Rbar[x]$ with $g \neq \infty$ and
$\mult_0(g)<\mult_0(h)$.  Choose such a $g$ of minimal degree.  We
have $\deg(g)>\deg(h)$, as otherwise we could eliminate $x^{\deg(h)}$
from $h$ and a suitable multiple of $g$ to get a polynomial in $I$ of lower
degree than $\deg(h)$.  Write $h = \bigoplus a_i x^i$, and $g = \bigoplus b_i x^i$.  After
scaling we may assume that $h(0)=g(0)=0$.  This means that $a_i,b_i
\geq 0$.  Let $j= \max \{ i : a_i=0 \}$, and $k= \max \{ i: b_i = 0
\}$.  We have $\deg(h) = j+ \sum_{i=1}^{l-1} \mult_{r_i}(h)$.  The
leading coefficient of $h$ is
$\lc(h) =  - \sum_{i=1}^{l-1} r_i \mult_{r_i}(h),$ where if $l=1$ we
have the empty sum, so the coefficient is $0$. 
An analogous statement holds for $g$.  Our assumption implies that 
$\mult_{r_i}(g)\geq \mult_{r_i}(h)$ for all $i<l$.  This means that
$\deg(g)-k \geq \deg(h) - j$, and $\lc(g) \geq \lc(h)$.

Set $h' = {(\lc(g)-\lc(h)) \ttimes x^{\deg(g)-\deg(h)} \ttimes h}$,
and let $f = \bigoplus c_ix^i \in I$ be an elimination $\elim{f}{x^{\deg(g)}}{g}{h'}$.
We have $\deg(f)<\deg(g)$.  If $\lc(g)>\lc(h)$, then
$c_i=0$ if and only if $b_i=0$, so $\mult_0(f)=\mult_0(g)<\mult_0(h)$,
contradicting our choice of $g$ to have minimal degree.  If
$\lc(g)=\lc(h)$, then we also have $\deg(g)-k=\deg(h)-j$.  Note that
$c_{k-\mult_0(h)}=0$, as $0=a_{j-\mult_0(h)}<b_{k-\mult_0(h)}$, and
$c_i>0$ for $i<k-\mult_0(h)$.  Let $l=\max \{ i: c_i =0 \}$.  By the argument above applied to $f$ we have $\deg(f)-l \geq \deg(h)-j = \deg(g)-k$, so since
$\deg(f)<\deg(g)$, we must have $l<k$.  But this means that
$\mult_0(f)<\mult_0(h)$, contradicting our choice of $g$ to have
minimal degree.  From these contradictions we conclude that $g$ does
not exist, so the claim is true for $w=r_l$, and thus for all $w$.
This shows that the 
convexification of $h$ divides the convexification of every other
polynomial in $I$.  

The multiplicity of $V(I)$ at $w$ is the smallest degree of a
polynomial in $\inn_w(I) \cap \mathbb B[x]$.  For a polynomial $f \in
I$ with $\mult_w(f)=m$, the convexification of $\inn_w(f)$ has the
form $x^{b} (x \tplus 0)^m$, for some $b \leq \deg(f)-m$, so
$(x \tplus 0)^m \in \inn_w(I)$.  Thus the minimal degree polynomial in
$\inn_w(I) \cap \mathbb B[x]$ has degree $\min \{ \mult_w(f) : f \in I
\}$.  The above argument shows that this equals $\mult_w(h)$.
\end{proof}

The following lemma is a key technical tool in the proof of Theorem~\ref{t:degissummult}.

\begin{lemma} \label{l:twovariable}
Let $I$ be a zero-dimensional homogeneous tropical ideal in
$\Rbar[x_0,\dots,x_n]$.  Then for all $0 \leq i<j\leq n$ there is a
nontrivial polynomial $f_{ij} \in I \cap \Rbar[x_i,x_j]$ that factors
as a product of linear factors. If $I$ is saturated with respect to
the product of the variables, then $f_{ij}$ can be chosen to have
degree at most $2\deg(I)$.
\end{lemma}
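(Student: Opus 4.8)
The plan is to reduce everything to a one‑variable problem and then invoke Proposition~\ref{p:principalideal}. Since $I$ is zero‑dimensional its Hilbert polynomial is the constant $\deg(I)$, so $H_I(e)=\rk(\uMat(I_e))=\deg(I)$ for $e\gg0$. Fix such an $e$ with moreover $e\ge\deg(I)$. The $e+1$ distinct degree‑$e$ monomials $x_i^e,\,x_i^{e-1}x_j,\dots,x_j^e$ form a subset of $\mon_e$ of size exceeding $\rk(\uMat(I_e))$, hence a dependent set; a circuit contained in it is the support of a nontrivial $f_0\in I_e$, and as $\supp(f_0)\subseteq\{x_i^e,\dots,x_j^e\}$ we get $f_0\in\Rbar[x_i,x_j]$. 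Thus $J:=I\cap\Rbar[x_i,x_j]$ is a nontrivial homogeneous tropical ideal.

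Next, dehomogenize and localize. The dehomogenization $J|_{x_j=0}\subseteq\Rbar[x_i]$ is a tropical ideal by Part~\ref{enum:affinetoproj} of Lemma~\ref{l:basicproperties}, hence so is $K:=(J|_{x_j=0})\,\Rbar[x_i^{\pm1}]$ by Part~\ref{enum:torustoaffine} of Lemma~\ref{l:basicproperties}; write $x=x_i$. It is nontrivial since $f_0\ne\infty$, and a degree‑$d$ homogeneous element of $J\,\Rbar[x_i^{\pm1},x_j^{\pm1}]$ is precisely $x_j^d\ttimes p(x_i/x_j)$ for $p\in K$. By Proposition~\ref{p:principalideal} there is $h\in K$ with $V(h)=V(K)$ and $\mult_w(h)=\mult_w(K)$ for all $w\in\RR$; take $h$ of minimal degree $d_0$ in $K\cap\Rbar[x]$. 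Since $K$ is Laurent, minimality forces $[h]_{x^0}$ and the leading coefficient of $h$ to be finite; and restricting $K$ to $\{x^0,\dots,x^{d_0}\}$ shows $\{x^0,\dots,x^{d_0-1}\}$ is independent in the underlying matroid, so by Lemma~\ref{l:degreeforzerodim} we get $d_0\le\deg(K)=\deg(J)\le\deg(I)$.

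The heart of the argument is to upgrade $h$ to a \emph{convex} element of $K$. The convexification $\hat h$ of $h$ has the same degree $d_0$ and lowest term $x^0$, so by \cite{GriggManwaring} it factors as $\alpha\ttimes\prod_l(x\tplus w_l)^{m_l}$ with all $w_l\in\RR$ and $\sum_l m_l=d_0$; it therefore suffices to show that $K$ contains a convex polynomial of degree at most $2d_0$. I would prove this inside the valuated matroids $\Mat(K|_E)$ for $E=\{x^0,\dots,x^N\}$: if $h$ is not convex, pick a monomial $x^k$ whose coefficient lies strictly above the lower hull, with hull neighbours $x^p,x^q$ ($p<k<q$, both on the hull); minimality of $d_0$ makes $\{x^p,\dots,x^{q-1}\}$ and $\{x^{p+1},\dots,x^q\}$ independent, which (through the circuit structure, equivalently through an elimination between $h$ and a shift $c\ttimes x^r h\in K$ with $r\le d_0$) produces an element of $K$ agreeing with $h$ off $(p,q)$ but with coefficient $[\hat h]_{x^k}$ at $x^k$; taking the tropical sum with $h$ lowers $[h]_{x^k}$ to $[\hat h]_{x^k}$ without raising the degree beyond $d_0+r\le2d_0$. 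Iterating removes all above‑hull coefficients. Verifying that this convex‑hull descent stays inside $K$, with the stated degree control, is the one genuinely non‑formal step — the one‑variable analogue of the delicate arguments of Section~\ref{s:specialization} — and is where I expect essentially all the work to lie.

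Finally, homogenize back. Let $\hat h'$ be such a convex element of $K$; after multiplying by a power of $x$ we may assume $[\hat h']_{x^0}$ is finite, and with $d:=\deg(\hat h')$ the polynomial $\hat H:=\bigoplus_k[\hat h']_{x^k}\,x_i^k x_j^{\,d-k}\in\Rbar[x_i,x_j]$ lies in $J\,\Rbar[x_i^{\pm1},x_j^{\pm1}]$ and, since homogenization is multiplicative on these factorizations, equals $\alpha\ttimes\prod_l(x_i\tplus w_lx_j)^{m_l}$, a product of linear forms. In general $\hat H\in J\,\Rbar[x_i^{\pm1},x_j^{\pm1}]\cap\Rbar[x_i,x_j]=(J:(x_ix_j)^\infty)$, so $(x_ix_j)^N\hat H\in J\subseteq I$ for some $N\ge0$; since $x_i$ and $x_j$ are themselves linear forms, $f_{ij}:=(x_ix_j)^N\hat H\in I\cap\Rbar[x_i,x_j]$ is a nontrivial product of linear factors, proving the first assertion. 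When $I$ is saturated with respect to $\prod_{l=0}^n x_l$, then $J$ is saturated with respect to $x_ix_j$ (given $(x_ix_j)^Ng\in J$ with $g\in\Rbar[x_i,x_j]$, multiply by $(\prod_{l\ne i,j}x_l)^N$ and use saturation of $I$), so already $\hat H\in J$, and we may take $f_{ij}=\hat H$, of degree $d\le2d_0\le2\deg(I)$.
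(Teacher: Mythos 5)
Your opening reduction is fine and matches the paper: the $\deg(I)+1$ monomials $x_i^{e},x_i^{e-1}x_j,\dots$ of a fixed large degree must be dependent in $\uMat(I_e)$, so $I\cap\Rbar[x_i,x_j]$ contains a nontrivial homogeneous polynomial $f$, and saturation lets you divide by a power of a variable to force $\deg(f)\le\deg(I)$. But the proof then stalls exactly where you say it does: the ``convex-hull descent'' that is supposed to upgrade $h$ to a convex element of $K$ is only a plan, not an argument. You never establish that the auxiliary element ``agreeing with $h$ off $(p,q)$ but with coefficient $[\hat h]_{x^k}$ at $x^k$'' actually exists (independence of $\{x^p,\dots,x^{q-1}\}$ in the underlying matroid controls supports, not the real coefficients of the valuated matroid), and the degree control does not survive iteration: one step may raise the degree to $d_0+r\le 2d_0$, so the next step, performed on the new polynomial, can push past $2d_0$, and nothing bounds the total. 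Since you yourself flag this as ``where essentially all the work lies,'' the proposal has a genuine gap at its central step.

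The gap closes with a one-line trick you missed, which is how the paper proceeds. Let $f'$ be the convexification of $f$. Since $I$ is an ideal it is closed under multiplication by \emph{arbitrary} polynomials, so $f'\ttimes f\in I\cap\Rbar[x_i,x_j]$; and by \cite{TropicalIdeals}*{Example 4.13} one has $f'\ttimes f=(f')^2$, which equals its own convexification and hence factors into linear forms, of degree $2\deg(f)\le 2\deg(I)$ in the saturated case. This makes the entire detour through dehomogenization, Proposition~\ref{p:principalideal}, and re-homogenization unnecessary, and it is also the source of the factor $2$ in the degree bound.
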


\begin{proof} 
  We can assume without loss of generality that $i = 0$ and $j = 1$.
  Fix $d \gg 0$, so that the Hilbert function of $I$ in degree $d$
  equals $\deg(I)$.  The set of monomials $E=\{ x_0^{d-i}x_1^i : 0
  \leq i \leq \deg(I) \}$ cannot be an independent set of $\uMat(I_d)$,
  so there is a polynomial in $I$ with support in $E$, and thus a
  polynomial $f \in I \cap \Rbar[x_0,x_1]$.  If $I$ is saturated
  with respect to the product of the variables, then we may divide by
  a power of $x_0$ to assume that $\deg(f) \leq \deg(I)$.
Let $f'$ be the convexification of $f$.  As described in \cite{TropicalIdeals}*{Example 4.13},
we have
$g := (f')^2 = f' \ttimes f \in I \cap \Rbar[x_0,x_1]$.  Since this polynomial is equal to its convexification, it factors into linear polynomials, so is the required polynomial.
\end{proof}

\begin{remark} \label{r:varietyintorus}
If $I \subseteq \Rbar[x_0, \dots, x_n]$ is a homogeneous zero-dimensional tropical ideal
saturated with respect to the product of the variables, then $V(I)
\subseteq \trop(\mathbb P^n)$ is contained in the tropical torus
$\mathbb R^{n+1}/\mathbb R \mathbf{1}$.  Indeed, suppose that
$[\mathbf{w}] \in V(I)$, where we may assume, after relabelling the
coordinates if necessary, that $w_0=0$.  For all $j>0$, the
polynomial $f_{0j}$ of Lemma~\ref{l:twovariable} can be chosen to
not be divisible by $x_j$, which implies that $w_j<\infty$, and thus
$\mathbf{w} \in \mathbb R^{n+1}$.
\end{remark}

The main result of this section is the following theorem, which is the
equivalent for tropical ideals of \cite{TropicalBook}*{Proposition
  3.4.13}.

\begin{theorem} \label{t:degissummult}
Let $I \subseteq \Rbar[x_1^{\pm 1},\dots,x_n^{\pm 1}]$ be a zero-dimensional tropical ideal.  Then $$\deg(I) = \sum_{\mathbf{w} \in V(I)} \mult_{V(I)}(\mathbf{w}).$$
\end{theorem}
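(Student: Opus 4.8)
\emph{Overall plan.} I would prove the identity by induction on the number of variables, after first reducing to a convenient normal form. Concretely: replace $I$ by the homogenization of $I\cap\Rbar[x_1,\dots,x_n]$ and then saturate with respect to $\prod_i x_i$; by definition this does not change $\deg$, it leaves the multiplicities intact by Part~\ref{enum:homogenizationmultiplicity} of Proposition~\ref{p:basicdeg}, and by Remark~\ref{r:varietyintorus} the variety $V(I)=\{[\mathbf w_1],\dots,[\mathbf w_k]\}$ now lies in the tropical torus, so each $\mathbf w_i$ has finite coordinates (normalize $w_{i,0}=0$). Since both sides of the claimed equality are invariant under automorphisms of the ambient semiring — $\deg$ by Lemma~\ref{l:degreeforzerodim}, and the multiplicities by Part~\ref{enum:automorphismdeg} of Proposition~\ref{p:basicdeg} together with Lemma~\ref{l:changeofcoords} — I would then apply a generic element of $\GL(n+1,\mathbb Z)$ so that the values $w_{i,1}$ are pairwise distinct (the projection to the $x_1$-coordinate is injective on $V(I)$) and so that no accidental coincidences arise among the data produced below.

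\emph{Base case.} When there is a single variable, Proposition~\ref{p:principalideal} supplies $h\in I$, of minimal degree in $I\cap\Rbar[x_1]$, with $V(I)=V(h)$, with $\mult_{V(I)}(w)=\mult_w(h)$ for every $w$, and whose convexification divides that of every element of $I$. Minimality forces $x_1\nmid h$, so the convexification of $h$ factors as $\alpha\ttimes\prod_i(x_1\tplus w_i)^{m_i}$ with $w_i\in\mathbb R$ and $\sum_i m_i=\deg(h)$. On the other hand, by Lemma~\ref{l:stdmonosbasis} and minimality of $\deg(h)$ the set $\{x_1^0,\dots,x_1^{\deg(h)-1}\}$ is a basis of $\uMat(I_E)$ for any sufficiently large finite collection $E$ of monomials, so $\deg(I)=\deg(h)$ by Lemma~\ref{l:degreeforzerodim}. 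Combining, $\deg(I)=\deg(h)=\sum_i m_i=\sum_{w\in V(I)}\mult_{V(I)}(w)$.

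\emph{Inductive step.} For $n\ge 2$, Lemma~\ref{l:twovariable} (applied to the homogenization) produces, for each $2\le j\le n$, a polynomial $f_{1j}\in I\cap\Rbar[x_1^{\pm1},x_j^{\pm1}]$ of controlled degree that factors into linear forms; arguing as in Remark~\ref{r:varietyintorus} one may take $f_{1j}$ divisible by no variable, so its convexification is $\alpha_j\ttimes\prod_\ell(x_1\tplus\beta_{j\ell}x_j)^{m_{j\ell}}$. These vanish on $V(I)$, and since $x_1$ separates the points, the coordinate $w_{i,1}$ of $\mathbf w_i$ already determines (through the roots of the $f_{1j}$) the whole point $\mathbf w_i$. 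The idea is then to compare $I$ with the elimination ideal $I'=I\cap\Rbar[x_1^{\pm1},\dots,x_{n-1}^{\pm1}]$, for which $V(I')=\pi(V(I))$ is in bijection with $V(I)$ by Theorem~\ref{t:projection}. Using $f_{1n}$ to control the "fibre" of $I$ over $I'$ — via a Gaussian-elimination argument along the lines of the proof of Theorem~\ref{t:projection} and the rank behaviour of the matroids $\uMat(I_d)$ under monomial term orders (Lemma~\ref{l:stdmonosbasis}) — one expresses $\deg(I)$ as a sum of local contributions, one for each $\mathbf w_i'\in V(I')$; applying the inductive hypothesis to $I'$ and identifying each local contribution with $\mult_{V(I)}(\mathbf w_i)$ (here Proposition~\ref{p:star} is used: the Gröbner cell through $\mathbf w_i$ is a point, so $\inn_{\mathbf w_i}(I)$ is a purely local zero-dimensional ideal) then yields $\deg(I)=\sum_i\mult_{V(I)}(\mathbf w_i)$.

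\emph{Main obstacle.} The hard part — and exactly where the lack of primary decomposition is felt — is the degree bookkeeping in the inductive step. Plain elimination drops the degree (the tropical analogue of a fat point failing to embed in a line; compare the example after Theorem~\ref{t:projection}), so one cannot simply pass to $I'$ and add up. One must instead show that the degree lost in $I'$ is recovered exactly by the multiplicities $m_{j\ell}$ of the two-variable polynomials, with nothing over- or under-counted. This is where the matroid hypothesis is essential: the fact that monomial term orders yield bases of all the matroids $\uMat(I_d)$, combined with the uniform degree bound $\deg f_{1j}\le 2\deg(I)$ from Lemma~\ref{l:twovariable}, is what makes the count both finite and tight.
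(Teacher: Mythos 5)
Your normalization and your one-variable base case are fine, but the inductive step contains a genuine gap: the decomposition of $\deg(I)$ into ``local contributions, one for each $\mathbf w_i'\in V(I')$'' is precisely the content of the theorem, and you do not supply an argument for it. You correctly flag this as the main obstacle, but the tools you point to do not produce it. The Gaussian-elimination argument in the proof of Theorem~\ref{t:projection} only establishes the set-theoretic statement $V(I\cap\Rbar[x_1^{\pm1},\dots,x_{n-1}^{\pm1}])=\pi(V(I))$; it carries no degree or multiplicity information. Moreover the induction on the number of variables is structurally misaligned with the statement: the inductive hypothesis applied to $I'$ computes $\sum_i\mult_{V(I')}(\mathbf w_i')$, where each multiplicity is the degree of a saturated initial ideal of $I'$ in $n-1$ variables, whereas the quantities you need, $\mult_{V(I)}(\mathbf w_i)$, are degrees of initial ideals of $I$ itself; no relation between these two families of numbers, nor between $\deg(I)$ and $\deg(I')$ (elimination can drop degree, as you yourself note), is established or even concretely sketched. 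In short, the proposal reduces the theorem to an unproved localization principle.

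For comparison, the paper's proof does not induct on the number of variables at all. After the same normalization (homogenize and saturate with respect to the product of the variables), it proves the refined statement that for every $\mathbf w\in\mathbb R^{n+1}$ and every $\sigma\subseteq\{1,\dots,n\}$ the degree of $(\inn_{\mathbf w}(I):m_\sigma^\infty)$, with $m_\sigma=\prod_{l\notin\sigma}x_l$, equals $\sum_p\mult_{V(I)}(p)$ over the points $p$ of $V(I)$ lying in the shifted orthant $\mathbf w+\cone(\mathbf e_i:i\in\sigma)$. This is proved by induction on $|\sigma|$ via a sweeping argument along the $\mathbf e_i$-direction, whose engine is Lemma~\ref{l:lemmafordegissummult}: a six-step computation with reverse-lexicographic and lexicographic monomial initial ideals showing that the degree splits exactly as one crosses a point of $V(I)$ and is locally constant otherwise. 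That lemma uses the primary decomposition of \emph{monomial} ideals together with the two-variable polynomials of Lemma~\ref{l:twovariable}, and it is the substitute for the missing primary decomposition of tropical ideals; something playing its role is exactly what your proposal lacks.
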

Note that by Theorem \ref{t:dimiscorrect}, the sum on the right hand
side is finite.  The overall approach of the proof of
Theorem~\ref{t:degissummult} is to prove that the sums of the
multiplicities of points in certain polyhedral regions are the degrees
of related ideals.  The proof requires the following technical lemma,
which will be used to gradually increase the size of
the regions.

\begin{lemma} \label{l:lemmafordegissummult}
 Let $I \subseteq \Rbar[x_0,\dots,x_n]$ be a zero-dimensional
 homogeneous tropical ideal that is saturated with respect to the
 product of the variables.  Fix $\mathbf{w} \in \mathbb R^{n+1}$,
 $\sigma \subseteq \{1,\dots,n\}$ with $\sigma \neq \emptyset$, and $i
 \in \sigma$.  Write $m_{\sigma} = \prod_{l \in \{0,\dots,n\} \setminus \sigma} x_l$.

 For $\epsilon$ sufficiently small we have 
 $$\deg(\inn_{\mathbf{w}}(I):m_\sigma^{\infty}) =
 \deg(\inn_{\mathbf{w}}(I) : (m_{\sigma}x_i)^{\infty}) + \deg(\inn_{\mathbf{w}+\epsilon \mathbf{e}_i}(I) : m_{\sigma}^{\infty})$$ and
 $$\deg(\inn_{\mathbf{w}}(I):m_{\sigma}^{\infty}) =
 \deg(\inn_{\mathbf{w}-\epsilon \mathbf{e}_i}(I):m_{\sigma}^{\infty}).$$
\end{lemma}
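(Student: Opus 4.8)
The plan is to fix $J := \inn_{\mathbf w}(I)$, which is again a zero-dimensional homogeneous tropical ideal (with coefficients in $\BB$), and to reformulate both asserted equalities in terms of $J$. Since $\inn_{\mathbf w \pm \epsilon \mathbf e_i}(I) = \inn_{\pm\epsilon\mathbf e_i}(J)$ for $0 < \epsilon \ll 1$ — the homogeneous counterpart of Proposition~\ref{p:star}, proved by the same finiteness-of-the-Gr\"obner-complex argument — and since $m_\sigma x_i = m_{\sigma \setminus \{i\}}$, the two identities become
\[
\deg(J:m_\sigma^\infty) = \deg\bigl(J:(m_\sigma x_i)^\infty\bigr) + \deg\bigl(\inn_{\epsilon\mathbf e_i}(J):m_\sigma^\infty\bigr),
\qquad
\deg(J:m_\sigma^\infty) = \deg\bigl(\inn_{-\epsilon\mathbf e_i}(J):m_\sigma^\infty\bigr).
\]
Two observations feed into both parts. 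First, because $\mathbf w \pm \epsilon\mathbf e_i$ lies in the relative interior of a cell $\tau$ of the Gr\"obner complex of $I$ with $\mathbf w \in \overline\tau$, the vector $\mathbf e_i$ lies in $\spann(\tau)$, so Corollary~\ref{c:homogeneous} shows that $\inn_{\pm\epsilon\mathbf e_i}(J)$ is homogeneous with respect to the $\ZZ$-grading by the $x_i$-degree, in addition to being homogeneous in the usual sense. Second, for any $\mathbf v$, writing $K := J:m_\sigma^\infty$, one has $\inn_{\mathbf v}(J):m_\sigma^\infty = \inn_{\mathbf v}(K):m_\sigma^\infty$: the inclusion $\supseteq$ is immediate, and for $\subseteq$ one uses that if $g\,m_\sigma^a = \inn_{\mathbf v}(f)$ with $f \in K$ and $f\,m_\sigma^b \in J$, then $g\,m_\sigma^{a+b} = \inn_{\mathbf v}(f\,m_\sigma^b) \in \inn_{\mathbf v}(J)$. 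I will also repeatedly invoke the fact that taking an initial ideal preserves the Hilbert function and hence the degree (\cite{TropicalIdeals}*{Corollary 3.6}), together with the description of $(\,\cdot\,:m_\sigma^\infty)$ as inverting the variables dividing $m_\sigma$ and intersecting back (the analogue of the third part of Lemma~\ref{l:basicproperties}, inverting only the variables that divide $m_\sigma$), so that $\inn$ interacts with it through Lemma~\ref{l:basicinitial}.

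For the second identity, by the second observation $\deg(\inn_{-\epsilon\mathbf e_i}(J):m_\sigma^\infty) = \deg(\inn_{-\epsilon\mathbf e_i}(K):m_\sigma^\infty)$, while $\deg(J:m_\sigma^\infty) = \deg K$. Since $K$ is saturated with respect to $m_\sigma$, it is enough to show that $\inn_{-\epsilon\mathbf e_i}(K)$ is again saturated with respect to $m_\sigma$; then degree-preservation gives $\deg(\inn_{-\epsilon\mathbf e_i}(K):m_\sigma^\infty) = \deg(\inn_{-\epsilon\mathbf e_i}(K)) = \deg K$. This is the step I expect to be the main obstacle. The content is that the small perturbation $-\epsilon\mathbf e_i$, with $i \in \sigma$, does not create components (including embedded ones) supported on the coordinate strata $\{w_l = \infty\}$ with $l \notin \sigma$ on which $m_\sigma$-saturation acts; the sign is essential here, since in $\trop(\mathbb P^n)$ the direction $-\mathbf e_i$ points towards the point $[\infty:\cdots:w_i:\cdots:\infty]$, which lies off $V(K)$ precisely because $K$ is $m_\sigma$-saturated and $i \in \sigma$. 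I would make this precise by passing to the ring with the variables of $m_\sigma$ inverted, where $K$ is recovered from its extension by Lemma~\ref{l:basicproperties}, and controlling $\inn_{-\epsilon\mathbf e_i}$ of that extension using Lemma~\ref{l:basicinitial} and the $x_i$-homogeneity from the first paragraph.

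For the first identity, I would exploit the $x_i$-grading directly. Writing $\inn_{\epsilon\mathbf e_i}(J) = \bigoplus_{k\ge0} x_i^k\widetilde M_k$ with $\widetilde M_0 \subseteq \widetilde M_1 \subseteq \cdots$ an ascending chain of ideals of $\BB[x_j:j\ne i]$ stabilizing at $\widetilde M_\infty$, saturation by the $x_i$-free monomial $m_\sigma$ respects the grading, giving $\inn_{\epsilon\mathbf e_i}(J):m_\sigma^\infty = \bigoplus_k x_i^k(\widetilde M_k:m_\sigma^\infty)$ and $\inn_{\epsilon\mathbf e_i}(J):(m_\sigma x_i)^\infty = \bigoplus_k x_i^k(\widetilde M_\infty:m_\sigma^\infty)$. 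Since the degree-$d$ matroid of an $x_i$-graded ideal splits as a direct sum over the $x_i$-degrees, a Hilbert-function computation valid for $d \gg 0$ yields
\[
\deg\bigl(\inn_{\epsilon\mathbf e_i}(J):m_\sigma^\infty\bigr) - \deg\bigl(\inn_{\epsilon\mathbf e_i}(J):(m_\sigma x_i)^\infty\bigr)
= \sum_{k\ge0}\Bigl(\deg(\widetilde M_k:m_\sigma^\infty) - \deg(\widetilde M_\infty:m_\sigma^\infty)\Bigr),
\]
a finite sum, which should be read as the degree of the part of $V(I)$ supported at $\{w_i = \infty\}$ but off the $m_\sigma$-strata. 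The remaining task is to identify this quantity with $\deg(J:m_\sigma^\infty) - \deg(J:(m_\sigma x_i)^\infty)$; this is the tropical version of the classical statement (as in \cite{TropicalBook}*{Proposition 3.4.13}) that a zero-dimensional subscheme of the line $\mathbb A^1_{x_i}$ has degree equal to the degree of its torus part plus the degree of its fibre over $x_i = \infty$, the latter being computed on the $\mathbf e_i$-degeneration. I would carry it out by comparing $J$ with $\inn_{\epsilon\mathbf e_i}(J)$ after inverting $x_i$, where the $\epsilon\mathbf e_i$-perturbation becomes harmless, combined once more with degree-preservation of initial ideals, so that the matching reduces to the same circle of facts used above.
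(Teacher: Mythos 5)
Your preliminary reductions are sound: the reformulation in terms of $J=\inn_{\mathbf w}(I)$ via Proposition~\ref{p:star}, the $x_i$-homogeneity of $\inn_{\pm\epsilon\mathbf e_i}(J)$, and the identity $\inn_{\mathbf v}(J):m_\sigma^\infty = \inn_{\mathbf v}(K):m_\sigma^\infty$ for $K=J:m_\sigma^\infty$ (though the roles of $\subseteq$ and $\supseteq$ are swapped in your justification). But both asserted equalities are then made to rest on claims you do not prove, and the one you yourself flag as the main obstacle --- that $\inn_{-\epsilon\mathbf e_i}(K)$ is again saturated with respect to $m_\sigma$ --- is stronger than what is needed and should not be expected to hold: initial degenerations of saturated ideals are typically not saturated (they acquire components, including irrelevant-ideal-primary ones, supported on the degenerate strata), and Part~\ref{enum:initialtorustoaffine} of Lemma~\ref{l:basicinitial} records precisely that $\inn_{\mathbf v}$ of the polynomial part of a saturated ideal recovers the polynomial part of $\inn_{\mathbf v}$ of its Laurent extension only \emph{after} saturating. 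What the lemma actually needs is the weaker statement that this saturation does not change the degree, i.e.\ that the two ideals agree in all sufficiently high degrees; and establishing that requires controlling which primary components can appear. In the paper's proof the substantive input for this is Lemma~\ref{l:twovariable}: the elements $\inn_{\mathbf w}(f_{0j})=x_0^{a_j}(x_0\oplus x_j)^{b_j}x_j^{c_j}$ force a power of each $x_j$ with $j\neq 0,i$ into the relevant monomial initial ideals (so the only possible associated primes are $\langle x_j:j\neq 0\rangle$, $\langle x_j:j\neq i\rangle$, and the irrelevant ideal), and they also let one trade $m_\sigma$-saturation for $x_0$-saturation. Nothing in your outline substitutes for this input.

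The first identity has the same problem one level down. The decomposition $\inn_{\epsilon\mathbf e_i}(J)=\bigoplus_k x_i^k\widetilde M_k$ and the resulting telescoping of Hilbert functions are fine (modulo noting that $\widetilde M_\infty:m_\sigma^\infty$ must be the unit ideal for your sum to converge), but the ``remaining task'' of identifying $\sum_k\bigl(\deg(\widetilde M_k:m_\sigma^\infty)-\deg(\widetilde M_\infty:m_\sigma^\infty)\bigr)$ with $\deg(J:m_\sigma^\infty)-\deg(J:(m_\sigma x_i)^\infty)$ \emph{is} the content of the identity, and you defer it to ``the same circle of facts used above,'' i.e.\ to the step that is already missing. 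For comparison, the paper proves both equalities by passing to monomial initial ideals with respect to a reverse-lexicographic order with $x_i$ largest and a lexicographic order with $x_i$ smallest (with $x_0$ treated specially in each), sandwiching the resulting monomial ideals between one another and their $x_0$- and $x_i$-saturations, and then combining primary decomposition of monomial ideals with the constraints coming from Lemma~\ref{l:twovariable} to get equality in high degrees. Your plan would need an argument of comparable substance at each of the two places identified above before it becomes a proof.
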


\begin{proof}
Let $\prec$ be the reverse lexicographic term order on the monomials
 in $\Rbar[x_0,\dots,x_n]$ with $x_i$ largest (so initial terms of
 homogeneous polynomials are divisible by the lowest power of $x_i$)
 and $x_0$ second-largest, and let $\prec'$ be the lexicographic term
 order with $x_i$ smallest (so initial terms of all polynomials are
 divisible by the largest power of $x_i$) and $x_0$ largest.  For  $0 <
 \epsilon \ll 1$ we define the monomial ideals
\begin{enumerate}
\item $J^+ = \inn_{\prec}(\inn_{\mathbf{w}}(I) : m_{\sigma}^{\infty})$,
\item $J^+_{\mathbf{\epsilon}} = \inn_{\prec}(\inn_{\mathbf{w+\epsilon \mathbf{e}_i}}(I)
  :m_{\sigma}^{\infty})$,
\item $J_{\mathbf{w}} = \inn_{\prec}(\inn_{\mathbf{w}}(I) :
  (m_{\sigma} x_i)^{\infty})$, 
\item $J^- = \inn_{\prec'}(\inn_{\mathbf{w}}(I) : m_{\sigma}^{\infty})$,
\item $J^-_{\mathbf{\epsilon}} = \inn_{\prec'}(\inn_{\mathbf{w-\epsilon \mathbf{e}_i}}(I)
  :m_{\sigma}^{\infty})$.
\end{enumerate}
Since the Hilbert function, and thus the degree, is preserved on
passing to a monomial initial ideal by Lemma~\ref{l:stdmonosbasis},
it suffices to prove that 
for $\epsilon$ sufficiently small we have 
$$\deg(J^+) = \deg(J_{\mathbf{w}})+\deg(J^+_{\epsilon}) 
\quad \text{and} \quad
\deg(J^-) = \deg(J^-_{\epsilon}).$$

We assume that $\epsilon$ has been chosen sufficiently small so that
$\inn_{\mathbf{w}+\epsilon
  \mathbf{e}_i}(I) = \inn_{\mathbf{e}_i}(\inn_{\mathbf{w}}(I))$ and 
  $\inn_{\mathbf{w}-\epsilon \mathbf{e}_i}(I) = \inn_{-\mathbf{e}_i}(\inn_{\mathbf{w}}(I))$, and furthermore
$\inn_{\mathbf{w}\pm \epsilon \mathbf{e}_i}(f) = \inn_{\pm
  \mathbf{e}_i}(\inn_{\mathbf{w}}(f))$ for all homogeneous $f \in I$ of
degree at most $2\deg(I)$.  This is possible by
Proposition~\ref{p:star}.

{\bf Step 1: }{\em $J^+ \subseteq J_{\mathbf{w}} \cap J^+_{\epsilon}$, and
$J^- \subseteq J^-_{\epsilon}$.}
Since $(\inn_{\mathbf{w}}(I):m_{\sigma}^{\infty})$ is contained in 
$(\inn_{\mathbf{w}}(I): (m_{\sigma}x_i)^{\infty}) = (
(\inn_{\mathbf{w}}(I): m_{\sigma}^{\infty}): x_i^{\infty})$, it follows that $J^+ \subseteq
J_{\mathbf{w}}$.  
Now, we have $\inn_{\bfe_i}(\inn_{\mathbf{w}}(I) : m_{\sigma}^{\infty}) \subseteq
(\inn_{\bfe_i}(\inn_{\mathbf{w}}(I)) : m_{\sigma}^{\infty})$.
Since $\inn_{\mathbf{e}_i}(\inn_{\mathbf{w}}(I)) = \inn_{\mathbf{w}+\epsilon \mathbf{e}_i}(I)$ 
and the term order $\prec$
is a refinement of the partial order given by $\mathbf{e}_i$,  we have
$J^+ \subseteq J^+_{\epsilon}$.  The same is true with $\epsilon$
replaced by $-\epsilon$, and $\prec$ replaced by $\prec'$, so
$J^- \subseteq J^-_{\epsilon}$.

{\bf Step 2: }{\em $(J^+:x_i^{\infty}) = J_{\mathbf{w}}$. } 
The containment $\inn_{\prec}(J : x_i^{\infty}) \subseteq
(\inn_{\prec}(J) : x_i^{\infty})$ holds for any ideal $J$.
Taking $J = (\inn_{\mathbf{w}}(I): m_{\sigma}^\infty)$, we obtain 
$J_{\mathbf{w}} \subseteq (J^+:x_i^{\infty})$.

Since $\prec$ is the reverse-lexicographic order and
$(\inn_{\mathbf{w}}(I):(m_{\sigma}x_i)^{\infty})$ is homogeneous and
saturated with respect to $x_i$, we claim that $J_{\mathbf{w}}$ is
also saturated with respect to $x_i$.  To see this, suppose
$\mathbf{x}^{\mathbf{v}} = \inn_{\prec}(f)$ is a minimal generator of
$J_{\mathbf{w}}$ with $f \in (\inn_{\mathbf{w}}(I) :
(m_{\sigma}x_i)^{\infty})$.  If $v_i>0$ then $u_i>0$ for all other
monomials $\mathbf{x}^{\mathbf{u}}$ in $f$, by the definition of the
reverse-lexicographic order, so $f/x_i \in (\inn_{\mathbf{w}}(I) :
(m_{\sigma}x_i)^{\infty})$, and thus $\mathbf{x}^{\mathbf{v}}/x_n \in
J_{\mathbf{w}}$, contradicting that $\mathbf{x}^{\mathbf{v}}$ was a
minimal generator.  We thus conclude that no generator of
$J_{\mathbf{w}}$ is divisible by $x_i$, so the claim follows.  This
means that $J_{\mathbf{w}} \subseteq (J^+:x_i^{\infty}) \subseteq
(J_{\mathbf{w}}:x_i^{\infty}) = J_{\mathbf{w}}$, so
$(J^+:x_i^{\infty}) = J_{\mathbf{w}}$.

{\bf Step 3: }{\em $J^+_{\epsilon} \subseteq (J^+ : x_0^{\infty})$, and
$J^-_{\epsilon} \subseteq (J^- : x_0^{\infty})$.}  The proof is the
same for both cases, so we give it for $J^+_{\epsilon} \subseteq (J^+
: x_0^{\infty})$.  The only changes needed are to replace each $\epsilon$ by $-\epsilon$, and $\prec$ by $\prec'$.

 If $\mathbf{x}^{\mathbf{u}} \in J^+_{\epsilon}$, then there is $f \in
 I$ of the form $f=m_{\sigma}^k(\mathbf{x}^{\mathbf{u}} \tplus f')\tplus f''$,
 where all terms $c_{\mathbf{v}} \mathbf{x}^{\mathbf{v}}$ in $f'$ have
 $c_{\mathbf{v}}+ \mathbf{w}\cdot \mathbf{v}+ \epsilon v_i =
 \mathbf{w} \cdot \mathbf{u}+ \epsilon u_i$ but
 $\mathbf{x}^{\mathbf{u}} \prec \mathbf{x}^{\mathbf{v}}$, and all
 terms in $f''$ have greater weight with respect to
 $\mathbf{w}+\epsilon \mathbf{e}_i$ than $m_{\sigma}^k\mathbf{x}^{\mathbf{u}}$.
  This means that $\inn_{\mathbf{w}+\epsilon \mathbf{e}_i}(f) =
  m_{\sigma}^k(\mathbf{x}^{\mathbf{u}} \tplus
  \bigoplus_{c_{\mathbf{v}}\mathbf{x}^{\mathbf{v}} \text{ a term of } f'}
  \mathbf{x}^{\mathbf{v}})$. We can write $f''=f''_1\tplus f''_2$, where the
  terms $\mathbf{x}^{\mathbf{v}'}$ in $f''_1$ have $c_{\mathbf{v}'}+\mathbf{w} \cdot \mathbf{v}'=
  \mathbf{w} \cdot \mathbf{u} + k(\sum_{i \not \in \sigma} w_i)$, and
  the equality is an inequality $>$ for the terms in $f''_2$.  Then
  $\inn_{\mathbf{w}}(f) = \inn_{\mathbf{w}+\epsilon \mathbf{e}_i}(f) \tplus
  \bigoplus_{c_{\mathbf{v}} \mathbf{x}^{\mathbf{v}} \text{ a term of }
    f''_1} \mathbf{x}^{\mathbf{v}}$.  We thus have $v_i>u_i$ for terms
  $c_{\mathbf{v}}\mathbf{x}^{\mathbf{v}}$ of $f''_1$.

We now show that, after multiplying $f$ by a sufficiently high power
of $x_0$ and doing some vector eliminations, we may assume that each
term of $f''_1$ is divisible by $m_{\sigma}^k$.  By
Lemma~\ref{l:twovariable}, since $I$ is saturated, for all $j>0$
there is a polynomial $f_{j} \in I \cap \Rbar[x_0,x_j]$ that factors
as a product of linear terms.  
Thus $\inn_{\mathbf{w}}(f_j) =x_0^{a_j}(x_0 \tplus x_j)^{b_j}
x_j^{c_j} \in \inn_{\mathbf{w}}(I)$ for some $a_j,b_j,c_j \geq 0$.  Fix $j>0$ with $j \not \in \sigma$.
We may assume that $b_j \neq 0$, as otherwise
$x_0^{a_j}x_j^{c_j} \in \inn_{\mathbf{w}}(I)$, so $0 \in
(\inn_{\mathbf{w}}(I):m_{\sigma}^{\infty})$, and so $J^+ = \langle 0
\rangle$, from which the lemma is immediate.  Since
$\inn_{\mathbf{w}}(I)$ is a tropical ideal, we may use the term
$x^{a_j+b_j}_0$ of $\inn_{\mathbf{w}}(f_j)$ to eliminate a term
$x_0^l\mathbf{x}^{\mathbf{v}}$ of $x_0^l f_1''$ from
$\inn_{\mathbf{w}}(x_0^lf)$, where $l \gg 0$.
This replaces the term
$x_0^l\mathbf{x}^{\mathbf{v}}$ with terms with the same exponent on
$x_i$, but higher exponents on $x_j$.  After iterating with all $f_j$,
as $j$ ranges over $\{1,\dots, n\} \setminus \sigma$, we may assume
that all terms of $\inn_{\mathbf{w}}(x_0^lf)$ are divisible by
$m_{\sigma}^k$.  Note that these elimination steps did not change
$\inn_{\mathbf{e}_i}(\inn_{\mathbf{w}}(x_0^lf))$, as they did not
change the exponent of $x_i$ in the affected terms, so did not change
$\inn_{\prec}(\inn_{\mathbf{w}}(x_0^lf)) = x_0^l m_{\sigma}^k
\mathbf{x}^{\mathbf{u}}$.  We thus conclude that $x_0^l
\mathbf{x}^{\mathbf{u}} \in J^+$, so $\mathbf{x}^{\mathbf{u}} \in
(J^+:x_0^{\infty})$.  This shows that $J_{\epsilon}^+ \subseteq
(J^+:x_0^{\infty})$ as required.

{\bf Step 4:} {\em $J^+$ contains a power of every variable except $x_0,
x_i$, and $J^-$ contains a power of every variable except $x_0$.}  From
the previous paragraph we have $x_0^{a_j}(x_0 \tplus x_j)^{b_j}
x_j^{c_j} \in \inn_{\mathbf{w}}(I)$ for all $j>0$, so $x_j^{b_j+c_j}
\in J^+$ for $j \neq i$, and $x_j^{b_j+c_j} \in J^-$ for all $j>0$.

{\bf Step 5:} {\em $J^+_d = (J_{\mathbf{w}})_d \cap
  (J^+_{\epsilon})_d$ and $J^-_d = (J^-_{\epsilon})_d$ for $d \gg 0$.}
Since all of $J^+$, $J^-$, $J_{\mathbf{w}}$, $J^+_{\epsilon}$, and
$J^-_{\epsilon}$ are monomial ideals, they have monomial primary
decompositions, viewed as monomial ideals living in a
polynomial ring over a field.  Since $J^+$ contains a power of every
variable except $x_0, x_i$ and $J^-$ contains a power of every
variable except $x_0$, and both have constant Hilbert polynomials,
the only possible associated primes of $J^+$
are $P_i = \langle x_j : j \neq i \rangle$, $P_0 = \langle x_j : j
\neq 0 \rangle$, and $\mathfrak{m} = \langle x_j : 0 \leq j \leq n
\rangle$, and the only possible associated primes of $J^-$ are $P_0$
and $\mathfrak{m}$.  This means that for $d \gg 0$,
\begin{equation} 
\label{eqtn:primarydecomp} J^+_d =
(J^+:x_0^{\infty})_d \cap (J^+:x_i^{\infty})_d,
\end{equation}
and 
\begin{equation} 
\label{eqtn:primarydecomp2} J_d^- =
(J^-:x_0^{\infty})_d. 
\end{equation}

Since $J^- \subseteq J^-_{\epsilon} \subseteq (J^- :x_0^{\infty})$, we
thus have $J^-_d = (J^-_{\epsilon})_d$ for $d \gg 0$, and so
$\deg(J^-)=\deg(J^-_{\epsilon})$.

Since $J^+ \subseteq J_{\mathbf{w}} \cap J^+_{\epsilon}$, we have
$J^+_d \subseteq (J_{\mathbf{w}})_d \cap (J^+_{\epsilon})_d \subseteq
(J^+ :x_i^{\infty})_d \cap (J^+:x_0^{\infty})_d = J^+_d$ for $d \gg
0$.  This means that $\deg(J^+)= \deg(J_{\mathbf{w}} \cap
J^+_{\epsilon})$.

{\bf Step 6: } {\em Equality of degrees.}
Since $\inn_{\mathbf{w}}(f_i)= x_0^{a_i}(x_0 \tplus
x_i)^{b_i}x_i^{c_i} \in \inn_{\mathbf{w}}(I)$, we have $(x_0 \tplus x_i)^{b_i}
\in (\inn_{\mathbf{w}}(I):(m_{\sigma}x_i)^{\infty})$, and so 
  $x_0^{b_i} \in J_{\mathbf{w}}$.  On the other hand, since $\deg(f_i) \leq  2 \deg(I)$ by Lemma~\ref{l:twovariable}, 
  $\inn_{\mathbf{w}+\epsilon \mathbf{e}_i}(f_i) =
  \inn_{\mathbf{e}_i}(\inn_{\mathbf{w}}(f_i)) = x_0^{a_i+b_i}x_i^{c_i}$,
  so $x_i^{c_i} \in J^+_{\epsilon}$.  This means that the monomials
  not in $J^+_{\epsilon}$ and not in $J_{\mathbf{w}}$ are disjoint in high
  degree, so $\deg(J^+) = \deg(J_{\mathbf{w}} \cap J^+_{\epsilon}) =
  \deg(J_{\mathbf{w}}) + \deg(J^+_{\epsilon})$.
\end{proof}

We are now ready to prove Theorem~\ref{t:degissummult}.

\begin{proof}[Proof of Theorem~\ref{t:degissummult}]
We first consider the related situation that $I$ is a zero-dimensional
homogeneous tropical ideal that is saturated with respect to the
product of the variables, and prove the following stronger result:
For any $[\mathbf{w}]$ in the tropical torus $\mathbb R^{n+1}/\mathbb R \mathbf{1}$ of $ \trop(\mathbb P^n)$,
\begin{equation}
\label{eqtn:degclaim}
\deg(\inn_{\mathbf{w}}(I): x_0^{\infty}) = \sum_{p \in
  (\mathbf{w}+\cone(\mathbf{e}_l : l >0)) \cap V(I)} \mult_{V(I)}(p).
\end{equation}

To see that this implies the theorem, we claim that when $\mathbf{w}$
is in the interior of an unbounded cell $C_{\mathbf{w}}$ of the
Gr\"obner complex of $I$ that contains points $\mathbf{w}'$ with $w'_l
\ll w'_0$ for all $l>0$, then $(\inn_{\mathbf{w}}(I):x_0^{\infty}) =
\inn_{\mathbf{w}}(I)$, so the result follows from the facts that
$\deg(\inn_{\mathbf{w}}(I)) = \deg(I)$ by \cite{TropicalIdeals}*{Corollary 3.6}, and that for such
$\mathbf{w}'$ all of $V(I)$ is contained in
$\mathbf{w}'+\cone(\mathbf{e}_l : l >0)$.  To see the claim, note that
$\inn_{\mathbf{w}}(I)$ is a monomial ideal, and
suppose that $\mathbf{x}^{\mathbf{u}}x_0^l \in \inn_{\mathbf{w}}(I)$.
There is $f = \bigoplus c_{\mathbf{v}} \mathbf{x}^{\mathbf{v}} \in
I$ with $\inn_{\mathbf{w}}(f) = \mathbf{x}^{\mathbf{u}}x_0^l$ and all
other monomials $\mathbf{x}^{\mathbf{v}}$ occurring in $f$ not in
$\inn_{\mathbf{w}}(I)$; such $f$ corresponds to the fundamental circuit
of $\mathbf{x}^{\mathbf{u}}x_0^l$ over the basis 
$B := \{ \bfx^\bfv : \bfx^\bfv \notin \inn_{\bfw}(I)_{|\bfu|+l} \}$ of the matroid
$\uMat(I_{|\bfu|+l})$.
Since $\inn_{\mathbf{w'}}(I) =
\inn_{\mathbf{w}}(I)$ for all $\mathbf{w}' \in C_{\mathbf{w}}$, we
must have $\inn_{\mathbf{w}'}(f) = \mathbf{x}^{\mathbf{u}}x_0^l$ for
such $\mathbf{w}'$.  By choosing such a $\mathbf{w}'$ with $w'_j$
sufficiently less than $w_0$ for all $1 \leq j \leq n$, we see that
the other terms of $f$ must also be divisible by $x_0^l$, so since $I$
is $(\prod x_j)$-saturated, $f/x_0^l \in I$, and so
$\mathbf{x}^{\mathbf{u}} \in \inn_{\mathbf{w}}(I)$.  This shows that
$(\inn_{\mathbf{w}}(I):x_0^{\infty}) = \inn_{\mathbf{w}}(I)$ as
required.

To prove Equation~\eqref{eqtn:degclaim}, we prove the following
stronger formula.  For $\sigma \subseteq \{1,\dots,n\}$, set
$m_{\sigma} = \prod_{l \in \{0,\dots,n\} \setminus \sigma} x_l$.  Then
for any $\sigma \subseteq \{1,\dots,n \}$ and any $\mathbf{w} \in
\mathbb R^{n+1}$,
\begin{equation}
\label{eqtn:strongdegclaim}
\deg(\inn_{\mathbf{w}}(I): m_{\sigma}^{\infty}) = \sum_{p \in
  (\mathbf{w}+\cone(\mathbf{e}_i : i \in \sigma)) \cap V(I)} \mult_{V(I)}(p).
\end{equation}
The case that $\sigma = \{1,\dots,n\}$ is
Equation~\eqref{eqtn:degclaim}.  We prove
Equation~\eqref{eqtn:strongdegclaim} by induction on $|\sigma|$.  When
$\sigma = \emptyset$, the right-hand side is
$\mult_{V(I)}(\mathbf{w})$, and the left-hand side is the definition
of this.

Now suppose that $|\sigma|>0$ and Equation~\eqref{eqtn:strongdegclaim}
holds for all $\sigma'$ with $|\sigma'|<|\sigma|$.  Write $\mathcal
C_{\sigma}$ for the locus of $\mathbf{w} \in \mathbb R^{n+1}$ for
which \eqref{eqtn:strongdegclaim} holds.  To prove the claim we need
to show that $\mathcal C_{\sigma} = \mathbb R^{n+1}$.  
Suppose $\mathcal C_{\sigma} \neq \mathbb R^{n+1}$, and fix
$\mathbf{w}_0 \in \mathbb R^{n+1} \setminus \mathcal C_{\sigma}$. Fix $i \in \sigma$, and consider the ray $\{ \mathbf{w} : \mathbf{w} = \bfw_0 + \lambda \bfe_i \text{ with } \lambda \geq 0\}$.  We observe that for
$\lambda$ large enough, $\bfw_0 + \lambda \bfe_i \in \C_{\sigma}$.
Indeed, by Lemma~\ref{l:twovariable} there is a polynomial $f \in
I \cap \Rbar[x_0,x_i]$ that factors as a product of linear terms, and
is saturated with respect to $x_0$ and $x_i$.  Thus for $\lambda \gg 0$,
we have $\inn_{\mathbf{w}_0+\lambda \bfe_i}(f) = x_0^m$ for some
$m>0$.  This means that $(\inn_{\mathbf{w}_0+\lambda
  \bfe_i}(I):m_{\sigma}^{\infty} )= \langle 0 \rangle$, so since $\mathbf{w}_0+\lambda \bfe_i+\cone({\mathbf e}_i : i \in \sigma) \cap V(I) = \emptyset$ for sufficiently large $\lambda$, 
\eqref{eqtn:strongdegclaim} holds for such  a point $\mathbf{w}_0+\lambda
\bfe_i$. 

We thus have that the set of $\lambda$ such that $\mathbf{w}_0+\lambda
\bfe_i \notin \mathcal C_{\sigma}$ is bounded above; let
$\tilde{\lambda}$ be its supremum, and set $\bfw =
\mathbf{w}_0+\tilde{\lambda} \bfe_i$.  By
Lemma~\ref{l:lemmafordegissummult} 
the left-hand side of \eqref{eqtn:strongdegclaim} does not change
when subtracting $\epsilon \mathbf{e}_i$ from $\mathbf{w}$ for
sufficiently small $\epsilon>0$, which implies that $\bfw \notin
\mathcal C_{\sigma}$.
Now, since $V(I)$ is finite and $\inn_\bfw(I)_{\leq \deg(I)}$ is generated by 
finitely many polynomials, we can fix $\epsilon > 0$ small enough so that: 
\begin{enumerate}
\item $\inn_{\mathbf{w}+\epsilon \mathbf{e}_i}(I) = 
\inn_{ \mathbf{e}_i}(\inn_{\mathbf{w}}(I))$, and
  $\inn_{\mathbf{w}+\epsilon \mathbf{e}_i}(f) =
  \inn_{\mathbf{e}_i}(\inn_{\mathbf{w}}(f))$ for all $f \in
  I$ of degree at most $2\deg(I)$, and
\item there are no points $p \in V(I)$ with $p \in
  \mathbf{w}+\cone(\mathbf{e}_j : j \in \sigma)$ and $w_i <
  p_i<w_i+\epsilon$.
\end{enumerate}

We now consider $(\inn_{\mathbf{w}}(I):(m_{\sigma}x_i)^{\infty})$ and $(\inn_{\mathbf{w}+\epsilon \mathbf{e}_i}(I):m_{\sigma}^{\infty})$.
The degree of $(\inn_{\mathbf{w}}(I):(m_{\sigma}x_i)^{\infty})$ is the 
left-hand side of \eqref{eqtn:strongdegclaim} for the set $\sigma'
    = \sigma \setminus \{i\}$.  By the induction hypothesis we know
    that this equals $\sum_{p \in (\mathbf{w} + \cone(\mathbf{e}_j : j
      \in \sigma')) \cap V(I)} \mult_{V(I)}(p)$.
Since $\mathbf{w}+\epsilon \mathbf{e}_i \in
    \mathcal C_{\sigma}$, 
    the degree
    of $(\inn_{\mathbf{w}+\epsilon \mathbf{e}_i}(I):m_{\sigma}^{\infty})$
    equals  $\sum_{p \in
      (\mathbf{w}+\epsilon \mathbf{e}_i +\cone(\mathbf{e}_j : j \in \sigma)) \cap V(I)}
    \mult_{V(I)}(p)$.  By our assumptions on $\epsilon$, we thus have
     that 
$$\deg((\inn_{\mathbf{w}}(I):(m_{\sigma}x_i)^{\infty})) +
     \deg((\inn_{\mathbf{w}+\epsilon \mathbf{e}_i}(I):
     m_{\sigma}^{\infty})) = \sum_{p \in (\mathbf{w}+\cone(\mathbf{e}_i
       : i \in \sigma)) \cap V(I)} \mult_{V(I)}(p),$$ which is the
     right-hand side of \eqref{eqtn:strongdegclaim}.
     Equation~\eqref{eqtn:strongdegclaim} then follows from
     Lemma~\ref{l:lemmafordegissummult}.

We now consider the case that $I$ is a zero-dimensional ideal in
$\Rbar[x_1^{\pm 1},\dots,x_n^{\pm 1}]$.  Let $I^h \subseteq
\Rbar[x_0,\dots,x_n]$ be the homogenization of $I \cap
\Rbar[x_1,\dots,x_n]$.  By definition we have $\deg(I^h) = \deg(I)$, and by Remark~\ref{r:varietyintorus} $V(I)$
equals $V(I^h)$ after the identification of $\mathbb R^{n+1}/\mathbb R
\mathbf{1}$ with $\mathbb R^n$ by choosing the representative with
first coordinate $0$.  In addition, by Part~\ref{enum:homogenizationmultiplicity} of
Proposition~\ref{p:basicdeg} we have $\mult_{V(I)}(\mathbf{w}) =
\mult_{V(I^h)}([0:\mathbf{w}])$ for all $\mathbf{w} \in V(I)$.  The
theorem for $I$ then follows from the result for homogeneous ideals
proved above.
\end{proof}

\begin{remark}
The proof of Theorem~\ref{t:degissummult} is simpler in the realizable
case, as we can use primary decomposition.  If $I \subseteq
K[x_0,\dots,x_n]$ is zero-dimensional, then $I = J \cap \bigcap_{p \in
  V(I)} Q_p$, where $Q_p$ is $I(p)$-primary, and $J$ is $\langle
x_0,\dots,x_n \rangle$-primary.  We then have $\deg(I) = \sum_{p \in
  V(I)} \deg(Q_p)$. The degree of an ideal $Q_p$ primary to the ideal of a
point $p$ equals its multiplicity, so the result follows.
\end{remark}


\section{Balancing} \label{s:balancing}

In this section we prove 
that the top-dimensional part of the variety of
a tropical ideal is {\defbold balanced} with respect to intrinsically
defined multiplicities.  

We first recall the definition of the balancing condition. 
A {\defbold weighted} polyhedral complex is a polyhedral
complex $\Sigma$ with a weight function that assigns a positive
integer to each maximal cell of the complex.

\begin{definition}
\label{d:balancing}
Let $\Sigma$ be a pure one-dimensional weighted rational polyhedral fan
with $s$ rays.  
 Let $\mathbf{u}_i$ be the first lattice point on
the $i$th ray of $\Sigma$, and let $m_i$ be the weight on that ray.
We say that $\Sigma$ is {\defbold balanced} if
$\sum_{i=1}^s m_i \mathbf{u}_i = \mathbf{0}$.

Let $\Sigma$ be a pure $d$-dimensional $\mathbb R$-rational weighted
polyhedral complex, and let $\sigma$ be a $(d-1)$-dimensional cell of
$\Sigma$.  The quotient of the star $\starr_{\Sigma}(\sigma)$ by the
subspace $\spann(\sigma)$ is a pure one-dimensional rational polyhedral fan
that inherits a weighting from $\Sigma$.  We say that $\Sigma$ is
{\defbold balanced} if this fan is balanced for {\em all}
$(d-1)$-dimensional cells of $\Sigma$.
\end{definition}

We now  define the multiplicities on the variety of a tropical ideal
that give $V(I)$ the structure of a weighted polyhedral complex.  This needs the following lemma.

Given a $\mathbb Z^d$-grading on the tropical Laurent polynomial semiring
$S = \Rbar[x_1^{\pm 1},\dots,x_n^{\pm 1}]$ or 
$S = \BB[x_1^{\pm 1},\dots,x_n^{\pm 1}]$, we denote by
$S_{\mathbf{0}}$ the subsemiring consisting of degree zero elements.
An ideal $I \subseteq S_{\mathbf{0}}$ is a tropical ideal if for any
finite collection of monomials $E \subseteq S_{\mathbf{0}}$ the
restriction $I|_{E}$ is the collection of vectors of a valuated
matroid on the set $E$; see \cite{TropicalIdeals}*{Definition 4.3}.
Note that $S_{\mathbf{0}}$ is isomorphic to a Laurent polynomial
semiring in fewer variables.  We can use this fact to define the
dimension and degree of $I$.  Explicitly, we say that $I$ is
zero-dimensional if there is an upper bound on the rank of
$\underline{\Mat}(I|_E)$ for $E \subseteq S_{\mathbf{0}}$ 
a finite collection of monomials, and in that case we set the degree of $I$ to
be the maximum possible such rank.

\begin{lemma} \label{l:maxinitialideal}
Let $I \subseteq \Rbar[x_1^{\pm 1},\dots,x_n^{\pm 1}]$ be a tropical
ideal, and fix $\mathbf{w} \in \mathbb R^n$ in the relative
interior of a maximal cell $\sigma$ of $V(I)$, where $V(I)$ is given the
Gr\"obner polyhedral complex structure.  By
Corollary~\ref{c:homogeneous}, $\inn_{\mathbf{w}}(I)$ is homogeneous with respect to a
$\mathbb Z^{\dim(\sigma)}$-grading of $S:= \mathbb B[x_1^{\pm 1},\dots,x_n^{\pm 1}]$. 
Let $S_{\mathbf{0}}$ be the degree-zero part of $S$ with respect to this grading. 
Then $\inn_{\mathbf{w}}(I) \cap S_{\mathbf{0}}$ 
is a zero-dimensional tropical ideal.
\end{lemma}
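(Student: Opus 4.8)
The plan is to verify the two assertions in turn. Set $K := \inn_{\bfw}(I) \cap S_{\mathbf 0}$. It is an ideal of $S_{\mathbf 0}$, since $\inn_{\bfw}(I)$ is an ideal of $S$ and $S_{\mathbf 0}$ is a subsemiring closed under multiplication by its own monomials, and $\inn_{\bfw}(I)$ is itself a tropical ideal \cite{TropicalIdeals}*{\S 3}. To see that $K$ satisfies the monomial elimination axiom inside $S_{\mathbf 0}$, take $f, g \in K$ and a (necessarily degree-zero) monomial $\bfx^\bfu$ with $[f]_{\bfx^\bfu} = [g]_{\bfx^\bfu} \neq \infty$. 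There is $h \in \inn_{\bfw}(I)$ with $\elim{h}{\bfx^\bfu}{f}{g}$. For every monomial $\bfx^\bfv$ of nonzero degree we have $[f]_{\bfx^\bfv} = [g]_{\bfx^\bfv} = \infty$ because $f, g \in S_{\mathbf 0}$, so the elimination axiom forces $[h]_{\bfx^\bfv} = \infty$; hence every monomial of $h$ has degree zero, so $h \in S_{\mathbf 0}$ and therefore $h \in K$ is an elimination of $\bfx^\bfu$ from $f$ and $g$. Thus $K$ is a tropical ideal.

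For zero-dimensionality I would combine Proposition~\ref{p:star} with the projection theorem. Put $d := \dim(\sigma)$ and $L := \spann(\sigma)$. By construction a monomial $\bfx^\bfu$ has degree zero in the $\ZZ^d$-grading exactly when $\bfu \in L^\perp$, so $S_{\mathbf 0}$ is the Laurent polynomial semiring on the saturated rank-$(n-d)$ sublattice $L^\perp \cap \ZZ^n$. Since $\sigma$ is a maximal cell of $V(I)$, the only cell of $V(I)$ containing $\sigma$ is $\sigma$ itself, so the fan $\starr_{V(I)}(\sigma)$ has a single cone, equal to its lineality space $L$; by Proposition~\ref{p:star} we get $V(\inn_{\bfw}(I)) = \starr_{V(I)}(\bfw) = L$. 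Now choose $A \in \GL(n,\ZZ)$ whose last $n-d$ columns $\mathbf{a}_{d+1}, \dots, \mathbf{a}_n$ form a basis of $L^\perp \cap \ZZ^n$ — possible because this sublattice is saturated — and let $\phi^*_A$ be the associated monomial automorphism of $S$ as in Lemma~\ref{l:changeofcoords}. Then $\phi^*_A$ carries $\BB[x_{d+1}^{\pm 1},\dots,x_n^{\pm 1}]$ isomorphically onto $S_{\mathbf 0}$, hence $K' := {\phi^*_A}^{-1}(\inn_{\bfw}(I)) \cap \BB[x_{d+1}^{\pm 1},\dots,x_n^{\pm 1}]$ onto $K$. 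Moreover each $\mathbf{a}_i$ with $i > d$ lies in $L^\perp$, so $A^T$ maps $L$ onto the coordinate subspace $\RR^d \times \{\bfzero\}$; Lemma~\ref{l:changeofcoords} therefore gives $V({\phi^*_A}^{-1}(\inn_{\bfw}(I))) = A^T(L) = \RR^d \times \{\bfzero\}$. Applying Theorem~\ref{t:projection} to project off the first $d$ coordinates yields $V(K') = \{\bfzero\} \subseteq \RR^{n-d}$, so by Theorem~\ref{t:dimiscorrect} the ideal $K'$, and with it $K$, is zero-dimensional. The cases of Lemma~\ref{l:changeofcoords}, Theorem~\ref{t:projection}, and Theorem~\ref{t:dimiscorrect} used here hold over $\BB$ by the same proofs; equivalently one may pass to trivializations, which changes nothing since $\inn_{\bfw}(I)$ is already Boolean.

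The only real friction I anticipate is the bookkeeping in the second step: arranging a single monomial change of coordinates that simultaneously presents $S_{\mathbf 0}$ as a coordinate sub-semiring and carries $L = \spann(\sigma)$ onto a coordinate subspace, and being comfortable invoking the Boolean versions of the projection and dimension theorems. The substantive input is the maximality of $\sigma$: it is exactly what collapses $\starr_{V(I)}(\sigma)$ to the linear space $\spann(\sigma)$, which is what makes the projected variety $V(K')$ a single point; for a non-maximal $\sigma$ the same computation would only bound $\dim(K)$ by the codimension of $\sigma$ inside a larger cell of $V(I)$.
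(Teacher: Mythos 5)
Your proposal is correct and follows essentially the same route as the paper: identify $V(\inn_{\mathbf{w}}(I))$ with $\spann(\sigma)$ via Proposition~\ref{p:star} using maximality of $\sigma$, apply a monomial change of coordinates (Lemma~\ref{l:changeofcoords}) to present $S_{\mathbf{0}}$ as a coordinate Laurent subring, and then combine Theorem~\ref{t:projection} with Theorem~\ref{t:dimiscorrect} to conclude the projected variety is a point and hence the ideal is zero-dimensional. The only cosmetic differences are that you verify the elimination axiom for $\inn_{\mathbf{w}}(I)\cap S_{\mathbf{0}}$ explicitly (the paper treats this as immediate from the valuated-matroid definition) and you invoke Boolean versions of the projection theorem directly where the paper first extends the initial ideal to $\Rbar$-coefficients.
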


\begin{definition} \label{d:multiplicitydefn}
Under the same setup as in Lemma \ref{l:maxinitialideal}, 
we define the {\defbold multiplicity} of
$V(I)$ at $\mathbf{w}$ to be the degree of the 
zero-dimensional tropical ideal $\inn_{\mathbf{w}}(I) \cap S_{\mathbf{0}}$.  
\end{definition}

Note that this definition agrees with the one given for
zero-dimensional ideals in Definition~\ref{d:zerodimmult}, as the
grading is trivial when $\dim(I)= 0$.

\begin{proof}[Proof of Lemma \ref{l:maxinitialideal}]
 By Proposition~\ref{p:star}, we have $V(\inn_{\mathbf{w}}(I)) =
 \spann(\sigma)$, which is a subspace of $\mathbb R^n$ of dimension $d := \dim(\sigma)$.
 Using Lemma~\ref{l:changeofcoords} we may change coordinates to
 assume that $\spann(\sigma) =
 \spann(\mathbf{e}_1,\dots,\mathbf{e}_d)$.
 Explicitly, choose a basis for the lattice
 $\spann(\sigma) \cap \mathbb Z^n$, and let $A \in \GL(n,\mathbb Z)$
 be the inverse of an invertible $n \times n$ matrix with first $d$ rows equal
 to this basis.  Set $\phi^* \colon \Rbar[x_1^{\pm 1},\dots,x_n^{\pm
     1}] \rightarrow \Rbar[x_1^{\pm 1},\dots,x_n^{\pm 1}]$ to be
 $\phi^*(x_i) = \mathbf{x}^{\mathbf{a}_i}$, where $\mathbf{a}_i$ is
 the $i$th column of $A$.  By construction we then have
 $\trop(\phi)(\spann(\sigma)) = \spann(\mathbf{e}_1,\dots,\mathbf{e}_d)$.
 The ideal $I'={\phi^*}^{-1}(I)$ satisfies
 $ V(\inn_{\trop(\phi)(\mathbf{w})}(I')) =
 V({\phi^*}^{-1}(\inn_{\mathbf{w}}(I))) =
 \trop(\phi)(V(\inn_{\mathbf{w}}(I))) = \trop(\phi)(\spann(\sigma)) =
\spann(\mathbf{e}_1,\dots,\mathbf{e}_d)$.
 The $\mathbb Z^d$-grading induced by $\trop(\phi)(\sigma)$ on $S$
is given by $\deg(x_i) = \mathbf{e}_i$ for $ 1\leq i \leq d$ and
$\deg(x_i)=\mathbf{0}$ for $i > d$, so the degree $\bfzero$ part of $S$ is 
$\mathbb B[x_{d+1}^{\pm 1},\dots,x_n^{\pm 1}]$.
By Part \ref{enum:automorphismdeg} of Proposition \ref{p:basicdeg},
the tropical ideal $\inn_{\mathbf{w}}(I) \cap S_{\mathbf{0}}$ is zero-dimensional
if and only if the tropical ideal 
${\phi^*}^{-1}(\inn_{\mathbf{w}}(I)) \cap \BB[x_{d+1}^{\pm 1},\dots,x_n^{\pm 1}]$ is zero-dimensional.
We can thus assume that $\spann(\sigma) =
\spann(\mathbf{e}_1,\dots,\mathbf{e}_d)$, and $S_{\mathbf{0}}= \BB[x_{d+1}^{\pm 1},\dots,x_n^{\pm 1}]$. 

To prove that $\inn_{\mathbf{w}}(I) \cap \BB[x_{d+1}^{\pm 1},\dots,x_n^{\pm 1}]$ 
is zero-dimensional, let $J = \inn_{\mathbf{w}}(I) \Rbar[x_1^{\pm 1},\dots,x_n^{\pm 1}]$. 
Denote by $\pi \colon \mathbb R^n \rightarrow
\RR^{n-d}$ the projection onto the last $n-d$ coordinates. By Theorem \ref{t:projection} we have
$$V(\inn_{\mathbf{w}}(I) \cap \BB[x_{d+1}^{\pm 1},\dots,x_n^{\pm 1}]) = 
V(J \cap \Rbar[x_{d+1}^{\pm 1},\dots,x_n^{\pm 1}]) = \pi(V(J)) = 
\pi(V(\inn_{\mathbf{w}}(I))).$$
As $V(\inn_{\mathbf{w}}(I)) = \spann(\mathbf{e}_1,\dots,\mathbf{e}_d)$, the last
term in the equalities above is equal to $\{\bfzero\}$.
Theorem~\ref{t:dimiscorrect} then implies that 
$\inn_{\mathbf{w}}(I) \cap \BB[x_{d+1}^{\pm 1},\dots,x_n^{\pm 1}]$ is zero-dimensional, as claimed.
\end{proof}

Recall from \S\ref{s:Groebner} that if $\Sigma$ is a polyhedral
complex and $\bfw$ is in the interior of a cell $\sigma \in \Sigma$,
the star $\starr_\Sigma(\bfw)$ is a polyhedral fan with cones
$\overline{\tau}$ for any $\tau \in \Sigma$ containing $\sigma$.  If
$\Sigma$ is a weighted polyhedral complex, the star
$\starr_\Sigma(\bfw)$ inherits weights on its maximal cones, and thus
it is a weighted polyhedral fan.
 
\begin{proposition}\label{p:starweighted}
Suppose $I$ is a tropical ideal in $\Rbar[x_1^{\pm 1}, \dots, x_n^{\pm 1}]$ or $\BB[x_1^{\pm 1}, \dots, x_n^{\pm 1}]$, and $\bfw \in \RR^n$. Then
$$V(\inn_\bfw(I)) = \starr_{V(I)}(\bfw)$$
as \emph{weighted} polyhedral complexes, where $V(I)$ and $V(\inn_\bfw(I))$ are given their Gr\"obner complex structures.
\end{proposition}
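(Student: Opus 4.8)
The plan is to promote the equality $V(\inn_\bfw(I)) = \starr_{V(I)}(\bfw)$ of Proposition~\ref{p:star} from an equality of (unweighted) polyhedral complexes to one of weighted complexes, by verifying that corresponding maximal cells carry equal weights. If $\bfw \notin V(I)$ then both sides are empty, so assume $\bfw \in V(I)$ and let $\sigma$ be the cell of the Gr\"obner complex of $I$ with $\bfw \in \relint(\sigma)$. First I would make the identification of complex structures explicit: a point $\bfv$ lies in the relatively open cell of the Gr\"obner complex of $\inn_\bfw(I)$ cut out by the value of $\inn_\bfv(\inn_\bfw(I))$, and by Proposition~\ref{p:star} this ideal equals $\inn_{\bfw+\epsilon\bfv}(I)$ for $0<\epsilon\ll 1$, which is precisely the condition defining the cone of $\starr_{V(I)}(\bfw)$ through $\bfv$. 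Hence the maximal cells of $V(\inn_\bfw(I))$ are exactly the cones $\overline\tau$ of $\starr_{V(I)}(\bfw)$ with $\tau$ ranging over the maximal cells of $V(I)$ that contain $\sigma$.

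Next I would fix such a maximal cell, i.e.\ a maximal cell $\tau$ of $V(I)$ with $\sigma\subseteq\tau$, pick $\bfv\in\relint(\overline\tau)$, and choose $\epsilon>0$ small enough that $\inn_\bfv(\inn_\bfw(I)) = \inn_{\bfw+\epsilon\bfv}(I)$ and $\bfw+\epsilon\bfv\in\relint(\tau)$. By Definition~\ref{d:multiplicitydefn}, the weight that $\overline\tau$ carries in $V(\inn_\bfw(I))$ is $\deg(\inn_\bfv(\inn_\bfw(I))\cap S_{\mathbf{0}})$, where $S=\BB[x_1^{\pm 1},\dots,x_n^{\pm 1}]$ is given the grading induced by $\spann(\overline\tau)$; and the weight that $\overline\tau$ inherits from $\Sigma=V(I)$ in $\starr_{V(I)}(\bfw)$ is the weight of $\tau$, namely $\deg(\inn_{\bfw+\epsilon\bfv}(I)\cap S_{\mathbf{0}})$ with $S$ graded by $\spann(\tau)$. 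The two tropical ideals $\inn_\bfv(\inn_\bfw(I))$ and $\inn_{\bfw+\epsilon\bfv}(I)$ agree, so it remains only to observe that the two gradings agree. This holds because $\overline\tau = \cone\{\bfx-\bfy : \bfx\in\tau,\ \bfy\in\sigma\}$ and $\sigma\subseteq\tau$, whence $\spann(\overline\tau) = \spann\{\bfx-\bfy:\bfx\in\tau,\ \bfy\in\sigma\} = \spann(\tau)$; since the degree-zero subsemiring $S_{\mathbf{0}}$ depends only on this linear span (it is spanned by the monomials whose exponent vector annihilates it), the two zero-dimensional ideals are literally equal, and in particular have the same degree. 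Thus the weights agree on every maximal cell, which proves the proposition.

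The argument is essentially a bookkeeping exercise once Proposition~\ref{p:star} and Definition~\ref{d:multiplicitydefn} are in hand, and I do not expect a serious obstacle. The one point that needs a little care is the first paragraph: one should check that the bijection between maximal cells is compatible with the Gr\"obner complex structures on both sides (so that ``maximal cell'' means the same thing on each side), and that $\bfv$ and $\epsilon$ can be chosen generically enough that $\bfw+\epsilon\bfv$ lands in the relative interior of $\tau$ — both of which are already implicit in the proof of Proposition~\ref{p:star}.
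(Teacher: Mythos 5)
Your proposal is correct and follows essentially the same route as the paper: the unweighted equality is quoted from Proposition~\ref{p:star}, and the weights on a maximal cone $\overline\tau$ are compared by identifying $\inn_\bfv(\inn_\bfw(I))$ with $\inn_{\bfw+\epsilon\bfv}(I)$ and noting $\spann(\overline\tau)=\spann(\tau)$, so the two zero-dimensional ideals whose degrees define the multiplicities coincide. The paper's proof is just a more compressed version of the same argument, and your extra care about the compatibility of the Gr\"obner complex structures and the equality of gradings is sound.
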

\begin{proof}
The equality of these fans as polyhedral complexes was proved in Proposition \ref{p:star}.
To show that they have the same weighting, suppose $\bfv$ is in the 
interior of a maximal cone $\overline{\tau}$ of $V(\inn_\bfw(I)) = \starr_{V(I)}(\bfw)$. 
The linear subspace $\spann(\overline{\tau}) = \spann(\tau)$ 
induces a $\ZZ^{\dim(\tau)}$-grading on 
$S := \mathbb B[x_1^{\pm 1},\dots,x_n^{\pm 1}]$.
The multiplicity of the cone
$\overline \tau \in V(\inn_\bfw(I))$ is equal to the degree of the 
zero-dimensional tropical ideal 
$\inn_{\mathbf{v}}(\inn_\bfw(I)) \cap S_{\mathbf{0}} = 
\inn_{\bfw+\epsilon \bfv}(I) \cap S_{\mathbf{0}}$,
which is equal to the multiplicity of the cell $\tau$ in $V(I)$,
as desired.
\end{proof}

We now show the key special case of Theorem~\ref{t:balanced} when the
ideal $I$ is one-dimensional with coefficients in $\mathbb B$.

\begin{lemma} \label{l:curvesbalancing}
Let $I \subseteq \mathbb B[x_1^{\pm 1},\dots,x_n^{\pm 1}]$ be a one-dimensional tropical
ideal.  Then $V(I)$ is the support of a one-dimensional 
rational polyhedral fan that is balanced, with weights on the rays given 
by the multiplicities of Definition~\ref{d:multiplicitydefn}.
\end{lemma}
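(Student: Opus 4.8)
The plan is to put the (unique up to refinement) fan structure on $\Sigma:=V(I)$ coming from the Gröbner fan, reduce the balancing condition to one scalar identity among the ray directions, and prove that identity by slicing with a hyperplane and comparing degrees via Theorem~\ref{t:degissummult}. Since $I$ has Boolean coefficients, its Gröbner complex is a fan by Proposition~\ref{p:recessionfan}, so $V(I)$ is the support of a rational polyhedral fan $\Sigma$; by Theorem~\ref{t:dimiscorrect} it is one-dimensional, so its maximal cells are rays $\rho_1,\dots,\rho_s$ and $\Sigma$ is pure. Write $\mathbf u_i$ for the primitive lattice generator of $\rho_i$ and $m_i$ for the multiplicity of Definition~\ref{d:multiplicitydefn} at a relative interior point of $\rho_i$. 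The only proper cell of $\Sigma$ is the origin, whose linear span is $\{\mathbf 0\}$, so balancedness of $\Sigma$ is exactly the assertion $\sum_{i=1}^s m_i\mathbf u_i=\mathbf 0$. Permuting the variables is a monomial automorphism: it permutes $V(I)$ and the rays, fixes the $m_i$ (Lemma~\ref{l:changeofcoords} together with Proposition~\ref{p:basicdeg}), and carries the $j$-th coordinate of the identity to the $n$-th, so it suffices to prove the single equation $\sum_{i=1}^s m_i(\mathbf u_i)_n=0$.

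To establish this I would slice with the hyperplane $\{x_n=a\}$, $a\in\mathbb R$. To fit the specialization machinery, first replace $I$ by the tropical ideal $I'\subseteq\Rbar[x_1^{\pm1},\dots,x_n^{\pm1}]$ it generates over $\Rbar$; then $\varphi(I')=I$, the underlying matroids $\uMat(I'|_E)=\uMat(I|_E)$ are unchanged, and so $V(I')=V(I)$, which is pure. For $a\in\mathbb R$ the ideal $I'|_{x_n=a}$ is a tropical ideal by Corollary~\ref{c:Laurentworkstoo} and is zero-dimensional by Proposition~\ref{p:dimgoesdown}; by Theorem~\ref{t:sliceintro} and its weighted refinement (Proposition~\ref{p:stableintersection}), we have, as weighted complexes, $V(I'|_{x_n=a})=\pi(V(I)\cap_{st}\{x_n=a\})$, where $\pi$ forgets the last coordinate.

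For $a>0$ the fan $V(I)$ meets $\{x_n=a\}$ transversally, in exactly one point on each ray $\rho_i$ with $(\mathbf u_i)_n>0$, and the stable-intersection weight at that point is $m_i$ times the lattice index $[\,\mathbb Z^n:\mathbb Z\mathbf u_i+(\mathbb Z^{n-1}\times 0)\,]=|(\mathbf u_i)_n|$; the projection $\pi$ identifies the relevant lattices, so Theorem~\ref{t:degissummult} gives
\[
\deg(I'|_{x_n=a})=\sum_{i\,:\,(\mathbf u_i)_n>0} m_i(\mathbf u_i)_n\qquad(a>0),
\]
and symmetrically $\deg(I'|_{x_n=a})=\sum_{i\,:\,(\mathbf u_i)_n<0} m_i\,(-(\mathbf u_i)_n)$ for $a<0$.

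Finally, the degree of a zero-dimensional tropical ideal depends only on its trivialization (Lemma~\ref{l:degreeforzerodim}), and $\varphi(I'|_{x_n=a})=\varphi(I')|_{x_n=\varphi(a)}=I|_{x_n=0}$ (Lemma~\ref{l:basicfacts}) for every $a\in\mathbb R$; hence $\deg(I'|_{x_n=a})$ is independent of $a$. Comparing the values for $a>0$ and $a<0$ yields $\sum_i m_i(\mathbf u_i)_n=0$, and by the coordinate symmetry from the first paragraph, $\sum_i m_i\mathbf u_i=\mathbf 0$, so $\Sigma$ is balanced. I expect the main obstacle to be the bookkeeping with multiplicities: one must check that the intrinsic multiplicity of Definition~\ref{d:multiplicitydefn} is exactly the weight appearing in the weighted form of Theorem~\ref{t:sliceintro}, and that a transversal stable intersection of a fan curve with a coordinate hyperplane contributes precisely the lattice-index factor $|(\mathbf u_i)_n|$, which survives the projection $\pi$; the passage from $\mathbb B$ to $\Rbar$ via $I'$ is routine but should also be verified.
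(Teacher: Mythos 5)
Your overall strategy coincides with the paper's: reduce to the single identity $\sum_i m_i(\mathbf u_i)_n=0$, slice with $\{x_n=a\}$ for $a>0$ and $a<0$, observe that $\deg(I'|_{x_n=a})$ is independent of $a$ because the trivialization is, and convert each degree into a weighted sum over rays via Theorem~\ref{t:degissummult}. The reduction to one coordinate, the passage from $\mathbb B$ to $\Rbar$ via $I'=I\,\Rbar[x_1^{\pm1},\dots,x_n^{\pm1}]$, and the final comparison of the two slices are all exactly as in the paper.

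However, there is a genuine gap at the one step that carries the real content. You invoke Proposition~\ref{p:stableintersection} for the weighted equality $V(I'|_{x_n=a})=\pi(V(I)\cap_{st}\{x_n=a\})$, but that proposition is proved \emph{after} Lemma~\ref{l:curvesbalancing} and its proof explicitly uses equation~\eqref{eqtn:degJ1mult}, which is established inside the proof of this lemma; moreover, stable intersection multiplicities are only unambiguously defined for balanced complexes, which is what you are trying to prove. So the citation is circular, and what must actually be supplied is the identity you defer to ``bookkeeping'': writing $\mathbf w_i$ for the point where the ray $\rho_i$ meets $\{x_n=a\}$, one needs
\[
\mult_{V(I'|_{x_n=a})}(\pi(\mathbf w_i)) \;=\; \deg\bigl(\inn_{\mathbf u_i}(I)|_{x_n=0}\bigr) \;=\; |(\mathbf u_i)_n|\cdot \mult_{V(I)}(\mathbf u_i),
\]
where the first equality uses Part~\ref{item:initial} of Lemma~\ref{l:basicfacts} and the Boolean scaling invariance $\inn_{(1,\mathbf w)}(I)=\inn_{\mathbf u_i}(I)$. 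The second equality is not formal: $\mult_{V(I)}(\mathbf u_i)$ is by definition the degree of $\inn_{\mathbf u_i}(I)\cap S_{\mathbf 0}$ for the $\mathbb Z$-grading $\deg(x_j)=(\mathbf u_i)_j$, and relating this to the degree of the specialization $\inn_{\mathbf u_i}(I)|_{x_n=0}$ requires choosing a lattice basis of $\ker(\mathbf u_i\cdot{-})$, identifying $S_{\mathbf 0}$ with $\mathbb B[y_1^{\pm1},\dots,y_{n-1}^{\pm1}]$, and applying Part~\ref{enum:degmap} of Proposition~\ref{p:basicdeg} to the induced map into $\mathbb B[x_1^{\pm1},\dots,\widehat{x_n},\dots]$, whose matrix has determinant $\pm(\mathbf u_i)_n$. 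That computation (equation~\eqref{eqtn:latticeindex} in the paper) is the heart of the proof and is missing from your argument; once it is supplied, the rest of your plan goes through, and you should also dispose of the degenerate case where every ray satisfies $(\mathbf u_i)_n=0$, in which both slices are empty and the identity holds trivially.
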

\begin{proof}
Since $I \subseteq \mathbb B[x_1^{\pm 1},\dots,x_n^{\pm 1}]$, we have
$I = \inn_{\mathbf{0}}(I)$ and thus $V(I) = \starr_{V(I)}(\mathbf{0})$
by Proposition~\ref{p:star}.  This implies that the
variety $V(I)$ is a fan, which is one-dimensional 
by Theorem~\ref{t:dimiscorrect}.   The same is true for the ideal $I' = I \Rbar[x_1^{\pm
    1},\dots,x_n^{\pm 1}]$, as the circuits are the same for both ideals.  For the
rest of the proof we replace $I$ by $I'$, so assume that $I \subseteq
\Rbar[x_1^{\pm 1},\dots,x_n^{\pm 1}]$.

Let $\mathbf{u}_1,\dots,\mathbf{u}_s$ be the first lattice points on
the rays of $V(I)$.  Set $m_i = \mult_{V(I)}(\mathbf{u}_i)$.  Let
$\mathbf{u} = \sum m_i \mathbf{u}_i$.  We will show that $\mathbf{v}
\cdot \mathbf{u} = 0$ for all $\mathbf{v} \in \mathbb Z^n$, which
implies that $\mathbf{u} = \mathbf{0}$, so $V(I)$ is balanced at the origin.
It
suffices to show that $\mathbf{v} \cdot \mathbf{u} =0$ for a basis of 
$\mathbb Z^n$, so up to reindexing we may assume that $\mathbf{v} = \mathbf{e}_1$.

 Let $J_1 = I|_{x_1=1} \subseteq \Rbar[x_2^{\pm 1},\dots,x_n^{\pm
     1}]$ and $J_2 = I|_{x_1=-1} \subseteq \Rbar[x_2^{\pm
     1},\dots,x_n^{\pm 1}]$.  By
 Proposition~\ref{p:transverseintersection}, since the intersections
 of $V(I)$ with with the two hyperplanes $\{x_1 =1 \}$ and $\{ x_1 = -1 \}$
 are transverse, these intersections equal $V(J_1)$ and $V(J_2)$
 respectively.
  By Proposition~\ref{p:dimgoesdown}, $J_1$ and $J_2$ are either the
  unit ideal $\langle 0 \rangle$ or are zero-dimensional.  By
  Part~\ref{item:trivialization} of Lemma~\ref{l:basicfacts}, $J_1=
  \langle 0 \rangle$ if and only if $J_2 = \langle 0 \rangle$.  In
  that case we have $V(J_1) = V(J_2) = \emptyset$, so $V(I) \cap
  \{ x_1 = \pm 1 \} = \emptyset$.  This
    means that $\mathbf{e}_1 \cdot \mathbf{u}_i = 0$ for all $1 \leq i
    \leq s$, and thus $\mathbf{e}_1 \cdot \mathbf{u} =0$.  We may thus
    restrict to the case that both $J_1$ and $J_2$ have dimension zero.

By Theorem~\ref{t:degissummult} we
know that
$$\deg(J_i) = \sum_{\mathbf{w} \in V(J_i)}
\mult_{V(J_i)}(\mathbf{w})$$ for $i=1,2$.  
 By Part~\ref{item:initial} of Lemma~\ref{l:basicfacts} we have $\inn_{\mathbf{w}}(J_1) =
 (\inn_{(1,\mathbf{w})}(I))|_{x_1=0}$ for all $\mathbf{w} \in \mathbb
 R^{n-1}$.  If $(1,\mathbf{w})$ lies on a ray $\RR_{\geq 0} \mathbf{u}'$, then $\inn_{(1,\mathbf{w})}(I) = \inn_{\mathbf{u}'}(I)$,
 as the initial ideals with respect to $\mathbf{u}'$ and $\lambda
 {\mathbf{u}'}$ are the same for any $\lambda >0$, since all circuits of $I$ have coefficients in $\mathbb B$.
 Thus
$$\deg(J_1)
  = \sum_{i: (\mathbf{u}_i)_1>0}
  \deg(\inn_{\mathbf{u}_i}(I)|_{x_1=0}) 
  \quad \text{ and } \quad
\deg(J_2)
  = \sum_{i: (\mathbf{u}_i)_1<0}
  \deg(\inn_{\mathbf{u}_i}(I)|_{x_1=0}).$$ 

Suppose now that $\mathbf{u}'$ is the first lattice point on a ray of
$V(I)$.  We next  show that 
\begin{equation} \label{eqtn:latticeindex}
\deg(\inn_{\mathbf{u}'}(I)|_{x_1=0}) =|u'_1| \mult_{V(I)}(\mathbf{u}').
\end{equation}
Grade $S=\mathbb B[x_1^{\pm 1},\dots,x_n^{\pm 1}]$  by $\deg(x_i)=u'_i$.  
The multiplicity $\mult_{V(I)}(\mathbf{u}')$ is equal to the degree of
the zero-dimensional ideal $\inn_{\mathbf{u}'}(I) \cap S_{\mathbf{0}} \subseteq S_{\mathbf{0}}$.
We now show that $\deg(\inn_{\mathbf{u}'}(I)|_{x_1=0}) = |u_1| \deg( \inn_{\mathbf{u}'}(I) \cap
S_{\mathbf{0}})$.
To see this, first note that $S_{\mathbf{0}} \cong \mathbb B[y_1^{\pm
    1},\dots, y_{n-1}^{\pm 1}]$.  This isomorphism is given by fixing
a basis $\mathbf{b}_1,\dots,\mathbf{b}_{n-1}$ for the kernel of the
map $\mathbb Z^n \rightarrow \mathbb Z$ given by sending $\mathbf{w}$
to $\mathbf{w} \cdot \mathbf{u}'$, and sending $y_i$ to
$\mathbf{x}^{\mathbf{b}_i}$.  While this isomorphism depends on the
choice of basis, different choices differ by an automorphism of
$\mathbb B[y_1^{\pm 1},\dots,y_{n-1}^{\pm 1}]$, so by
Part~\ref{enum:automorphismdeg} of Proposition~\ref{p:basicdeg},
$\inn_{\mathbf{u}'}(I) \cap S_{\mathbf{0}}$ can be considered as an
ideal in $\mathbb B[y_1^{\pm 1},\dots,y_{n-1}^{\pm 1}]$ for the
purpose of computing the degree.  The map $\phi^*$ from $\mathbb
B[y_1^{\pm 1},\dots,y_{n-1}^{\pm 1}]$ to $\mathbb B[x_2^{\pm
    1},\dots,x_n^{\pm 1}]$ then takes $y_i$ to
$\mathbf{x}^{\mathbf{b}'_i}$, where $\mathbf{b}'_i$ is the projection
of $\mathbf{b}_i$ onto the last $n-1$ coordinates.  Write $B$ for the
$(n-1) \times (n-1)$ matrix with columns $\mathbf{b}'_i$.  Note that
$|\det(B)| = |u'_1|$.  One way to see this is to note that since the
map $\mathbb Z^n/\mathbb Z(\mathbf{b}_1,\dots,\mathbf{b}_{n-1})
\rightarrow \mathbb Z$ given by $\mathbf{e}_i \mapsto u'_i$ is an
isomorphism, the induced map $\mathbb Z^{n-1}/\mathbb
Z(\mathbf{b}'_1,\dots,\mathbf{b}'_{n-1}) \rightarrow \mathbb Z/\mathbb
Zu'_1$ on the last $n-1$ coordinates given by $\mathbf{e}_i \mapsto
u'_i$ is an isomorphism as well, so $|\det(B)| = |u'_1|$.  Write
$\inn_{\mathbf{u}'}(I)_{\phi}$ for the ideal in $\mathbb B[x_2^{\pm
    1},\dots,x_n^{\pm}]$ generated by $\phi^*(\inn_{\mathbf{u}'}(I)
\cap S_{\mathbf{0}})$.  By Part~\ref{enum:degmap} of
Proposition~\ref{p:basicdeg}, the degree of
$\inn_{\mathbf{u}'}(I)_{\phi}$ equals $|\det(B)|
\deg(\inn_{\mathbf{u}'}(I) \cap S_{\mathbf{0}})$.  Finally, note that
$\inn_{\mathbf{u}'}(I)_{\phi}$ is generated by $\{ f|_{x_1=0} : f
\text{ is a generator of } \inn_{\mathbf{u}'}(I) \cap S_{\mathbf{0}}
\}$.  If $f$ is a homogeneous element of $\inn_{\mathbf{u}'}(I)$, then
$\mathbf{x}^{\mathbf{v}} f \in \inn_{\mathbf{u}'}(I) \cap
S_{\mathbf{0}}$, where $\mathbf{x}^{\mathbf{v}}$ is any monomial of
degree $-\deg(f)$, so $(\mathbf{x}^{\mathbf{v}} f)|_{x_1=0}$, and thus
also $f|_{x_1=0}$, are in $\inn_{\mathbf{u}'}(I)_{\phi}$.  Thus
$\inn_{\mathbf{u}'}(I)_{\phi} = \inn_{\mathbf{u}'}(I)|_{x_1=0}$, so
$\deg(\inn_{\mathbf{u}'}(I)|_{x_1=0}) = |u'_1|
\deg(\inn_{\mathbf{u}'}(I) \cap S_{\mathbf{0}})$ as required.  This
finishes the proof of \eqref{eqtn:latticeindex}.

We thus have 
\begin{equation} \label{eqtn:degJ1mult}
\deg(J_1) = \sum_{i : (\mathbf{u}_i)_1>0} |(\mathbf{u}_i)_1|
\mult_{V(I)}(\mathbf{u}_i),
\end{equation}
and analogously for $J_2$.  
We conclude that
\begin{equation*}
\textstyle
\mathbf{u} \cdot \mathbf{e}_1  = (\sum_{i=1}^s m_i \mathbf{u}_i)
\cdot \mathbf{e}_1 
  = \sum_{i=1}^s m_i (\mathbf{u}_i)_1 
 = \deg(J_1) - \deg(J_2) 
 = 0,
\end{equation*}
where the last equality follows from Lemma~\ref{l:basicfacts}, as 
$\deg(J_1) = \deg(\varphi(J_1)) = \deg(\varphi(J_2)) = \deg(J_2)$.
\end{proof}

\begin{theorem} \label{t:balanced}
Let $I \subseteq \Rbar[x_1^{ \pm 1},\dots,x_n^{\pm 1}]$ be a $d$-dimensional tropical
ideal, and let $\Sigma$ be a polyhedral complex with support equal to
$V(I)$.  
Then the collection of closed $d$-dimensional cells of $\Sigma$ 
forms an $\mathbb R$-rational
polyhedral complex $\Sigma^d$ that is balanced, 
with weights given by the multiplicities
of Definition~\ref{d:multiplicitydefn}.
\end{theorem}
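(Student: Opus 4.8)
The plan is to reduce the balancing condition, cell by cell, to the one-dimensional case treated in Lemma~\ref{l:curvesbalancing}. First, the $\mathbb R$-rationality and polyhedral structure of $\Sigma^d$ are immediate: $V(I)$ is a finite $\mathbb R$-rational polyhedral complex by \cite{TropicalIdeals}*{Theorem 5.11}, and the $d$-dimensional cells of $\Sigma$ together with all their faces form a pure $d$-dimensional $\mathbb R$-rational subcomplex $\Sigma^d$; since $\dim V(I)=d$ by Theorem~\ref{t:dimiscorrect}, every $d$-cell is a maximal cell of $V(I)$ and so carries the multiplicity of Definition~\ref{d:multiplicitydefn}. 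By Definition~\ref{d:balancing} it then suffices to fix a $(d-1)$-dimensional cell $\sigma$ of $\Sigma^d$, a point $\bfw\in\relint(\sigma)$, and show that the weighted one-dimensional rational fan $\starr_{\Sigma^d}(\sigma)/\spann(\sigma)$ is balanced. By Proposition~\ref{p:starweighted}, $\starr_{V(I)}(\bfw)=V(\inn_\bfw(I))$ as weighted polyhedral complexes; its lineality space is $\spann(\sigma)$, and (as $\sigma$ is a face of a $d$-cell and $\dim V(I)=d$) its maximal cones are exactly the $\overline\tau$ for $d$-cells $\tau$ of $\Sigma^d$ containing $\sigma$, with weight $\mult_{V(I)}(\tau)$.

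Next I would mod out the lineality. By Corollary~\ref{c:homogeneous} the tropical ideal $J:=\inn_\bfw(I)\subseteq\BB[x_1^{\pm 1},\dots,x_n^{\pm 1}]$ is homogeneous with respect to the $\mathbb Z^{d-1}$-grading of $S:=\BB[x_1^{\pm 1},\dots,x_n^{\pm 1}]$ induced by $\spann(\sigma)$. Using Lemma~\ref{l:changeofcoords} together with the invariance of the balancing condition, of degrees, and of multiplicities under $\GL(n,\mathbb Z)$ changes of coordinates (Part~\ref{enum:automorphismdeg} of Proposition~\ref{p:basicdeg}), I may assume $\spann(\sigma)=\spann(\mathbf{e}_1,\dots,\mathbf{e}_{d-1})$, so that $\deg(x_i)=\mathbf{e}_i$ for $i\leq d-1$ and $\deg(x_i)=\bfzero$ otherwise, and the degree-zero part is $S_{\bfzero}=\BB[x_d^{\pm 1},\dots,x_n^{\pm 1}]$. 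Let $K:=J\cap S_{\bfzero}$, a tropical ideal. Then Theorem~\ref{t:projection}, applied with $y$-variables $x_d,\dots,x_n$ and $x$-variables $x_1,\dots,x_{d-1}$, gives $V(K)=\pi(V(J))=\pi(\starr_{V(I)}(\bfw))$, where $\pi\colon\RR^n\to\RR^{n-d+1}$ is the projection with kernel $\spann(\sigma)$; this identifies $V(K)$ with $\starr_{\Sigma^d}(\sigma)/\spann(\sigma)$ as a polyhedral complex, and it is one-dimensional by Theorem~\ref{t:dimiscorrect}. This is the same line of argument as in the proof of Lemma~\ref{l:maxinitialideal}, with the maximal cell there replaced by a $(d-1)$-cell.

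It remains to match the weights. Fix a $d$-cell $\tau\supseteq\sigma$ and $\bfv$ in the relative interior of $\overline\tau$, so $\bfw+\epsilon\bfv\in\relint(\tau)$ for $0<\epsilon\ll 1$. By Proposition~\ref{p:star}, $\inn_{\bfw+\epsilon\bfv}(I)=\inn_\bfv(J)$, so Definition~\ref{d:multiplicitydefn} (for the $\spann(\tau)$-grading, with degree-zero part $T_{\bfzero}\subseteq S_{\bfzero}$) gives $\mult_{V(I)}(\tau)=\deg(\inn_\bfv(J)\cap T_{\bfzero})$. On the other side, $\pi(\bfv)$ lies in the relative interior of the ray $\pi(\overline\tau)$ of $V(K)$. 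Since $J$ is homogeneous for the $\spann(\sigma)$-grading, the homogeneous components of its elements again lie in $J$, and from this one checks that passing to initial ideals commutes with intersecting with $S_{\bfzero}$, i.e. $\inn_{\pi(\bfv)}(K)=\inn_\bfv(J)\cap S_{\bfzero}$; intersecting further with $T_{\bfzero}$ — which is exactly the degree-zero part of $S_{\bfzero}$ for the one-dimensional grading by $\pi(\spann(\tau))$ — shows that the weight of $\pi(\overline\tau)$ prescribed by Definition~\ref{d:multiplicitydefn} equals $\deg(\inn_\bfv(J)\cap T_{\bfzero})=\mult_{V(I)}(\tau)$. Hence $V(K)$ with its Definition~\ref{d:multiplicitydefn} weights is $\starr_{\Sigma^d}(\sigma)/\spann(\sigma)$ with its inherited weights, and Lemma~\ref{l:curvesbalancing} applied to the one-dimensional tropical ideal $K\subseteq\BB[x_d^{\pm 1},\dots,x_n^{\pm 1}]$ shows it is balanced. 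I expect this weight-matching step to be the main obstacle: the rest is bookkeeping on top of Theorems~\ref{t:dimiscorrect} and~\ref{t:projection}, Propositions~\ref{p:star} and~\ref{p:starweighted}, and Lemma~\ref{l:curvesbalancing}, but verifying that initial ideals commute with extraction of the degree-zero part of a homogeneous tropical ideal, and threading the two gradings (by $\spann(\sigma)$ and by $\spann(\tau)$) through consistently, will require care.
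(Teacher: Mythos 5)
Your proposal is correct and follows essentially the same route as the paper's proof: pass to $\starr_{V(I)}(\bfw)=V(\inn_\bfw(I))$ via Proposition~\ref{p:starweighted}, reduce modulo $\spann(\sigma)$ by intersecting with the degree-zero subsemiring $\BB[x_d^{\pm 1},\dots,x_n^{\pm 1}]$ and invoking Theorem~\ref{t:projection}, match the ray weights using the homogeneity of $\inn_\bfw(I)$ (which is exactly the identity $\inn_{\mathbf{e}_n}(J)=\inn_{\mathbf{e}_n}(\inn_\bfw(I))\cap S_{\mathbf 0}$ the paper uses), and conclude with Lemma~\ref{l:curvesbalancing}. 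The step you flag as the main obstacle is precisely the one the paper resolves by the same homogeneity argument, so no gap remains.
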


\begin{proof}
Fix a $(d-1)$-dimensional cell $\sigma$ of $\Sigma^d$, and $\mathbf{w}$
in the relative interior of $\sigma$.  
 By Proposition~\ref{p:starweighted} the variety
 $V(\inn_{\mathbf{w}}(I))$ equals $\starr_{V(I)}(\mathbf{w})$ as
a weighted rational polyhedral fan.  Write $L=\spann(\sigma)$ for the
 $(d-1)$-dimensional lineality space of this fan.  After a monomial
 change of coordinates, which by
 Part~\ref{enum:automorphismdeg} of Proposition~\ref{p:basicdeg} 
 does not change degrees or multiplicities of zero-dimensional ideals, we
 may assume that $L$ is the span of
 $\mathbf{e}_1,\dots,\mathbf{e}_{d-1}$. 
Let $J= \inn_{\mathbf{w}}(I) \cap \mathbb
 B[x_d^{\pm 1},\dots,x_n^{\pm 1}]$.  By Theorem~\ref{t:projection} we
 have $V(J) = V(\inn_{\mathbf{w}}(I))/L$, so $V(J)$ is a
 one-dimensional rational polyhedral fan in $\mathbb R^{n-d+1}$.
 By Theorem~\ref{t:dimiscorrect} we have that $\dim(J)=1$ as well, so $V(J)$ is balanced by
 Lemma~\ref{l:curvesbalancing}.

Each ray in $V(J)$ is the quotient of a cone in
$\starr_{V(I)}(\mathbf{w})$ by the lineality space $L$.  To show that
$\Sigma^d$ is balanced at $\sigma$ it suffices to show that the
multiplicity on a ray in $V(J)$ equals the weight of the
corresponding cone in $\starr_{V(I)}(\mathbf{w})$.
After a monomial change of coordinates we may assume that the ray is
$\mathbb R_{\geq 0} \mathbf{e}_n$, so the corresponding cone $\tau$
is $\spann(\mathbf{e}_1,\dots,\mathbf{e}_d)+\mathbb R_{\geq 0}
\mathbf{e}_n$.  The multiplicity of $\RR_{\geq 0} \mathbf{e}_n$ in
$V(J)$ is the degree of the zero-dimensional ideal obtained by
intersecting $\inn_{\mathbf{e}_n}(J)$ with $\mathbb B[x_d^{\pm
    1},\dots,x_{n-1}^{\pm 1}]$.  The multiplicity of $\tau$ in
$\starr_{V(I)}(\sigma)= V(\inn_{\mathbf{w}}(I))$ is the degree of
$\inn_{\mathbf{e}_n}(\inn_{\mathbf{w}}(I)) \cap \mathbb B[x_d^{\pm
    1},\dots,x_{n-1}^{\pm 1}]$. 
The equality then follows from the fact that, since
$\inn_{\mathbf{w}}(I)$ is homogeneous with respect to the grading by
$\deg(x_i)=\mathbf{e}_i \in \mathbb Z^{d-1}$ for $1 \leq i \leq d-1$ and
$\deg(x_i)=\mathbf{0} \in \ZZ^{d-1}$ otherwise, we have 
$\inn_{\mathbf{e}_n}(J) =\inn_{\mathbf{e}_n}(\inn_{\mathbf{w}}(I))
\cap \mathbb B[x_d^{\pm 1}, \dots, x_n^{\pm 1}]$.
\end{proof}

\begin{remark} \label{r:HilbertChow} In \cite{TropicalIdeals} tropical
ideals were used to define subschemes of arbitrary toric varieties.
One consequence of Theorem~\ref{t:balanced} is the existence of a
{\defbold Hilbert-Chow morphism} that takes a subscheme of an
$n$-dimensional tropical toric variety $\trop(X_{\Sigma})$ given by a
locally tropical ideal (see \cite{TropicalIdeals}*{\S 4}) to a class
in $A^*(X_{\Sigma})$.  More specifically, if $I$ is a locally tropical ideal in $S:=
\Cox(X_{\Sigma})$, and $m$ is the product of the variables of
$S$, then $I'=(IS_{m})_{\mathbf{0}} \subseteq
(S_{m})_{\mathbf{0}} \cong \Rbar[y_1^{\pm 1},\dots,y_n^{\pm
1}]$ is a tropical ideal.  Set $d= \dim(I')$.  By
Theorem~\ref{t:balanced}, the union $\Delta$ of the $d$-dimensional
cones of $V(\varphi(I')) \subseteq \mathbb R^n$ forms a finite
$\mathbb R$-rational balanced polyhedral complex that is pure of
dimension $d$.  Then the techniques of \cite{FultonSturmfels} and
\cite{TropicalBook}*{\S 6.7} associate a class $[V(I)] \in
A_d(X_{\Sigma})$ to $\Delta$.
\end{remark}    

For an irreducible variety $X \subseteq (K^*)^n$ and fixed $a \in \mathbb R$, the
variety $\trop(X \cap V(x_n-\alpha))$ is the {\em stable intersection}
of $\trop(X)$ and the hyperplane $\trop(V(x_n-\alpha)) = \{ x_n=a \}$ 
for most choices of $\alpha \in K$ with
$\val(\alpha) = a$.  Here ``most'' means that given a fixed $t \in K$
with $\val(t)=a$, there is a finite set in the residue field $\K$ of
$K$ for which if the residue $\overline{\alpha/t} \in \K$ is not in
this set then the tropicalization has the given form; see
\cite{TropicalBook}*{Proposition 3.6.15}.
Remark~\ref{r:realizableslicing} suggests a connection of this fact to
the specialization construction, which we now explain.

\begin{proposition} \label{p:stableintersection}
Let $I \subseteq \Rbar[x_1^{\pm 1},\dots,x_n^{\pm 1}]$ be a tropical
ideal, and fix $a \in \mathbb R$. If $V(I)$ is the support of a
pure $d$-dimensional polyhedral complex, we have
$$V(I|_{x_n=a}) = \pi(V(I) \cap_{st} \{x_n =a \})$$
as weighted polyhedral complexes, 
where $\pi : \mathbb R^n \rightarrow \mathbb R^{n-1}$ is the projection onto
the first $n-1$ coordinates and $\cap_{st}$ denotes the stable intersection.
\end{proposition}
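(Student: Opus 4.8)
The plan is to prove the equality by \emph{localizing}, i.e.\ by comparing the two weighted complexes star-by-star, thereby reducing to a statement about $\mathbb B$-coefficient tropical ideals whose varieties are fans, which is then settled by a degree computation. A weighted polyhedral complex in $\RR^{n-1}$ is determined by its stars at all points (the support is where the star is nonempty, and the weight on a maximal cell is recorded on the lineality space of the star there), so it suffices to show that for every $\bfw' \in \RR^{n-1}$ the weighted fans $\starr_{V(I|_{x_n=a})}(\bfw')$ and $\starr_{\pi(V(I) \cap_{st} \{x_n=a\})}(\bfw')$ coincide. By Part~\ref{item:initial} of Lemma~\ref{l:basicfacts} we have $\inn_{\bfw'}(I|_{x_n=a}) = (\inn_{(\bfw',a)}(I))|_{x_n=0}$, so Proposition~\ref{p:starweighted}, applied both to $I|_{x_n=a}$ and to $I$, identifies $\starr_{V(I|_{x_n=a})}(\bfw')$ with $V\big((\inn_{(\bfw',a)}(I))|_{x_n=0}\big)$ and $\starr_{V(I)}(\bfw',a)$ with $V(\inn_{(\bfw',a)}(I))$, as \emph{weighted} fans. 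On the other side, $\pi$ restricts to a lattice isomorphism on $\{x_n=a\}$, and the formation of stars commutes with stable intersection (a standard property of stable intersection, cf.\ \cite{TropicalBook}*{\S 3.6}), so $\starr_{\pi(V(I)\cap_{st}\{x_n=a\})}(\bfw') = \pi\big(\starr_{V(I)}(\bfw',a) \cap_{st} \{x_n=0\}\big)$; moreover the star of the pure $d$-dimensional $V(I)$ is again pure of dimension $d$. Thus it is enough to prove: if $\hat I \subseteq \BB[x_1^{\pm1},\dots,x_n^{\pm1}]$ is a tropical ideal with $V(\hat I)$ a pure $d$-dimensional fan, then $V(\hat I|_{x_n=0}) = \pi(V(\hat I)\cap_{st}\{x_n=0\})$ as weighted fans.

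For this fan statement I would iterate the same localization: at a generic point $\bfw''$ in the relative interior of a maximal cone $\tau''$ of $V(\hat I|_{x_n=0})$, the dictionary above replaces $\hat I$ by $\inn_{(\bfw'',0)}(\hat I)$, whose variety $\starr_{V(\hat I)}(\bfw'',0)$ is still a pure $d$-dimensional fan but now has lineality space of dimension $\dim\tau''$; after finitely many steps we may assume $V(\hat I|_{x_n=0})$ is a single linear subspace $L$ of some dimension $e$ carrying a weight $\mu$. It then remains to show $e = d-1$ and that $\pi(V(\hat I)\cap_{st}\{x_n=0\})$ equals $L$ with weight $\mu$. The inequality $e \le d-1$ is immediate from Proposition~\ref{p:dimgoesdown} and Theorem~\ref{t:dimiscorrect}. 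For the remainder I would run the degree bookkeeping of the proof of Lemma~\ref{l:curvesbalancing}: slice $V(\hat I)$ by the transverse hyperplanes $\{x_n = 1\}$ and $\{x_n = -1\}$, which by Proposition~\ref{p:transverseintersection} equal $V(\hat I|_{x_n=1})$ and $V(\hat I|_{x_n=-1})$; by Theorem~\ref{t:degissummult} the degrees of these ideals are the sums of the multiplicities of their points, and using that $\inn_{(\pm1,\bfw)}(\hat I) = \inn_{\bfu'}(\hat I)$ whenever $(\pm1,\bfw)$ lies on a ray of $V(\hat I)$ through a primitive lattice point $\bfu'$, together with the lattice-index identity~\eqref{eqtn:latticeindex}, these sums become sums of $(\text{lattice index of }\spann(\sigma)\text{ along }\{x_n=0\})\cdot\mult_{V(\hat I)}(\sigma)$ over the $d$-dimensional cones $\sigma$ of $V(\hat I)$ transverse to $\{x_n=0\}$, with the cones on one side contributing to $\deg(\hat I|_{x_n=1})$ and those on the other to $\deg(\hat I|_{x_n=-1})$. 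This is precisely the data computed by the fan displacement rule for $V(\hat I)\cap_{st}\{x_n=0\}$, so the weights match; since $V(\hat I)$ is balanced the two one-sided sums agree (as in the final display of the proof of Lemma~\ref{l:curvesbalancing}), which via the transverse cones also forces $e = d-1$.

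The step I expect to be the main obstacle is controlling the \emph{parallel} top-dimensional cones of $V(I)$: a pure $d$-dimensional $V(I)$ can contain a $d$-dimensional cell $\sigma$ with $\spann(\sigma) \subseteq \{x_n=0\}$, which after displacement misses $\{x_n=a\}$ and so contributes nothing to the stable intersection, yet a priori could contribute an extra piece to $V(I|_{x_n=a})$ and spoil purity. Here Proposition~\ref{p:transverseintersection} only gives $V(I|_{x_n=a}) \subseteq \pi(V(I)\cap\{x_n=a\})$ together with equality on the transverse part, so one genuinely has to show that such parallel cones do \emph{not} survive in $V(I|_{x_n=a})$; equivalently, for $(\bfw',a)$ in the relative interior of such a $\sigma$ one must exhibit a monomial in $(\inn_{(\bfw',a)}(I))|_{x_n=0}$. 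I expect this to require the same manoeuvres as in Lemmas~\ref{l:lemmafordegissummult} and~\ref{l:curvesbalancing}, where a two-variable factor of the shape $x_j^{c}$ in a suitable $x_0$-saturated initial ideal is produced and forces that initial ideal to be the unit ideal; feeding this into the degree bookkeeping above shows both that the support of $V(I|_{x_n=a})$ is exactly the projected closure of the transverse locus and that the weights agree. Everything else — the reduction, locality of stable intersection, and the lattice-index arithmetic — is routine given the results already established.
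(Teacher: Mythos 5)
Your overall architecture is sound and runs parallel to the paper's: Proposition~\ref{p:transverseintersection} handles the transverse locus, localization at a point $\mathbf{w}$ of a maximal cell of the stable intersection replaces $I$ by $\inn_{\mathbf{w}}(I)$ (the paper does this algebraically rather than via ``stars commute with stable intersection,'' but it is the same reduction), and the weights are matched by exactly the degree bookkeeping of \eqref{eqtn:latticeindex} and \eqref{eqtn:degJ1mult} against the fan displacement rule, after passing to the degree-zero part $\inn_{\mathbf{w}}(I)\cap S_{\mathbf{0}}$, which is one-dimensional. So the multiplicity half of your plan would go through essentially as in the paper.

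The genuine gap is the step you yourself flag and then defer: showing that a cell $\sigma$ of $V(I)$ (or, worse, a point $\mathbf{w}$ all of whose containing maximal cells lie in $H=\{x_n=a\}$) contributes nothing to $V(I|_{x_n=a})$. Your proposed route --- producing a two-variable factor of the shape $x_j^{c}$ as in Lemmas~\ref{l:lemmafordegissummult} and~\ref{l:twovariable} --- does not apply as stated, because those lemmas require a \emph{zero-dimensional} (homogeneous, saturated) ideal, whereas here $\inn_{\mathbf{w}}(I)$ is $d$-dimensional and, at a non-maximal $\mathbf{w}$, its variety is a $d$-dimensional fan inside $\{x_n=0\}$ rather than a linear space, so there is no immediate reduction to the zero-dimensional setting. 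The argument that actually closes this is shorter and different: since $V(\inn_{\mathbf{w}}(I))\subseteq\{x_n=0\}$, Theorem~\ref{t:projection} applied to the projection onto the $x_n$-coordinate shows $\inn_{\mathbf{w}}(I)\cap\BB[x_n^{\pm1}]\neq\{\infty\}$; any nontrivial Boolean Laurent polynomial in $x_n$ specializes at $x_n=0$ to the unit $0$, so $\inn_{\pi(\mathbf{w})}(I|_{x_n=a})=(\inn_{\mathbf{w}}(I))|_{x_n=0}=\langle 0\rangle$ and $\pi(\mathbf{w})\notin V(I|_{x_n=a})$. Without this (or an equivalent) argument your proof establishes only the containments $\pi(V(I)\cap_{st}H)\subseteq V(I|_{x_n=a})\subseteq\pi(V(I)\cap H)$ together with the weight comparison on the transverse part, which is not the full statement.
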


\begin{proof}
Fix a pure $\mathbb R$-rational polyhedral complex $\Sigma$ with
support $V(I)$.  By definition, the stable intersection of $\Sigma$ with
the hyperplane $H:=\{x_n=a \}$ is the
polyhedral complex $\Sigma \cap_{st} H$ that has a cell $\tau
\cap H$ for all cells $\tau \in \Sigma$ with $\dim(\tau+H)=n$, or
equivalently with $\tau$ not contained in $H$.

By Proposition~\ref{p:transverseintersection} we know that
$\pi(\Sigma \cap_{st} H) \subseteq V(I|_{x_n=a}) \subseteq \Sigma \cap H$.
Conversely, if $\mathbf{w} \in \Sigma \cap H$ but $\mathbf{w} \not
\in \Sigma \cap_{st} H$, then any $\tau \in \Sigma$ containing
$\mathbf{w}$ lies in $H$, so $\starr_{V(I)}(\mathbf{w})$ is contained
in the hyperplane $\{x_n=0 \}$.  By
Proposition~\ref{p:star} this means that $V(\inn_{\mathbf{w}}(I))$ is
contained in this hyperplane. Theorem~\ref{t:projection} then implies
that $\inn_{\mathbf{w}}(I) \cap \mathbb B[x_n^{\pm 1}] \neq \{ \infty
\}$.  This means that $\inn_{\mathbf{w}}(I)|_{x_n=0}$, which equals
$\inn_{\pi(\mathbf{w})}(I|_{x_n=a})$ by Part~\ref{item:initial} of
Lemma~\ref{l:basicfacts}, equals $\langle 0 \rangle$, and so
$\pi(\mathbf{w}) \not \in V(I|_{x_n=a})$ as required.  This shows that
$V(I|_{x_n=a}) = \pi(\Sigma \cap_{st} H)$ as a set.

We now prove that the multiplicities also coincide: for $\mathbf{w}
\in \mathbb R^n$ with $w_n=a$, we have $\mult_{V(I|_{x_n=a})}(\pi(\mathbf{w}))
= \mult_{\Sigma \cap_{st} H}(\mathbf{w})$.  We may assume that the
polyhedral complex $\Sigma$ has been chosen so that $\Sigma \cap H$ is
a subcomplex of $\Sigma$, so every cell $\sigma \in \Sigma \cap_{st}
H$ can also be viewed as a cell in $\Sigma$.  Maximal cells of the
stable intersection $\Sigma \cap_{st} H$ have dimension $d-1$.  By
definition, if $\mathbf{w}$ is in the relative interior of a maximal
cell $\sigma$ of $\Sigma \cap_{st} H$, the multiplicity of
$\mathbf{w}$ is $\sum_{\tau} \mult_{\Sigma}(\tau) [N : N_{\tau} +
  N_H]$, where $N_{\tau}$ and $N_H$ are the sublattices of $N =
\mathbb Z^n$ generated by the lattice points in $\spann(\tau)$ and $H$
respectively, $[N : N_{\tau}+N_H]$ is the lattice index, and the sum
is over all maximal cells $\tau$ in $\Sigma$ containing $\sigma$ with
$\tau \cap (\epsilon \mathbf{v}+H) \neq \emptyset$ for fixed generic
$\mathbf{v}$ and $0 < \epsilon \ll 1$.  Since $H$ is a coordinate hyperplane, we may take
$\mathbf{v} = \mathbf{e}_n$.  As
both sides of the equality in the proposition statement are invariant
under changes of coordinates in $x_1,\dots,x_{n-1}$ that fix $x_n$, we may assume that $\spann(\sigma) = \spann(\mathbf{e}_1,\dots,\mathbf{e}_{d-1})$.

For $\tau$ a maximal cell in $\Sigma$ containing $\sigma$, 
let $\mathbf{v}_{\tau} \in N$ be a representative for a generator
of the one-dimensional lattice $N_{\tau}/N_{\sigma}$.  The
sublattice $N_{\tau}+N_H$ is generated by 
$\{ \mathbf{e}_1,\dots,\mathbf{e}_{n-1} \}$ and $\mathbf{v}_{\tau}$, so
the index $[N : N_{\tau} + N_H]$ is up to sign equal to the last coordinate
$(\mathbf{v}_{\tau})_n$.  Thus the multiplicity of $\mathbf{w}$ in
$\Sigma \cap_{st} H$ equals
\begin{equation} \label{eqtn:stableintmult}
\textstyle
\sum_{\tau} (\mathbf{v}_{\tau})_n \mult_{\Sigma}(\tau),
\end{equation}
where the sum is over all maximal cells $\tau$ containing $\sigma$ with $\tau \cap
(\epsilon \mathbf{e}_n + H) \neq \emptyset$, which are precisely those $\tau$
with $(\mathbf{v}_{\tau})_n>0$.  The multiplicities
$\mult_{\Sigma}(\tau)$ are the same as the multiplicities of the
corresponding cones in $\starr_{\Sigma}(\sigma) =
V(\inn_{\mathbf{w}}(I))$.
Grade $S=\mathbb B[x_1^{\pm 1},\dots,x_n^{\pm 1}]$ by $\spann(\sigma)$.
We have $S_{\mathbf{0}} = \mathbb B[x_{d}^{\pm 1},\dots,x_n^{\pm 1}]$.  The
variety of the tropical ideal $\inn_{\mathbf{w}}(I) \cap
S_{\mathbf{0}}$ is a one-dimensional fan in $\mathbb R^{n-d+1}$ which
is isomorphic to $\starr_{\Sigma}(\sigma)/\spann(\sigma)$ by
Theorem~\ref{t:projection}.  

Equation~\eqref{eqtn:stableintmult} is then the right-hand side of
\eqref{eqtn:degJ1mult} applied to the ideal $\inn_{\mathbf{w}}(I)
\cap S_{\mathbf{0}}$, so the multiplicity of $\mathbf{w}$ in $\Sigma
\cap_{st} H$ equals $\deg((\inn_{\mathbf{w}}(I) \cap
S_{\mathbf{0}})|_{x_n=1})$.  
This degree is equal to $\deg((\inn_{\mathbf{w}}(I) \cap
S_{\mathbf{0}})|_{x_n=0})$ by Part~\ref{item:trivialization} of Lemma~\ref{l:basicfacts}, 
since the degree of a tropical ideal depends only on its trivialization.
Now, we have 
$$(\inn_{\mathbf{w}}(I) \cap S_{\mathbf{0}})|_{x_n=0} \,=\, \inn_{\mathbf{w}}(I)|_{x_n=0} \cap S_{\mathbf{0}}|_{x_n=0} \,=\, \inn_{\pi(\mathbf{w})}(I|_{x_n=a}) \cap \BB[x_d^{\pm 1},\dots,x_{n-1}^{\pm 1}],$$
where the first equality follows from the fact that $\deg(x_n)=\mathbf{0}$, 
and the second from Part~\ref{item:initial} of Lemma~\ref{l:basicfacts}.
Thus the multiplicity of $\mathbf{w}$ in $\Sigma \cap_{st} H$ equals
$\deg(\inn_{\pi(\mathbf{w})}(I|_{x_n=a}) \cap \BB[x_d^{\pm 1},\dots,x_{n-1}^{\pm 1}])$,
which is by definition the multiplicity of $\pi(\mathbf{w})$ in $V(I|_{x_n=a})$.
\end{proof}

Stable intersection is only defined for balanced polyhedral complexes, to ensure 
the resulting multiplicities are independent of  choices. 
The reason we require $V(I)$ to be pure in the previous theorem is because
Theorem~\ref{t:balanced} only guarantees that the maximal-dimensional
subcomplex of $V(I)$ is balanced, and thus the stable intersection 
can only be defined when that is the entire complex.

In the case that $I = \trop(J)$ for a {\em prime} ideal $J \subseteq
K[x_1^{\pm 1},\dots,x_n^{\pm 1}]$, the Structure Theorem for tropical
geometry implies that $V(I)$ is the support of a pure-dimensional
polyhedral complex.  It would be good to have a more general condition
on tropical ideals that still guarantees this.  If $J$ is not prime,
it still has a primary decomposition, so $V(I)$ decomposes as the
support of a union of balanced polyhedral complexes.
Proposition~\ref{p:stableintersection} thus extends to these cases.
An analogous construction is currently missing from tropical scheme
theory.

\begin{bibdiv}

\begin{biblist}

\bib{BurgosGilSombra}{article}{
   author={Burgos Gil, Jos\'{e} Ignacio},
   author={Sombra, Mart\'{\i}n},
   title={When do the recession cones of a polyhedral complex form a fan?},
   journal={Discrete Comput. Geom.},
   volume={46},
   date={2011},
   number={4},
   pages={789--798},
   issn={0179-5376},
}

\bib{DraismaRincon}{article}{
    author={Draisma, Jan},
    author={Rinc\'{o}n, Felipe},
    title={Tropical ideals do not realise all Bergman fans},
    year={2019},
    note= {arXiv:1903.00356}
    }

\bib{FultonSturmfels}{article}{
  author={Fulton, William},
  author={Sturmfels, Bernd},
  title={Intersection theory on toric varieties},
  journal={Topology},
  volume={36},
  date={1997},
  number={2},
  pages={335--353},
  issn={0040-9383},
  }

\bib{GG2}{unpublished}{
    label={GG14},
    title={The universal tropicalization and the Berkovich analytification},
 author={Giansiracusa, Jeffrey},
   author={Giansiracusa, Noah},
    year={2014},
    note= {arXiv:1410.4348}
    }

\bib{Giansiracusa2}{article}{
 author={Giansiracusa, Jeffrey},
   author={Giansiracusa, Noah},
   title={Equations of tropical varieties},
   journal={Duke Math. J.},
   volume={165},
   date={2016},
   number={18},
   pages={3379--3433},
   issn={0012-7094},
}

\bib{GGGrassmann}{article}{
 author={Giansiracusa, Jeffrey},
   author={Giansiracusa, Noah},
    title={A Grassmann algebra for matroids},
    journal={Manuscripta Math.},
    volume={156},
    date={2018},
    number={1-2},
    pages={187--213},
    issn={0025-2611},
    }
    
\bib{GriggManwaring}{unpublished}{
  title={An elementary proof of the fundamental theory of tropical algebra},
  author={Grigg, Nathan},
  author={Manwaring, Nathan},
  note={arXiv:0707.2591},
  year={2007}
  }

\bib{GrigorievPodolskii}{unpublished}{
  label={GP14},
    author={Dima Grigoriev and Vladimir V. Podolskii},
    title={Tropical Effective Primary and Dual Nullstellensätze},
    year={2014},
    note={arXiv:1409.6215}
}

\bib{JooMincheva}{article}{
  author={Jo\'{o}, D\'{a}niel},
  author={Mincheva, Kalina},
  title={Prime congruences of additively idempotent semirings and a
    Nullstellensatz for tropical polynomials},
  journal={Selecta Math. (N.S.)},
  volume={24},
  date={2018},
  number={3},
  pages={2207--2233},
  issn={1022-1824},
}

\bib{JooMincheva2}{article}{
  author={Jo\'{o}, D\'{a}niel},
  author={Mincheva, Kalina},
  title={On the dimension of polynomial semirings},
  journal={J. Algebra},
  volume={507},
  date={2018},
  pages={103--119},
  issn={0021-8693},
}

\bib{Lorscheid}{unpublished}{
  label={L15},
title={Scheme theoretic tropicalization},
author={Oliver Lorscheid},
year={2015},
note={arXiv:1508.07949},
}

\bib{TropicalIdeals}{article}{
   author={Maclagan, Diane},
   author={Rinc\'{o}n, Felipe},
   title={Tropical ideals},
   journal={Compos. Math.},
   volume={154},
   date={2018},
   number={3},
   pages={640--670},
   issn={0010-437X},
}

\bib{MaclaganRincon1}{article}{
  author={Maclagan, Diane},
  author={Rinc\'{o}n, Felipe},
  title={Tropical schemes, tropical cycles, and valuated matroids},
  journal={J. Eur. Math. Soc. (JEMS)},
  volume={22},
  date={2020},
  number={3},
  pages={777--796},
  issn={1435-9855},
}

\bib{TropicalBook}{book}{
    AUTHOR = {Maclagan, Diane},
    AUTHOR = {Sturmfels, Bernd},
     TITLE = {Introduction to {T}ropical {G}eometry},
    SERIES = {Graduate Studies in Mathematics},
 PUBLISHER = {American Mathematical Society, Providence, RI},
    VOLUME = {161},
      YEAR = {2015},
     PAGES = {vii+359},
}

\bib{MacPhersonThesis}{thesis}{
  label={M14},
  title = {Skeleta in non-Archimedean and tropical geometry},
  author={Andrew MacPherson},
  year={2014},
  type={PhD thesis},
  organization = {Imperial College London}
}

\bib{KalinaThesis}{thesis}{
  title={Semiring congruences and tropical geometry},
    author={Kalina Mincheva},
    year={2016},
    type={PhD thesis},
    organization={Johns Hopkins University}
  }
  
\bib{Oxley}{book}{
author={Oxley, James G.},
title={Matroid theory},
series={Oxford Science Publications},
publisher={The Clarendon Press Oxford University Press},
place={New York},
date={1992},
pages={xii+532},
isbn={0-19-853563-5},
}

\bib{Silversmith}{unpublished}{
  label={S19},
    title={The matroid stratification of the Hilbert scheme of points in $\mathbb P^1$},
    author={Rob Silversmith},
    year={2019},
   note={arXiv:1911.03569},
    }
  
\bib{GBCP}{book}{
   author={Sturmfels, Bernd},
   title={Gr\"{o}bner bases and convex polytopes},
   series={University Lecture Series},
   volume={8},
   publisher={American Mathematical Society, Providence, RI},
   date={1996},
   pages={xii+162},
   isbn={0-8218-0487-1},
}

\bib{Ziegler}{book}{
   author={Ziegler, G\"{u}nter M.},
   title={Lectures on polytopes},
   series={Graduate Texts in Mathematics},
   volume={152},
   publisher={Springer-Verlag, New York},
   date={1995},
   pages={x+370},
   isbn={0-387-94365-X},
}

\end{biblist}

\end{bibdiv}

\end{document}